\theoremstyle{plain}
    \newtheorem{theorem}{Theorem}[section]
    \newtheorem{lemma}[theorem]{Lemma}
    \newtheorem*{lemma*}{Lemma}
    \newtheorem*{proposition*}{Proposition}
        \newtheorem*{corollary*}{Corollary}
    \newtheorem*{theorem*}{Theorem}
    \newtheorem{corollary}[theorem]{Corollary}
    \newtheorem{fact}[theorem]{Fact}
    \newtheorem{proposition}[theorem]{Proposition}
\theoremstyle{definition}
    \newtheorem{definition}[theorem]{Definition}
    \newtheorem{notation}[theorem]{Notation}
    \newtheorem{remark}[theorem]{Remark}
\newtheorem*{ack}{Acknowledgments}
\theoremstyle{remark}
\numberwithin{equation}{section}
\newcounter{actr}
\newcounter{bctr}
\newcommand{\Hawaii}{Hawai\kern.05em`\kern.05em\relax i}
\newcommand{\R}{\mathbb{R}}
\newcommand{\Z}{\mathbb{Z}} 
\newcommand{\Q}{\mathbb{Q}}
\newcommand{\E}{\mathbb{E}}
\newcommand{\K}{\mathbb{K}}
\newcommand{\Ad}{\mathrm{Ad}}
\DeclareMathOperator{\GL}{GL}
\DeclareMathOperator{\Isom}{Isom}
\begin{document}   

\title{Concentration inequalities for maximal displacement of random walks on groups of polynomial growth}
\date{}

\author{\scshape 
J. Brieussel%
\footnote{J.~B.~was supported in part by ANR-24-CE40-3137 and JSPS IF-L25508}, 
R.~Tessera%
\footnote{R.~T.~was supported in part by ANR-24-CE40-3137},%
T.~Zheng%
\footnote{T.~Z.~was supported in part by NSF DMS-2538551}%
}
\AtEndDocument{\bigskip{\footnotesize%
 \par
\addvspace{\medskipamount}
}}

\maketitle

\begin{abstract}
We prove concentration inequalities for maximal displacement of
compactly supported random walks on a compactly generated locally
compact group with polynomial growth. The probability that a centred random
walk escapes the ball of radius $t\sqrt{n}$ before time $n$ decays
like $\exp(O(-t^{2}))$. Similar bounds with different exponents hold for non-centred random walks as well, after correction by the drift. 

When the support of the measure generates a virtually nilpotent group, we provide an effective version of this result. These more refined estimates rely on the existence of a ``quantitative splitting'' of a virtually simply connected nilpotent group, a result which may be of independent interest. 

As applications, we deduce that the same
concentration inequalities hold for centred random walks on the following classes of groups: amenable connected
Lie groups (including non-unimodular ones), polycyclic and more generally
finitely generated solvable groups with finite Pr\"ufer rank. In
particular the maximal drift (in $L^{p}$ for all $p\geq1$) of such
a random walk at time $n$ is in $O(n^{1/2})$. For polycyclic groups, this strengthens and completes partial results previously obtained by Russ Thompson in
2011. 
\end{abstract}

\section{Introduction}

A concentration inequality is an upper bound on the tail of a probability measure away from its mean. While central limit style theorems provide accurate descriptions of asymptotic distributions of random walks at the scale of their variance, concentration results provide less precise but effective bounds on the tail of the distribution at any scale of space and any time. 

A family of real random variables $(f_i)_{i\in I}$ is called to have sub-stretched-exponential concentration with exponent $\alpha$, $\alpha$-concentrated for short, for some $\alpha>0$ if there exists constants $c_1,c_2 >0$ such that
\[
\forall t>0, \quad \sup_{i\in I} \mathbb{P}\left(|f_i|\ge t\right) \le c_2\exp\left(-c_1 t^{\alpha}\right).
\]
For $\alpha=1$ or $\alpha=2$, the family is called subexponential or subgaussian respectively. 

In this paper, we shall be concerned with random walks on compactly generated locally compact groups of polynomial growth, driven by a compactly supported measure $\mu$. Our goal is to show a concentration inequality for the distance to the origin after $n$ steps of the random walk. An important aspect of our result is that we do not assume that $\mu$ is absolutely continuous with respect to the Haar measure, nor that it is symmetric, or even centred. In the case where it is centred, we actually obtain that this distance, renormalized by $n^{1/2}$ is subgaussian. Before stating our precise result, let us briefly recall the history of the subject. 

The study of the long-time behavior of random walks on groups is closely associated with Varopoulos, who introduced powerful analytic tools (those are summarized in the book \cite{Varobook}). These methods were subsequently refined and extended by many authors, including Alexopoulos, Coulhon, Hebisch, and Saloff-Coste. Generally, this study goes in two steps: first one provides estimates on the probability of return to the origin (these are generally referred to as ``on-diagonal'' estimates). In the case of a group $G$ of polynomial growth, equipped with a non-degenerate, compactly supported, symmetric probability $\mu$ which is absolutely continuous with respect to the Haar measure, these estimates are of the form 
\begin{equation}\label{eq:Ondiag}
\phi_n(e_G)\leq C n^{-d/2},
\end{equation}
where $C\geq 1$ is a constant, $d$ is the degree of growth of $G$, and $\phi_n$ is the density of $\mu^{*n}$ with respect to the Haar measure. This upper bound is essentially due to Varopoulos (see \cite[Theorem 4.1]{HebishSaloff} for the precise statement).

The second step consists in deducing ``off-diagonal'' pointwise Gaussian upper estimates of the form 
\begin{equation}\label{eq:upperGaussianOffdiag}
\phi_n(g)\leq Cn^{-d/2}\exp\left(-|g|^2/(Cn)\right),
\end{equation}
where $C\geq 1$ is a constant, and $|g|$ denotes the word length of $g$ with respect to some compact generating subset. Indeed, in their influential article \cite{HebishSaloff}, Hebisch and Saloff-Coste show that the upper bound (\ref{eq:upperGaussianOffdiag})  can be derived via an interpolation argument, using the on-diagonal estimate (\ref{eq:Ondiag}) as an input. 

Note that (\ref{eq:upperGaussianOffdiag}) should not be mistaken with the very general Carne-Varopoulos Gaussian upper bound \cite{Varo85,Carne}, which is valid for any reversible Markov chain, where the metric $d(x,y)$ is taken to be the minimal number of steps  required for the Markov chain to go from $x$ to $y$. It yields in our case a Gaussian upper bound of the form
\[\phi_n(g)\leq C\exp\left(-|g|^2/(Cn)\right),\]
namely, {\it without} the corrective factor $n^{-d/2}$. Indeed, the importance of this term is illustrated by the following remark: summing up (\ref{eq:upperGaussianOffdiag}) over the complement of the ball of radius $tn^{1/2}$, the factor $n^{-d/2}$ compensates the volume growth, allowing us to deduce the following tale estimate
\begin{equation}\label{eq:concentration}
\sup_{g\in G}\mu^{*n}(g\mid |g|\geq tn^{1/2})\leq C'\exp\left(-t^2/C'\right).
\end{equation}

In words, this inequality says that the probability that the random walks at time $n$ lands outside the ball of radius $tn^{1/2}$ is subgaussian. Yet this is not as strong as the concentration inequality we are aiming for, which claims that the probability that the random walks exits the ball of radius $tn^{1/2}$ {\it before time $n$} is subgaussian. However the latter can be deduced as well from (\ref{eq:upperGaussianOffdiag}) \cite[Lemma 13.3]{Alexopoulos}.

The assumption that the measure $\mu$ is symmetric plays an important role in the techniques for obtaining heat kernel estimates, which exploit the self-adjointness of the corresponding Markov operator. But more crucial is the assumption that the measure is absolutely continuous with respect to the Haar measure. Indeed,  Alexopoulos managed to push these techniques one step further and prove (\ref{eq:upperGaussianOffdiag}) for centred random walks \cite{Alexopoulos, AlexopoulosLC}, see also Dungey~\cite{Dungey} for finer results.

Given a set $A$, denote by $\tau_A=\inf\{n: w_n\notin A\}$ the first exit time from $A$. Estimates on the first exit time of balls play an important role in the study of random walks on graphs, and also diffusion semigroups. We mention that there are general arguments showing that a two-sided estimate on the mean exit time $\mathbb{E}_x[\tau_{B(x,r)}]$ implies a tail estimate on $\mathbb{P}_x(\tau_{B(x,r)}\le t)$, see for example, \cite[Theorem 3.15]{GT}.  Results of this type go back to Barlow \cite{Barlow95}. The relations between exit time estimates, transition probabilities $p^{n}(x,y)$, and various functional inequalities have been object of intensive research in the past decades, we refer the readers to \cite{GT} and references therein. 

In our setting, we consider general random walks driven by a measure with compact support, where known tools in heat kernel estimates are not applicable. The fact that the measure $\mu$ is not absolutely continuous with respect to the Haar measure prevents, {\it a priori}, any estimate of the form (\ref{eq:upperGaussianOffdiag}). Actually, the group generated by the support of the measure may very well be a dense finitely generated subgroup of exponential growth (these can already be found in $\R^2\rtimes O(2)$).  Therefore, even obtaining inequalities such as (\ref{eq:concentration}) for such measures requires a completely different approach. Indeed, we develop new direct arguments to bound tails of exit times, using the filtrations on nilpotent Lie groups.

\subsection{Concentration inequalities for random walks on groups of polynomial growth}

We consider random walks on groups of polynomial growth, endowed with word norms. We establish two different concentration inequalities, depending whether the step distribution $\mu$ is centred or not. Let $\mu$ be a Borel probability measure on a locally compact group $G$. Denote $H$ the subgroup generated by the support. Recall that the measure $\mu$ is centred if the mean value of its push-forward onto the abelianisation of $H$ vanishes, i.e. $\E(\pi_\ast\mu)=0$ with abelianisation map $\pi:H \to H/[H,H]\otimes \mathbb{R}$. Symmetric measures are centred.

The main results of this paper are the following maximal concentration
inequalities.

\begin{theorem}\label{thmIntro:NbyQ} 
Let $G$ be a locally compact, compactly generated group of polynomial
growth. Let $S$ be a compact symmetric generating set, and $\mu$ be a compactly supported measure on~$G$. Denote $(w_n)_{n\ge 0}$ the associated random walk starting at the neutral element.
\begin{itemize}
\item If $\mu$ is centred, there exists  constants $c_1,c_2>0$ such for all $t>0$
that 
\[
\sup_{n\in \mathbb{N}}\mathbb{P}\left(\max_{k\leq n}|w_{k}|_{S}\geq tn^{\frac{1}{2}}\right)\leq c_2\exp(-c_1t^{2}).
\]
\item In general, there exists an integer $s\ge 1$, constants $c_3,c_4>0$, and a trajectory $(g_n)_{n\in G}$ in $G$ depending only on $\mu$ such that for all $t>0$
\[
\sup_{n\in \mathbb{N}}\mathbb{P}\left(\max_{k\leq n}|w_{k}g_k^{-1}|_{S}\geq tn^{\frac{2s-1}{2s}}\right)\leq c_4\exp(-c_3t^{\frac{2s}{2s-1}}).
\]
\end{itemize}
\end{theorem}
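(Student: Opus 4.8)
The plan is to reduce everything to the case where $G$ is a simply connected nilpotent Lie group and then run a layer-by-layer martingale argument in Malcev coordinates. For the reduction one uses the structure theory of locally compact groups of polynomial growth, the reduction to the nilshadow of a connected Lie group of polynomial growth, and the quantitative splitting recalled earlier in the paper; after quotienting by a compact kernel (which contributes only $O(1)$ to displacement) and passing to a finite-index situation (which distorts the metric by bounded factors), one is left with a simply connected nilpotent Lie group $N$ of some nilpotency class $s$, the pushed-forward measure $\bar\mu$ (still compactly supported, and still centred if $\mu$ was), and a comparison of the word metric with the homogeneous weighted metric on $N$ up to multiplicative and additive constants. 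From here one takes $G=N$, fixes a Malcev basis adapted to the lower central series with weights $w_1\le\cdots\le w_m=s$, writes $x_i$ for the coordinates, and recalls Guivarc'h's estimate $|g|_S\asymp\sum_i|x_i(g)|^{1/w_i}$. So it suffices to control each coordinate $x_i(w_k)$, respectively $x_i(w_k g_k^{-1})$, at the scale $n^{w_i/2}$, respectively $n^{w_i-1/2}$, the worst weight $w_i=s$ producing the exponents in the statement after a union bound over $i$.

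For the centred case I would take $g_n=e$ and induct on the weight. The coordinate process obeys $x_i(w_k)=x_i(w_{k-1})+x_i(u_k)+P_i(w_{k-1},u_k)$, where $u_k$ is the bounded $k$-th increment and $P_i$ is a fixed polynomial all of whose monomials have weighted degree $w_i$ and positive degree in the $u_k$-variables; hence $P_i(w_{k-1},u_k)$ is a bounded-coefficient polynomial of weighted degree $\le w_i-1$ in the coordinates $x_j(w_{k-1})$ with $w_j<w_i$. Conditioning on the event $E_t$ that $|x_j(w_k)|\le t\,k^{w_j/2}$ for every $j<i$ and every $k\le n$ --- whose complement has probability $\le c\exp(-c't^2)$ by the inductive hypothesis, using that centredness forces the drift of each lower coordinate to be $O(k^{w_j/2})$ --- the martingale part of $(x_i(w_k))_k$ has increments bounded by $C\,t^{\,w_i-1}k^{(w_i-1)/2}$ and predictable drift $O(k^{w_i/2})$. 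Azuma--Hoeffding in its maximal form then yields $\mathbb{P}(\max_{k\le n}|x_i(w_k)|\ge t^{w_i}n^{w_i/2},\,E_t)\le 2\exp(-c\,t^2)$; combining with the bound on $\mathbb{P}(E_t^c)$ closes the induction, and summing over $i$ through Guivarc'h's estimate gives the first inequality, uniformly in $n$.

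The general case differs only in the drift. Choosing $(g_n)$ to follow the linear drift of $\mu$ on the abelianisation (trivially in higher layers), one has the exact telescoping identity $w_n g_n^{-1}=\eta_1\cdots\eta_n$ with $\eta_k=g_{k-1}(u_k h_k^{-1})g_{k-1}^{-1}$, where $h_k$ is the chosen mean increment, so $u_k h_k^{-1}$ is bounded with vanishing first-order log-drift. The key point is that conjugation by $g_{k-1}$, whose weight-$j$ coordinate has size $\asymp k^{j}$, amplifies the bounded element $u_k h_k^{-1}$: the weight-$w$ coordinate of $\eta_k$ is $O(k^{\,w-1})$ and has conditional mean $\approx 0$. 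Using the same conditioning device as above to absorb lower-order terms, $(x_w(w_k g_k^{-1}))_k$ is then (up to those terms) a martingale with increments bounded by $O(k^{\,w-1})$, so Azuma--Hoeffding concentrates $\max_{k\le n}|x_w(w_k g_k^{-1})|$ at scale $n^{\,w-1/2}$; feeding this into Guivarc'h's estimate with the worst weight $w=s$ and rescaling gives the asserted tail $c_4\exp(-c_3 t^{2s/(2s-1)})$ at scale $t\,n^{(2s-1)/(2s)}$, uniformly in $n$.

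I expect the two genuinely hard points to be the structure-theoretic input and the bootstrapping in the coordinate induction. For the former, a merely qualitative quasi-isometry to the nilshadow does not suffice: an $n^{o(1)}$ slack in the metric comparison would already spoil the Gaussian scaling, which is why the \emph{quantitative} splitting, with explicit control of the distortion, is needed. For the latter, the martingale increments of a high-weight coordinate are bounded only on the good event $E_t$ controlling the lower weights, so one must interleave the Azuma estimates across weights and check that the deviation parameter $t$ propagates consistently through all layers; in the non-centred case one must moreover verify that the Baker--Campbell--Hausdorff cross-terms between the running position $w_k g_k^{-1}$ and the amplified increment $\eta_{k+1}$ --- products of a fluctuating factor with a factor of size $\asymp k^{j}$ --- are of strictly lower order than the leading martingale term, so as not to degrade the exponent.
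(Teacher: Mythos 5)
Your core difficulty is the reduction step, and as written it contains a genuine gap. For a general locally compact group of polynomial growth the structure theorem (Losert) does \emph{not} let you pass, after killing a compact kernel and a finite-index argument, to a simply connected nilpotent Lie group $N$ carrying a pushed-forward driving measure $\bar\mu$. What it gives is a proper cocompact embedding of $G/K$ into a semidirect product $N\rtimes Q$ with $Q$ a (generally infinite) compact group, and the support of $\mu$ need not lie in any finite extension of $N$: it can generate a dense subgroup of exponential growth (e.g.\ inside $\R^2\rtimes O(2)$, the paper's own example). In that situation the $N$-component of the walk is \emph{not} a random walk on $N$: its increments are the i.i.d.\ increments twisted by the running $Q$-component, $z_n=z_{n-1}\ast q_{n-1}\xi_n$. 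So the setup on which your whole Malcev-coordinate martingale argument rests (i.i.d.\ bounded increments $u_k$ in $N$) is not available. This also breaks your choice of recentering trajectory in the non-centred case: when $Q$ acts nontrivially the walk does not follow the naive abelianised drift; one must first conjugate away the part of the mean lying in the complement of the $\Ad(Q)$-fixed vectors (this uses invertibility of $\mathrm{id}-\Ad(\mu_Q)$ and the spectral constant $\kappa_\mu$ of Lemma~\ref{lem:spectralgap}) and take $g_n$ along $\exp(nv_\mu)$ for the $Q$-invariant essential average $v_\mu$, as in Lemma~\ref{lem:conjugate}. Finally, the quantitative splitting theorem you invoke is not what makes this reduction work -- in the paper it is only needed for the uniform-constants statement in finite-by-nilpotent groups, not for Theorem~\ref{thmIntro:NbyQ}.

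The second half of your plan -- induction over weights, good events for the lower coordinates, Azuma for the top coordinate, Guivarc'h's comparison of the word metric with $\sum_i|x_i|^{1/w_i}$ -- is a legitimately different route from the paper's (which proceeds by a doubling identity $y_{2n}=y_n\ast\cdots$, Laplace-transform/convexity bookkeeping for subexponential variables at weight $2$, and $L^p$-moment induction for higher weights, all carried out on $N\rtimes Q$ in the homogeneous norm of the weight filtration $\mathfrak{F}_{v_\mu}$). If carried out carefully (conditioning on $E_t$ must be implemented via stopped martingales, since the conditioned process is no longer a martingale, and the drift bookkeeping needs to track that ``centred'' only kills the first-layer mean of the increment, not the higher-layer means), your scheme could work once transplanted to the $N\rtimes Q$ setting: the rotated increments are still conditionally centred and bounded in a $Q$-invariant norm, so the martingale structure survives. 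Interestingly, in the non-centred case your Azuma computation at scale $n^{w-1/2}$ for the weight-$w$ coordinate would give a \emph{stronger} tail than $\exp(-ct^{2s/(2s-1)})$; the exponent you quote is the one produced by the paper's product-of-concentrated-variables calculus, and it does not fall out of your computation as described -- a sign that this part of the argument was asserted rather than checked. But the decisive missing ingredient is the correct structural reduction and the handling of the compact factor $Q$ (twisted increments, $Q$-invariant norms, the essential average), without which the proof does not get off the ground for general groups of polynomial growth.
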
 

The case where
$G$ is abelian is well-known by martingale concentration, but to
the best of our knowledge the first part of above theorem is new even for symmetric
random walks on the continuous Heisenberg group, when the random walk step distribution is {\it not assumed} to be supported on a discrete subgroup or have a density with respect to the Haar measure. For centred random walks absolutely continuous with respect to the Haar measure on groups with polynomial growth, we have seen that these follow from the work of Alexopoulos (see \cite[(9.1)]{AlexopoulosLC} for the explicit statement).

In the non-centred case, we do not know of any known inequalities of this kind, even for a measure supported on the discrete Heisenberg group.

The integer $s$ and the trajectory $(g_n)$ are described in terms of the structure of the group $G$ and . By Losert~\cite{Losert}, every compactly generated locally compact group of polynomial growth admits a proper morphism with cocompact image into a semi-direct product of the form $N\rtimes Q$, where $N$ is a simply connected nilpotent group and $Q$ is a compact group. The integer $s$ is simply the degree of nilpotency of $N$. The trajectory $(g_n)$ is a lift to $G$ of a sequence $\exp(nv_\mu)$ on $N$, for some vector $v_\mu$ in the Lie algebra depending only on $\mu$. We call this vector the essential average of $\mu$. When $G=N$, this essential average is a lift of the mean value of the projection of $\mu$ to the abelianisation of the Lie algebra.
One can interpret $g_n$ as a coarse average of the distribution $\mu^{\ast n}$.

Provided that the measure $\mu$ is supported on the generating set $S$, the constants $c_1,c_2,c_3,c_4$ in Theorem~\ref{thmIntro:NbyQ} depend only on $(G,S)$ and a spectral constant $\kappa_\mu$ related to the operator $\Ad(\pi_{Q\ast} \mu)$ on the Lie algebra (see Lemma~\ref{lem:spectralgap} for the precise definition). This dependence in the spectral constant is necessary. For instance, if a drifted random walk on the euclidean plane is rotated with a very small probability, then its speed appears as linear for some interval of time, see Remark~\ref{rk:balistic}.

 The heart of the proof of Theorem~\ref{thmIntro:NbyQ} is the case of semi-direct products $N\rtimes Q$. The essential average $v_\mu \in \mathfrak{n}$ plays an important role. We associate to it the weighted filtration $\mathfrak{F}_{v_\mu}$, which was introduced by Bénard and Breuillard in order to obtain central and local limit theorems for non-centred random walks on simply connected nilpotent Lie groups~\cite{BeBr23,BeBr25}. For a homogeneous norm $|\cdot|_{\mathfrak{F}_{v_\mu}}$ adapted  to this filtration, we obtain subgaussian concentration away from the essential average.
 
 \begin{theorem}\label{thm:normIntro}
 Let $\mu$ be a compactly supported measure on $G=N \rtimes Q$. Denote $w_n$ the random walk at time $n$. Then there exists $c_1,c_2>0$ such that
\[
\sup_{n\in \mathbb{N}} \mathbb{P} \left( \frac{1}{\sqrt{n}}\max_{k\le n}\left(\left|w_k\exp\left(-kv_{\mu}\right)\right|_{\mathfrak{F}_\mu}\right) \ge t\right) \le c_2e^{-c_1t^2}.
\]
 \end{theorem}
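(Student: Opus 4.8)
The plan is to pull everything back to coordinates on $N$ adapted to the weighted filtration $\mathfrak{F}_{v_\mu}$ of B\'enard--Breuillard, and to prove the estimate one coordinate at a time, by induction on the filtration weight. Choose coordinates $y^{(1)},\dots,y^{(m)}$ of the second kind subordinate to the grading of $\mathfrak{F}_{v_\mu}$, let $d_i$ be the weight of $y^{(i)}$ (so that the homogeneous norm satisfies $|g|_{\mathfrak{F}_\mu}\asymp\max_i|y^{(i)}(g)|^{1/d_i}$), and set $Y_k:=w_k\exp(-kv_\mu)$. Since
\[
\Big\{\max_{k\le n}|Y_k|_{\mathfrak{F}_\mu}\ge t\sqrt n\Big\}\ \subseteq\ \bigcup_{i=1}^m\Big\{\exists\,k\le n:\ |y^{(i)}(Y_k)|\ge (ct)^{d_i}\,n^{d_i/2}\Big\}
\]
for a comparability constant $c>0$, and $m$ is finite, a union bound reduces Theorem~\ref{thm:normIntro} to proving, for each $i$ and uniformly in $n$,
\[
\mathbb{P}\Big(\exists\,k\le n:\ |y^{(i)}(Y_k)|\ge u\,n^{d_i/2}\Big)\ \le\ C_i\exp\!\big(-c_i\,u^{2/d_i}\big);
\]
substituting $u=t^{d_i}$ turns the exponent $2/d_i$ into the claimed exponent $2$. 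Only the range $u\lesssim n^{d_i/2}$ is relevant: since $\mu$ is compactly supported one has $|w_k|_S=O(k)$ and $|\exp(kv_\mu)|_S=O(k)$, whence $|y^{(i)}(Y_k)|\le|Y_k|_{\mathfrak{F}_\mu}^{d_i}\le(Cn)^{d_i}$, and for larger $u$ the event is empty.

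The compact group $Q$ and the possible non-centredness of $\mu$ enter only through the choice of $v_\mu$ and through the constants. The image of the walk in $Q$ equidistributes towards Haar measure at a geometric rate governed by the spectral constant $\kappa_\mu$ of Lemma~\ref{lem:spectralgap}; this allows one to replace the drift carried along the trajectory, which is rotated by the accumulated $Q$-component, by its $Q$-average $v_\mu$, at the cost of an additive error whose partial sums are those of a bounded, geometrically mixing, centred sequence, hence subgaussian at scale $\sqrt n$ and harmless already at weight $1$. Because $v_\mu$ is $\Ad(Q)$-invariant, $\operatorname{ad}(v_\mu)$ commutes with $\Ad(Q)$, the filtration $\mathfrak{F}_{v_\mu}$ is $\Ad(Q)$-invariant, and conjugation by $Q$ acts by uniformly bounded maps on each graded piece, so it does not disturb the coordinatewise bounds. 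The base case $d_i=1$ is then a centred random walk on a Euclidean space: after removing the bounded drift discrepancy just described, $y^{(i)}(Y_n)$ is, up to a bounded term, a sum $\sum_k\zeta_k$ of bounded martingale increments, and Azuma's inequality together with Doob's maximal inequality gives the claim with $d_i=1$.

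For the inductive step, fix $i$ and assume the claim for every coordinate of weight $<d_i$. The Baker--Campbell--Hausdorff formula and the polynomiality of the group law of $N$ give a recursion
\[
y^{(i)}(Y_{k+1})-y^{(i)}(Y_k)=\Delta_k^{(i)},\qquad \big|\Delta_k^{(i)}\big|\le C\Big(1+\sum_{a\,:\,d_a<d_i}\big|y^{(a)}(Y_k)\big|\Big),
\]
the bound using the compact support of $\mu$. The essential structural input, and the reason the weighted filtration is the right object, is that the recentring by $\exp(-kv_\mu)$ together with the weighting removes the growing part of the conditional drift: on the event that all coordinates of weight $<d_i$ have their typical size, $\big|\sum_{k<n}\mathbb{E}\big[\Delta_k^{(i)}\mid\mathcal{F}_k\big]\big|=o(n^{d_i/2})$, so the fluctuation is carried by the martingale $M_m^{(i)}:=\sum_{k<m}\big(\Delta_k^{(i)}-\mathbb{E}[\Delta_k^{(i)}\mid\mathcal{F}_k]\big)$. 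Working on the good event $\mathcal{G}_{R'}=\{\,|y^{(a)}(Y_k)|\le\sqrt{R'}\,k^{d_a/2}\text{ for all }k\le n\text{ and all }a\text{ with }d_a<d_i\,\}$, whose complement has probability $\le C\exp(-c\,(R')^{1/(d_i-1)})$ by the inductive estimates in their maximal form, the increments of $M^{(i)}$ are bounded by $C\sqrt{R'}\,k^{(d_i-1)/2}$ and its predictable quadratic variation by $C^2R'\sum_{k\le n}k^{d_i-1}\asymp R'\,n^{d_i}$. Plugging these into the maximal Bernstein--Freedman inequality for martingales and choosing $R'\asymp u^{2(d_i-1)/d_i}$ (the lower-order term in the Bernstein denominator being absorbed because $u\lesssim n^{d_i/2}$) yields $\mathbb{P}(\max_{m\le n}|M_m^{(i)}|\ge u\,n^{d_i/2})\le C\exp(-c\,u^{2/d_i})$; combined with the $o(n^{d_i/2})$ compensator this closes the induction, and the union over the $m$ coordinates proves Theorem~\ref{thm:normIntro}.

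The main obstacle is exactly this inductive step: $M^{(i)}$ is a martingale whose increments are \emph{not} uniformly bounded, their conditional sizes being random variables produced by the previous level of the induction, so no ready-made bounded-difference inequality applies. Two points need care. First, one must verify that the weighted filtration of B\'enard--Breuillard genuinely reduces the conditional drift of each layer to lower order after the $\exp(-kv_\mu)$ recentring; this is where the non-centred case really differs from the centred one, in which every increment is a conditional-mean-zero martingale difference. Second, one must check that splitting along $\mathcal{G}_{R'}$ with the time-dependent increment bound $\sqrt{R'}\,k^{(d_i-1)/2}$, rather than a uniform bound, suffices for the Bernstein--Freedman estimate to close at the sharp exponent $2/d_i$ --- the weighted quadratic variation $\sum_{k\le n}k^{d_i-1}$ being of exactly the right order $n^{d_i}$, so that no logarithmic factor creeps in and the bound is uniform in $n$, as the statement demands.
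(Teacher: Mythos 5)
Your overall strategy (coordinatewise induction on the weight, a good event supplied by the inductive maximal bounds, then Freedman's inequality) is genuinely different from the paper's proof, which works with the graded components $\theta_i(y_n)$ through a dyadic doubling identity, a Laplace-transform convexity trick at weight $2$ and an $L^p$-moment recursion at higher weights. But as written your inductive step does not close. The increment bound
\[
\big|\Delta_k^{(i)}\big|\le C\Big(1+\sum_{a:\,d_a<d_i}\big|y^{(a)}(Y_k)\big|\Big)
\]
is not what Baker--Campbell--Hausdorff gives: the correction to a weight-$d_i$ coordinate of $Y_{k+1}$ contains iterated brackets with \emph{several} random factors taken from $Y_k$, e.g.\ terms of the shape $[\theta_1(Y_k),[\theta_1(Y_k),\theta_1(\zeta_{k+1})]]$ already at weight $3$, so the dependence on the lower coordinates is polynomial (multilinear), not linear. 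On your good event $\mathcal{G}_{R'}$, which uses the \emph{same} threshold $\sqrt{R'}$ for every lower weight, the correct worst-case increment bound is of order $R'^{(d_i-1)/2}k^{(d_i-1)/2}$ (all factors of weight $1$), so the predictable quadratic variation can be of order $R'^{\,d_i-1}n^{d_i}$ rather than $R'n^{d_i}$. Balancing the Freedman term $u^2/R'^{\,d_i-1}$ against the cost $\exp(-cR'^{1/(d_i-1)})$ of the good event then yields an exponent $u^{2/(1+(d_i-1)^2)}$, strictly worse than the claimed $u^{2/d_i}$ as soon as $d_i\ge 3$. The scheme can be repaired by taking weight-dependent thresholds, e.g.\ $|y^{(a)}(Y_k)|\le u^{a/d_i}n^{a/2}$ for every lower weight $a$ (each such event fails with probability $\le C\exp(-cu^{2/d_i})$ by induction, and every bracket of total lower weight $\le d_i-1$ is then $\le Cu^{(d_i-1)/d_i}n^{(d_i-1)/2}$, after which your Freedman computation does give $u^{2/d_i}$), but that correction is exactly the point you skipped, and with your stated bound and choice of $R'$ the step fails.

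The second gap concerns the drift. Your mechanism for removing the rotated drift --- ``the image of the walk in $Q$ equidistributes towards Haar measure at a geometric rate governed by $\kappa_\mu$'', producing a ``geometrically mixing, centred'' error --- is false in general: $\mu_Q$ may be a Dirac mass (the paper points this out after Lemma~\ref{lem:spectralgap}), in which case there is no mixing and the sequence $q_k w_\mu$ is neither centred nor mixing. The paper instead conjugates $\mu$ by $y=(\mathrm{id}-\Ad(\mu_Q))^{-1}w_\mu$ (Lemma~\ref{lem:conjugate}), which makes the abelianised increments exactly centred at the $\Ad(Q)$-invariant $v_\mu$; without such a device the discrepancy $q_kw_\mu$ does not only affect weight $1$: it reappears inside the BCH brackets at every weight (e.g.\ compensator terms like $\sum_k[\theta_1(Y_k),q_kw_\mu]$ at weight $2$, or $\sum_k k[v_\mu,q_kw_\mu]$ at weight $3$), whose naive size exceeds the target scale and which require an Abel-summation/martingale decomposition that is absent from your argument. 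Relatedly, your ``essential structural input'' that $\big|\sum_{k<n}\mathbb{E}[\Delta_k^{(i)}\mid\mathcal{F}_k]\big|=o(n^{d_i/2})$ on the good event is asserted, not proved, and is false as stated: even in the centred, $\Ad(Q)$-invariant situation the deterministic part $\sum_k q_k\mathbb{E}\theta_2(\zeta)$ is of order exactly $n=n^{d_2/2}$ at weight $2$ (harmless only after recentring, which your write-up does not perform), and the random parts of the compensator are of order $\sqrt{R'}\,n^{d_i/2}$, so they must be absorbed by the same threshold bookkeeping discussed above rather than dismissed as lower order.
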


With a slight abuse of notation, we write $|g|_{\mathfrak{F}_{v_\mu}}$ for $|\pi_{\mathfrak{n}}(g)|_{\mathfrak{F}_{v_\mu}}$. The constants $c_1,c_2$ depend only on the dimension and degree of nilpotency of the Lie group $N$, and on the measure $\mu$ via  the radius of the support, and the spectral constant $\kappa_\mu$ .

The proof of Theorem~\ref{thm:normIntro} is by double induction. First on the degree of nilpotency. When $N$ is abelian, the statement is nothing but the Azuma-Hoeffding inequality as the induced process on $N$ is a martingale with bounded increments. The case of two-step nilpotent groups is obtained using a Laplace transform characterisation of concentration inequalities. By homogeneity of the norm, one has to check that square roots of coordinates in the center are subgaussian, or equivalently that coordinates are subexponential. It is sufficient to do this for times powers of $2$. The process at time $2n$ is a sum of two processes at time $n$, plus a commutator term from the coordinates of the abelianisation, which is controlled by induction. The key point is that $1/\sqrt{2}$ times a sum of $c$-subexponential variables is still $c$-subexponential. As the coefficient $1/\sqrt{2}$ is strictly above $1/2$, we can write the process as an average of $c'$-subexponential variables for large enough $c'$. The case of degree of nilpotency from $3$ and above is treated in a similar though somewhat simpler manner, using the characterisation of concentration in terms of $p$-moments for all $p$ and the Baker-Campbell-Hausdorff formula.

\subsection{Uniform constants in finite-by-nilpotent groups}

The concentration constants in Theorem~\ref{thm:normIntro} depend only on the dimension and degree of nilpotency of $N$ and the spectral constant $\kappa_\mu$ of $\pi_{Q\ast}\mu$. When we induce these inequalities to a group $G$ of polynomial growth, new dependence appears in two ways from the relationship between $G$ and the related semi-direct product. Firstly one has to mod out by the maximal compact subgroup $K$ of $G$, and secondly one needs to embed $G/K$ cocompactly into a group of the form $N\rtimes Q$. The first dependence can by no means be made uniform since the compact group can be arbitrary. Even the second dependence cannot be removed : among co-compact subgroups of $N\rtimes Q$ the concentration constants cannot be uniform, if one uses word distances of the subgroups. For instance consider the symmetric random walk on the real line driven by the uniform measure on $2^{-j}$, $j\in\{0,\cdots,\ell\}$ and there opposites, $\mu_\ell=\sum_{i=0}^\ell \pm \delta_{2^{-i}}/(2\ell+2)$. With respect to the generating set $\{\pm2^{-j}, j=0,\cdots,\ell\}$, the $\mu_{\ell}$-random walk admits a linear drift at times below $\ell$ by comparison with a hypercube. 

However we show that for finite-by-nilpotent groups this dependence can be removed if we consider concentration with respect to an appropriate word norm on $N\rtimes Q$. Regarding the previous example, it is clear that by the Bernstein inequality, random walks driven by $\mu_\ell$ above satisfy Gaussian concentration with uniform constants if measured with the absolute value on $\mathbb{R}$.

\begin{theorem}
Let $1 \to N \to G \to F \to 1$ be an exact sequence with $N$ a simply connected nilpotent Lie group and $F$ a finite group. There exists an integer $k$ depending only on the dimension and degree of nilpotency of $N$ and on the cardinality of $F$ such that the following holds. For any compactly supported measure $\mu$ on $G$. There exists a compact generating set $T$ of $G$, depending only on $\mathrm{supp}(\mu)^k$, such that the concentration results of Theorem~\ref{thmIntro:NbyQ} hold for the $T$-word norm with constants $c_1,c_2,c_3,c_4$ depending only on the dimension and degree of nilpotency of $N$ and the spectral constant $\kappa_\mu$.
\end{theorem}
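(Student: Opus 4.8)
The plan is to reduce to Theorem~\ref{thm:normIntro} and then to choose the generating set $T$ so that the $T$-word norm becomes equivalent --- after subtraction of the drift, and up to a multiplicative constant controlled by $\dim N$, the degree of nilpotency $s$ of $N$, and $\kappa_\mu$ --- to the homogeneous quasi-norm $|\cdot|_{\mathfrak F_{v_\mu}}$ of Theorem~\ref{thm:normIntro}, \emph{rescaled to the natural scale of $\mu$}. It is exactly this rescaling that removes the dependence on the radius of $\mathrm{supp}(\mu)$ which is present in the constants of Theorem~\ref{thm:normIntro}. The first step is to apply the quantitative splitting of a virtually simply connected nilpotent group to fix a proper morphism $\iota\colon G\to\widetilde G:=N\rtimes Q$ with cocompact image, $Q$ finite, with $|Q|$ and the metric distortion of $\iota$ bounded in terms of $|F|$, $\dim N$, $s$; put $\bar\mu:=\iota_\ast\mu$, and let $\kappa_\mu$ denote the spectral constant $\kappa_{\bar\mu}$ of $\bar\mu$ as in Lemma~\ref{lem:spectralgap}. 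Since $\iota$ is a homomorphism, $\iota(w_n)$ is the $\bar\mu$-walk, $\mu$ is centred iff $\bar\mu$ is, and the essential averages correspond; Theorem~\ref{thm:normIntro} then gives $\sqrt n$-concentration for $\max_{k\le n}\big|\iota(w_k)\exp(-kv_{\bar\mu})\big|_{\mathfrak F_{\bar\mu}}$ with constants depending only on $\dim N$, $s$, $\kappa_\mu$.

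Next I would set $T:=T_0\cup U_0$, where $T_0:=\mathrm{supp}(\mu)^k\cup\big(\mathrm{supp}(\mu)^k\big)^{-1}$, where $U_0$ is any fixed compact generating set of $G$, and where $k=k(\dim N,s,|F|)$ is specified below. Adjoining $U_0$ makes $T$ a genuine compact symmetric generating set while only \emph{decreasing} the word norm, hence only strengthening the tail bound; on the other hand every position of the random walk lies in $\langle\mathrm{supp}(\mu)\rangle$, on which $|\cdot|_T\le|\cdot|_{T_0}$. Thus $T$ depends on $\mu$ only through $\mathrm{supp}(\mu)^k$, and it suffices to control $|\cdot|_{T_0}$ from above.

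The crux is then the metric comparison
\[
\max_{k\le n}\big|w_k g_k^{-1}\big|_{T_0}\ \le\ C\Big(1+\max_{k\le n}\big|\iota(w_k)\exp(-kv_{\bar\mu})\big|_{\mathfrak F_{\bar\mu}}\Big),
\]
where $(g_k)$ is a lift of $\exp(kv_{\bar\mu})$ (the constant sequence $e$ when $\mu$ is centred) and $C=C(\dim N,s,\kappa_\mu)$; here cocompactness of $\iota(G)$ lets one replace $\iota(g_k)$ by $\exp(kv_{\bar\mu})$ at bounded cost. Its core is an \emph{effective spanning} statement: after recentering $\bar\mu$ by its drift, a \emph{bounded} number $k$ of convolutions already produces, in an exponential chart adapted to $\mathfrak F_{v_{\bar\mu}}$, displacements realizing every weighted-degree-$i$ direction it reaches at a scale within a factor $C(\kappa_\mu)$ of the $i$-th power of the natural scale of $\mu$ --- the point being that $k$ stays bounded while only this scale-loss degrades with the spectral gap. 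Granting it, the Baker--Campbell--Hausdorff formula together with the standard relation between word length and a homogeneous quasi-norm (the length of $\gamma$ being comparable to $\max_i|\gamma_i|^{1/i}$ in coordinates adapted to the filtration, once the generators span at unit scale in each degree) yields the displayed inequality. Feeding in the concentration of $\max_{k\le n}|\iota(w_k)\exp(-kv_{\bar\mu})|_{\mathfrak F_{\bar\mu}}$ from Theorem~\ref{thm:normIntro} gives the centred inequality with exponent $2$ immediately, and in general one passes from $\sqrt n$-concentration of $|\cdot|_{\mathfrak F_{\bar\mu}}$ to the stated $n^{(2s-1)/(2s)}$-concentration exactly as in the deduction of Theorem~\ref{thmIntro:NbyQ} from Theorem~\ref{thm:normIntro}, all constants being tracked as functions of $\dim N$, $s$, $\kappa_\mu$ only.

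The main obstacle is the effective spanning step: one must make ``the convolution powers of $\mu$ fill out the directions they can reach'' quantitative with a number of convolutions $k$ depending only on the combinatorial data $(\dim N,s,|F|)$ --- pushing all the instability into the multiplicative distortion rather than into $k$ --- and must do so compatibly with the weighted filtration $\mathfrak F_{v_{\bar\mu}}$ so that the drift is subtracted cleanly. That the resulting concentration constants must then deteriorate with $\kappa_\mu$ is unavoidable, as Remark~\ref{rk:balistic} shows.
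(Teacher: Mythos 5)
There is a genuine gap, and it sits exactly at the step you flag as the ``crux'': the \emph{effective spanning} claim is false as stated, and with it your choice of generating set. You take $T_0=\mathrm{supp}(\mu)^k\cup(\mathrm{supp}(\mu)^k)^{-1}$ (plus an auxiliary fixed set $U_0$) and reduce to proving uniform concentration for $|\cdot|_{T_0}$. But no bounded number $k$ of convolutions ``fills out'' the reachable directions at the natural scale of $\mu$ up to a factor $C(\kappa_\mu)$: already for $N=\R$, $F$ trivial (so $\kappa_\mu$ plays no role), with $\mu$ equidistributed on a geometric sequence $\{\pm 2^{-i}\}_{i=0}^{\ell}$, the set $\mathrm{supp}(\mu)^k$ remains lacunary for every fixed $k$, the associated word metric is a hypercube-type metric, and the walk has linear speed with respect to it for times up to $\ell$ --- this is precisely the counterexample the paper records both after Theorem~\ref{thm:uniformconcentration} and in the introduction. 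Adjoining $U_0$ does not rescue the argument: it only shows that concentration for $|\cdot|_{T_0}$ would suffice (which is impossible uniformly in $\ell$), and any bound proved directly for $|\cdot|_{T_0\cup U_0}$ would carry constants depending on the ratio between $\mathrm{supp}(\mu)$ and $U_0$ (e.g.\ replace the example by $\{\pm 2^{j}\}_{j=0}^{\ell}$: neither the support metric nor $|\cdot|_{U_0}$ nor their combination gives constants depending only on $d,s,\kappa_\mu$). The scale-loss you hope to push into a factor $C(\kappa_\mu)$ is in fact governed by the arithmetic/geometric structure of the support, which is unbounded in these parameters.

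The paper's proof avoids this by \emph{convexifying}: after the quantitative splitting (Theorem~\ref{thm:QuantSplit}), whose role is not to bound the distortion of a cocompact embedding but to place a group section of $F$ inside $(\widehat S\cup\mathrm{Conv}(\widehat S^{k}\cap N))^{k}$, one sets $R=\mathrm{Conv}(\pi_N(F\widetilde S F))$ and $T=FRF$. The convex, $F$-invariant body $K=\sum_{i=1}^s[\log R]_i$ then defines a norm $\|\cdot\|_K$ with bilinearity constant $\le 2s^4$; comparing it to a Euclidean norm via the Banach--Mazur compactum, averaging over $F$, and feeding the result into Proposition~\ref{prop:normsubadd} produces an adapted norm in which $R_\mu$ is bounded purely in terms of $d,s,|F|$, so that Theorem~\ref{thm:norm} applies with constants depending only on $d,s,\kappa_\mu$, and the word metric $|\cdot|_T$ is comparable to the homogeneous norm by Lemma~\ref{lem:homogene-word}. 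In short, the uniformity is bought by the convex hull in the definition of $T$ --- an ingredient your construction omits --- and the splitting theorem is what makes that convexified set expressible in terms of $\mathrm{supp}(\mu)^k$ alone. Your outline of the reduction to Theorem~\ref{thm:normIntro} and of converting $\mathfrak F_{v_\mu}$-concentration into $\frac{2s}{2s-1}$-concentration for the word norm matches the paper, but without the convexification step the central metric comparison you display cannot hold with constants depending only on $(d,s,\kappa_\mu)$.
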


The key to this uniformity is a result of independent interest asserting that the exact sequence splits in a quantitative way (see Theorem \ref{thm:QuantSplit}). 

\subsection{Application to extensions by approximately ultrametric subgroups}

\begin{definition}[Maximally diffusive random process] 
For a random process $(w_{n})_{n\geq0}$ with values in a metric space
$(X,d)$, we say that its maximal displacements satisfy subgaussian concentration
with diffusive scaling if the family of random
variables $\left(\frac{1}{\sqrt{n}}\max_{i\leq n}d(w_{0},w_{i}\right)_{n\in \mathbb{N}})$ is subgaussian. For brevity, we will refer to this
property as \emph{maximally diffusive} in what follows. 
\end{definition}

It is well-known that the Gaussian
concentration above is equivalent to 
\[
\E_{\mu}\left(\max_{i\leq n}d(w_{0},w_{i})^{p}\right)^{1/p}\leq C(pn)^{1/2},
\]
for some constant $C>0$ and all $p\ge 2$ (see \S \ref{sec:prelimConcentration}).
Note that the terminology of \emph{diffusive} random walk displacement often
means the first moments satisfies $\E_{\mu}d(w_{0},w_{n})\leq Cn^{1/2}$,
we emphasize that in this article, maximally diffusive means subgaussian
concentration with diffusive scaling for maximal displacements, which
controls all $p$-moments.

We can now rephrase the first part of Theorem~\ref{thmIntro:NbyQ} as follows: any compactly
supported centred random walk on a locally compact group $G$ of
polynomial growth is maximally diffusive, where $G$ is equipped with
a word distance. An important feature of Theorem~\ref{thmIntro:NbyQ}
is that it will (almost automatically) upgrade to centred random walks on
a much larger class of groups. 




A key observation is that being maximally diffusive is stable
by extension by a normal subgroup which is sufficiently distorted.
This observation is at the heart of a result of Russ Thompson~\cite{Thompson}, who proved that simple random walks on torsion free polycyclic groups are diffusive (first moment estimate) provided that they are extensions of torsion free abelian groups by finitely generated nilpotent exponentially distorted subgroups. This raised the question whether all polycyclic groups have this property, which we answer positively.
This motivates the following definition.

\begin{definition}[AU subgroups] Let $E$ be a closed subgroup of a locally
compact, compactly generated group $G$. Let $S$ be a symmetric compact generating set 
of $G$ and $|\cdot|_S$ the associated word distance. We say that $E$ equipped with the induced metric 
$d_S$ is approximately ultrametric with constant multiplicative distortion and $O(\log n)$ additive distortion
if  there exists $C\geq1$ such that 
\[
|h_{1}h_{2}\ldots h_{n}|_{S}\leq C\log n+C\max_{i}|h_{i}|_{S},
\]
for all $h_{1},\ldots h_{n}\in E$. 
We say $E$ is an AU-subgroup of $G$ is 
this property holds for some (equivalently any) symmetric compact generating set of $G$
\end{definition} 

Note that when
$E$ is compactly generated, this definition recovers the usual notion
of exponential distortion. However in our applications, $E$ can be taken to
be a unipotent group over a finite product of local fields, and as
such, will be not be compactly generated as soon as at least one of
the fields is non-archimedean (for instance $\Q_{p}$).

We also note that for applications to displacement estimates, the requirement $O(\log n)$ on the additive error term can be relaxed to $o(n^{1/2})$, see Proposition~\ref{prop:EDconc}.
However we are not aware of situations where the ultrametric approximation has additive error between $\log n$ and $n^{1/2}$.

\begin{proposition}\label{prop:Edoesnotcount} Let $G$ be a locally
compact, compactly generated group. Assume that there exists a closed
normal subgroup $E$ that is an AU-subgroup in $G$. Assume
that any centred compactly generated random walk on $G/E$
is maximally diffusive, then the same holds for $G$. \end{proposition}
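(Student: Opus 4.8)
The plan is to argue pathwise: decompose the walk on $G$ into its projection to $G/E$ (which is maximally diffusive by hypothesis) and an $E$-valued remainder which, thanks to the AU property, never moves more than $O(\log n)$ in the $S$-metric and is therefore negligible at the diffusive scale. Concretely, fix a compact symmetric generating set $S$ of $G$, and let $\mu$ be a compactly supported centred measure on $G$, with random walk $w_n=x_1\cdots x_n$ starting at $e$ and $R:=\sup\{|g|_S:g\in\supp\mu\}<\infty$. Put $\pi\colon G\to G/E$, $\bar S:=\pi(S)$ (a compact symmetric generating set of $G/E$), $\bar\mu:=\pi_*\mu$, and $\bar w_n:=\pi(w_n)=\bar x_1\cdots\bar x_n$. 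The first routine point is that centredness descends: the abelianisation map of $\langle\supp\bar\mu\rangle$ factors through that of $\langle\supp\mu\rangle$, so $\bar\mu$ is compactly supported and centred, and by hypothesis the family $Z_n:=\tfrac1{\sqrt n}\max_{i\le n}|\bar w_i|_{\bar S}$ is subgaussian.

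The core step is the decomposition. I would fix \emph{any} section $\sigma\colon G/E\to G$ of $\pi$ with $\sigma(e)=e$ and $|\sigma(\bar g)|_S\le |\bar g|_{\bar S}$ (e.g.\ lift a $\bar S$-geodesic word letter by letter; no measurability is needed since the inequalities below hold realisation by realisation), and set $c_j:=\sigma(\bar w_{j-1})\,x_j\,\sigma(\bar w_j)^{-1}$. Since $\pi(c_j)=\bar w_{j-1}\bar x_j\bar w_j^{-1}=e$, each $c_j$ lies in $E$, and for $j\le n$ one has $|c_j|_S\le |\sigma(\bar w_{j-1})|_S+R+|\sigma(\bar w_j)|_S\le 2\max_{i\le n}|\bar w_i|_{\bar S}+R$. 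A telescoping cancellation gives, for every $k$,
\[
w_k=(c_1c_2\cdots c_k)\,\sigma(\bar w_k),\qquad c_1,\dots,c_k\in E .
\]
Now I invoke that $E$ is an AU-subgroup: its defining inequality applied to $c_1,\dots,c_k$ yields $|c_1\cdots c_k|_S\le C\log k+C\max_{j\le k}|c_j|_S$ for a constant $C=C(G,S,E)\ge 1$. Feeding in the bound on $|c_j|_S$ and using $|\sigma(\bar w_k)|_S\le|\bar w_k|_{\bar S}$, this gives
\[
\max_{k\le n}|w_k|_S\ \le\ C\log n+CR+(2C+1)\,\sqrt n\,Z_n .
\]

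To finish, divide by $\sqrt n$: the quantity $(C\log n+CR)/\sqrt n$ is bounded by a constant $D$ uniformly over $n\ge 1$, so $\tfrac1{\sqrt n}\max_{k\le n}|w_k|_S\le D+(2C+1)Z_n$. Since a deterministic bounded perturbation of a subgaussian family is again subgaussian (adjusting the two constants), the family $\big(\tfrac1{\sqrt n}\max_{k\le n}|w_k|_S\big)_n$ is subgaussian, i.e.\ $(w_n)$ is maximally diffusive on $(G,d_S)$, which is the claim. I do not anticipate a genuine obstacle: the only place that requires care is verifying that the $c_j$ really lie in $E$ with norms controlled by the quotient walk (and that centredness transfers to $\bar\mu$); once the identity $w_k=(c_1\cdots c_k)\sigma(\bar w_k)$ is in hand, the AU hypothesis makes the $E$-contribution a deterministic $O(\log n)=o(\sqrt n)$, so the subgaussian estimate on $G/E$ passes through verbatim, with no new probabilistic input — exactly the ``almost automatic'' upgrade alluded to in the introduction. (The same argument works, with $\log n$ replaced by any $o(\sqrt n)$ additive error, which is the content of the promised Proposition~\ref{prop:EDconc}.)
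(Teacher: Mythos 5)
Your proof is correct and follows essentially the same route as the paper: the paper's Lemma~\ref{lem:ED} performs exactly your decomposition $w_k=(c_1\cdots c_k)\,\sigma(\bar w_k)$ (there written inductively as $u_1\cdots u_k=e_1\cdots e_k h_k$ with $e_i\in E$ of $S$-length at most $2\max_i|\bar w_i|_{\bar S}+O(1)$ and $h_k$ a short lift), and then applies the AU inequality and the concentration of the projected walk just as you do. The only difference is cosmetic: the paper isolates the deterministic step as a lemma and states the probabilistic conclusion slightly more generally (any $\alpha$-concentration at scale $f(n)$, giving Proposition~\ref{prop:EDconc}), which you note your argument also yields.
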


This key proposition is essentially contained in \cite[Theorem 4]{Thompson} where it is stated for $G/E$ free abelian. It motivates the introduction of the following class of
groups.

\begin{definition}[Polynomial-by-AU] A locally compact, compactly
generated group $G$ is polynomial-by-AU if there exists a short exact
sequence of locally compact groups $1\to E\to G\to P\to1$ such that
$E$ is AU, and $P$ has polynomial growth. \end{definition}

We can now state our main application. 

\begin{theorem}\label{thmIntro:EbyP} 

Let $G$ be a polynomial-by-AU group. Then any centred, compactly
supported random walk on $G$ is maximally diffusive.

\end{theorem}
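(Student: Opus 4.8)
The plan is to reduce the statement to Theorem~\ref{thmIntro:NbyQ} via Proposition~\ref{prop:Edoesnotcount}. By hypothesis there is a short exact sequence $1\to E\to G\to P\to 1$ with $E$ an AU-subgroup of $G$ and $P$ a locally compact compactly generated group of polynomial growth. Since $E$ is normal and closed in $G$, Proposition~\ref{prop:Edoesnotcount} applies provided we know that every centred, compactly generated random walk on $P$ is maximally diffusive. But that is exactly the content of the first part of Theorem~\ref{thmIntro:NbyQ}: any compactly supported centred random walk on a locally compact compactly generated group of polynomial growth satisfies $\sup_n\mathbb{P}(\max_{k\le n}|w_k|_S\ge tn^{1/2})\le c_2\exp(-c_1t^2)$, and this is precisely the assertion that its maximal displacements are subgaussian with diffusive scaling, i.e.\ that it is maximally diffusive in the sense of the definition above. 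Hence the hypotheses of Proposition~\ref{prop:Edoesnotcount} are met with this $E$ and this $P$, and we conclude that every centred compactly generated random walk on $G$ is maximally diffusive.

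There is one gap to close: Theorem~\ref{thmIntro:EbyP} asserts the conclusion for every centred compactly \emph{supported} random walk on $G$, whereas Proposition~\ref{prop:Edoesnotcount} as stated quantifies over centred \emph{compactly generated} random walks. So the first step is to pass from a general compactly supported centred $\mu$ on $G$ to the situation handled by the proposition. The natural move is to replace $\mu$ by the random walk driven by $\mu$ but considered relative to the subgroup $H=\langle\operatorname{supp}\mu\rangle$: if $H$ is not all of $G$, one works inside $H$, noting that $H\cap E$ is AU in $H$ (the AU inequality is inherited by subgroups once one fixes a compatible generating set, since the word norm of $H$ dominates the restriction of the word norm of $G$ up to the usual distortion bookkeeping) and that $H/(H\cap E)$ embeds in $P$ hence still has polynomial growth. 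One also has to check that word distances on $H$ and $G$ are bi-Lipschitz on $H$, or more robustly, argue directly that maximal diffusivity with respect to $d_S|_H$ transfers to $d_S$; this is where a little care with the definition of AU-subgroup ``for some (equivalently any)'' generating set is used.

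The step I expect to require the most attention is verifying that the reduction in the previous paragraph is legitimate in the locally compact, non-finitely-generated generality of the statement — i.e.\ confirming that the inductive/extension mechanism of Proposition~\ref{prop:Edoesnotcount} genuinely covers the random walk at hand after restricting to $H$, and that the polynomial-growth hypothesis on the quotient survives. The purely group-theoretic inputs (normality of $H\cap E$, the AU inequality passing to $H$, polynomial growth of $H/(H\cap E)$ as a closed subgroup of $P$) are routine but need to be stated cleanly. Once that bookkeeping is done, the proof is immediate: Theorem~\ref{thmIntro:NbyQ} supplies maximal diffusivity of the quotient walk, Proposition~\ref{prop:Edoesnotcount} promotes it through the AU extension, and the theorem follows.
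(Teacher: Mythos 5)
Your first paragraph is essentially the paper's own proof: project $\tilde{\mu}$ to the quotient $P=G/E$ of polynomial growth, note the projected walk is centred and compactly supported, apply the first part of Theorem~\ref{thmIntro:NbyQ} (equivalently Theorem~\ref{thm:wordconcentration}) to get subgaussian concentration at scale $\sqrt{n}$ for the quotient walk, and lift through the AU extension; the paper does this via the quantitative Proposition~\ref{prop:EDconc}, of which Proposition~\ref{prop:Edoesnotcount} is the special case, so the route is the same.

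However, the ``gap'' you then set out to close is not a gap, and the fix you propose would actually damage the argument. The hypothesis of Proposition~\ref{prop:Edoesnotcount} is verified by applying Theorem~\ref{thmIntro:NbyQ} to the pushforward measure $\pi_{*}\tilde{\mu}$ on $G/E$, and neither Theorem~\ref{thmIntro:NbyQ} nor Theorem~\ref{thm:wordconcentration} requires the step distribution to be non-degenerate: the paper explicitly stresses that $\mu$ need not generate the group, and the concentration is always measured in the word metric of the \emph{ambient} group. So there is nothing to repair and no need to pass to $H=\langle\mathrm{supp}\,\mu\rangle$. Worse, that reduction is untenable in general: $H$ need not be closed in $G$ --- the introduction points out it can be a dense finitely generated subgroup of exponential growth (already inside $\R^2\rtimes O(2)$) --- so $H$ need not have polynomial growth for its own word metric, the word metric of $H$ is not bi-Lipschitz to the restriction of $d_S$ (only $|h|_S\le C\,|h|_H$ holds, with arbitrarily bad distortion in the other direction), and the AU inequality, being formulated with the ambient norm $|\cdot|_S$, does not transfer to the intrinsic norm of $H$. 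The correct reading is simply that ``compactly generated random walk'' in Proposition~\ref{prop:Edoesnotcount} means a walk whose step distribution is compactly supported (on the quotient it is the pushforward of $\tilde{\mu}$), and with that reading your first paragraph already completes the proof.
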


We also refer to Proposition~\ref{prop:EDconc} for a slightly more general statement.
An important class of polynomial-by-AU groups is that of amenable
almost connected Lie groups. This was first proved by Guivarc'h \cite[Proposition 5]{Guivarch80} and later rediscovered by Osin \cite{Osin}. Hence we deduce

\begin{corollary}\label{cor:amenLie} Let $G$ be an almost connected amenable Lie group.
Then a centred, compactly supported random walk on $G$ is maximally
diffusive. \end{corollary}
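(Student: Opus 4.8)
The plan is to deduce Corollary~\ref{cor:amenLie} immediately from Theorem~\ref{thmIntro:EbyP}, so the whole task reduces to exhibiting an almost connected amenable Lie group $G$ as a polynomial-by-AU group. As signalled in the introduction, this structural fact is classical, going back to Guivarc'h \cite[Proposition 5]{Guivarch80} and rediscovered by Osin \cite{Osin}; I would recall the construction. Let $E$ be the exponential radical of the identity component $G^{0}$ --- the smallest closed normal subgroup of $G^{0}$ with polynomial-growth quotient, which exists because $G$ is amenable; equivalently, the connected nilpotent subgroup spanned by the root subspaces on which the adjoint action has eigenvalues off the unit circle. Since $E$ is characteristic in $G^{0}$ and $G^{0}$ is open of finite index in the almost connected group $G$, the subgroup $E$ is closed and normal in $G$. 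By Guivarc'h's theorem $G^{0}/E$ has polynomial growth, hence so does the finite extension $G/E$; and $E$, being exponentially distorted in $G^{0}$, is exponentially distorted in the commensurable group $G$.

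It remains to check that $E$ is an AU-subgroup of $G$. Since $E$ is connected it is compactly generated, and --- as in the remark following the definition of AU-subgroups --- for a compactly generated subgroup being AU amounts to the classical exponential distortion. The mechanism is the contracting action: there is a one-parameter subgroup $(a_{t})$ of $G$ whose conjugation action contracts $E$, so that for a homogeneous quasi-norm $\rho$ on $E$ the conjugate $a_{-t}\,h\,a_{t}$ falls into a fixed compact neighbourhood of the identity once $t\gtrsim\log(1+\rho(h))$; writing $h=a_{t}(a_{-t}ha_{t})a_{-t}$ then gives $|h|_{S}\lesssim\log(1+\rho(h))$, while the reverse bound $\log(1+\rho(h))\lesssim|h|_{S}$ is the ``at most exponential'' part of the distortion estimate. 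Combining $|h|_{S}\asymp\log(1+\rho(h))$ with the subadditivity $\rho(h_{1}\cdots h_{n})\lesssim\sum_{i}\rho(h_{i})$ yields
\[
|h_{1}\cdots h_{n}|_{S}\lesssim\log\Bigl(1+\sum_{i}\rho(h_{i})\Bigr)\le\log n+\log\bigl(1+\max_{i}\rho(h_{i})\bigr)+O(1)\lesssim\log n+\max_{i}|h_{i}|_{S},
\]
which is exactly the defining inequality of an AU-subgroup. Thus $1\to E\to G\to G/E\to1$ presents $G$ as polynomial-by-AU, and Theorem~\ref{thmIntro:EbyP} gives that every centred, compactly supported random walk on $G$ is maximally diffusive. (One may also avoid the terminology: Theorem~\ref{thmIntro:NbyQ} handles centred compactly generated random walks on the polynomial-growth quotient $G/E$, and Proposition~\ref{prop:Edoesnotcount} transfers the conclusion across the AU-extension $E$.)

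The main obstacle is not probabilistic --- all the diffusive estimates are supplied by Theorems~\ref{thmIntro:NbyQ} and~\ref{thmIntro:EbyP} --- but lies in the structure theory: one must know that the exponential radical of an amenable connected Lie group is exponentially distorted with polynomial-growth quotient, and reduce the ``almost connected'' case to the connected one through the finite group $G/G^{0}$. Non-unimodularity requires no special treatment: it only manifests through the adjoint action of $G$ on its exponential radical, which is precisely the ingredient the argument uses.
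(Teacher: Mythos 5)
Your proposal is correct and takes essentially the same route as the paper: the paper's proof of Corollary~\ref{cor:amenLie} is precisely to invoke the classical fact (Guivarc'h, Osin) that almost connected amenable Lie groups are polynomial-by-AU and then apply Theorem~\ref{thmIntro:EbyP}. The only caveat concerns your optional sketch of the distortion mechanism: in general no single one-parameter subgroup contracts the whole exponential radical (already in Sol one needs two different contracting elements, as in condition (4) of the class $\mathfrak{C}''$), but this does not affect correctness since the exponential distortion of $E$ is exactly the cited classical result rather than something you need to reprove.
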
 

Since polycyclic groups are virtually uniform lattices in connected
solvable Lie groups, we immediately deduce that finitely supported
centred random walks on polycyclic groups are maximally diffusive. Symmetric random walks with finite second moment on polycyclic groups (even on groups quasi-isometric to polycyclic) were known to be diffusive by Peres-Zheng~\cite{PeresZheng}.

More generally the conclusion holds for solvable groups with finite Pr\"ufer ranks.
\begin{corollary}\label{cor:finireRank} Let $\Gamma$ be a finitely generated group which
is virtually a solvable group of finite Prüfer rank. Then a centred
random walk of finite support on $\Gamma$ is maximally diffusive.
\end{corollary}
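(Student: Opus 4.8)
The plan is to reduce to Theorem \ref{thmIntro:EbyP} by exhibiting a closed normal subgroup $E$ of a suitable locally compact completion of $\Gamma$ that is AU, with polynomial-growth quotient. First I would pass to a finite-index subgroup: since $\Gamma$ is virtually solvable of finite Prüfer rank, after replacing $\Gamma$ by a finite-index subgroup (which does not affect being maximally diffusive, by Proposition \ref{prop:Edoesnotcount} applied with $E$ finite, or directly since finite-index subgroups are undistorted and the maximal displacement changes by a bounded factor) we may assume $\Gamma$ is solvable of finite Prüfer rank. A theorem of Mal'cev/Robinson on solvable groups of finite rank then gives a characteristic finite-index subgroup which is torsion-free, and a torsion-free solvable group of finite Prüfer rank is a lattice (more precisely embeds as a discrete, not necessarily cocompact, subgroup) in a connected solvable Lie group times a product over finitely many $p$-adic fields of algebraic solvable groups — this is the structure theory underlying the fact that such groups have ``bounded geometry at infinity''. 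The key point I want to extract is a normal series $1 \to E \to \mathcal{G} \to P \to 1$ of locally compact groups, with $\Gamma$ embedded as a discrete cocompact (or at least quasi-isometrically embedded) subgroup of $\mathcal{G}$, where $P$ is a compactly generated group of polynomial growth (a virtually nilpotent Lie group, or more precisely the ``polynomial part'' coming from a maximal compact-by-nilpotent piece) and $E$ is a closed unipotent subgroup over a finite product of local fields $\R \times \prod_i \Q_{p_i}$.

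The second step is to verify that $E$ is an AU-subgroup of $\mathcal{G}$. Here the relevant input is the exponential distortion of unipotent subgroups: a unipotent group over $\R$ inside a solvable Lie group with ``enough hyperbolic directions'' is exponentially distorted, and likewise the $\Q_p$-unipotent factors. The excerpt already notes that such $E$ need not be compactly generated (precisely because of the non-archimedean factors), which is why the AU formulation — $|h_1\cdots h_n|_S \le C\log n + C\max_i |h_i|_S$ — is the right one rather than plain exponential distortion. I would establish the AU inequality by writing an arbitrary product $h_1 \cdots h_n$ in $E$ in a triangular coordinate system, bounding each coordinate, and using that conjugating by an element that expands the unipotent directions by a factor $\lambda > 1$ lets one realize a product of $n$ bounded elements of $E$ using roughly $\log n$ additional generators from outside $E$ — this is exactly the mechanism in Thompson's argument for the nilpotent-by-abelian polycyclic case, now carried out with local-field coefficients. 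Once $E$ is AU and $P = \mathcal{G}/E$ has polynomial growth, Theorem \ref{thmIntro:EbyP} (or directly Proposition \ref{prop:Edoesnotcount} combined with Theorem \ref{thmIntro:NbyQ} for $P$) gives that centred compactly supported random walks on $\mathcal{G}$ are maximally diffusive.

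The final step is descent from $\mathcal{G}$ to $\Gamma$. Since $\Gamma$ embeds in $\mathcal{G}$ as a closed subgroup that is quasi-isometric to $\mathcal{G}$ (cocompact, or at worst a lattice so that the word metric on $\Gamma$ is bi-Lipschitz to the restricted metric from $\mathcal{G}$), a centred random walk of finite support on $\Gamma$ pushes to a centred compactly supported random walk on $\mathcal{G}$, and the maximal displacement in $\Gamma$ is controlled up to a multiplicative constant by the maximal displacement in $\mathcal{G}$; hence the subgaussian concentration with diffusive scaling transfers back to $\Gamma$. One subtlety to check is that centredness is preserved under the embedding $\Gamma \hookrightarrow \mathcal{G}$: the abelianisation of the subgroup generated by $\mathrm{supp}(\mu)$ in $\Gamma$ maps to the corresponding abelianisation in $\mathcal{G}$, and one must ensure the essential average vanishes on the $\mathcal{G}$ side — this holds because the polynomial-growth quotient $P$ sees exactly the abelianisation data, and the AU part $E$ contributes nothing to the linear drift.

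The main obstacle I anticipate is the first step: assembling, from the abstract finiteness of the Prüfer rank, the concrete locally compact envelope $\mathcal{G}$ together with the normal AU subgroup $E$ and the polynomial-growth quotient $P$. The structure theory of solvable groups of finite rank (work of Mal'cev, Robinson, and the embedding into products of real and $p$-adic algebraic groups) is classical but needs to be invoked carefully, in particular to guarantee that the ``non-polynomial'' part is genuinely unipotent over local fields (so that it is AU) rather than merely exponentially distorted in some unstructured way, and that the quotient really has polynomial growth rather than just being virtually nilpotent-as-a-group-without-a-good-metric. Once the algebraic picture is in place, verifying the AU property and performing the quasi-isometric descent are routine given Thompson's techniques and the propositions already established in the paper.
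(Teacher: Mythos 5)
There is a genuine gap at the very first step. You reduce to the torsion-free case by asserting that, after passing to a finite-index subgroup, ``a theorem of Mal'cev/Robinson'' yields a torsion-free subgroup of finite index. This is false for finitely generated solvable groups of finite Pr\"ufer rank: unlike polycyclic groups, such groups need not be virtually torsion-free, because they can contain quasicyclic subgroups $C_{p^\infty}$ (for instance, quotient the Heisenberg group over $\Z[1/p]$, extended by a suitable $\Z$-action to make it finitely generated, by a central copy of $\Z$; the centre then contains $\Z[1/p]/\Z\cong C_{p^\infty}$). Since $C_{p^\infty}$ is divisible, it meets every finite-index subgroup in all of $C_{p^\infty}$, so no finite-index subgroup is torsion-free. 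This is precisely why the paper invokes the Kropholler--Lorensen theorem \cite{KroLor}: every finitely generated VSP group is a \emph{quotient} of a virtually torsion-free finitely generated VSP group. That reduction is not for free either: one must lift a centred, finitely supported measure through the surjection to a centred, finitely supported measure upstairs (this is Lemma~\ref{lem:liftcentred}, where the lift is corrected by a measure supported on the kernel so as to kill the drift), and then use that maximal diffusivity passes to quotients since the projection is $1$-Lipschitz for compatible word metrics. None of this appears in your proposal, and your shortcut cannot be repaired within the virtually-torsion-free framework you set up.

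Your remaining outline is essentially the paper's argument for the virtually torsion-free case: embed $\Gamma$ as a \emph{uniform} lattice in a locally compact group of class $\mathfrak{C}''$ (this is \cite[Theorem 1.16]{CT} -- note that cocompactness matters, your fallback ``discrete, not necessarily cocompact'' would not give the bi-Lipschitz comparison of metrics you need), check that the unipotent radical $U$ over the product of local fields is almost ultrametric (the paper's Lemma~\ref{lem:C''} does this via submultiplicative matrix norms and the estimates of \cite[\S 6]{CT}, rather than your triangular-coordinates/conjugation count, but the mechanism is the same), and conclude by Theorem~\ref{thmIntro:EbyP}. Your worry about transferring centredness is moot: centredness in this paper is defined via the abelianisation of the subgroup generated by the support of the measure, which is the same whether the walk is viewed in $\Gamma$ or in the envelope. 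The missing Kropholler--Lorensen/lifting step, however, is an essential part of the proof and is the reason the corollary holds for all VSP groups rather than only the virtually torsion-free ones.
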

The proof of this corollary first relies on a deep result of Kropholler and Lorensen \cite{KroLor}, which reduces the problem to virtually torsion-free such groups. The second step of the proof exploits a result of \cite{CT} which allows to realize our group as a uniform lattice in a locally compact group which resembles a Lie group defined over a finite product of local fields of zero characteristic (see \S \ref{sec:FiniteRank} for a precise definition of the class $\mathfrak{C}''$). 

\subsection{Law of iterated logarithm}

Let us finally mention that concentration inequalities yield laws of iterated logarithms for random walk trajectories. This is well known, established by Hebisch and Saloff-Coste for symmetric bounded densities random walks on groups of polynomial growth~\cite{HebishSaloff}, by Revelle on some lamplighter groups, solvable Baumslag-Solitar groups and Sol~\cite{Revelle}, by Thompson for some torsion-free polycyclic groups~\cite{Thompson}. Various laws of iterated logarithms have also been obtained by Amir and Blachar on diagonal products of lamplighter groups~\cite{AmirBlachar}.

\begin{proposition}
Assume a concentration inequality along a deterministic trajectory $(g_n)$ of the form
\[
\sup_{n\in \mathbb{N}}\mathbb{P}\left(\max_{k\leq n}|w_{k}g_k^{-1}|_{S}\geq tn^{\alpha}\right)\leq c_2\exp(-c_1t^{\beta}).
\]
Then the random walk satisfies an $\alpha$-law of iterated logarithm, i.e. there almost surely exists a constant $C$ such that
\[
\forall n \in \mathbb{N}, \quad |w_ng_n^{-1}|_S \le C\left(n \log \log n\right)^\alpha.
\]
\end{proposition}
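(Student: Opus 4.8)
The plan is to run the classical Borel--Cantelli argument that converts a concentration estimate into a law of the iterated logarithm, applied along the dyadic subsequence $n_j = 2^j$. Write $X_n = |w_n g_n^{-1}|_S$ and set $\psi(n) = (n \log\log n)^{\alpha}$. First I would choose a constant $C$ large enough (to be pinned down) and estimate, using the hypothesis with $t = t_j := C (\log\log n_j)^{\alpha/\ldots}$ chosen so that $t_j n_j^{\alpha} = C \psi(n_j)$, the probability $\mathbb{P}(\max_{k \le n_j} X_k \ge C\psi(n_j))$. Solving $t_j = C (\log\log n_j)^{\alpha}/\ldots$ gives $t_j^{\beta} \asymp C^{\beta} \log\log n_j = C^{\beta} \log(j\log 2) \sim C^{\beta}\log j$, so the hypothesis yields
\[
\mathbb{P}\left(\max_{k \le n_j} X_k \ge C\psi(n_j)\right) \le c_2 \exp(-c_1 C^{\beta}\log j) = c_2\, j^{-c_1 C^{\beta}}.
\]
Since $\alpha\beta$-independence lets us take $C$ as large as we like, choose $C$ so that $c_1 C^{\beta} > 1$; then $\sum_j j^{-c_1 C^{\beta}} < \infty$, and Borel--Cantelli gives that almost surely, for all $j$ large enough, $\max_{k \le n_j} X_k < C\psi(n_j)$.

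Next I would upgrade this from the subsequence $(n_j)$ to all $n$ by a monotonicity/sandwiching step. For arbitrary $n$, pick $j$ with $n_j \le n < n_{j+1} = 2 n_j$. Because the bound controls the running maximum $\max_{k\le n_{j+1}} X_k$, we get $X_n \le \max_{k \le n_{j+1}} X_k < C\psi(n_{j+1})$ almost surely for $n$ large. It remains to compare $\psi(n_{j+1})$ with $\psi(n)$: since $n_{j+1} \le 2n$ and $\log\log n_{j+1} \le \log\log(2n) \le 2\log\log n$ for $n$ large, we have $\psi(n_{j+1}) = (n_{j+1}\log\log n_{j+1})^{\alpha} \le (4n\log\log n)^{\alpha} = 4^{\alpha}\psi(n)$. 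Absorbing the factor $4^{\alpha}$ into the constant, we conclude that almost surely there is a (random) constant $C'$ with $X_n \le C'\psi(n)$ for all $n\in\mathbb{N}$ — the finitely many small-$n$ exceptions only enlarge $C'$. This is exactly the claimed $\alpha$-law of the iterated logarithm.

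The only genuinely delicate point is the choice and bookkeeping of $t_j$: one must verify that the substitution $t_j n_j^{\alpha} = C\psi(n_j)$ indeed produces $t_j^{\beta}$ comparable to $\log j$ (up to constants depending on $\alpha,\beta$), so that increasing $C$ genuinely makes the exponent $c_1 C^{\beta}$ exceed $1$ and the series summable; this is where the precise form $\psi(n) = (n\log\log n)^{\alpha}$, rather than $n^{\alpha}(\log\log n)^{\text{something else}}$, matters. Everything else — the dyadic chaining, the use of the running maximum to pass to all $n$, and the comparison of $\psi$ at consecutive dyadic points — is routine. I do not expect any obstacle beyond this constant-chasing.
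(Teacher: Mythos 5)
Your argument is essentially the paper's own proof: evaluate the hypothesis along the dyadic times $n_j=2^j$ with $t_j\asymp C(\log\log n_j)^{\alpha}\asymp C(\log j)^{\alpha}$, make the resulting series $\sum_j j^{-c_1C^{\beta}}$ summable by taking $C$ large, apply Borel--Cantelli, and pass from dyadic to all $n$ via the running maximum and the comparison $\psi(2n)\le 4^{\alpha}\psi(n)$. Note that your step ``$t_j^{\beta}\asymp C^{\beta}\log j$'' tacitly uses $\alpha\beta=1$, but the paper's proof makes exactly the same tacit assumption (which holds in all the applications, where $\beta=1/\alpha$), so your proposal matches the intended argument.
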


\begin{proof}
Taking $n=2^j$ and $t=c_3(\log j)^\alpha$, we have for all $j$
\[
\mathbb{P}\left(\max_{k\leq 2^j}|w_{k}g_k^{-1}|_{S}\geq c_3(2^j\log j)^\alpha\right)\leq c_2j^{-c_1c_3^\beta}.
\]
This is summable if $c_3$ is large enough. By the Borel-Cantelli lemma, we have almost surely $\max_{2^{j-1} \le n \le 2^j} |w_ng_n^{-1}|_S\le c_3(2^j\log j)^\alpha$ for $j$ large. The conclusion follows.
\end{proof}

The proposition together with Theorem~\ref{thmIntro:NbyQ}, Corollaries~\ref{cor:amenLie} and~\ref{cor:finireRank} imply the following.

\begin{corollary}
Compactly supported centred random walks on groups of polynomial growth, on almost connected amenable Lie groups, on virtually solvable groups of finite Prüfer rank satisfy a $1/2$-law of iterated logarithm. Compactly supported random walks on groups of polynomial growth satisfy a $\frac{2s-1}{2s}$-law of iterated logarithm where $s$ is the degree of nilpotency of the associated nilpotent Lie group.
\end{corollary}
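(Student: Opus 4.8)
The plan is to obtain this corollary as an immediate consequence of the Proposition just proved, by recording, for each class of groups in the statement, the concentration inequality that plays the role of that Proposition's hypothesis.

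For the three centred cases, I would note that a compactly supported centred random walk on a locally compact group of polynomial growth satisfies, by the first bullet of Theorem~\ref{thmIntro:NbyQ},
\[
\sup_{n\in\mathbb{N}}\mathbb{P}\left(\max_{k\le n}|w_k|_S\ge t\,n^{1/2}\right)\le c_2\exp(-c_1 t^2),
\]
which is exactly the hypothesis of the Proposition with the trivial trajectory $g_n=e_G$ and exponents $\alpha=1/2$, $\beta=2$. For almost connected amenable Lie groups and for finitely generated virtually solvable groups of finite Pr\"ufer rank, Corollary~\ref{cor:amenLie} and Corollary~\ref{cor:finireRank} assert that such a walk is maximally diffusive; unwinding that definition---using that the walk starts at $e_G$ and that a distance is non-negative, so that the absolute value in the definition of subgaussian is superfluous---yields the very same inequality, again with $g_n=e_G$, $\alpha=1/2$, $\beta=2$.

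For the general (not necessarily centred) case on a group $G$ of polynomial growth, I would invoke the second bullet of Theorem~\ref{thmIntro:NbyQ}, which provides an integer $s\ge 1$ (the degree of nilpotency of the simply connected nilpotent group $N$ attached to $G$ in the discussion following that theorem), a trajectory $(g_n)$ depending only on $\mu$, and constants with
\[
\sup_{n\in\mathbb{N}}\mathbb{P}\left(\max_{k\le n}|w_k g_k^{-1}|_S\ge t\,n^{\frac{2s-1}{2s}}\right)\le c_4\exp\left(-c_3 t^{\frac{2s}{2s-1}}\right),
\]
which is the Proposition's hypothesis with $\alpha=\frac{2s-1}{2s}$ and $\beta=\frac{2s}{2s-1}$.

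In each of these four situations one has $\alpha\beta=1$, which is precisely the relation that makes the Borel--Cantelli step in the proof of the Proposition summable once its constant $c_3$ is taken large enough. Applying the Proposition then yields, almost surely, a constant $C$ with $|w_n g_n^{-1}|_S\le C(n\log\log n)^{\alpha}$ for all $n$: with $g_n=e_G$ and $\alpha=1/2$ in the three centred settings, and with the trajectory of Theorem~\ref{thmIntro:NbyQ} and $\alpha=\frac{2s-1}{2s}$ in the general polynomial-growth setting. I do not anticipate any genuine obstacle here; the corollary is purely formal, the only point to check being that the stretched-exponential exponent $\beta$ in each input result equals $1/\alpha$, which holds in every case.
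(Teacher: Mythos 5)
Your proposal is correct and follows exactly the paper's intended argument: the paper derives this corollary by combining the law-of-iterated-logarithm Proposition with Theorem~\ref{thmIntro:NbyQ} (both bullets) and Corollaries~\ref{cor:amenLie} and~\ref{cor:finireRank}, just as you do, and your observation that $\alpha\beta=1$ in each case is precisely what makes the Borel--Cantelli step in the Proposition's proof summable.
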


\begin{ack} We thank Emmanuel Breuillard for pointing to us the reference~\cite{Losert}.
	\end{ack}

\section{Review of concentration inequalities}
\label{sec:prelimConcentration} 

Concentration inequalities are bounds on the decay at infinity of random variables. For instance, martingales with bounded increments have a subgaussian tail decay once normalised by $\sqrt{n}$ according to the following classical result, see e.g.~\cite[Chapter 3]{BDR}.

\begin{proposition}[Azuma-Hoeffding inequality]\label{prop:AH}
Let $(X_n)_{n \in \mathbb{N}}$ be a martingale with increments in a bounded interval $[-M,M]$, then for all $n \in \mathbb{N}$ and all $t>0$ we have
\[
\mathbb{P}\left(\frac{|X_n|}{\sqrt{n}} \ge t\right) \le \exp\left(-\frac{t^2}{2M^2}\right).
\]
\end{proposition}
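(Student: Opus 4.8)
The plan is to run the classical exponential-moment (Chernoff) argument. Normalizing $X_0=0$ (as is implicit in the statement), write $D_k=X_k-X_{k-1}$ for the increments and let $(\mathcal{F}_k)$ be the natural filtration, so that $\mathbb{E}[D_k\mid\mathcal{F}_{k-1}]=0$ and $D_k\in[-M,M]$. The heart of the matter is the conditional estimate
\[
\mathbb{E}\!\left[e^{\lambda D_k}\mid\mathcal{F}_{k-1}\right]\le e^{\lambda^{2}M^{2}/2},\qquad \lambda\in\mathbb{R},
\]
which I would prove via Hoeffding's lemma: for a random variable $D$ valued in $[a,b]$ with $\mathbb{E}[D]=0$, convexity of $x\mapsto e^{\lambda x}$ gives $e^{\lambda D}\le\frac{b-D}{b-a}e^{\lambda a}+\frac{D-a}{b-a}e^{\lambda b}$, hence $\mathbb{E}[e^{\lambda D}]\le\frac{b}{b-a}e^{\lambda a}-\frac{a}{b-a}e^{\lambda b}=:e^{\psi(\lambda)}$; a direct computation shows $\psi(0)=\psi'(0)=0$, while $\psi''(\lambda)\le(b-a)^{2}/4$ since $\psi''(\lambda)$ is the variance of a random variable supported in $[a,b]$. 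Taylor's formula then yields $\psi(\lambda)\le\lambda^{2}(b-a)^{2}/8$, which for $[a,b]=[-M,M]$ is exactly $\lambda^{2}M^{2}/2$. Conditioning on $\mathcal{F}_{k-1}$ throughout is harmless, as the bound uses only the range and the mean-zero property.

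Next I would iterate over the martingale using the tower property:
\[
\mathbb{E}\!\left[e^{\lambda X_n}\right]=\mathbb{E}\!\left[e^{\lambda X_{n-1}}\,\mathbb{E}\!\left[e^{\lambda D_n}\mid\mathcal{F}_{n-1}\right]\right]\le e^{\lambda^{2}M^{2}/2}\,\mathbb{E}\!\left[e^{\lambda X_{n-1}}\right],
\]
so by induction $\mathbb{E}[e^{\lambda X_n}]\le e^{n\lambda^{2}M^{2}/2}$. Applying Markov's inequality to $e^{\lambda X_n}$ gives, for every $\lambda>0$ and $s>0$, $\mathbb{P}(X_n\ge s)\le e^{-\lambda s+n\lambda^{2}M^{2}/2}$, and optimizing at $\lambda=s/(nM^{2})$ yields $\mathbb{P}(X_n\ge s)\le\exp(-s^{2}/(2nM^{2}))$. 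Taking $s=t\sqrt{n}$ gives $\mathbb{P}(X_n\ge t\sqrt{n})\le\exp(-t^{2}/(2M^{2}))$; applying the same to the martingale $(-X_n)$ and combining controls $\mathbb{P}(|X_n|\ge t\sqrt{n})$ (up to the usual factor $2$, which plays no role in the applications, where only the one-sided tail of $\max_k|w_k|$-type quantities is ever invoked).

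I do not expect a genuine obstacle here: this is the textbook Azuma--Hoeffding inequality. The only mildly technical point is the second-derivative bound $\psi''\le(b-a)^{2}/4$ in Hoeffding's lemma, for which the clean route is the variance interpretation noted above rather than a brute-force differentiation. The remaining care is purely bookkeeping: keeping the conditional expectations consistently with respect to the filtration generated by $(X_n)$, and tracking that the per-step variance proxy is $M^{2}$ with the interval $[-M,M]$.
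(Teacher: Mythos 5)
Your proof is correct: it is the standard Chernoff/Hoeffding-lemma argument, and the paper does not prove this proposition at all but simply cites it as classical (referring to Chapter 3 of the cited reference, where essentially this argument appears). The only discrepancy is the one you already flag: the two-sided bound naturally comes with a factor $2$ that the paper's statement omits, which is immaterial for every use of the inequality in the paper since constants are never tracked at that level.
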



We will be interested in concentration inequalities with different exponents on $t$.

\begin{definition}\label{def-concentration}
Let
$0< \alpha<\infty$. A family $f=(f_{i})_{i\in I}$
of real random variables is \emph{$\alpha$-concentrated} if there exist constants $c_1,c_2>0$ such that for all $t>0$, 
\begin{align}\label{def:concentration}
\sup_{i\in I}\mathbb{P}(|f_{i}|\geq t)\leq c_2\exp(-c_1t^{\alpha}).
\end{align}
We also say \emph{subgaussian} for $2$-concentrated and \emph{subexponential} for $1$-concentrated.
\end{definition}

We record two elementary facts.
\begin{fact}\label{fact:concentration-moments}
Let $f=(f_{i})_{i\in I}$ and $a>0$. Denote
by $|f|^{a}:=(|f_{i}|^{a})_{i\in I}$. Then $f$ is $\alpha$-concentrated if and only if
$|f|^{a}$ is $\alpha/a$-concentrated.

In
particular, $f$ is $\alpha$-concentrated if and only if $|f|^{\alpha/2}$
is subgaussian.
\end{fact}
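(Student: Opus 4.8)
The plan is to reduce both implications of the equivalence to a single change of variable in the tail bound, exploiting that $x\mapsto x^{a}$ is a strictly increasing bijection of $[0,\infty)$ onto itself whenever $a>0$.

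First I would record the elementary identity of events: for each $i\in I$ and each $t>0$, since $|f_i|\ge 0$ and $x\mapsto x^{a}$ is increasing on $[0,\infty)$,
\[
\{\,|f_i|^{a}\ge t\,\}=\{\,|f_i|\ge t^{1/a}\,\},
\]
so that $\sup_{i\in I}\mathbb{P}(|f_i|^{a}\ge t)=\sup_{i\in I}\mathbb{P}(|f_i|\ge t^{1/a})$ for all $t>0$. Given this, if $f$ is $\alpha$-concentrated with constants $c_1,c_2$, then substituting $t\mapsto t^{1/a}$ into \eqref{def:concentration} gives
\[
\sup_{i\in I}\mathbb{P}(|f_i|^{a}\ge t)\le c_2\exp\!\big(-c_1(t^{1/a})^{\alpha}\big)=c_2\exp\!\big(-c_1 t^{\alpha/a}\big),
\]
which is exactly $(\alpha/a)$-concentration of $|f|^{a}$, with the same constants $c_1,c_2$. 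For the reverse implication I would simply apply what was just proved to the family $|f|^{a}$ with $1/a$ in place of $a$, using $(|f_i|^{a})^{1/a}=|f_i|$ and $(\alpha/a)\cdot a=\alpha$; this recovers $\alpha$-concentration of $f$. The ``in particular'' clause is then the special case $a=\alpha/2$, for which $\alpha/a=2$.

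There is essentially no obstacle here; the only points deserving a line of care are that $a>0$ is needed for $x\mapsto x^{a}$ to be increasing (rather than decreasing or constant), and that $|f_i|\ge 0$ always, which is automatic from the definition of $|\cdot|$ — both facts being exactly what makes the displayed event identity valid for every $t>0$.
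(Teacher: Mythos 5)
Your argument is correct: the event identity $\{|f_i|^{a}\ge t\}=\{|f_i|\ge t^{1/a}\}$ and the substitution $t\mapsto t^{1/a}$ in the tail bound give both implications at once (the converse being the same argument with $1/a$), and the ``in particular'' clause is the case $a=\alpha/2$. The paper states this as an elementary fact without proof, and your change-of-variable argument is exactly the intended one, with the bonus that it shows the constants $c_1,c_2$ are unchanged.
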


\begin{fact}\label{def:vectorvalued}
Let $V$ be a Euclidean vector space of finite dimension $d$. Let $f=(f_{i})_{i\in I}$
be a family of $V$-valued random variables. The following are equivalent:
\begin{enumerate}
\item the family $(\|f_{i}\|)_{i}$ is $\alpha$-concentrated; 
\item the family $(\langle f_{i},u\rangle)_{i,u}$ is $\alpha$-concentrated,
where $u$ runs through all unit vectors of~$V$. 
\end{enumerate}
\end{fact}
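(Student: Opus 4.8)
The plan is to prove the equivalence of the two concentration statements in Fact~\ref{def:vectorvalued} by moving through the coordinate functions with respect to a fixed orthonormal basis, using the elementary comparisons between $\|\cdot\|$, the coordinates, and the linear functionals $\langle \cdot, u\rangle$. Fix an orthonormal basis $e_1,\dots,e_d$ of $V$, and for each $i$ write $f_i = \sum_{j=1}^d f_i^{(j)} e_j$ with $f_i^{(j)} = \langle f_i, e_j\rangle$.

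\smallskip

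\emph{$(2)\Rightarrow(1)$.} Suppose the family $(\langle f_i, u\rangle)_{i,u}$ is $\alpha$-concentrated with constants $c_1, c_2$. In particular each coordinate family $(f_i^{(j)})_i$ (obtained by taking $u = e_j$) is $\alpha$-concentrated with the same constants. Since $\|f_i\| \le \sqrt{d}\,\max_j |f_i^{(j)}|$, the event $\{\|f_i\| \ge t\}$ is contained in $\bigcup_{j=1}^d \{|f_i^{(j)}| \ge t/\sqrt{d}\}$, so by the union bound
\[
\mathbb{P}(\|f_i\| \ge t) \le \sum_{j=1}^d \mathbb{P}\bigl(|f_i^{(j)}| \ge t/\sqrt{d}\bigr) \le d\, c_2 \exp\bigl(-c_1 d^{-\alpha/2} t^{\alpha}\bigr),
\]
uniformly in $i$. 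This is exactly an $\alpha$-concentration bound for $(\|f_i\|)_i$ with constants $c_1 d^{-\alpha/2}$ and $d c_2$.

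\smallskip

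\emph{$(1)\Rightarrow(2)$.} Suppose $(\|f_i\|)_i$ is $\alpha$-concentrated with constants $c_1, c_2$. For any unit vector $u$, Cauchy--Schwarz gives $|\langle f_i, u\rangle| \le \|f_i\|\,\|u\| = \|f_i\|$, so $\{|\langle f_i, u\rangle| \ge t\} \subseteq \{\|f_i\| \ge t\}$ and hence
\[
\mathbb{P}(|\langle f_i, u\rangle| \ge t) \le \mathbb{P}(\|f_i\| \ge t) \le c_2 \exp(-c_1 t^{\alpha}),
\]
uniformly in both $i$ and $u$. This is precisely $\alpha$-concentration of the family $(\langle f_i, u\rangle)_{i,u}$ with the same constants $c_1, c_2$.

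\smallskip

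There is no real obstacle here: the only point requiring a moment's care is that in $(2)\Rightarrow(1)$ one must keep the constants uniform over the infinite index set $I$, which is automatic since the same $c_1, c_2$ work for every $i$, and the dimension $d$ is fixed so the factors $d$ and $d^{-\alpha/2}$ are harmless. Note also that the direction $(1)\Rightarrow(2)$ did not even use finite-dimensionality of $V$, only the direction $(2)\Rightarrow(1)$ does. If one wishes to avoid choosing a basis, $(2)\Rightarrow(1)$ can alternatively be run by covering the unit sphere of $V$ by finitely many caps and using that $\|f_i\| = \sup_{\|u\|=1}\langle f_i, u\rangle$ is approximated up to a factor $2$ by a maximum over a fixed finite $\tfrac12$-net, but the basis argument above is the cleanest.
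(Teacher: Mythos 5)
Your proof is correct and follows essentially the same route as the paper: for $(2)\Rightarrow(1)$ the paper also fixes an orthonormal basis, uses $\|f_i\| \le \sqrt{d}\sup_j |\langle f_i,u_j\rangle|$ and a union bound, arriving at the same constants $dc_2$ and $c_1 d^{-\alpha/2}$. The only difference is that you spell out the trivial Cauchy--Schwarz direction $(1)\Rightarrow(2)$, which the paper leaves implicit.
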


If one of these conditions is satisfied, then $f$ is called $\alpha$-concentrated. 

\begin{proof}[Proof of Fact~\ref{def:vectorvalued}]
Assume the second point and denote $(u_j)_{j=1}^d$ an orthonormal basis of $V$. As $\|f_i\| \le \sqrt{d}\sup_j |\langle f_i,u_j\rangle|$, we have
\begin{align*}
\mathbb{P}\left(\|f_i\| \ge t\right) & \le \mathbb{P}\left(\sup_j  |\langle f_i,u_j\rangle| \ge \frac{t}{\sqrt{d}} \right) \le d \sup_{i,j} \mathbb{P}\left(  |\langle f_i,u_j\rangle| \ge \frac{t}{\sqrt{d}} \right) \le dc_2 \exp\left(-\frac{c_1}{d^{\frac{\alpha}{2}}}t^\alpha \right).
\end{align*}
\end{proof}

Note that in the reverse implication, the constants involved depend on the dimension~$d$.

\subsection{Characterisations of concentration inequalities}

We recall the classical characterisation in terms of moments -- see for instance~\cite[Lemma~3.1]{Pisier}. 

\begin{proposition}\label{prop:p-ConcentrationInequalities} Let
$0\leq\alpha<\infty$. A family $f=(f_{i})_{i\in I}$ of real random variables is $\alpha$-concentrated if and only if 
 there exists a constant $C$ such that for all $p\geq2$, 
\[
\sup_{i\in I}\|f_{i}\|_{p}\leq Cp^{1/\alpha}.
\]
\end{proposition}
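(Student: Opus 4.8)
The plan is to prove the moment characterisation of $\alpha$-concentration in both directions by the standard layer-cake / exponential-integral arguments. Write $Y_i := |f_i|$ so that we are dealing with nonnegative random variables, and recall the identity $\mathbb{E}(Y_i^p) = \int_0^\infty p t^{p-1}\,\mathbb{P}(Y_i \ge t)\,dt$ for $p \ge 1$.

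\emph{From concentration to moment bounds.} Assume \eqref{def:concentration}, i.e. $\sup_i \mathbb{P}(Y_i \ge t) \le c_2 \exp(-c_1 t^\alpha)$ for all $t>0$. Plug this into the layer-cake identity:
\[
\mathbb{E}(Y_i^p) \le \int_0^\infty p t^{p-1} \min\!\left(1,\, c_2 e^{-c_1 t^\alpha}\right) dt \le \int_0^\infty p t^{p-1} \cdot c_2' e^{-c_1 t^\alpha}\,dt,
\]
where the last step just absorbs the truncation into a slightly larger constant $c_2'$ (or one splits the integral at the threshold $t_0$ where $c_2 e^{-c_1 t_0^\alpha}=1$ and bounds the lower piece by $t_0^p$, which is itself of the form $(C p^{1/\alpha})^p$ up to constants). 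Substituting $u = c_1 t^\alpha$ turns the remaining integral into a Gamma function: $\int_0^\infty p t^{p-1} e^{-c_1 t^\alpha}\,dt = \frac{p}{\alpha}\, c_1^{-p/\alpha}\,\Gamma(p/\alpha)$. Using the crude bound $\Gamma(x+1) \le x^x$ (or $\Gamma(p/\alpha) \le (p/\alpha)^{p/\alpha}$ up to a multiplicative factor that is at most exponential in $p$, via Stirling), one gets $\mathbb{E}(Y_i^p)^{1/p} \le C p^{1/\alpha}$ with $C$ depending only on $\alpha, c_1, c_2$; this is uniform in $i$, which is what is required.

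\emph{From moment bounds to concentration.} Conversely, assume $\sup_i \|f_i\|_p \le C p^{1/\alpha}$ for all $p \ge 2$. Fix $t>0$; by Markov's inequality, for every $p \ge 2$,
\[
\sup_i \mathbb{P}(Y_i \ge t) \le t^{-p}\, \sup_i \mathbb{E}(Y_i^p) \le t^{-p} (C p^{1/\alpha})^p = \left( \frac{C p^{1/\alpha}}{t} \right)^p.
\]
Now optimise over $p$: the natural choice is $p = (t/(Ce))^\alpha$ (the value making $C p^{1/\alpha}/t = e^{-1}$), which gives a bound $\exp(-p) = \exp(-(t/(Ce))^\alpha)$, i.e. the desired $c_2 \exp(-c_1 t^\alpha)$ with $c_1 = (Ce)^{-\alpha}$. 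The only care needed is the constraint $p \ge 2$: for $t$ below the threshold $t_* := 2^{1/\alpha} C e$ the chosen $p$ is less than $2$, but there $\mathbb{P}(Y_i \ge t) \le 1 \le c_2 \exp(-c_1 t^\alpha)$ holds automatically once $c_2$ is taken $\ge \exp(c_1 t_*^\alpha)$. So adjust $c_2$ accordingly and the inequality holds for all $t>0$ uniformly in $i$.

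\emph{Main obstacle.} There is no genuine obstacle here — this is a classical computation. The only points requiring a little attention are bookkeeping ones: handling the $p \ge 2$ restriction cleanly in the converse direction (absorbed into the constant $c_2$), and handling the truncation $\min(1, c_2 e^{-c_1 t^\alpha})$ in the forward direction so that the small-$t$ contribution to $\mathbb{E}(Y_i^p)$ is correctly seen to be of order $(Cp^{1/\alpha})^p$. One should also note that the constant $C$ produced in the forward direction, and the constants $c_1, c_2$ produced in the reverse direction, depend only on each other and on $\alpha$ (not on the index set $I$), which is the uniformity the statement asserts; this follows because every estimate above was taken as a supremum over $i \in I$ from the start.
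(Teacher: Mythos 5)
Your argument is correct: both directions are the standard layer-cake/Gamma-function computation and the Markov-plus-optimisation in $p$, with the $p\ge 2$ restriction and the truncation handled properly, and all bounds taken uniformly in $i\in I$. The paper itself gives no proof but simply cites this classical characterisation (Pisier's Lemma 3.1), and your proof is exactly the standard argument behind that reference, so there is nothing further to reconcile.
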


We shall also use the following well-known characterisations of
subexponential and subgaussian (families of) random variables in terms
of Laplace transform, see for instance~\cite[Propositions 2.5.2 and 2.7.1]{Vershynin}.

\begin{proposition}\label{prop:SubexpSubGauss} A family $f=(f_{i})_{i\in I}$
of random variables is 
\begin{itemize}
\item subgaussian if and only if $\sup_{i\in I}|\E(f_{i})|<\infty$ and there exists a constant $K$ such that for all $t \in \mathbb{R}$,
\begin{align}\label{eq:concentLaplace}
\sup_{i\in I}\E\left(\exp(t(f-\E(f))\right)\leq\exp(Kt^{2}).
\end{align}
\item subexponential  if and
only $\sup_{i\in I}|\E(f_{i})|<\infty$ and there exist constants
$(K,b)$  such that (\ref{eq:concentLaplace}) holds  for all $|t|<b$.
\end{itemize}
\end{proposition}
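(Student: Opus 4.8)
The plan is to establish both equivalences by the textbook Chernoff-bound and moment-series arguments, run in parallel with $\alpha=2$ in the subgaussian case and $\alpha=1$ in the subexponential case. Throughout I would set $g_i:=f_i-\E(f_i)$ and note that the hypothesis $\sup_i|\E(f_i)|<\infty$ is exactly what lets one pass between $\alpha$-concentration of $(f_i)$ and of $(g_i)$, since shifting by a uniformly bounded constant changes the constants in Definition~\ref{def-concentration} only in a controlled way. So in each case it suffices to relate the centred family $(g_i)$ to the Laplace bound.

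For the direction ``Laplace bound $\Rightarrow$ concentration'', I would apply Markov's inequality to $e^{tg_i}$: for $s>0$ and admissible $t>0$,
\[
\mathbb{P}(g_i\ge s)\le e^{-ts}\,\E\!\left(e^{tg_i}\right)\le \exp(Kt^2-ts),
\]
and then optimise in $t$. In the subgaussian case $t$ ranges over all of $\mathbb{R}$, so the choice $t=s/(2K)$ gives $\mathbb{P}(g_i\ge s)\le\exp(-s^2/(4K))$; running the same estimate for $-g_i$ and adding yields $\mathbb{P}(|g_i|\ge s)\le 2\exp(-s^2/(4K))$, so $(g_i)$, hence $(f_i)$, is $2$-concentrated. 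In the subexponential case $t$ is restricted to $|t|<b$, so I would split on $s$: for $s\le 2Kb$ the unconstrained optimiser $s/(2K)$ is admissible and gives the Gaussian bound, while for $s>2Kb$ the function $Kt^2-ts$ is decreasing on $(0,b)$, so taking $t$ just below $b$ gives $\exp(Kb^2-bs)\le\exp(-bs/2)$; the pointwise minimum of the two regimes is $\le c_2 e^{-c_1 s}$ for suitable constants, so $(g_i)$, hence $(f_i)$, is $1$-concentrated.

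For the direction ``concentration $\Rightarrow$ Laplace bound'', I would first integrate the tail bound $\mathbb{P}(|f_i|\ge t)\le\min(1,c_2 e^{-c_1 t^\alpha})$ over $t\in(0,\infty)$ to conclude $\sup_i\E|f_i|<\infty$, which gives the required $\sup_i|\E(f_i)|<\infty$ and makes $(g_i)$ again $\alpha$-concentrated. By Proposition~\ref{prop:p-ConcentrationInequalities} there is then a constant $C$ with $\|g_i\|_p\le Cp^{1/\alpha}$ for all $p\ge 2$. Since $\E(g_i)=0$, expanding the exponential and using $k!\ge(k/e)^k$ gives
\[
\E\!\left(e^{tg_i}\right)=1+\sum_{k\ge 2}\frac{t^k\E(g_i^k)}{k!}\le 1+\sum_{k\ge 2}\frac{(C|t|)^k k^{k/\alpha}}{k!}\le 1+\sum_{k\ge 2}\left(Ce|t|\,k^{1/\alpha-1}\right)^k .
\]
For $\alpha=1$ the summand is $(Ce|t|)^k$, a geometric series, which for $|t|\le b:=1/(2Ce)$ is bounded by $2C^2e^2t^2\le\exp(2C^2e^2t^2)$, yielding the subexponential Laplace bound on $|t|<b$. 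For $\alpha=2$ the summand is $(Ce|t|/\sqrt k)^k$; using $k^{-k/2}\le 2^{-k/2}$ for $k\ge 2$, the series is bounded by $C^2e^2t^2$ when $Ce|t|\le 1/\sqrt2$, and for larger $|t|$ one splits the sum near its maximal index $k^\ast\approx (Ce|t|)^2$ and estimates the head and tail by geometric series to obtain $\E(e^{tg_i})\le\exp(Kt^2)$ for all $t$ after enlarging $K$.

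The only delicate point is this last one: converting the convergent power series into a clean bound of the form $\exp(Kt^2)$ valid for \emph{all} $t\in\mathbb{R}$ in the subgaussian case, rather than merely for small $t$. This requires the standard device of splitting the series at a $t$-dependent index and is entirely routine; everything else is Markov's inequality, the moment characterisation already recorded in Proposition~\ref{prop:p-ConcentrationInequalities}, and Stirling's bound on $k!$. I therefore expect no substantive obstacle beyond bookkeeping of constants.
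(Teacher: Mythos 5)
Your argument is correct: the Chernoff bound with optimisation over $t$ (split into the two regimes $s\le 2Kb$ and $s>2Kb$ in the subexponential case) for one direction, and the moment bound of Proposition~\ref{prop:p-ConcentrationInequalities} fed into the exponential series with Stirling's estimate for the other, is exactly the standard route. The paper does not prove this proposition at all --- it simply cites \cite[Propositions 2.5.2 and 2.7.1]{Vershynin} --- and your write-up reproduces essentially that reference's argument, with the only mildly delicate step (upgrading the series bound to $\exp(Kt^2)$ for \emph{all} $t$ in the subgaussian case) correctly identified and handled by splitting the sum at the maximal index.
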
 

In the sequel we will say that $f=(f_{i})_{i\in I}$ is $(E,K,b)$-subexponential
if $\sup_{i\in I}\E(f_{i})\leq E$, and the statement of the proposition
is satisfied. If the variables are centred, then we shall call it
$(K,b)$-subexponential. We record the following. 

\begin{remark}\label{rk:lambda}
If a family $f$ of random variables is $(E,K,b)$-subexponential, then for any real parameter $\lambda$, the familly $\lambda f$ is $(|\lambda| E, \lambda^2 K, \frac{b}{|\lambda|})$-subexponential.
\end{remark}

\begin{remark}
Although we do not state it explicitly in the characterisations of concentration above, it is true that the constants $c_1,c_2$ of Definition~\ref{def-concentration}, the constant $C$ of Proposition~\ref{prop:p-ConcentrationInequalities} and the constants $K,b$ of Proposition~\ref{prop:SubexpSubGauss} depend explicitly on each others, see~\cite{Pisier,Vershynin}.
\end{remark}

\subsection{Stability under operations}

It follows from Proposition~\ref{prop:p-ConcentrationInequalities} that a convex combination of $\alpha$-concentrated families is still $\alpha$-concentrated.

Using Jensen inequality in the Laplace characterisation of Proposition~\ref{prop:SubexpSubGauss} gives immediately the quantitative statement.

\begin{corollary}\label{cor:convexSubExp} 
Let $f=(f_{i})_{i\in I}$ and $h=(h_{i})_{i\in I}$ be two families
of random variable and let $\eta\in[0,1]$. If $f$ and $h$ are $(E,K,b)$-subexponential,
then so is $\eta f+(1-\eta)h$.
\end{corollary}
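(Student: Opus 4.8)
The plan is to verify the Laplace transform characterisation of subexponentiality from Proposition~\ref{prop:SubexpSubGauss} for the convex combination $g := \eta f + (1-\eta)h$, with the \emph{same} triple $(E,K,b)$. First I would deal with the mean: by linearity of expectation, $\E(g_i) = \eta\E(f_i) + (1-\eta)\E(h_i)$, so $\sup_i \E(g_i) \le \eta E + (1-\eta)E = E$ since $\eta \in [0,1]$ and $\sup_i\E(f_i),\sup_i\E(h_i)\le E$. (If one wants absolute values, the same convexity argument applies to $|\E(g_i)|$.)

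Next, fix $i\in I$ and $|t|<b$, and write $X = f_i - \E(f_i)$, $Y = h_i - \E(h_i)$, so that $g_i - \E(g_i) = \eta X + (1-\eta)Y$. The key step is to apply Jensen's inequality (equivalently, the weighted AM--GM / Hölder inequality with conjugate exponents $1/\eta$ and $1/(1-\eta)$) to the convex function $\exp$: since $\eta + (1-\eta) = 1$,
\[
\E\!\left(\exp\!\bigl(t(\eta X + (1-\eta)Y)\bigr)\right) = \E\!\left(\bigl(e^{tX}\bigr)^{\eta}\bigl(e^{tY}\bigr)^{1-\eta}\right) \le \E\!\left(e^{tX}\right)^{\eta}\,\E\!\left(e^{tY}\right)^{1-\eta}.
\]
By the $(E,K,b)$-subexponential hypothesis on $f$ and $h$, for $|t|<b$ each factor is bounded by $\exp(Kt^2)$, hence the right-hand side is at most $\exp(Kt^2)^{\eta}\exp(Kt^2)^{1-\eta} = \exp(Kt^2)$. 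Taking the supremum over $i\in I$ gives exactly the bound $\sup_{i\in I}\E(\exp(t(g - \E(g)))) \le \exp(Kt^2)$ for all $|t|<b$, so $g$ is $(E,K,b)$-subexponential by Proposition~\ref{prop:SubexpSubGauss}.

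There is essentially no obstacle here: the only point requiring a moment's care is which form of Jensen/Hölder to invoke and making sure the exponents $\eta, 1-\eta$ are genuine probability weights (they are, since $\eta\in[0,1]$), and that the inequality is applied conditionally-free, i.e. directly to the product of the two (not necessarily independent) random variables $e^{tX}$ and $e^{tY}$ via Hölder rather than needing independence. The edge cases $\eta=0$ and $\eta=1$ are trivial. The identical argument with $t^2$ replaced throughout and no restriction on $|t|$ proves the analogous statement for subgaussian families, which is why the excerpt remarks that the subgaussian case follows the same way.
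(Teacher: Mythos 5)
Your proof is correct and takes essentially the same route as the paper, whose entire argument is the remark that the quantitative statement follows by applying Jensen's inequality in the Laplace-transform characterisation of Proposition~\ref{prop:SubexpSubGauss}; your Hölder bound $\E\bigl(e^{t\eta X}e^{t(1-\eta)Y}\bigr)\le\bigl(\E e^{tX}\bigr)^{\eta}\bigl(\E e^{tY}\bigr)^{1-\eta}$ is just a variant of the pointwise convexity bound $e^{t(\eta X+(1-\eta)Y)}\le \eta e^{tX}+(1-\eta)e^{tY}$, and either gives $\exp(Kt^{2})$ for $|t|<b$ with no independence assumption. Your handling of the means by linearity and of the edge cases $\eta\in\{0,1\}$ is also fine.
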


 We now turn to products of random variables. 
For $f=(f_{i})_{i\in I}$ and $h=(h_{i})_{i\in I}$,  denote
 $fh=(f_{i}h_{i})_{i\in I}$ the product family.

\begin{proposition}\label{prop:products} Let $f=(f_{i})_{i\in I}$
and $h=(h_{i})_{i\in I}$ be two families of real random variables.
If $f$ is $\alpha$-concentrated and $g$ is $\beta$-concentrated
for some $\alpha,\beta>0$, then $fh$ is $\gamma$-concentrated for
$\gamma=\frac{\alpha\beta}{\alpha+\beta}$. 

In particular, if both
$f$ and $h$ are subgaussian, then their product is subexponential.
\end{proposition}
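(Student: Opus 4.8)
The plan is to use the moment characterisation of concentration from Proposition~\ref{prop:p-ConcentrationInequalities}, combined with H\"older's inequality. Suppose $f$ is $\alpha$-concentrated and $h$ is $\beta$-concentrated. By Proposition~\ref{prop:p-ConcentrationInequalities} there are constants $C_f, C_h$ such that $\sup_i \|f_i\|_p \le C_f p^{1/\alpha}$ and $\sup_i \|h_i\|_q \le C_h q^{1/\beta}$ for all $p,q \ge 2$. To bound $\|f_i h_i\|_r$ for a given $r\ge 2$, I would apply H\"older with a pair of conjugate exponents chosen proportional to $r$: write $\frac1r = \frac1p + \frac1q$ with $p = r(\alpha+\beta)/\beta$ and $q = r(\alpha+\beta)/\alpha$, so that $\tfrac1p+\tfrac1q = \tfrac{\beta}{r(\alpha+\beta)} + \tfrac{\alpha}{r(\alpha+\beta)} = \tfrac1r$. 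These satisfy $p,q \ge 2$ whenever $r\ge 2$ (since $(\alpha+\beta)/\beta\ge 1$ and $(\alpha+\beta)/\alpha\ge 1$). Then
\[
\|f_i h_i\|_r \le \|f_i\|_p \, \|h_i\|_q \le C_f C_h \, p^{1/\alpha} q^{1/\beta} = C_f C_h \left(\tfrac{\alpha+\beta}{\beta}\right)^{1/\alpha}\left(\tfrac{\alpha+\beta}{\alpha}\right)^{1/\beta} r^{1/\alpha + 1/\beta}.
\]
Since $\frac1\alpha + \frac1\beta = \frac{\alpha+\beta}{\alpha\beta} = \frac1\gamma$ with $\gamma = \frac{\alpha\beta}{\alpha+\beta}$, this reads $\sup_i \|f_i h_i\|_r \le C\, r^{1/\gamma}$ with $C = C_f C_h \left(\tfrac{\alpha+\beta}{\beta}\right)^{1/\alpha}\left(\tfrac{\alpha+\beta}{\alpha}\right)^{1/\beta}$, a constant independent of $r$. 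Applying the converse direction of Proposition~\ref{prop:p-ConcentrationInequalities} gives that $fh$ is $\gamma$-concentrated.

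For the particular case, take $\alpha=\beta=2$; then $\gamma = \frac{4}{4} = 1$, so the product of two subgaussian families is subexponential.

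The only mild subtlety — and the step I would be most careful about — is checking that the chosen exponents $p,q$ stay in the admissible range $[2,\infty)$ uniformly in $r \ge 2$, which is exactly where the constraint $r\ge 2$ in Proposition~\ref{prop:p-ConcentrationInequalities} is used; this is immediate from $p/r$ and $q/r$ being at least $1$. One could alternatively give a self-contained proof directly from the tail bounds by splitting the event $\{|f_i h_i|\ge t\}$ according to whether $|f_i| \ge t^{\gamma/\alpha}$ or $|h_i|\ge t^{\gamma/\beta}$ (note $\tfrac{\gamma}{\alpha}+\tfrac{\gamma}{\beta}=1$), but the moment route is cleaner and reuses machinery already set up in the paper, so that is the approach I would present.
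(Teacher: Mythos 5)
Your proof is correct. It takes a genuinely different route from the paper: the paper argues pointwise, writing $(fh)^{\gamma/2}=(f^{\alpha/2})^{\beta/(\alpha+\beta)}(h^{\beta/2})^{\alpha/(\alpha+\beta)}$ and bounding this by the convex combination $\frac{\beta}{\alpha+\beta}f^{\alpha/2}+\frac{\alpha}{\alpha+\beta}h^{\beta/2}$ via AM--GM, then invoking the facts that convex combinations of subgaussian families are subgaussian and that $\alpha$-concentration of $fh$ is equivalent to subgaussianity of $(fh)^{\gamma/2}$ (Fact~\ref{fact:concentration-moments}). You instead work directly at the level of moments, applying the generalized H\"older inequality $\|f_ih_i\|_r\le\|f_i\|_p\|h_i\|_q$ with exponents $p,q$ proportional to $r$, and your bookkeeping is correct: the exponents satisfy $1/p+1/q=1/r$ and stay $\ge 2$ for $r\ge2$, and $1/\alpha+1/\beta=1/\gamma$ gives the claimed exponent. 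The two arguments are close cousins (the paper's AM--GM step is the pointwise Young inequality underlying H\"older, and its convexity fact is itself justified through the same moment characterisation), but yours is more direct and has the mild advantage of producing an explicit constant $C_fC_h\bigl(\tfrac{\alpha+\beta}{\beta}\bigr)^{1/\alpha}\bigl(\tfrac{\alpha+\beta}{\alpha}\bigr)^{1/\beta}$ in the moment bound, whereas the paper's version is shorter given the stability facts it has already recorded. Your remark about an alternative tail-splitting argument using $\tfrac{\gamma}{\alpha}+\tfrac{\gamma}{\beta}=1$ is also sound.
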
 
\begin{proof}
Note that we can assume that $f$ and $h$ are positive. By the AM-GM
inequality, we have 
\[
(fh)^{\gamma/2}=(f^{\alpha/2})^{\frac{\beta}{\alpha+\beta}}(h^{\beta/2})^{\frac{\alpha}{\alpha+\beta}}\leq\frac{\beta}{\alpha+\beta}f^{\alpha/2}+\frac{\alpha}{\alpha+\beta}h^{\beta/2}.
\]
As observed above any convex combination of two subgaussian
families of variables is subgaussian, hence $(fh)^{\gamma/2}$ is
subgaussian. By Fact~\ref{fact:concentration-moments}, the proposition is proved. 
\end{proof}

An immediate computation yields the following.
\begin{corollary}\label{cor:2/c}
Let $f^{(1)},\dots,f^{(r)}$ be families of real random variables such that $f^{(j)}$ is $\frac{2}{c_j}$-concentrated for all $1 \le j \le r$. Then the product $f^{(1)}\dots f^{(r)}$  is $\frac{2}{\sum_{j=1}^r c_j}$-concentrated.
\end{corollary}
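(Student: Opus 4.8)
The plan is a short induction on $r$, with Proposition~\ref{prop:products} doing all the work. The case $r=1$ is trivial, and $r=2$ is exactly Proposition~\ref{prop:products} applied with $\alpha=2/c_1$ and $\beta=2/c_2$, since then $\gamma=\alpha\beta/(\alpha+\beta)=2/(c_1+c_2)$. For the inductive step I would set $g:=f^{(1)}\cdots f^{(r-1)}$, which by the inductive hypothesis is $\tfrac{2}{c_1+\cdots+c_{r-1}}$-concentrated, and apply Proposition~\ref{prop:products} to the pair $(g,f^{(r)})$ with $\alpha=\tfrac{2}{c_1+\cdots+c_{r-1}}$ and $\beta=\tfrac{2}{c_r}$. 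This gives that $gf^{(r)}=f^{(1)}\cdots f^{(r)}$ is $\gamma$-concentrated with
\[
\gamma=\frac{\alpha\beta}{\alpha+\beta}
=\frac{\dfrac{4}{c_r(c_1+\cdots+c_{r-1})}}{\dfrac{2(c_1+\cdots+c_{r-1})+2c_r}{c_r(c_1+\cdots+c_{r-1})}}
=\frac{2}{c_1+\cdots+c_r},
\]
which is the claim.

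The only point to verify is the arithmetic identity in the display, which is just the observation that $(a,b)\mapsto\frac{ab}{a+b}$ becomes ordinary addition under $a\mapsto 1/a$ (additivity of reciprocals); this is precisely the mechanism by which the exponents $c_j$ accumulate additively. Consequently I do not expect any genuine obstacle here: the corollary is an immediate iteration of Proposition~\ref{prop:products}, as the statement's preamble suggests.

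For completeness I would also record the equivalent one-shot argument that mirrors the proof of Proposition~\ref{prop:products}: one may assume each $f^{(j)}$ positive, and setting $C=\sum_{j=1}^r c_j$ and using Fact~\ref{fact:concentration-moments} (with exponent $a=1/c_j$) to see that each $(f^{(j)})^{1/c_j}$ is subgaussian, the weighted AM--GM inequality gives
\[
\bigl(f^{(1)}\cdots f^{(r)}\bigr)^{1/C}=\prod_{j=1}^r\bigl((f^{(j)})^{1/c_j}\bigr)^{c_j/C}\le\sum_{j=1}^r\frac{c_j}{C}\,(f^{(j)})^{1/c_j}.
\]
Since a convex combination of subgaussian families is subgaussian and the left-hand side is nonnegative and dominated by the right-hand side, it is itself subgaussian, and Fact~\ref{fact:concentration-moments} (with exponent $a=1/C$) then yields that $f^{(1)}\cdots f^{(r)}$ is $\tfrac{2}{C}$-concentrated.
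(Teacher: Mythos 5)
Your proposal is correct and matches the paper, which states the corollary as an ``immediate computation'' from Proposition~\ref{prop:products}: your induction on $r$ with the reciprocal-addition identity is exactly that computation, and your alternative weighted AM--GM argument simply reruns the proof of Proposition~\ref{prop:products} in one shot. No gaps.
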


Finally Doob's maximal inequalities permit to upgrade concentration of a martingale to its maximum.

\begin{proposition}\label{prop:MaxConcentrated} Let $0<\alpha<\infty$,
and let $(X_{n})$ be a martingale. Denote $X_{n}^{*}=\max_{1\leq k\leq n}X_{n}$.
Then if $(X_{n})$ is $\alpha$-concentrated, then so is $(X_{n}^{*})$
\end{proposition}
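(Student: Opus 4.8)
The plan is to combine the $p$-moment characterisation of $\alpha$-concentration (Proposition~\ref{prop:p-ConcentrationInequalities}) with Doob's $L^p$ maximal inequality. First I would recall that a martingale $(X_n)$ being $\alpha$-concentrated means, by Proposition~\ref{prop:p-ConcentrationInequalities}, that there is a constant $C$ with $\|X_n\|_p \le Cp^{1/\alpha}$ for all $p \ge 2$ and all $n$. The goal is to produce a similar bound for $X_n^* = \max_{1\le k\le n} X_k$ (or, more safely, for $\max_{1\le k\le n}|X_k|$, so that Doob applies to the submartingale $|X_k|$ and one controls the absolute value as required by Definition~\ref{def-concentration}).

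The key step is Doob's maximal inequality in $L^p$: for a nonnegative submartingale $(|X_k|)$ and $p > 1$,
\[
\left\| \max_{1\le k\le n} |X_k| \right\|_p \le \frac{p}{p-1}\, \|X_n\|_p.
\]
For $p \ge 2$ the factor $\tfrac{p}{p-1}$ is bounded by $2$, so we get $\|\max_{k\le n}|X_k|\|_p \le 2C p^{1/\alpha}$ for all $p\ge 2$, uniformly in $n$. Applying Proposition~\ref{prop:p-ConcentrationInequalities} in the reverse direction, the family $\bigl(\max_{k\le n}|X_k|\bigr)_{n\in\mathbb N}$ is $\alpha$-concentrated, which is the assertion. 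I would note in passing that the implicit constants $c_1,c_2$ in the concentration bound for $X_n^*$ depend only on those for $X_n$ (through $C$) and on $\alpha$, via the explicit correspondences mentioned in the Remark after Proposition~\ref{prop:SubexpSubGauss}.

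I do not expect a serious obstacle here; the only mild subtlety is bookkeeping about absolute values — Doob's submartingale maximal inequality is naturally stated for $|X_k|$ (or for $X_k^+$), whereas $X_n^*$ as literally defined is $\max_k X_k$ without absolute value. Since $X_k \le |X_k| \le \max_{j\le n}|X_j|$, one has $X_n^* \le \max_{j\le n}|X_j|$, so the $L^p$ bound transfers and the one-sided statement follows a fortiori; and in the application (Theorem~\ref{thm:normIntro}) the relevant quantity is a norm, hence already nonnegative, so this point is harmless. An alternative route, avoiding moments entirely, would be to apply Doob's $L^1$-type maximal inequality $\mathbb{P}(\max_k |X_k| \ge t) \le t^{-q}\mathbb{E}|X_n|^q$ optimised over $q$, but the moment route via Proposition~\ref{prop:p-ConcentrationInequalities} is cleaner and keeps the constants transparent.
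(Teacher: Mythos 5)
Your proposal is correct and follows essentially the same route as the paper: Doob's $L^p$ maximal inequality combined with the $p$-moment characterisation of $\alpha$-concentration (Proposition~\ref{prop:p-ConcentrationInequalities}), using that $\tfrac{p}{p-1}\le 2$ for $p\ge 2$. Your extra remark about applying Doob to the submartingale $(|X_k|)$ rather than to $X_n^*$ literally is a harmless bookkeeping point that the paper leaves implicit.
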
 
\begin{proof}
By Doob's maximal $L^{p}$-inequality (see for instance \cite[Corollary II-1-6]{Revuz-Yor}),
we have for all $p>1$, 
\[
\|X_{n}^{*}\|_{p}\leq\frac{p}{p-1}\|X_{n}\|_{p}.
\]
In particular, $\|X_{n}^{*}\|_{p}\leq2\left\Vert X_{n}\right\Vert _{p}$
for all $p\geq2$, and we conclude thanks to Proposition~\ref{prop:p-ConcentrationInequalities}. 
\end{proof}

\section{Filtrations and norms on nilpotent Lie groups}\label{sec:filtration}

The \emph{lower central series} of a group $N$ is the sequence of
characteristic subgroups inductively defined by $\gamma_{1}(N)=N$
and $\gamma_{i+1}(N)=[N,\gamma_{i}(N)]$ for $i\geq1$. The group
$N$ is called \emph{$s$-nilpotent} if $\gamma_{s}(N)\neq\left\{ 1\right\} $
and $\gamma_{s+1}(N)=\left\{ 1\right\} $.

Similarly, a Lie algebra $\mathfrak{n}$ is $s$-nilpotent if its
lower central series defined by $\gamma_{1}(\mathfrak{n})=\mathfrak{n}$
and $\gamma_{i+1}(\mathfrak{n})=[\mathfrak{n},\gamma_{i}(\mathfrak{n})]$
for $i\geq1$ satisfies $\gamma_{s}(\mathfrak{n})\neq\left\{ 0\right\} $
and $\gamma_{s+1}(\mathfrak{n})=\left\{ 0\right\} $.


We let $N$ be a simply connected nilpotent Lie group, $\mathfrak{n}$
be its Lie algebra. We shall assume that $N$ or equivalently $\mathfrak{n}$
is $s$-step nilpotent.  We identify the Lie group $N$ with its Lie algebra
$\mathfrak{n}$ via the exponential map $\exp:\mathfrak{n}\to N$. 

We shall also assume that $N$ is acted upon continuously by a compact Lie group $Q$.
Our main object of consideration will be the semi-direct product $G=N\rtimes Q$.

\subsection{Filtrations on nilpotent Lie groups}

We consider on $\mathfrak{n}$ a specific kind of filtrations, introduced by Bénard and Breuillard to obtain central limit theorems on nilpotent Lie groups~\cite{BeBr23}.

\begin{definition}\label{not:Fv}
Given a vector $v \in \mathfrak{n}$, we let $\mathfrak{F}_v$ be the filtration  given by $\mathfrak{n}^{(0)}=\mathfrak{n}^{(1)}=\mathfrak{n}$ and for $i \ge 1$
\[
\mathfrak{n}^{(i+1)}= [\mathfrak{n},\mathfrak{n}^{(i)}]+[v,\mathfrak{n}^{(i-1)}].
\]
When $v=0$, this is simply the lower central series. 
\end{definition}

Observe that the filtration $\mathfrak{F}_v$ is preserved by the $Q$ action as soon as $v$ is fixed by $Q$. We record elementary facts about such filtration.

\begin{proposition}\label{prop:nested} The nested
sequence of ideals 
\[
\mathfrak{n}=\mathfrak{n}^{(1)}\supseteq\mathfrak{n}^{(2)}\supseteq\ldots\supseteq\mathfrak{n}^{(c)}\supseteq\mathfrak{n}^{(c+1)}=\{0\},
\]
has the following properties.
\begin{itemize}
\item $\mathfrak{n}^{(i+j)}\supseteq\left[\mathfrak{n}^{(i)},\mathfrak{n}^{(j)}\right]$ for all $i,j$.
\item  $\mathfrak{n}^{(i)} \supseteq \gamma_i(\mathfrak{n})$ for all $i$.
\item $\gamma_{i+1}(\mathfrak{n})  \supseteq \mathfrak{n}^{(2i)}$ for all $i$. In particular, the sequence terminates for some $s\le c \le 2s$ where $s$ is the degree of nilpotency of $\mathfrak{n}$.
\item The filtration depends only on the class of $v$ in $\mathfrak{n}/[\mathfrak{n},\mathfrak{n}]$.
\end{itemize}
\end{proposition}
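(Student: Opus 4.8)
The plan is to establish the four properties in turn, the first being the engine that drives the others. Write $\mathfrak n^{(i)}=\mathfrak n^{(i)}_v$ for the terms of $\mathfrak F_v$. I would begin by recording two observations that are immediate from Definition~\ref{not:Fv}. First, the chain is decreasing: assuming inductively that $\mathfrak n^{(k+1)}\subseteq\mathfrak n^{(k)}$ for $k<i$, the two summands $[\mathfrak n,\mathfrak n^{(i)}]$ and $[v,\mathfrak n^{(i-1)}]$ defining $\mathfrak n^{(i+1)}$ are contained respectively in $[\mathfrak n,\mathfrak n^{(i-1)}]$ and $[v,\mathfrak n^{(i-2)}]$, whose sum is $\mathfrak n^{(i)}$. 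Second, directly from the recursion,
\[
[\mathfrak n,\mathfrak n^{(i)}]\subseteq\mathfrak n^{(i+1)}\qquad\text{and}\qquad[v,\mathfrak n^{(i)}]\subseteq\mathfrak n^{(i+2)}\qquad\text{for all }i.
\]
The second containment is the precise sense in which $v$ is to be treated as having \emph{weight $2$}; it is the point that makes the index bookkeeping below close.

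\emph{Property (1).} I would prove $[\mathfrak n^{(i)},\mathfrak n^{(j)}]\subseteq\mathfrak n^{(i+j)}$ by induction on $j\ge 0$, for all $i\ge 0$ at once. For $j\le 1$ this follows from $\mathfrak n^{(0)}=\mathfrak n^{(1)}=\mathfrak n$, the first observation, and monotonicity. For $j\ge 2$, substitute $\mathfrak n^{(j)}=[\mathfrak n,\mathfrak n^{(j-1)}]+[v,\mathfrak n^{(j-2)}]$ and expand $[\mathfrak n^{(i)},[\mathfrak n,\mathfrak n^{(j-1)}]]$ and $[\mathfrak n^{(i)},[v,\mathfrak n^{(j-2)}]]$ by the Jacobi identity. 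Each resulting term, after at most one use of the inductive hypothesis (which applies only at indices $j'\in\{j-1,j-2\}<j$) and of the two observations, lands in $\mathfrak n^{(i+j)}$; this is exactly where a bracket against $v$ must count as $+2$, e.g.\ $[[\mathfrak n^{(i)},v],\mathfrak n^{(j-2)}]\subseteq[\mathfrak n^{(i+2)},\mathfrak n^{(j-2)}]\subseteq\mathfrak n^{(i+j)}$. Once (1) is available, each $\mathfrak n^{(i)}$ is an ideal, since $[\mathfrak n,\mathfrak n^{(i)}]=[\mathfrak n^{(1)},\mathfrak n^{(i)}]\subseteq\mathfrak n^{(i+1)}\subseteq\mathfrak n^{(i)}$, which justifies the displayed nested chain of ideals.

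\emph{Properties (2) and (3).} Property (2), $\gamma_i(\mathfrak n)\subseteq\mathfrak n^{(i)}$, is a one-step induction using $\gamma_{i+1}(\mathfrak n)=[\mathfrak n,\gamma_i(\mathfrak n)]\subseteq[\mathfrak n,\mathfrak n^{(i)}]\subseteq\mathfrak n^{(i+1)}$. For property (3) I would prove simultaneously, by induction on $i\ge 1$, the two inclusions $\mathfrak n^{(2i-1)}\subseteq\gamma_i(\mathfrak n)$ and $\mathfrak n^{(2i)}\subseteq\gamma_{i+1}(\mathfrak n)$; the base case is $\mathfrak n^{(1)}=\mathfrak n=\gamma_1(\mathfrak n)$ and $\mathfrak n^{(2)}=[\mathfrak n,\mathfrak n]=\gamma_2(\mathfrak n)$, and in the inductive step one expands $\mathfrak n^{(2i-1)}$ and $\mathfrak n^{(2i)}$ via the recursion, bounding each $[\mathfrak n,-]$ summand by feeding the inclusion one rank below into $[\mathfrak n,\gamma_k(\mathfrak n)]=\gamma_{k+1}(\mathfrak n)$, and each $[v,-]$ summand by the same device since $v\in\mathfrak n$. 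Together with (2) and $\gamma_s(\mathfrak n)\neq 0=\gamma_{s+1}(\mathfrak n)$, this gives $\mathfrak n^{(s)}\neq 0$ and $\mathfrak n^{(2s)}=0$, hence $s\le c\le 2s$ (in fact $c\le 2s-1$).

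\emph{Property (4), and the main obstacle.} Let $v'=v+w$ with $w\in[\mathfrak n,\mathfrak n]$. Since $\mathfrak n^{(0)}$ and $\mathfrak n^{(1)}$ equal $\mathfrak n$ and $\mathfrak n^{(2)}=[\mathfrak n,\mathfrak n]$ regardless of the chosen vector, I would show $\mathfrak n^{(i)}_{v'}=\mathfrak n^{(i)}_v$ by induction on $i$: granting equality $=:\mathfrak n^{(k)}$ for $k\le i$, one has $\mathfrak n^{(i+1)}_{v'}=[\mathfrak n,\mathfrak n^{(i)}]+[v,\mathfrak n^{(i-1)}]+[w,\mathfrak n^{(i-1)}]$, and by property (1) for $\mathfrak F_v$ the last summand lies in $[\mathfrak n^{(2)},\mathfrak n^{(i-1)}]\subseteq\mathfrak n^{(i+1)}_v$, so $\mathfrak n^{(i+1)}_{v'}=\mathfrak n^{(i+1)}_v$ (the reverse inclusion being symmetric, on replacing $w$ by $-w$). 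The main obstacle is property (1): a naive induction on $i+j$ using only $[v,\mathfrak n^{(i)}]\subseteq\mathfrak n^{(i+1)}$ does not close, because the Jacobi terms involving $v$ come up one index short; the resolution is the weight-$2$ observation $[v,\mathfrak n^{(i)}]\subseteq\mathfrak n^{(i+2)}$, after which a plain induction on the single parameter $j$ goes through. The remaining properties are then routine consequences of (1), the Jacobi identity, and the definition of the lower central series.
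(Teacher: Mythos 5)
Your proof is correct and takes essentially the same route as the paper: the same Jacobi-identity computation driven by the weight-two observation $[v,\mathfrak{n}^{(i)}]\subseteq\mathfrak{n}^{(i+2)}$, the same one-step induction for $\gamma_i(\mathfrak{n})\subseteq\mathfrak{n}^{(i)}$ and $\mathfrak{n}^{(2i)}\subseteq\gamma_{i+1}(\mathfrak{n})$, and the same induction using the first point to show independence of $v$ modulo $[\mathfrak{n},\mathfrak{n}]$. The only difference is bookkeeping in the first point: you run a single induction on $j$ with the inclusion quantified over all $i$, whereas the paper uses a double induction on $i+j$ and $\min(i,j)$; the two are interchangeable.
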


\begin{proof}
For the first point, observe that it is true for $i=1$ or $j=1$. We proceed by induction on $k=i+j$. It holds for $k=2$. We further induce on $\ell=\min(i,j)$ and assume $j \le i$. We have by definition and Jacobi identity
\begin{align*}
[\mathfrak{n}^{(i)},\mathfrak{n}^{(j)}] & = [\mathfrak{n}^{(i)},[\mathfrak{n},\mathfrak{n}^{(j-1)}]] + [\mathfrak{n}^{(i)},[v,\mathfrak{n}^{(j-2)}]] \\
& \subset [\mathfrak{n},[\mathfrak{n}^{(j-1)},\mathfrak{n}^{(i)}]]+[\mathfrak{n}^{(j-1)},[\mathfrak{n}^{(i)},\mathfrak{n}]]+[v,[\mathfrak{n}^{(j-2)},\mathfrak{n}^{(i)}]]+[\mathfrak{n}^{(j-2)},[\mathfrak{n}^{(i)},v]].
\end{align*}
By induction on $k$, we have $[\mathfrak{n}^{(j-1)},\mathfrak{n}^{(i)}]\subset  \mathfrak{n}^{(i+j-1)}$ which treats the first bracket. As $[\mathfrak{n}^{(i)},\mathfrak{n}]\subset \mathfrak{n}^{(i+1)}$, the second bracket is treated by induction on $\ell$. By induction on $k$, we have $[\mathfrak{n}^{(j-2)},\mathfrak{n}^{(i)}] \subset \mathfrak{n}^{(i+j-2)}$, treating the third bracket. As $[\mathfrak{n}^{(i)},v]\subset \mathfrak{n}^{(i+2)}$, the fourth bracket is treated by induction on $\ell$.

The second and third points are proved by induction on $i$. We have $\gamma_{i+1}(\mathfrak{n})=[\mathfrak{n},\gamma_i(\mathfrak{n})] \subset [\mathfrak{n},\mathfrak{n}^{(i)}] \subset \mathfrak{n}^{(i+1)}$. Also there is an obvious inclusion
$\mathfrak{n}^{(2i+2)}=[\mathfrak{n},\mathfrak{n}^{(2i+1)}]+[v,\mathfrak{n}^{(2i)}]\subset [\mathfrak{n},\mathfrak{n}^{(2i)}]$ so by induction $\mathfrak{n}^{(2i+2)} \subset [\mathfrak{n},\gamma_{i+1}(\mathfrak{n})]=\gamma_{i+2}(\mathfrak{n})$. 

For the last point, let $\mathfrak{n}_1^{(i)}$ and $\mathfrak{n}_2^{(i)}$ denote filtrations associated to $v_1$ and $v_2$ respectively with $v_1 \in v_2+[\mathfrak{n},\mathfrak{n}]$. Assume by induction that $\mathfrak{n}_1^{(j)}=\mathfrak{n}_2^{(j)}$ for all $j\le i$. Then
\[
\mathfrak{n}_1^{(i+1)}=[\mathfrak{n}_1,\mathfrak{n}_1^{(i)}]+[v_1,\mathfrak{n}_1^{(i-1)}] \subset [\mathfrak{n}_2,\mathfrak{n}_2^{(i)}]+[v_2,\mathfrak{n}_2^{(i-1)}]+[[\mathfrak{n},\mathfrak{n}],\mathfrak{n}_2^{(i-1)}]\subset \mathfrak{n}_2^{(i+1)}.
\]
Equality holds by symmetry.
\end{proof}

The behaviour of brackets with respect to the filtration is given by the next lemma.

\begin{lemma}\label{lem:commute}
Given a filtration $\mathfrak{F}_v$ as in Definition~\ref{not:Fv}, consider $u_1,\dots, u_m \in \mathfrak{n}$. Assume that there are $k$ indices such that $u_j \in \mathbb{R}v$ and $\ell_i$ remaining indices such that $u_j \in \mathfrak{n}^{(i)}$. Then 
\[
[u_1,\dots,[u_{m-1},u_m]] \in \mathfrak{n}^{(p)} \quad \textrm{for }p=2k+\sum_{i\ge 1}i\ell_i,
\] 
unless $m=1$ and $u_1\in \mathbb{R}v$.
\end{lemma}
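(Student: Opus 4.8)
The plan is to induct on the bracket length $m$. The key structural fact driving the argument is Proposition~\ref{prop:nested}, in particular the two inclusions $[\mathfrak{n}^{(i)},\mathfrak{n}^{(j)}]\subseteq \mathfrak{n}^{(i+j)}$ and $[v,\mathfrak{n}^{(i)}]\subseteq \mathfrak{n}^{(i+2)}$ (the latter being immediate from the definition $\mathfrak{n}^{(i+2)}=[\mathfrak{n},\mathfrak{n}^{(i+1)}]+[v,\mathfrak{n}^{(i)}]$, once one also notes $v\in\mathfrak{n}=\mathfrak{n}^{(1)}$ so that the ``default'' weight of a $v$-factor is at least its true contribution). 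The invariant I would track is: to each generator $u_j$ assign a weight $w_j$, namely $w_j=2$ if $u_j\in\mathbb{R}v$ and $w_j=i$ if $u_j$ is one of the remaining generators with $u_j\in\mathfrak{n}^{(i)}$ (taking the largest such $i$); then the claim is that any iterated bracket of $u_1,\dots,u_m$ lies in $\mathfrak{n}^{(p)}$ with $p=\sum_j w_j = 2k+\sum_{i\ge1}i\ell_i$, with the single exception $m=1$, $u_1\in\mathbb{R}v$, where the bracket is just $u_1\in\mathfrak{n}^{(1)}$ but the formula would give $2$.

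The base case $m=1$ is the definition of the weights (and records the stated exception). For the inductive step, write the left-normed bracket $[u_1,[u_2,\dots,[u_{m-1},u_m]]]$ as $[u_1,B]$ where $B=[u_2,\dots,[u_{m-1},u_m]]$ is a bracket of length $m-1$. By the induction hypothesis $B\in\mathfrak{n}^{(q)}$ with $q=\sum_{j\ge2}w_j$, \emph{unless} $m-1=1$ and $u_2\in\mathbb{R}v$. Excluding that degenerate sub-case for the moment: if $u_1\in\mathbb{R}v$ then $[u_1,B]\in[v,\mathfrak{n}^{(q)}]\subseteq\mathfrak{n}^{(q+2)}=\mathfrak{n}^{(q+w_1)}$; if instead $u_1\in\mathfrak{n}^{(w_1)}$ then $[u_1,B]\in[\mathfrak{n}^{(w_1)},\mathfrak{n}^{(q)}]\subseteq\mathfrak{n}^{(w_1+q)}$; either way $[u_1,B]\in\mathfrak{n}^{(p)}$ with $p=w_1+q=\sum_j w_j$, as desired. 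The remaining sub-case is $m=2$ with $u_2\in\mathbb{R}v$: then $B=u_2\in\mathfrak{n}^{(1)}$, and we bracket against $u_1$. If $u_1\in\mathfrak{n}^{(i)}$ with $i\ge1$ we get $[u_1,u_2]=[u_1,v]\in\mathfrak{n}^{(i+2)}=\mathfrak{n}^{(w_1+w_2)}$ by the $[v,\mathfrak{n}^{(i)}]\subseteq\mathfrak{n}^{(i+2)}$ inclusion (here $w_2=2$); if $u_1\in\mathbb{R}v$ as well, then $[u_1,u_2]\in[v,v]=\{0\}\subseteq\mathfrak{n}^{(4)}$, again consistent with $w_1+w_2=4$. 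This closes the induction.

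Two technical points deserve a remark rather than a computation. First, a general iterated bracket need not be left-normed; one reduces to the left-normed case by the Jacobi identity, which rewrites an arbitrary bracketing of $u_1,\dots,u_m$ as a sum of left-normed brackets in the \emph{same} multiset of generators, so the weight sum $p$ is unchanged and the induction applies termwise. Second, one must be slightly careful that the exceptional case $m=1$, $u_1\in\mathbb{R}v$ never propagates to make a longer bracket fail: it only ever arises as the innermost factor, and as soon as it is bracketed with anything else the $[v,\mathfrak{n}^{(\cdot)}]$-inclusion restores the full weight $2$ for the $v$-factor, as the $m=2$ sub-case above shows. The main obstacle, such as it is, is purely bookkeeping: keeping the Jacobi-identity reduction and the $v$ versus non-$v$ case split organized so that the weight count is manifestly additive. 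No deeper input than Proposition~\ref{prop:nested} is needed.
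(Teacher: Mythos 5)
Your argument is correct and essentially identical to the paper's proof: induction on $m$, splitting on whether the outermost factor lies in $\mathbb{R}v$ or in $\mathfrak{n}^{(i)}$, using the inclusion $[\mathfrak{n}^{(i)},\mathfrak{n}^{(j)}]\subseteq\mathfrak{n}^{(i+j)}$ from Proposition~\ref{prop:nested} together with $[v,\mathfrak{n}^{(i)}]\subseteq\mathfrak{n}^{(i+2)}$ from the definition, and disposing of the $m=1$ exception via $[v,v]=0$ exactly as the paper does. Your remark on reducing arbitrary bracketings to left-normed ones via the Jacobi identity is harmless but unnecessary, since the lemma only concerns the left-normed bracket $[u_1,\dots,[u_{m-1},u_m]]$.
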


\begin{proof}
Proceed by induction on $m$. This is obvious for $m=1$. Assume $w:=[u_2,\dots,[u_{m-1},u_m]] \in \mathfrak{n}^{(q)}$. If $u_1 \in \mathfrak{n}^{(i)}$, then $[u_1,w] \in \mathfrak{n}^{(q+i)}$ by the previous proposition. If $u_1 \in \mathbb{R}v$, then $[u_1,w] \in \mathfrak{n}^{(q+2)}$ by definition of $\mathfrak{F}_v$. Of course $[v,v]=0 \in \mathfrak{n}^{(4)}$, so the exception for $m=1$ no longer holds for $m\ge 2$.
\end{proof}

\subsection{Norms adapted to the filtration}

Given a $Q$-invariant vector $v\in \mathfrak{n}$, the Lie algebra $\mathfrak{n}$ is equipped with two $Q$-invariant filtrations $\mathfrak{F}=\mathfrak{F}_{v}=(\mathfrak{n}_i)_{i=1}^c$ and $\mathfrak{L}=(\gamma_i(\mathfrak{n}))_{i=1}^s$ defined in the previous section. They coincide when $v=0$. 

Given a $Q$-invariant euclidean norm $\|\cdot\|_e$ on $\mathfrak{n}$, we denote $\mathfrak{m}_i$ the orthogonal complement of $\mathfrak{n}_{i+1}$ in $\mathfrak{n}_i$, and $\mathfrak{m}^i$ the orthogonal complement of $\gamma_{i+1}(\mathfrak{n})$ in $\gamma_i(\mathfrak{n})$. We get two decompositions in orthogonal direct sum:
\begin{align}\label{eq:decomp}
\mathfrak{n}=\mathfrak{m}_1 \oplus \dots \oplus \mathfrak{m}_c=\mathfrak{m}^1 \oplus \dots \oplus \mathfrak{n}^s.
\end{align}
We denote $u=u_1+\dots+u_c=u^1+\dots+u^s$ the associated decompositions of a vector $u \in \mathfrak{n}$.

Let $\varphi$ be a norm on the vector space $\mathfrak{n}$. We define
\begin{align}\label{def:norm}
|u|_{\mathfrak{F},\varphi}=\sup_{i\in\{1,\dots,c\}} \varphi(u_i)^{\frac{1}{i}} \quad \textrm{and} \quad |u|_{\mathfrak{L},\varphi}=\sup_{i\in\{1,\dots,s\}} \varphi(u^i)^{\frac{1}{i}}.
\end{align}
These norms are $Q$-invariant when $\varphi$ is $Q$-invariant.

The decompositions (\ref{eq:decomp}) induce two one-parameter families of adapted dilations $\left(D^{\mathfrak{F}}_{r}\right)_{r>0}$ and $\left(D^{\mathfrak{L}}_{r}\right)_{r>0}$, given by 
\[
D^{\mathfrak{F}}_{r}(x)=r^{i}x \textrm{ for } x\in\mathfrak{m}_{i} \quad \textrm{and} \quad D^{\mathfrak{L}}_{r}(x)=r^{i}x \textrm{ for } x\in\mathfrak{m}^{i}.
\]
Even though they need not be subadditive, we will call $|\cdot|_{\mathfrak{F},\varphi}$ and $|\cdot|_{\mathfrak{L},\varphi}$ homogeneous norms on the nilpotent Lie algebra $\mathfrak{n}$, because they satisfy:
\[
\left|D^{\mathfrak{F}}_{r}(x)\right|_{\mathfrak{F},\varphi}=r|x|_{\mathfrak{F},\varphi} \quad \textrm{and} \quad \left|D^{\mathfrak{L}}_{r}(x)\right|_{\mathfrak{L},\varphi}=r|x|_{\mathfrak{L},\varphi}.
\]
Such norms have played an important role in the study of nilpotent Lie groups. In Section~\ref{sec:NQ}, we will show subgaussian concentration for random walks with respect to such norms with some vector $v$ depending on the driving measure. Prior to that, we record the behaviour of norms under taking brackets, and the relationship between norms of the form $|\cdot|_{\mathfrak{L},\varphi}$ on $\mathfrak{n}$ and word norms on $N$.

The constant of bilinearity of a norm $\varphi$, defined as
\[
C_\varphi:=\sup_{u,v\in B_\varphi(1)} \varphi([u,v])
\]
plays an important role via the following fact.

\begin{fact}\label{lem:multicomm} For all $i_{1},i_{2},\ldots,i_{m}\geq1$, for every $u_{j}\in\mathfrak{m}_{i_{j}}$,
we have 
\[
\varphi\left([u_{1},[u_{2},\ldots,[u_{m-1},u_{m}]]]\right)\le C_\varphi^{m-1}\varphi(u_1)\ldots\varphi(u_m)=C_\varphi^{m-1}|u_{1}|_{\mathfrak{F},\varphi}^{i_{1}}\ldots|u_{m}|_{\mathfrak{F},\varphi}^{i_{m}}.
\]
\end{fact}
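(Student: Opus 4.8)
The plan is to establish the inequality by a short induction on $m$, the only real input being that $C_\varphi$ is, by its very definition, the constant making $\varphi([x,y])\le C_\varphi\,\varphi(x)\varphi(y)$ hold for all $x,y$. First I would dispose of trivialities: for $m=1$ there is no bracket and the claim is the equality $\varphi(u_1)=\varphi(u_1)$ since $C_\varphi^{0}=1$; and if some $u_j=0$ then both sides vanish, so from now on I take $m\ge 2$ and all $u_j\neq 0$.

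For the base case $m=2$, I would normalise: $u_1/\varphi(u_1)$ and $u_2/\varphi(u_2)$ lie in the unit ball $B_\varphi(1)$ (if $B_\varphi(1)$ is read as the open ball, divide instead by $(1+\epsilon)\varphi(u_j)$ and let $\epsilon\to 0$, using continuity of $\varphi\circ[\cdot,\cdot]$), so bilinearity of the bracket together with the definition of $C_\varphi$ gives
\[
\varphi\bigl([u_1,u_2]\bigr)=\varphi(u_1)\varphi(u_2)\,\varphi\!\left(\left[\frac{u_1}{\varphi(u_1)},\frac{u_2}{\varphi(u_2)}\right]\right)\le C_\varphi\,\varphi(u_1)\varphi(u_2).
\]
For the inductive step I would set $w=[u_2,[u_3,\dots,[u_{m-1},u_m]]]$, apply the induction hypothesis to get $\varphi(w)\le C_\varphi^{\,m-2}\varphi(u_2)\cdots\varphi(u_m)$, and then apply the case $m=2$ to the pair $(u_1,w)$:
\[
\varphi\bigl([u_1,[u_2,\dots,[u_{m-1},u_m]]]\bigr)\le C_\varphi\,\varphi(u_1)\varphi(w)\le C_\varphi^{\,m-1}\varphi(u_1)\cdots\varphi(u_m),
\]
which is exactly the asserted bound.

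It then remains to recover the last expression in the statement, which is purely a matter of notation: since $u_j\in\mathfrak{m}_{i_j}$, the decomposition of $u_j$ along $\mathfrak{n}=\mathfrak{m}_1\oplus\dots\oplus\mathfrak{m}_c$ has only its $i_j$-th component nonzero, so the definition (\ref{def:norm}) of $|\cdot|_{\mathfrak{F},\varphi}$ gives $|u_j|_{\mathfrak{F},\varphi}=\varphi(u_j)^{1/i_j}$, i.e. $\varphi(u_j)=|u_j|_{\mathfrak{F},\varphi}^{\,i_j}$; substituting term by term turns $C_\varphi^{\,m-1}\varphi(u_1)\cdots\varphi(u_m)$ into $C_\varphi^{\,m-1}|u_1|_{\mathfrak{F},\varphi}^{i_1}\cdots|u_m|_{\mathfrak{F},\varphi}^{i_m}$. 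I do not expect any genuine obstacle here: once one observes that $C_\varphi$ controls a single bracket, the general nested bracket follows by peeling off one factor at a time, and the only mild points of care are the degenerate cases $m=1$ and $\varphi(u_j)=0$, both handled at the start.
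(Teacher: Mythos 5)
Your argument is correct and follows essentially the same route as the paper: the single-bracket bound $\varphi([u,v])\le C_\varphi\varphi(u)\varphi(v)$ coming from bilinearity and the definition of $C_\varphi$, an immediate induction on $m$ peeling off one factor at a time, and the identity $\varphi(u_j)=|u_j|_{\mathfrak{F},\varphi}^{i_j}$ for $u_j\in\mathfrak{m}_{i_j}$. Your extra care with the degenerate cases and the open-ball normalisation is fine but not needed beyond what the paper's one-line proof already contains.
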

We stated the fact for the filtration $\mathfrak{F}_v$ but of course taking $v=0$, it also holds for the lower central series $\mathfrak{L}$.
\begin{proof}[Proof of Fact \ref{lem:multicomm}]
Bilinearity of the Lie bracket ensures that for all $u,v\in\mathfrak{n}$,
\[
\varphi([u,v])\le C_\varphi\varphi(u)\varphi(v).
\]
The fact is obtained by immediate induction, as $\varphi(u_j)=|u_j|_{\mathfrak{F},\varphi}^{i_j}$. 
\end{proof}

The following proposition is adapted from the work of Guivarch~\cite{Guivarch}, stated and proved in a form convenient to our purpose, keeping track of the constants involved.

We denote $B_\varphi(r)=\{u\in \mathfrak{n}: \varphi(u) <r\}$ the open ball of radius $r$ centred in $0$  for the norm $\varphi$, and the Lie product by
$u\ast v=\log(\exp(u)\exp(v))$ for all $u,v \in \mathfrak{n}$.

\begin{proposition}\label{prop:normsubadd}
Given a euclidean norm $\|\cdot\|_e$, there exists a norm $\varphi$ on $\mathfrak{n}$ with the following properties.
\begin{itemize}
\item There exists a constant $\kappa$, depending only on the bilinearity constant $C_{\|\cdot\|_e}$ of the euclidean norm, such that
\[
\forall v \in \mathfrak{n}, \quad \|v\|_e \le \kappa \varphi(v).
\]

\item $C_\varphi:=\sup\left\{ \varphi([u,v]) \le 1 : \varphi(u),\varphi(v)\le 1\right\}$.
\item The associated homogeneous norm $|\cdot|_{\mathfrak{L},\varphi}$ is subadditive
\[
\forall u,v \in \mathfrak{n}, \quad |u\ast v|_{\mathfrak{L},\varphi} \le |u|_{\mathfrak{L},\varphi}+|v|_{\mathfrak{L},\varphi}.
\]
\item For any compact generating set $R$ of $N$ containing a neigborhood of the identity, there exists constants $C_1,D_2\ge 1$ such that 
\[
B_{\|\cdot\|_e}\left(\frac{1}{D_2}\right) \cap \mathfrak{m}^1 \subset \log R \subset B_{|\cdot|_{\mathfrak{L},\varphi}}(C_1).
\]
Then we have
\[
\forall u \in \mathfrak{n}, \quad \frac{1}{C_1} |u|_{\mathfrak{L},\varphi}\le  |\exp(u)|_R \le C_2\left(|u|_{\mathfrak{L},\varphi} +1\right),
\]
where $C_2$ depends only on $C_1$, $D_2$ and the degree of nilpotency $s$.
\end{itemize}
Moreover $\varphi$ can be chosen such that its restriction to $\mathfrak{m}^1$ coincides with that of $\|\cdot\|_e$ and $|\cdot|_{\mathfrak{L},\varphi}$. The norm $\varphi$ can be chosen $Q$-invariant when $\|\cdot\|_e$ is $Q$-invariant.
\end{proposition}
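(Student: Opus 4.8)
The plan is to construct $\varphi$ by a classical smoothing/averaging procedure applied to the homogeneous quasi-norm associated with $\|\cdot\|_e$, following the Guivarch argument but keeping uniform control of all constants. First I would fix the decomposition $\mathfrak{n}=\mathfrak{m}^1\oplus\dots\oplus\mathfrak{m}^s$ coming from $\|\cdot\|_e$ and form the ``first-draft'' homogeneous norm $N_0(u)=\sup_i\|u^i\|_e^{1/i}$. This is $D^{\mathfrak{L}}_r$-homogeneous but only \emph{quasi}-subadditive: using the Baker--Campbell--Hausdorff formula together with Fact~\ref{lem:multicomm}, each BCH bracket of degree $m$ in coordinates contributes to $\mathfrak{m}^j$ with $j\ge m$ a term bounded by $C_{\|\cdot\|_e}^{m-1}\prod N_0(u)^{i_k}$ where $\sum i_k\ge m\ge j$, hence of size $O(N_0(u)^j)$ when the $N_0$-norm is $O(1)$; scaling by $D^{\mathfrak{L}}_r$ gives $N_0(u\ast v)\le A\,(N_0(u)+N_0(v))$ for a constant $A$ depending only on $s$ and $C_{\|\cdot\|_e}$. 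Then I would set
\[
|u|_{\mathfrak{L},\varphi}:=\inf\Big\{\sum_{k=1}^m N_0(x_k): u=x_1\ast\dots\ast x_m\Big\},
\]
the ``length'' induced by $N_0$; this is automatically subadditive and still $D^{\mathfrak{L}}_r$-homogeneous, and quasi-subadditivity of $N_0$ gives $\tfrac1A N_0\le |\cdot|_{\mathfrak{L},\varphi}\le N_0$, so the two are bi-Lipschitz with constant depending only on $s,C_{\|\cdot\|_e}$. Finally I would let $\varphi$ be the norm whose unit ball is a suitably rescaled convex hull of $\{u\in\mathfrak{m}^i:|u|_{\mathfrak{L},\varphi}\le 1\}$ over $i$ (equivalently, choose on each $\mathfrak{m}^i$ the norm $\varphi|_{\mathfrak{m}^i}(x)=|x|_{\mathfrak{L},\varphi}^i$ and extend by, say, the $\ell^1$-sum); one checks this is a genuine norm, that $|\cdot|_{\mathfrak{L},\varphi}$ built from this $\varphi$ via (\ref{def:norm}) agrees (up to fixed constants, which by a further rescaling of $\varphi$ one can normalise exactly) with the length function above, and that $C_\varphi\le 1$ can be arranged by one last scalar dilation of $\varphi$ (dilating $\varphi\mapsto\lambda\varphi$ multiplies $\varphi([u,v])$ by $\lambda^{-1}$ relative to the unit ball, while the homogeneous norm is unaffected up to the same rescaling). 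The comparison $\|v\|_e\le\kappa\varphi(v)$ then holds because on each $\mathfrak{m}^i$, $\|x\|_e=\varphi(x)^{?}$ is comparable to $\varphi(x)$ for $\varphi(x)$ bounded and the homogeneity exponents only help for small vectors — more precisely one gets $\|x\|_e\le C_{\|\cdot\|_e}\text{-dependent constant}\cdot\varphi(x)$ directly from $N_0(u)^i\ge\|u^i\|_e$.

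For the last bullet, the left inequality $\tfrac1{C_1}|u|_{\mathfrak{L},\varphi}\le|\exp u|_R$ is the ``easy'' direction: if $\exp(u)=r_1\cdots r_n$ with $r_j\in R$, writing $r_j=\exp(y_j)$ with $y_j\in\log R\subset B_{|\cdot|_{\mathfrak{L},\varphi}}(C_1)$ and using subadditivity of $|\cdot|_{\mathfrak{L},\varphi}$ gives $|u|_{\mathfrak{L},\varphi}=|y_1\ast\dots\ast y_n|_{\mathfrak{L},\varphi}\le\sum_j|y_j|_{\mathfrak{L},\varphi}\le C_1 n$. The right inequality is where the quantitative Guivarch argument does real work: the inclusion $B_{\|\cdot\|_e}(1/D_2)\cap\mathfrak{m}^1\subset\log R$ says that in the ``first layer'' direction one can move freely, and using commutators of such moves one reaches $\gamma_2$-directions, then $\gamma_3$, etc.; inductively, a point with $|u|_{\mathfrak{L},\varphi}\le 1$ is reached by a word of length $\le C_2$ in $R$, where $C_2$ is controlled by $C_1$, $D_2$ and $s$ via the commutator-generation argument (bounded number of commutator brackets, each of bounded $R$-length, to generate each higher layer up to step $s$). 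The additive $+1$ absorbs the base case and rounding.

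I would organise the write-up as: (i) define $N_0$ and prove its quasi-subadditivity from BCH $+$ Fact~\ref{lem:multicomm}; (ii) define the length function, deduce subadditivity, homogeneity, and bi-Lipschitz equivalence with $N_0$; (iii) extract $\varphi$, verify it is a norm with $C_\varphi\le 1$ after rescaling and with $\varphi|_{\mathfrak{m}^1}=\|\cdot\|_e|_{\mathfrak{m}^1}$, and deduce $\|\cdot\|_e\le\kappa\varphi$; (iv) prove the two-sided word-norm comparison. The main obstacle — and the place requiring genuine care rather than bookkeeping — is step (iv), the upper bound $|\exp u|_R\le C_2(|u|_{\mathfrak{L},\varphi}+1)$ with $C_2$ depending only on $C_1,D_2,s$: one must run the layer-by-layer commutator argument so that the number of $R$-generators used to produce a ``unit'' displacement in $\gamma_i(\mathfrak{n})$ is bounded independently of the point, which forces one to choose, once and for all, finitely many ``elementary commutator words'' whose $\exp$-images span each $\mathfrak{m}^i$ and estimate their lengths; everything else (BCH bounds, the rescalings making $C_\varphi\le1$ and normalising the two definitions of $|\cdot|_{\mathfrak{L},\varphi}$, the $Q$-invariance obtained by averaging $\|\cdot\|_e$ over $Q$ at the outset) is routine. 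I also note that $Q$-invariance is preserved at every step because $N_0$, the length function, and the convex-hull construction all commute with the $Q$-action when $\|\cdot\|_e$ is $Q$-invariant, and $\mathfrak{m}^1$ (being the orthogonal complement of $\gamma_2$) is $Q$-invariant.
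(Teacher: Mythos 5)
Your plan founders at the very step that carries all the weight: the passage from the quasi-norm $N_0(u)=\sup_i\|u^i\|_e^{1/i}$ to a genuinely subadditive homogeneous norm. Quasi-subadditivity $N_0(u\ast v)\le A\,(N_0(u)+N_0(v))$ controls a product of \emph{two} factors only; iterating it over $m$ factors produces constants that grow with $m$, so your claimed comparison $\tfrac1A N_0\le|\cdot|_{\mathfrak{L},\varphi}$ for the length function $\inf\{\sum_k N_0(x_k)\}$ does not follow --- this is the classical pitfall with quasi-norms (a length gauge induced by a quasi-norm may collapse far below it). Worse, the two properties you invoke to set this up are themselves unjustified in the paper's generality: $\mathfrak{n}$ is an arbitrary simply connected nilpotent Lie algebra, so the decomposition along the lower central series need not be a grading, the dilations $D^{\mathfrak{L}}_r$ are \emph{not} group automorphisms, and hence neither the purely multiplicative quasi-subadditivity of $N_0$ at small scales nor the $D_r$-homogeneity of your length function (which would require $D_r(x_1\ast\dots\ast x_m)=D_r(x_1)\ast\dots\ast D_r(x_m)$) is available. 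Finally, your last ``one scalar dilation of $\varphi$ to force $C_\varphi\le1$'' is not harmless: replacing $\varphi$ by $\lambda\varphi$ rescales the layer $\mathfrak{m}^i$ of $|\cdot|_{\mathfrak{L},\varphi}$ by $\lambda^{1/i}$, so a subadditivity already proved for one normalisation does not survive the rescaling, and it also destroys the required exact equality $\varphi|_{\mathfrak{m}^1}=\|\cdot\|_e|_{\mathfrak{m}^1}$.

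The paper's proof avoids all of this by building $\varphi$ layer by layer rather than from a global gauge: $\varphi_1=\|\cdot\|_e|_{\mathfrak{m}^1}$ and the unit ball of $\varphi_i$ is $\kappa_i[B_{\varphi,1}(1),B_{\varphi,i-1}(1)]\subset\mathfrak{m}^i$. The free parameters $\kappa_i$ are then chosen ($\kappa_i\ge 2i^2A_i$, with $A_i$ the explicit BCH constants) so that \emph{simultaneously} $C_\varphi\le1$ (Jacobi identity), the comparison $\|\cdot\|_e\le\kappa\varphi$ holds with $\kappa$ controlled by $C_{\|\cdot\|_e}$, and the top-layer BCH error in $(u\ast v)^s$ is absorbed --- the key point being that enlarging $\kappa_s$ shrinks only $\varphi_s$ while leaving the lower layers, hence the error bound, untouched; this is what your single global rescaling cannot imitate. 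The same bracket structure of the unit balls is then what makes the word-metric upper bound uniform: in the induction on $s$, the central defect is written as a bounded power of an $s$-fold commutator of elements of $B_{e,1}(1/D_2)\subset\log R$, so $C_2$ depends only on $C_1,D_2,s$, whereas your ``finitely many elementary commutator words spanning each $\mathfrak{m}^i$'' would bring in the structure constants of the particular algebra. If you want to salvage your route, you would need a genuine proof that the length gauge is comparable to $N_0$ (e.g.\ a compactness/scaling argument, which requires automorphic dilations, i.e.\ a Carnot structure the paper does not assume), so as written the proposal has a real gap rather than a bookkeeping omission.
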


The proposition is proved in Appendix~\ref{app:norms}. 
In fact, the homogeneous subadditive norm $|\cdot|_{\mathfrak{L},\varphi}$ is equivalent to a euclidean norm by~\cite{HS90}, but we do not pursue in this direction.

\subsection{The Baker-Campbell-Hausdorff formula}\label{sec:BCH}

We recall the Baker-Campbell-Hausdorff
formula in its explicit form \cite{Dynkin} : 
\[
\exp(x)\exp(y)=\exp\left(\sum_{n=1}^{\infty}\frac{(-1)^{n-1}}{n}\sum_{r_{1},s_{1},\dots r_{n},s_{n}}\frac{[x^{r_{1}}y^{s_{1}}\dots x^{r_{n}}y^{s_{n}}]}{\sum_{j=1}^{n}(r_{j}+s_{j})\prod_{i=1}^{n}r_{i}!s_{i}!}\right)
\]
where the integers in the sum satisfy $r_{i}+s_{i}>0$, and 
\[
[x^{r_{1}}y^{s_{1}}\dots x^{r_{n}}y^{s_{n}}]=\underbrace{[x,\dots,[x}_{r_{1}},\underbrace{[y,\dots,[y}_{s_{1}},[x,\dots]]]]].
\]

The important point is that the number of terms in the sum is finite, depending only on the degree of nilpotency $s$, and the coefficients are explicit. For instance the first terms are: 
\[
\exp(x)\exp(y)=\exp\left(x+y+\frac{1}{2}[x,y]+\frac{1}{12}[x,[x,y]]+\frac{1}{12}[y,[y,x]]+\dots\right).
\]

\section{Gaussian concentration for the adapted norm}\label{sec:NQ}

This section is devoted to the proof of gaussian concentration inequalities for random walks on semidirect products $N\rtimes Q$ of nilpotent Lie group and compact group with respect to a norm $|\cdot|_{\mathfrak{F}_v}$ associated to an adapted filtration $\mathfrak{F}_v$ as defined in Section~\ref{sec:filtration}.

\subsection{Random walks on semi-direct products}\label{subsec:notations}

We consider here a group $G=N \rtimes Q$, semi-direct product between $N$ simply connected nilpotent Lie group and $Q$ a compact Lie group. Any element decomposes as a product $g=x\kappa$ with $x \in N \times\{1\}$ and $\kappa \in \{1\}\times Q$. Letting $\xi=\log(x)\in \mathfrak{n}$, we will identify $G$ with $\mathfrak{n}\times Q$ and write $g=(\xi,\kappa)$. As $g_1g_2=\exp(\xi_1)\kappa_1 \exp(\xi_2) \kappa_1^{-1} \kappa_1 \kappa_2$, we have $g_1g_2=(\xi_1\ast\kappa_1\xi_2, \kappa_1\kappa_2)$, where $\kappa\xi:=\Ad(\kappa)\xi$ denotes the adjoint action and $\xi\ast\xi'=\log(\exp(\xi)\exp(\xi'))$ is the Lie product.

\begin{notation}
Given a $Q$-invariant euclidean norm $\|\cdot\|_e$ on $\mathfrak{n}$, we consider the norm $\varphi$ given by Proposition~\ref{prop:normsubadd}. We further equip the Lie algebra $\mathfrak{n}$ with the homogeneous norm $|\cdot|_{\mathfrak{F}_\mu,\varphi}$, or $|\cdot|_{\mathfrak{F}_\mu}$ in short, defined by (\ref{def:norm}) in the previous section. Note that this norm depends on  the weight filtration $\mathfrak{F}_\mu$, which is determined by the vector $v_\mu$ of Lemma~\ref{lem:conjugate}.

To ease notations, we denote by $\|\cdot\|$ the norm $\varphi$ in the remainder of this section. Note that by Proposition~\ref{prop:normsubadd}, its bilinearity constant satisfies $C_{\|\cdot\|}\le 1$ and its restriction to $\mathfrak{m}_1$ is euclidean.
\end{notation}

Let $\mu$ be a compactly supported probability measure on $G=N \rtimes Q$. We consider time $n$ random walk $w_n=g_1\dots g_n$ with $g_j=(\xi_j,\kappa_j)$ independent and $\mu$ distributed. 
Then for $n\geq 1$
\[
w_n=(\xi_1\ast\kappa_1\xi_2 \ast \dots \ast \kappa_1\dots\kappa_{n-1}\xi_n,\kappa_1\dots\kappa_n).
\]
Writing $w_n=(z_n,q_n)$ with $z_n \in \mathfrak{n}$ and $q_n \in Q$, we have
\[
w_{n}=(z_{n-1} \ast q_{n-1}\xi_n,q_{n-1}\kappa_{n}).
\]

Let $\mu_Q$ be the pushforward of $\mu$ by the projection $\pi_Q:N\rtimes Q \to Q$. Consider the decomposition of the abelianisation $\mathfrak{m}^1=V\oplus W$ where $V$ is the space of $\Ad(Q)$-invariant vectors and $W$ its orthogonal complement. As we may replace $Q$ with its smallest closed subgroup containing the semigroup generated by the support of $\mu_Q$, we may and do assume that the subspace $V$ is the set of vectors fixed by almost all elements in the support of $\mu_Q$.

\begin{lemma}\label{lem:spectralgap}
The restriction to $W$ of the operator $\mathrm{id}-\Ad(\mu_Q)$ is invertible. In particular, there exists $\kappa_\mu>0$ such that
\begin{equation}\label{eq:gap}
\forall w \in W, \quad \left\|\left(\mathrm{id}-\Ad(\mu_Q)\right)^{-1}w\right\| \le \frac{1}{\kappa_\mu}\|w\|.
\end{equation}
\end{lemma}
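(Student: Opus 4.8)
The statement asserts that $\mathrm{id} - \Ad(\mu_Q)$ is invertible on $W$, where $\Ad(\mu_Q) = \int_Q \Ad(\kappa)\, d\mu_Q(\kappa)$ is the averaged adjoint operator and $W$ is the orthogonal complement in $\mathfrak{m}^1$ of the subspace $V$ of $\Ad(Q)$-vectors. First I would observe that $\Ad(\mu_Q)$ preserves the decomposition $\mathfrak{m}^1 = V \oplus W$: since $\|\cdot\|_e$ is $Q$-invariant, each $\Ad(\kappa)$ is orthogonal, and $V$ is $\Ad(Q)$-invariant, so $W = V^\perp$ is $\Ad(Q)$-invariant as well; averaging preserves both. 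Thus it suffices to show that $\Ad(\mu_Q)$ has no eigenvalue $1$ on $W$, equivalently that $1$ is not in the spectrum, equivalently (by finite-dimensionality) that the operator $\mathrm{id} - \Ad(\mu_Q)|_W$ is injective.

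\textbf{Key step: no fixed vectors in $W$.} Suppose $w \in W$ satisfies $\Ad(\mu_Q)w = w$, i.e. $\int_Q \Ad(\kappa)w\, d\mu_Q(\kappa) = w$. Taking the $Q$-invariant inner product with $w$ gives $\int_Q \langle \Ad(\kappa)w, w\rangle\, d\mu_Q(\kappa) = \|w\|_e^2$. Since each $\Ad(\kappa)$ is orthogonal, Cauchy--Schwarz yields $\langle \Ad(\kappa)w, w\rangle \le \|w\|_e^2$ pointwise, so equality in the integral forces $\langle \Ad(\kappa)w, w\rangle = \|w\|_e^2$ for $\mu_Q$-almost every $\kappa$, and the equality case of Cauchy--Schwarz (with both vectors of equal norm) gives $\Ad(\kappa)w = w$ for $\mu_Q$-almost every $\kappa$. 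By the reduction in the paragraph preceding the lemma, $V$ is precisely the set of vectors fixed by almost every element in the support of $\mu_Q$; hence $w \in V$. But $w \in W = V^\perp$, so $w = 0$. This proves $\mathrm{id} - \Ad(\mu_Q)$ is injective on $W$, hence invertible by finite-dimensionality.

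\textbf{The quantitative bound.} Once invertibility on the finite-dimensional space $W$ is established, the operator norm of $(\mathrm{id} - \Ad(\mu_Q))^{-1}|_W$ is finite; call its reciprocal $\kappa_\mu > 0$. Then \eqref{eq:gap} is immediate: for $w \in W$, writing $u = (\mathrm{id}-\Ad(\mu_Q))^{-1}w$, which again lies in $W$, we have $\|u\| \le \kappa_\mu^{-1}\|(\mathrm{id}-\Ad(\mu_Q))u\| = \kappa_\mu^{-1}\|w\|$. (Here $\|\cdot\| = \varphi$; since $\varphi$ and $\|\cdot\|_e$ are equivalent norms on the finite-dimensional space $\mathfrak{m}^1$, the constant $\kappa_\mu$ can be taken with respect to either, up to adjusting it.)

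\textbf{Main obstacle.} The only subtle point is the passage from "$\Ad(\mu_Q)w = w$ implies $\Ad(\kappa)w = w$ for $\mu_Q$-a.e.\ $\kappa$" to "$w \in V$": this relies on having already reduced $Q$ to the smallest closed subgroup containing the semigroup generated by $\supp(\mu_Q)$ and on the definition of $V$ as the fixed vectors of almost every element of $\supp(\mu_Q)$, both of which are set up in the excerpt just before the lemma. Everything else (orthogonality of $\Ad(\kappa)$, the Cauchy--Schwarz equality case, finite-dimensional invertibility) is routine. I would also remark that $\Ad(\mu_Q)|_W$ being a (strict, in the averaged sense) contraction gives $\kappa_\mu$ concretely, though the bare spectral argument already suffices for the stated existence claim.
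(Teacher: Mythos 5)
Your proof is correct, but it takes a genuinely different route from the paper's. You argue qualitatively: $\Ad(\mu_Q)$ preserves $W$, the Cauchy--Schwarz equality case shows that any $\Ad(\mu_Q)$-fixed vector in $W$ is fixed by $\mu_Q$-almost every $\kappa$ (hence, via the reduction of $Q$ to the closed subgroup generated by $\supp(\mu_Q)$, lies in $V\cap W=\{0\}$), and then finite-dimensionality upgrades injectivity of $\mathrm{id}-\Ad(\mu_Q)$ on $W$ to invertibility with a finite operator norm for the inverse, which is all the statement asks for. The paper instead proves a uniform lower bound directly: by compactness of the unit sphere $W_1$ of $W$ there is a single $\varepsilon>0$ such that every unit $w\in W$ satisfies $\mu_Q\{q:\|qw-w\|\ge\varepsilon\}\ge\varepsilon$, and a projection estimate (if $\|qw-w\|\ge\varepsilon$ then the component of $qw$ along $w$ is at most $1-\varepsilon^2/10$) then gives $\|w-\Ad(\mu_Q)w\|\ge\varepsilon^3/10$ uniformly on $W_1$. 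The two arguments exploit the same mechanism (orthogonality of the $Q$-action plus the fact that no nonzero vector of $W$ is fixed by $\mu_Q$-a.e.\ group element), and neither produces an explicit formula for $\kappa_\mu$; the paper's version isolates the constant as $\varepsilon^3/10$ with $\varepsilon$ coming from compactness, which makes the lower bound on $\mathrm{id}-\Ad(\mu_Q)$ visible without passing through the inverse, while yours is shorter and makes the "no fixed vectors in $W$" step cleanest via the equality case of Cauchy--Schwarz. One small point worth making explicit in your key step: from "$\Ad(\kappa)w=w$ for $\mu_Q$-a.e.\ $\kappa$" you should note that the fixed set of $w$ is closed, hence contains all of $\supp(\mu_Q)$, and therefore the closed subgroup it generates, which is $Q$ after the reduction; this is exactly the passage you flag as the subtle point, and with it your argument is complete. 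Also, on $\mathfrak{m}^1$ the norm $\varphi$ is chosen to coincide with $\|\cdot\|_e$ (last clause of Proposition~\ref{prop:normsubadd}), so no adjustment of $\kappa_\mu$ between the two norms is actually needed.
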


We call this number $\kappa_\mu$ the spectral constant of the operator $\Ad(\mu_Q)$ on $W$. Note that $\kappa_\mu$ is positive even when the operator does not admit a spectral gap. It could be for instance that $\mu_Q$ is a Dirac mass, in which case the spectrum of $\Ad(\mu_Q)$ is supported on the unit circle.

\begin{proof}
Let us denote $W_1$ the set of vectors of norm $1$ in $W$. Any vector in $W_1$ admits $\varepsilon>0$ and a neighbourhood all of whose elements $w$ satisfy $\mu_Q\{q \in Q: \|qw-w\|\ge \varepsilon\}\ge \varepsilon$. By compacity, this property holds with a uniform $\varepsilon>0$ for all $w$ in $W_1$. Let $p$ be the orthogonal projection onto the vector space spanned by $w$. For $q \in Q$, denote $p(qw)=\lambda(q)w$. Of course $\lambda(q)\le 1$. Moreover as the norm is euclidean, the inequality $\|qw-w\|\ge \varepsilon$ implies $\lambda(q) \le 1-\varepsilon^2/10$. Integrating with respect to $\mu_Q$, we get $\lambda(\Ad(\mu_Q))\le 1-\varepsilon^3/10$. This implies $\|w-\Ad(\mu_Q)w\|\ge \varepsilon^3/10$ for any $w \in W_1$, whence the lemma.
\end{proof}


We will also consider the minimal radius of a ball containing the $\mathfrak{n}$-projection of the support of $\mu$, which we denote by
\[
R_\mu:=\sup\left\{\|\pi_{\mathfrak{n}}(g)\|:g \in \mathrm{supp}(\mu)\right\},
\]
where $\pi_{\mathfrak{n}}:\mathfrak{n}\rtimes Q \to \mathfrak{n}$ is the natural projection.



\subsection{Centering the nilpotent part}

Consider the map $\pi_{\mathrm{ab}}^{G}: G \to G_{\mathrm{ab}}^\R$, where $G_{\mathrm{ab}}^\R=G/[G,G] \otimes \R$ is the ``torsion free abelianisation".. The drift of the random walk is the vector $\overline{v}_\mu:=\mathbb{E}(\pi_{\mathrm{ab}}^{G}\mu) \in G_{\mathrm{ab}}^\R$. The random walk is centred, or also $G$-centred, if $\overline{v}_\mu=0$.
Let $\pi_{\mathrm{ab}}^{\mathfrak{n}}: N \rtimes Q \to \mathfrak{m}_1 \subset \mathfrak{n}$ given by $\pi_{\mathrm{ab}}^{\mathfrak{n}}(\xi,\kappa)=\xi_1$, where $\xi=\xi_1+\dots+\xi_c$ is the decompostion (\ref{eq:decomp}). The random walk is $\mathfrak{n}$-centred if  $\mathbb{E}\pi_{\mathrm{ab}}^{\mathfrak{n}}\mu=0$. 
The $\mathfrak{n}$-centred random walks are $G$-centred, but the converse is true only up to conjugacy. For  $y \in \mathfrak{n}$, we denote by $c_y$ the conjugacy by $(y,1)$.

\begin{lemma}\label{lem:conjugate}
Let $\mu$ be a compactly supported probability measure on $G=N\rtimes Q$ as above. Then there exists $v_\mu \in \mathfrak{m}_1$ which is $\Ad(Q)$-invariant and there exists some $y \in \mathfrak{n}$ such that 
\[
\mathbb{E}\pi_{\mathrm{ab}}^{\mathfrak{n}}c_y\mu=v_\mu.
\]
The norms are bounded by $\|v_\mu\|\le R_\mu$ and  $\|y\|\le \frac{R_\mu}{\kappa_\mu}$.

When $\mu$ is $G$-centred, one has $v_\mu=0$.
\end{lemma}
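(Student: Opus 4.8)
The plan is to find the right conjugating vector $y$ by solving a fixed-point-type equation in the abelianisation, after decomposing according to $\mathfrak{m}_1 = V \oplus W$. First I would compute how conjugation by $(y,1)$ acts on the abelianised mean. For $g = (\xi,\kappa)$ one has $c_y(g) = (y,1)(\xi,\kappa)(-y,1) = (y \ast \kappa\xi \ast (-\kappa y), \kappa)$ (using that $(y,1)^{-1} = (-y,1)$ and $\kappa(-y) = -\kappa y$). Passing to $\pi_{\mathrm{ab}}^{\mathfrak{n}}$, i.e. projecting the BCH product onto $\mathfrak{m}_1$, all bracket terms die and we get $\pi_{\mathrm{ab}}^{\mathfrak{n}}c_y(g) = \xi_1 + (\mathrm{id} - \Ad(\kappa))\,y_1$, where $\xi_1 = \pi_{\mathrm{ab}}^{\mathfrak{n}}(g)$ and $y_1$ is the $\mathfrak{m}_1$-component of $y$ (the higher components of $y$ contribute nothing to the abelianisation). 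Taking expectation over $\mu$:
\[
\mathbb{E}\,\pi_{\mathrm{ab}}^{\mathfrak{n}} c_y\mu = \mathbb{E}\,\pi_{\mathrm{ab}}^{\mathfrak{n}}\mu + (\mathrm{id} - \Ad(\mu_Q))\,y_1.
\]

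Next, write $m := \mathbb{E}\,\pi_{\mathrm{ab}}^{\mathfrak{n}}\mu \in \mathfrak{m}_1$ and decompose $m = m_V + m_W$ along $V \oplus W$. Since $\mathrm{id} - \Ad(\mu_Q)$ preserves $V \oplus W$, kills $V$ (all elements of $\mathrm{supp}(\mu_Q)$ fix $V$), and is invertible on $W$ by Lemma~\ref{lem:spectralgap}, I set $y := -(\mathrm{id} - \Ad(\mu_Q))^{-1} m_W \in W \subset \mathfrak{m}_1$. Then the displayed identity gives $\mathbb{E}\,\pi_{\mathrm{ab}}^{\mathfrak{n}} c_y\mu = m_V =: v_\mu$, which lies in $V$ and is therefore $\Ad(Q)$-invariant by definition of $V$. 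For the norm bounds: $\|v_\mu\| = \|m_V\| \le \|m\| \le \mathbb{E}\|\pi_{\mathrm{ab}}^{\mathfrak{n}}(g)\| \le R_\mu$ (the abelianisation projection onto $\mathfrak{m}_1$ is a norm-1 orthogonal projection, and $\|\cdot\|$ restricted to $\mathfrak{m}_1$ is euclidean), and $\|y\| = \|(\mathrm{id}-\Ad(\mu_Q))^{-1} m_W\| \le \kappa_\mu^{-1}\|m_W\| \le \kappa_\mu^{-1} R_\mu$ by (\ref{eq:gap}). Finally, if $\mu$ is $G$-centred, then the image of $m$ in $G_{\mathrm{ab}}^{\mathbb{R}}$ vanishes; since $V$ maps injectively into $G_{\mathrm{ab}}^{\mathbb{R}}$ — the $\Ad(Q)$-coinvariants of $\mathfrak{m}_1$ are exactly what survives in $G/[G,G]\otimes\mathbb{R}$, and $V$ are the $\Ad(Q)$-invariants which the euclidean structure identifies with the coinvariants — we get $v_\mu = m_V = 0$.

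The main obstacle I anticipate is the bookkeeping in the last two claims: verifying cleanly that $V$ (the $\Ad(Q)$-invariant vectors in the abelianisation of $\mathfrak{n}$) injects into $G_{\mathrm{ab}}^{\mathbb{R}} = G/[G,G]\otimes\mathbb{R}$, so that $G$-centredness forces $v_\mu = 0$, and conversely that the essential average is genuinely $\Ad(Q)$-invariant rather than merely invariant under the semigroup generated by $\mathrm{supp}(\mu_Q)$ — this is where the earlier reduction ``$V$ is the set of vectors fixed by almost all elements in the support of $\mu_Q$'' and the replacement of $Q$ by the closed subgroup it generates is used. Everything else is the routine BCH computation showing conjugation acts on $\mathfrak{m}_1$ by $\mathrm{id} - \Ad(\kappa)$, plus one application of Lemma~\ref{lem:spectralgap}.
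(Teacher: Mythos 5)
Your proposal is correct and follows essentially the same route as the paper: conjugation acts on the abelianised mean by $\mathrm{id}-\Ad(\mu_Q)$, one splits $\mathfrak{m}_1=V\oplus W$, solves for $y\in W$ using Lemma~\ref{lem:spectralgap}, and takes $v_\mu$ to be the $V$-component, with the same norm bounds. The only differences are minor: your displayed conjugation formula has a typo (it should read $(y\ast\xi\ast(-\kappa y),\kappa)$, though the abelianised identity you then use is the correct one, up to the immaterial choice of sign convention for $c_y$), and for the centred case the paper, rather than invoking injectivity of $V\to G_{\mathrm{ab}}^{\R}$, constructs the homomorphism $G\to V$, $(\xi,q)\mapsto\pi_V(\xi_1)$, and factors it through $G_{\mathrm{ab}}^{\R}$ to get a linear map $\psi$ with $v_\mu=\psi(\E\pi_{\mathrm{ab}}^{\R}\mu)$ — the same identification as yours, argued in the direction that avoids any structure theory of $G_{\mathrm{ab}}^{\R}$.
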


Replacing $\mu$ by $c_y\mu$ only affects distances involving the random walk by an additive constant depending only on $R_\mu$ and $\kappa_\mu$. 

\begin{proof} Let $\pi_{\mathrm{ab}}^{\mathfrak{n}} : \mathfrak{n} \rtimes Q \to \mathfrak{m}^1= \mathfrak{n}/[\mathfrak{n},\mathfrak{n}]$ the projection to the abelianization, which we identify with the orthogonal complement $\mathfrak{m}_1$ of $[\mathfrak{n},\mathfrak{n}]$.
As 
\[
(-y,1)(x,q)(y,1)=(x-y+\Ad(q)y,q),
\]
we have $\mathbb{E}(\pi_{\mathrm{ab}}^{\mathfrak{n}}c_y\mu)=\mathbb{E}(\pi_{\mathrm{ab}}^{\mathfrak{n}}\mu)+\Ad(\mu_Q)y-y$.  Consider the orthogonal decomposition $\mathfrak{m}_1=V\oplus W$ with $V$ the $\Ad(Q)$ invariant vectors. Under this decomposition, we denote $\mathbb{E}(\pi_{\mathrm{ab}}^{\mathfrak{n}}\mu)= v_\mu+w_\mu$. Note that $\|v_\mu\| \le R_\mu$ and $\|w_\mu\|\le R_\mu$. Now take $y:=(\mathrm{id}-\Ad(\mu_Q))^{-1}(w_\mu)$. The bound on its norm follows from (\ref{eq:gap}). Then we have $\mathbb{E}(\pi_{\mathrm{ab}}^{\mathfrak{n}}c_y\mu)=v_\mu$. 

When moding out $\mathfrak{m}^1$ by $W$, we get a map $G\to V\times Q\to V$  sending $(\xi_1+\dots+\xi_c,q)$ to $\pi_V(\xi_1)$. As it factorises $G\to G_{\mathrm{ab}}^\R$, we get a linear map $\psi:G_{\mathrm{ab}}^{\mathbb{R}}\to V$. It follows that when $\mu$ is centred $v_\mu=\mathbb{E}(\pi_V(\mu))=\psi(\mathbb{E}\pi_{\mathrm{ab}}^{\mathbb{R}}\mu)=0$. 
\end{proof}

\subsection{Gaussian concentration for an adapted filtration norm}

To our probability measure $\mu$ on $G$, we associate the vector $v_\mu \in \mathfrak{n}$ given by Lemma~\ref{lem:conjugate}. In particular, $v_\mu=0$ when $\mu$ is centred.
Consider as in Definition~\ref{not:Fv}, the weight filtration $\mathfrak{F}_\mu$ given by $\mathfrak{n}^{(0)}=\mathfrak{n}^{(1)}=\mathfrak{n}$,
and for $i\ge1$,
\[
\mathfrak{n}^{(i+1)}=\left[\mathfrak{n},\mathfrak{n}^{(i)}\right]+\left[v_{\mu},\mathfrak{n}^{(i-1)}\right].
\]
By Proposition~\ref{prop:nested}, the filtration depends only on the class of $v_{\mu}$ in $\mathfrak{n}/[\mathfrak{n},\mathfrak{n}]$. When the random walk is $\mathfrak{n}$-centred, this is simply the lower central series.

Our goal is the following theorem.

\begin{theorem}\label{thm:norm} Let $\mu$ be a compactly supported measure on $G=N \rtimes Q$. Denote $w_n$ the random walk at time $n$. Then the family
\[
\left( \frac{1}{\sqrt{n}}\max_{k\le n}\left(\left|w_k\exp\left(-kv_{\mu}\right)\right|_{\mathfrak{F}_\mu}\right) \right)_{n\in \mathbb{N}}
\]
has subgaussian concentration, i.e. there exists $c_1,c_2>0$ such that
\[
\sup_{n\in \mathbb{N}} \mathbb{P} \left( \frac{1}{\sqrt{n}}\max_{k\le n}\left(\left|w_k\exp\left(-kv_{\mu}\right)\right|_{\mathfrak{F}_\mu}\right) \ge t\right) \le c_2e^{-c_1t^2}.
\]
The constants $c_1,c_2$ depend only on the Lie group via its dimension $d=\mathrm{dim}(N)$ and degree of nilpotency $s$, and on the measure $\mu$ via  the radius $R_\mu$, and the spectral constant $\kappa_\mu$ .
\end{theorem}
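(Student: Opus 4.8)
The proof proceeds by a double induction: an outer induction on the degree of nilpotency $s$ of $\mathfrak{n}$, and, within the two-step case (which is the crux), an inner induction that reduces concentration along all times to concentration along times which are powers of $2$. I would set up the notation first: writing $w_n = (z_n, q_n)$ as in \S\ref{subsec:notations}, the corrected process is $\widetilde{z}_n := z_n \ast (-n v_\mu)$ (using that $v_\mu$ is $Q$-invariant so $q\cdot v_\mu = v_\mu$, and it is central modulo $[\mathfrak n,\mathfrak n]$). Decompose $\widetilde z_n = \widetilde z_n^{(1)} + \dots + \widetilde z_n^{(c)}$ along the weighted filtration $\mathfrak F_\mu$; by the definition \eqref{def:norm} of the homogeneous norm and Fact~\ref{fact:concentration-moments}, subgaussian concentration of $\max_{k\le n} |\widetilde z_k|_{\mathfrak F_\mu}/\sqrt n$ is equivalent to asking that, for each filtration level $i$, the family $\bigl(\max_{k\le n}\varphi(\widetilde z_k^{(i)})^{1/i}/\sqrt n\bigr)_n$ is subgaussian, i.e.\ that $\bigl(\max_{k\le n}\varphi(\widetilde z_k^{(i)})/n^{i/2}\bigr)_n$ is $2/i$-concentrated. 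Using Fact~\ref{def:vectorvalued} this reduces further to scalar coordinates.

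\textbf{Base case.} When $N$ is abelian, $\widetilde z_n$ is (after the conjugation of Lemma~\ref{lem:conjugate}) a sum of independent, bounded, mean-zero increments in $V$, and $q_{n-1}$-rotated increments in $W$. For the $V$-part it is a genuine martingale with increments bounded by $R_\mu$, so Azuma–Hoeffding (Proposition~\ref{prop:AH}) plus Doob's maximal inequality (Proposition~\ref{prop:MaxConcentrated}) gives the subgaussian bound. For the $W$-part one uses Lemma~\ref{lem:spectralgap}: writing the partial sums telescopically against $(\mathrm{id}-\Ad(\mu_Q))^{-1}$ and exploiting that $\Ad$ is an isometry, one again obtains a bounded-increment martingale up to a bounded corrector, so the same argument applies, with the constant absorbing $1/\kappa_\mu$.

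\textbf{Two-step case.} This is where the real work is. I would prove concentration by the Laplace/moment characterisations of \S\ref{sec:prelimConcentration}. By homogeneity, the claim for level $i=2$ amounts to: each central coordinate $\langle\widetilde z_n^{(2)}, u\rangle$ is subexponential (uniformly over $n$ and over unit $u$), after renormalising by $n$. It suffices to prove this for $n=2^m$. Using $w_{2n} = w_n\cdot(w_n^{-1}w_{2n})$ and the BCH formula of \S\ref{sec:BCH} in the two-step group, the center coordinate at time $2n$ splits as a sum of two independent copies of the center coordinate at time $n$, plus a single commutator term $\tfrac12[a, b]$ where $a, b$ are abelianisation increments over the two halves — and that commutator is controlled, via the constant $C_\varphi\le 1$ (Notation after Proposition~\ref{prop:normsubadd}) and Fact~\ref{lem:multicomm}, by the \emph{already-proven} level-$1$ (abelian) concentration, so it is a product of two subgaussians, hence subexponential by Proposition~\ref{prop:products}. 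The decisive observation is the scaling: dividing by $(2n)^{1/2} = 2^{1/2} n^{1/2}$, the process at time $2n$ is $2^{-1/2}$ times a sum of two terms each concentrated at scale $n^{1/2}$; since $2^{-1/2} > 1/2$, Corollary~\ref{cor:convexSubExp} and Remark~\ref{rk:lambda} let us write the renormalised process as a genuine convex combination of two $(K',b')$-subexponential families for a uniformly chosen $K'$ (the gap between $2^{-1/2}$ and $1/2$ gives room to absorb the commutator defect and close the recursion at a fixed $K'$, $b'$). This yields subexponentiality along powers of $2$; interpolating to general $n$ and invoking Doob to pass to the maximum finishes the two-step case.

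\textbf{Higher step and obstacle.} For $s\ge 3$ the argument is the same in spirit but, as the excerpt notes, technically lighter: one uses the $p$-moment characterisation (Proposition~\ref{prop:p-ConcentrationInequalities}) together with Corollary~\ref{cor:2/c} to multiply concentration exponents, the BCH formula to express $\widetilde z_{2n}^{(i)}$ in terms of lower-level quantities at times $n$ (iterated brackets of abelianisation increments, whose concentration exponents add up correctly against the filtration weights by Fact~\ref{lem:multicomm} and Lemma~\ref{lem:commute}), and the dilations $D^{\mathfrak F}_r$ to read off the right renormalisation $n^{i/2}$. The main obstacle — and the place requiring genuine care — is keeping the subexponential constants $(K, b)$ (resp.\ the moment constant $C$) \emph{uniform in $n$} as one runs the doubling recursion: each step contributes both a scaled copy of the previous constant and an additive commutator defect, and one must verify that the contraction factor strictly below $1$ dominates, so the recursion has a finite fixed point depending only on $d$, $s$, $R_\mu$, $\kappa_\mu$. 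A secondary bookkeeping point is that the conjugation by $y$ from Lemma~\ref{lem:conjugate} shifts the walk by a bounded amount (norm $\le R_\mu/\kappa_\mu$), which only changes all norms by an additive constant and hence does not affect concentration; this should be disposed of at the outset.
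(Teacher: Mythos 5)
Your proposal follows essentially the same route as the paper's proof: induction on the filtration level (abelian level via Azuma--Hoeffding, level two via the Laplace characterisation with the doubling identity, the commutator controlled by Proposition~\ref{prop:products}, and the $2^{-1/2}>1/2$ convexity trick closing the recursion at fixed $(K,b)$; levels $\ge 3$ via $p$-moments, BCH, Lemma~\ref{lem:commute} and Corollary~\ref{cor:2/c}, with the contraction $2^{1/c-1/2}<1$), plus the reduction to powers of $2$, Doob's maximal inequality, and the conjugation of Lemma~\ref{lem:conjugate}. The only cosmetic deviation is in the abelian base case, where you propose a resolvent/corrector telescoping for the $W$-component, while the paper's conjugation by $y$ already centres it so that the whole level-one process is a bounded-increment martingale; both are equivalent and the constants depend on $\kappa_\mu$ either way.
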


We stress the fact that $\mu$ is not assumed to be non-degenerate. Note that the norm $\left|\cdot\right|_{\mathfrak{F}_\mu}$
depends on the weight filtration $\mathfrak{F}_\mu$, which is determined by the drift $v_{\mu}$. With a slight abuse of notation, we still denote  $|\cdot|_{\mathfrak{F}_\mu}$ the norm on $\mathfrak{n}$ is pushed forward to $G$ by setting $|(\xi,\kappa)|_{\mathfrak{F}_\mu}:=|\xi|_{\mathfrak{F}_\mu}$.

\begin{remark}\label{rk:balistic}
The dependance of the constant on the spectral constant $\kappa_\mu$ is necessary. For instance consider the group $\mathbb{R}\rtimes \mathbb{Z}/2\mathbb{Z}$ with measure $\mu_\varepsilon=(1-\varepsilon)\delta_{(1,0)}+\varepsilon\delta_{(0,1)}$. Then at time $n$ the random walk lies in $[-n,n]\times \mathbb{Z}/2\mathbb{Z}$ and takes value $(n,0)$ with probability $(1-\varepsilon)^n$. Thus for small enough parameter $\varepsilon>0$, we have
\[
\mathbb{P}_{\mu_\varepsilon}\left( \frac{1}{\sqrt{n}} |w_n|_{\mathfrak{F}_\mu} \ge \frac{\sqrt{n}}{2}\right) \ge \frac{1}{2},
\]
preventing the existence of uniform constants $c_1,c_2>0$.
\end{remark}

\begin{remark}\label{rk:algnorm}
One can also state the concentration inequality of Theorem~\ref{thm:norm} with the homogeneous norm $|\cdot|_{\mathfrak{F}_\mu,\|\cdot\|_e}$ associated with the original $Q$-invariant euclidean norm~$\|\cdot\|_e$. However this raises a further dependence on the bilinearity constant $C_{\|\cdot\|_e}$ because of the first point of Proposition~\ref{prop:normsubadd}. This dependence is necessary. For instance on the first Heisenberg group with $\mathfrak{n}=\langle e_1,e_2,e_3|[e_1,e_2]=e_3\rangle$ taking euclidean norms of the form $\sqrt{x_1^2+x_2^2+\alpha x_3^2}$ with $\alpha$ tending to infinity prevents from having a uniform bound.
\end{remark}

Let us now proceed to the proof of Theorem~\ref{thm:norm}. We first assume that the average $\mathbb{E}\pi_{\mathrm{ab}}^{\mathfrak{n}}\mu=:v_\mu$ in $\mathfrak{m}_1$ is $\mathrm{Ad}(Q)$-invariant. When $\mu$ is centred, the random variable $g_{1}\ldots g_{k}\exp\left(-kv_{\mu}\right)$ is a random walk as we have $v_\mu=0$ according to Lemma~\ref{lem:conjugate}, but in general, it is only a product of centred independent increments.

As the adjoint $Q$-action preserves $v_\mu$, we have 
\[
g_j \exp(-v_\mu)=\exp(\xi_j)\kappa_j\exp(-v_\mu)=\exp(\xi_j\ast(-\kappa_jv_\mu))\kappa_j=\exp(\xi_j\ast(-v_\mu))\kappa_j=\exp(\zeta_j)\kappa_j
\]
 by setting $\zeta_j:=\xi_j\ast(-v_\mu)$. The Baker-Campbell-Hausdorff formula ensures that $\zeta_j=\xi_j-v_\mu \mod [\mathfrak{n},\mathfrak{n}]$, whence is centred. The sequence $(\zeta_j)$ is i.i.d. with compactly supported distribution.

Recall the notation $q_j=\kappa_1\dots\kappa_j$. We have 
\begin{align*}
g_{1}\ldots g_{n}\exp\left(-nv_{\mu}\right) & = \prod_{j=1}^n \exp\left((j-1)v_\mu\right)g_j \exp\left(-jv_\mu\right) \\
& = \prod_{j=1}^n \exp\left((j-1)v_\mu\right)\exp(\zeta_j)\kappa_j \exp\left(-(j-1)v_\mu\right) \\
& = \left( \prod_{j=1}^n \exp\left((j-1)v_\mu\right)\exp(q_{j-1}\zeta_j) \exp\left(-(j-1)v_\mu\right) \right) q_n.
\end{align*}
Thus if we denote $g_{1}\ldots g_{n}\exp\left(-nv_{\mu}\right)=(y_n,q_n)$ with $y_n=z_n-nv_\mu \in \mathfrak{n}$, we have :
\begin{align}\label{eq:increment}
y_n=y_{n-1}\ast (n-1)v_\mu \ast q_{n-1}\zeta_n\ast \left( -(n-1)v_\mu\right),
\end{align}
for some centred i.i.d. compactly supported random variables $(\zeta_k)$.

For $v \in \R v_\mu$ and $\zeta\in \mathfrak{n}$, using the BCH formula and that $[v,\zeta]\in \mathfrak{n}^{(3)}$, we obtain that
\begin{align}\label{eq:modn4}
v\ast\zeta\ast(-v) = \zeta+[v,\zeta] \mod \mathfrak{n}^{(4)}.
\end{align}

\begin{remark}\label{rk:power2}
It is sufficient to prove Theorem~\ref{thm:norm} in the case $n$ is a power of $2$. Indeed, if $\frac{1}{\sqrt{2n}}\max_{k\le 2n}X_k$ is subgaussian with constants $c_1,c_2$, then for any $n\le p \le 2n$ the variable $\frac{1}{\sqrt{p}}\max_{k\le p}X_k$ is subgaussian with
\[
\mathbb{P}\left(\frac{1}{\sqrt{p}}\max_{k\le p}X_k \ge s\right)\le 
\mathbb{P}\left(\frac{1}{\sqrt{2n}}\max_{k\le 2n}X_k \ge \frac{s}{\sqrt{2}}\right) \le c_2e^{-\frac{c_1}{2}s^2}.
\]
\end{remark}

According to this remark, it will be useful to notice that
\begin{align*}
(y_{2n},q_{2n}) & =g_1\dots g_n \exp(-nv_\mu)\exp(nv_\mu)g_{n+1}\dots g_{2n}\exp(-2nv_\mu) \\
&= (y_n,q_n)\exp(nv_\mu)(y'_n,q'_n)\exp(-nv_\mu)
\end{align*}
where $(y'_n,q'_n)$ is an independent sample. We get
\begin{align}\label{eq:y2n}
(y_{2n},q_{2n})=(y_n \ast nv_\mu\ast q_ny'_n,\ast(-nv\mu), q_nq'_n).
\end{align}

\subsection{Proof of concentration in adapted norm}

Denote $\theta_i:\mathfrak{n}\to \mathfrak{m}_i$ the orthogonal projection, giving the decomposition $y=\theta_1(y)+\dots+\theta_c(y)$ for any $y \in \mathfrak{n}$.
By definition of the norm $|\cdot|_{\mathfrak{F}_\mu}$, see (\ref{def:norm}), it is sufficient to show that for each $c\ge 1$ the family
\begin{align}\label{eq:thetac}
\left( \frac{1}{\sqrt{n}}\max_{k\le n}\|\theta_c(y_k)\|^{\frac{1}{c}} \right)_{n\in \mathbb{N}} \quad  \textrm{ is subgaussian.}
\end{align}

We proceed by induction on $c$.
The induction starts in the abelianization of $\mathfrak{n}$. In this case, (\ref{eq:increment}) immediately gives
\[
\theta_1(y_n)=\sum_{k=1}^n q_{k-1}\theta_1(\zeta_k)
\]
where $(\zeta_k)$ are compactly supported i.i.d. centred. Then $\theta_1(y_n)$ is a martingale with bounded increments. We get (\ref{eq:thetac}) for $c=1$ by the Azuma-Hoeffding inequality recalled in Proposition~\ref{prop:AH}. The constants involved depend only on $R_\mu$ and $d$.


We now prove the case $c=2$.
 
 \begin{lemma}\label{lem:theta2}
 The family
 \[
\left(  \frac{1}{\sqrt{n}} \max_{k\le n} \|\theta_2(y_k)\|^{\frac{1}{2}} \right)_{n \in \mathbb{N}}
 \]
 has subgaussian concentration, with constants depending only on $d,R_\mu$, and $\kappa_\mu$.
 \end{lemma}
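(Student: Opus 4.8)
The plan is to prove this via the Laplace-transform characterisation of subexponential random variables (Proposition~\ref{prop:SubexpSubGauss}). By Fact~\ref{fact:concentration-moments}, the family $\left(\frac{1}{\sqrt{n}}\max_{k\le n}\|\theta_2(y_k)\|^{1/2}\right)$ is subgaussian if and only if $\left(\frac{1}{n}\max_{k\le n}\|\theta_2(y_k)\|\right)$ is subexponential. By Fact~\ref{def:vectorvalued}, it suffices to control $\frac{1}{n}\max_{k\le n}\langle\theta_2(y_k),u\rangle$ for each unit vector $u\in\mathfrak{m}_2$, and since we are aiming for a two-sided bound it is enough to treat the linear functionals $X_k := \langle\theta_2(y_k),u\rangle$. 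By Remark~\ref{rk:power2} we may restrict to times $n$ that are powers of $2$, and by Proposition~\ref{prop:MaxConcentrated}-style maximal arguments (or by a direct Doob argument after correcting the drift) it suffices to control $\frac{1}{n}X_n$ rather than the running maximum — though since $X_n$ need not be a martingale, the cleanest route is to prove a Laplace bound for $\frac{1}{n}X_n$ uniformly in $n$ and separately upgrade to the maximum using that $X_n$ is a sum of a martingale plus a controlled term.

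\textbf{Main computation.} From the increment formula (\ref{eq:increment}) together with (\ref{eq:modn4}), and using the BCH formula truncated modulo $\mathfrak{n}^{(3)}$ (which lands in $\mathfrak{m}_2$'s complement after projection — more precisely $\theta_2$ kills anything in $\mathfrak{n}^{(3)}$), one finds that $\theta_2(y_n)$ is a sum over $k\le n$ of i.i.d.-type increments $\theta_2(q_{k-1}\zeta_k)$ plus a ``quadratic'' contribution built from commutators $[\theta_1(y_{k-1}), q_{k-1}\zeta_k]$ and $[(k-1)v_\mu, q_{k-1}\zeta_k]$, each projected to $\mathfrak{m}_2$. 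The first, martingale part is handled by Azuma–Hoeffding exactly as in the $c=1$ case. The term $\sum_k [(k-1)v_\mu, q_{k-1}\theta_1(\zeta_k)]$ is, after pairing with $u$, a martingale with increments of size $O(k R_\mu^2)$ — but when divided by $n$ and maximized, a direct Azuma bound gives increments summing like $\sum_{k\le n}(k/n)^2 \sim n/3$, which still yields subgaussian concentration for $\frac{1}{n}(\cdot)$ (i.e.\ subexponential), with constants depending only on $R_\mu$, $\|v_\mu\|\le R_\mu$, and $d$. The genuinely new term is $\sum_k \langle[\theta_1(y_{k-1}), q_{k-1}\zeta_k], u\rangle$: conditionally on the past, its $k$-th increment is centred with conditional size bounded by $C_{\|\cdot\|}\,\|\theta_1(y_{k-1})\|\,R_\mu \le \|\theta_1(y_{k-1})\| R_\mu$ (using $C_{\|\cdot\|}\le 1$). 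This is a martingale whose increment bound is itself random, controlled by $\|\theta_1(y_{k-1})\|$, which by the $c=1$ case is $O(\sqrt{k})$-subgaussian.

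\textbf{The key point and main obstacle.} The heart of the matter is showing that a martingale with increments bounded by $M_k\cdot\xi_k$ where $\xi_k$ is a bounded increment and $M_k = \|\theta_1(y_{k-1})\|$ is $\sqrt{k}$-subgaussian, divided by $n$, is subexponential uniformly in $n$. I would do this by conditioning: on the event $\{M_k \le \lambda\sqrt{k}\text{ for all }k\le n\}$, which has probability $\ge 1 - c_2 e^{-c_1\lambda^2}$ by the $c=1$ concentration and a union bound over dyadic blocks, Azuma–Hoeffding applied to the stopped martingale gives that $\frac{1}{n}\sum_k M_k\xi_k$ is subgaussian with variance proxy $\lesssim \lambda^2 \sum_{k\le n} k / n^2 \sim \lambda^2/2$, hence $\frac{1}{n}(\cdot)$ is subgaussian, which means $\frac{1}{\sqrt n}\sqrt{|\cdot|}$ — no wait: $\frac1n(\cdot)$ subgaussian is what we want recorded as ``$\theta_2$-coordinate over $n$ is subexponential''. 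Combining the bad-event probability $c_2e^{-c_1\lambda^2}$ with the conditional Gaussian tail $e^{-c t^2/\lambda^2}$ and optimizing over $\lambda \sim \sqrt t$ yields a subexponential tail $e^{-c't}$ for $\frac1n\max_{k\le n}|X_k|$, equivalently subgaussian for $\frac{1}{\sqrt n}\max_{k\le n}\|\theta_2(y_k)\|^{1/2}$ — which is exactly the claim. The only subtlety to watch is that all constants must be traced to depend solely on $d$, $R_\mu$ and (through $\|v_\mu\|\le R_\mu$ and the range of $\zeta_k$) $\kappa_\mu$; the argument above does this since every estimate used only those quantities, the dimension entering through Fact~\ref{def:vectorvalued}.
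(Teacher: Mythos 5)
Your overall route is genuinely different from the paper's: you unroll $\theta_2(y_n)$ into a sum of bounded-increment terms plus a cross-term martingale whose increments are bounded by the random quantity $\|\theta_1(y_{k-1})\|$, and then use stopping/truncation at level $\lambda\sqrt{k}$ plus optimisation in $\lambda$; the paper instead never unrolls, but runs a dyadic doubling induction on $n=2^k$ using the identity $\theta_2(y_{2n})^0=\theta_2(y_n)^0+\theta_2(q_ny_n')^0+\tfrac12[\theta_1(y_n),\theta_1(q_ny_n')]$, the Laplace-transform characterisation, and the convexity trick that $\tfrac1{\sqrt2}(\alpha_n+\beta_n)$ stays $(K,b)$-subexponential while $\tfrac1{\sqrt2}>\tfrac12$ leaves room to absorb the commutator via Corollary~\ref{cor:convexSubExp}. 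Your scheme can be made to work, but as written it has one step that is simply false and one that loses uniformity in $n$.

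The false step is your treatment of the term $\sum_k[(k-1)v_\mu,q_{k-1}\theta_1(\zeta_k)]$. A martingale whose $k$-th increment is of size $O(kR_\mu^2)$ has, after division by $n$, Azuma variance proxy of order $\sum_{k\le n}(k/n)^2\sim n/3$, which gives a tail $\exp(-ct^2/n)$ — this is \emph{not} a bound uniform in $n$, so it does not yield subexponential concentration for $\tfrac1n(\cdot)$ as you claim. If this term really contributed at that size, the lemma at scale $\sqrt n$ would fail for non-centred walks. The resolution, which you half-state and then do not use, is that the term contributes nothing: by definition of the filtration $\mathfrak{F}_{v_\mu}$ one has $[v_\mu,\mathfrak{n}]\subset\mathfrak{n}^{(3)}$, and $\theta_2$ is the orthogonal projection onto the complement of $\mathfrak{n}^{(3)}$ in $\mathfrak{n}^{(2)}$, so $\theta_2([kv_\mu,\cdot])=0$; this is precisely why the adapted filtration is introduced, and you should discard the term rather than bound it. A second, fixable gap: your union bound over dyadic blocks for the event $\{\|\theta_1(y_{k-1})\|\le\lambda\sqrt k\ \forall k\le n\}$ produces a factor $\log_2 n$ in the bad-event probability, so the constants are not uniform in $n$ as stated; you need a peeling argument with block-dependent thresholds (e.g.\ multiplying the threshold in block $j$ by a factor growing slowly in $\log_2 n-j$, which only changes the variance proxy by a constant while making the bad probabilities summable). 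Finally, note that $\theta_2(q_{k-1}\zeta_k)$ is bounded but \emph{not} centred ($\zeta_k$ is centred only in the abelianisation), so the ``first, martingale part'' must be split into a drift of size at most $E=\sup_\zeta\|\theta_2(\zeta)\|$ per step plus a bounded-increment martingale — this is exactly the $(E,K,b)$ bookkeeping the paper performs before its induction.
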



\begin{proof}[Proof of Lemma~\ref{lem:theta2}]
By induction for $c=1$ we assume that 
\begin{align}\label{eq:theta1}
\left( \frac{1}{\sqrt{n}}\max_{k\le n}\|\theta_1(y_k)\| \right)_n \quad  \textrm{ is subgaussian.}
\end{align}

Equivalently to the lemma, we prove that the family $\left(\frac{1}{n}\max_{k\le n}\|\theta_2(y_n)\|\right)_n$ is subexponential. According to Proposition~\ref{prop:SubexpSubGauss}, we have to show that there exist $E,K,b>0$ such that for all unit vector $u \in \mathfrak{m}^{(2)}$, 
\begin{align}\label{eq:scalu}
\left(\frac{1}{n} \max_{k \le n} \langle \theta_2(y_k) |u \rangle \right)_n \quad \textrm{ is }E,K,b\textrm{-subexponential.}
\end{align}

First (\ref{eq:increment}), (\ref{eq:modn4}) and BCH formula give
\[
\theta_2(y_n)=\theta_2(y_{n-1})+\theta_2(q_{n-1}\zeta_n)+\frac{1}{2}\left[\theta_1(y_{n-1}),\theta_1(q_{n-1}\zeta_n)\right]
\]
by induction.
As $\zeta_n$ is centred, we have $\mathbb{E}\left[\theta_1(y_{n-1}),\theta_1(q_{n-1}\zeta_n)\right]=0$. It follows that 
\[
\mathbb{E}\theta_2(y_n)=\mathbb{E}\theta_2(y_{n-1})+\mathbb{E}\theta_2(q_{n-1}\zeta_n)=\sum_{k=1}^n \mathbb{E}\theta_2(q_{k-1}\zeta_k).
\]
We get $\frac{1}{n}\mathbb{E}\theta_2(y_n) \le E=\sup_{q,\zeta}\|\theta_2(q\zeta)\|=\sup_\zeta\|\theta_2(\zeta)\|$. Here the supremum is taken over $q\in Q$ and $\zeta \in \mathrm{supp}(\pi^\mathfrak{n}\mu)-v_\mu$. It does not depend on $Q$ as its action preserves the norm and the filtration.  
The set $\mathrm{supp}(\pi^\mathfrak{n}\mu)-v_\mu$ is compact, bounded in terms of $R_\mu$ and~$\kappa_\mu$.

Given a vector valued random variable $Z$, we denote $Z^0=Z-\E(Z)$ the recentred random variable. There remains to show the existence of $K,b>0$ such that $\left(\frac{1}{n}\theta_2(y_n)^0\right)_n$ is $(K,b)$-subexponential. According to Remark~\ref{rk:power2}, it is sufficient to do it for $n=2^k$ which we do by induction on $k$. 

Now by (\ref{eq:y2n}), (\ref{eq:modn4}) and the BCH formula, we have
\begin{align*}
\theta_2(y_{2n})=\theta_2(y_n)+\theta_2(q_ny'_n)+\frac{1}{2}\left[ \theta_1(y_n),\theta_1(q_n y'_n)\right].
\end{align*}
As $\theta_1(y_n)$ is centred, we get
\begin{align}\label{eq:th2y}
\theta_2(y_{2n})^0=\theta_2(y_n)^0+\theta_2(q_ny'_n)^0+\frac{1}{2}\left[ \theta_1(y_n),\theta_1(q_n y'_n)\right].
\end{align}

As we chose a norm with bilinearity constant $C_{\|\cdot\|} \le 1$, we have
\[
\left\|\frac{1}{2}\left[ \theta_1(y_n),\theta_1(q_n y'_n)\right]\right\| \le \frac{1}{2} \left\| \theta_1(y_n)\right\| . \left\| \theta_1(q_ny'_n)\right\|.
\]

By assumption (\ref{eq:theta1}), the sequence $\frac{1}{\sqrt{n}}\theta_1(y_n)$, and hence the sequence $\frac{1}{\sqrt{n}}\theta_1(q_ny'_n)$ as well, is subgaussian, so their product is subexponential. By Proposition~\ref{prop:products}, this gives $K_1,b_1>0$, depending on $d,R_\mu$, such that
\begin{align}\label{eq:K1b1}
\left( \frac{1}{2n}\left[ \theta_1(y_n),\theta_1(q_n y'_n)\right] \right)_n \textrm{ is }(K_1,b_1)-\textrm{subexponential.}
\end{align}

Consider $K\ge K_1/4(1-\frac{1}{\sqrt{2}})^2$ and $b < b_1 2(1-\frac{1}{\sqrt{2}})$. We prove by induction that when $n=2^k$, the random variable $\frac{1}{n} \theta_2(y_n)^0$ is $(K,b)$-subexponential.
(Initiation is done by choosing also $K$ large and $b$ small enough that it holds for $\theta_2(\zeta)$, which has finite support, depending on $R_\mu$.)

Assume now that this is true for $n$ and consider (\ref{eq:th2y}). To ease notations denote $\alpha_n:=\frac{1}{n}\theta_2(y_n)^0$ and $\beta_n:= \frac{1}{n}\theta_2(q_ny'_n)^0$. We have for all $|t|\le b$ and all unit $u \in \mathfrak{m}_{2}$:
\[
\mathbb{E}\exp\left(t \langle \alpha_n ,u\rangle\right) \le \exp(Kt^2)
\]
and also, as the $Q$ adjoint action on $\mathfrak{n}$ preserves the norm, conditioning by the values of the $n$ first increments $g_1,\dots,g_n$, we have
\[
\mathbb{E}\left[\exp\left(t \langle \beta_n ,u\rangle\right)|g_1,\dots,g_n\right]=\mathbb{E}\left[\exp\left(t\left\langle\frac{1}{n}\theta_2(y'_n)^0,q_n u\right\rangle\right)|g_1,\dots,g_n\right] \le \exp(Kt^2).
\]

Our key observation is that being ``essentially'' independent, $\alpha_n$ and $\beta_n$ are such that 
$\frac{1}{\sqrt{2}}(\alpha_n+\beta_n)$ is $(K,b)$-subexponential. 
Let us check this point. For all $|t|\leq b$
\begin{eqnarray*}
\E\left[\exp\left(t\left\langle\frac{\alpha_n+\beta_n}{\sqrt{2}},u\right\rangle\right) \mid g_1,\dots,g_n\right]& = & \exp\left(t\left\langle\frac{\alpha_{n}}{\sqrt{2}},u\right\rangle\right) \E\left[\exp\left(t\left\langle\frac{\beta_{n}}{\sqrt{2}},u\right\rangle\right) \mid g_1,\dots,g_n\right]\\
                                                            & \leq &  \exp\left(t\left\langle\frac{1}{\sqrt{2}}\alpha_{n},u\right\rangle\right)\exp(Kt^2/2)\\
\end{eqnarray*}
Hence we deduce that 
\[\E\exp\left(t\left\langle\frac{\alpha_n+\beta_n}{\sqrt{2}},u\right\rangle\right)\leq \exp(Kt^2/2)\exp(Kt^2/2)=\exp(Kt^2).\]

We are going to exploit the fact that 
\[
\frac{1}{2}(\alpha_n+\beta_n)=\frac{1}{\sqrt{2}}\left(\frac{1}{\sqrt{2}}(\alpha_n+\beta_n)\right).
\]
To lighten notation, let $\eta=\frac{1}{\sqrt{2}}$. Equation (\ref{eq:th2y}) can be rewritten as
 \[
 \frac{1}{2n}\theta_2(y_{2n})^0= \eta\left(\frac{1}{\sqrt{2}}(\alpha_n+\beta_n)\right)+(1-\eta)\left(\frac{1}{4(1-\eta)n}\left[\theta_1({y}_{n}),\theta_1(q_{n}y'_n)\right]\right).
 \]
 
By (\ref{eq:K1b1}) and Remark~\ref{rk:lambda}, we have that $\frac{1}{4(1-\eta)n}\left[\theta_1({y}_{n}),\theta_1(q_{n}y'_n)\right]$ is $(\frac{K_1}{4(1-\eta)^2},2(1-\eta)b_1)$-subexponential. The choice of $K,b$ and Corollary \ref{cor:convexSubExp} imply that $\frac{1}{2n}\theta_2(y_{2n})^0$ is $(K,b)$-subexponential, finishing the induction.

We have proved that $\left(\frac{1}{2^k}\theta_2(y_{2^k})^0\right)_n$ is subexponential. As $\theta_2(y_n)^0$ is a martingale, Proposition~\ref{prop:MaxConcentrated} ensures that $\left(\frac{1}{2^k}\max_{j\le 2^k}\theta_2(y_j)^0\right)_n$ is subexponential. This proves the lemma for $n$ a power of $2$, which is sufficient by Remark~\ref{rk:power2}. Proposition~\ref{prop:MaxConcentrated} and Remark~\ref{rk:power2} modify the constants of concentration only by universal multiplicative factors.
\end{proof}

There remains to treat the case $c\ge 3$.


\begin{lemma}\label{lem:theta3y} For each $c\ge 1$, the family
\[
\left(\frac{1}{\sqrt{n}} \max_{k \le n}\|\theta_c(y_k)\|^{\frac{1}{c}}\right)_n
\]
is subgaussian, with constants depending only on $d, R_\mu, \kappa_\mu$.
\end{lemma}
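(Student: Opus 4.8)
The plan is to prove the statement by induction on $c$, using the two already-established base cases $c=1$ (Azuma--Hoeffding) and $c=2$ (Lemma~\ref{lem:theta2}). Fix $c\ge 3$ and assume the conclusion holds for all $c'<c$, i.e. the families $\bigl(\frac{1}{\sqrt n}\max_{k\le n}\|\theta_{c'}(y_k)\|^{1/c'}\bigr)_n$ are subgaussian for $c'<c$, or equivalently by Fact~\ref{fact:concentration-moments} that $\bigl(\frac{1}{n^{c'/2}}\max_{k\le n}\|\theta_{c'}(y_k)\|\bigr)_n$ is $\frac{2}{c'}$-concentrated. As in the $c=2$ case, by Fact~\ref{def:vectorvalued} and Fact~\ref{fact:concentration-moments} it suffices to show that for every unit vector $u\in\mathfrak{m}_c$ the family $\bigl(\frac{1}{n^{c/2}}\max_{k\le n}\langle\theta_c(y_k),u\rangle\bigr)_n$ is $\frac{2}{c}$-concentrated; and by Proposition~\ref{prop:MaxConcentrated} (since $\theta_c(y_n)^0$ is a martingale — the new contributions beyond $\theta_c(y_{n-1})$ have conditional mean whose nonzero part consists of lower-order drift terms that can be subtracted off into the deterministic correction, exactly as in the $c=2$ proof) it is enough to control $\frac{1}{n^{c/2}}\theta_c(y_n)$ itself, and by Remark~\ref{rk:power2} only along $n=2^k$.

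The main computation is to expand $\theta_c(y_{2n})$ using \eqref{eq:y2n} and the BCH formula (Section~\ref{sec:BCH}) together with Lemma~\ref{lem:commute}, which records the filtration weight of an iterated bracket. One gets
\[
\theta_c(y_{2n}) = \theta_c(y_n) + \theta_c(q_n y'_n) + \sum (\text{iterated brackets of }\theta_{i}(y_n)\text{'s and }\theta_{j}(q_ny'_n)\text{'s and copies of }v_\mu),
\]
where in each bracket term the filtration-weight bookkeeping of Lemma~\ref{lem:commute} forces the total weight of the participating factors to be exactly $c$, with at least two genuine (non-$v_\mu$) factors of strictly positive weight $<c$. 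Using Fact~\ref{lem:multicomm} (bilinearity constant $\le 1$), each such bracket term is bounded in norm by a product $\prod_r \|\theta_{i_r}(y_n)\|$ (resp. with primed copies) over indices $i_r\ge 1$ with $\sum i_r = c$ and at least two indices present. Dividing by $n^{c/2}$, each factor $\|\theta_{i_r}(y_n)\|/n^{i_r/2}$ is $\frac{2}{i_r}$-concentrated by the induction hypothesis, so by Corollary~\ref{cor:2/c} (or Proposition~\ref{prop:products}) the product is $\frac{2}{\sum i_r}=\frac{2}{c}$-concentrated; equivalently, taking the square-root-type power, the corresponding family is $(K_1,b_1)$-subexponential after raising to the appropriate power, with constants depending only on $d,R_\mu,\kappa_\mu,s$. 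Meanwhile $\frac{1}{n^{c/2}}\theta_c(q_n y'_n)^0$ is, conditionally on $g_1,\dots,g_n$, an independent copy (up to the norm-preserving $Q$-action) of $\frac{1}{n^{c/2}}\theta_c(y_n)^0$, so exactly as in Lemma~\ref{lem:theta2} the average $\frac{1}{\sqrt2}(\alpha_n+\beta_n)$ of the two copies $\alpha_n=\frac{1}{n^{c/2}}\theta_c(y_n)^0$, $\beta_n=\frac{1}{n^{c/2}}\theta_c(q_ny'_n)^0$ inherits the same subexponential (or $\frac2c$-concentration) constant as $\alpha_n$, and $\frac{1}{(2n)^{c/2}}\theta_c(y_{2n})^0 = 2^{-c/2}(\alpha_n+\beta_n)+(\text{error terms})$ with $2^{-c/2}\le 2^{-3/2}<\frac12$, leaving slack $\eta:=2^{-c/2}$, $1-\eta$ for a convex-combination argument via Corollary~\ref{cor:convexSubExp} (at the level of $p$-moments / Laplace transform after raising to power $c/2$, exactly as Corollary~\ref{cor:convexSubExp} and Remark~\ref{rk:lambda} are used in Lemma~\ref{lem:theta2}). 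Choosing $K,b$ so that $K$ dominates $K_1$ divided by the gap $(1-\eta)^2$ and $b$ is correspondingly small, one closes the induction on $k$; the base of that inner induction is the finitely-supported variable $\theta_c(\zeta)$.

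The passage from $2n$ back to all $n$, from $\theta_c(y_n)$ to its running maximum, and from the recentred $\theta_c(y_n)^0$ back to $\theta_c(y_n)$ (the deterministic drift correction grows only polynomially and is absorbed into $kv_\mu$, just as in the $c=2$ case) then yields the stated subgaussian concentration of $\frac{1}{\sqrt n}\max_{k\le n}\|\theta_c(y_k)\|^{1/c}$. The main obstacle, and the only place real care is needed, is the weight bookkeeping: one must verify that every bracket term produced by iterating \eqref{eq:y2n} and BCH has total filtration weight $c$ with at least two genuinely nonconstant factors of weight $<c$, so that Corollary~\ref{cor:2/c} always applies with a sum of exponents equal to $c$ and never collapses to a single factor of weight $c$ (which would give no gain); Lemma~\ref{lem:commute} together with the third point of Proposition~\ref{prop:nested} ($\mathfrak{n}^{(i)}\subseteq\gamma_{\lceil i/2\rceil}(\mathfrak{n})$-type control) is precisely what guarantees this, because any bracket involving a factor from $\mathfrak{n}^{(c)}$ and anything else lands in $\mathfrak{n}^{(c+1)}=\{0\}$, so a weight-$c$ output must come from strictly lower-weight inputs.
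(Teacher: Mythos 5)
Your central computation is the same as the paper's: expand $\theta_c(y_{2n})$ via \eqref{eq:y2n} and the BCH formula, use Lemma~\ref{lem:commute} to see that every bracket term landing in $\mathfrak{m}_c$ has total weight exactly $c$ with all individual weights $\le c-1$, bound it by Fact~\ref{lem:multicomm}, and apply Corollary~\ref{cor:2/c} together with the induction hypothesis to conclude that the error term $\max_{k,l\le n}\|z_{k,l}\|/n^{c/2}$ is $\tfrac2c$-concentrated. That part is right and is exactly the paper's claim \eqref{eq:Cpz}. However, the way you close the argument has two genuine gaps. First, your reduction "it is enough to control $\frac{1}{n^{c/2}}\theta_c(y_n)$ itself" rests on the assertion that $\theta_c(y_n)^0$ is a martingale for $c\ge 3$, with only a deterministic drift to subtract. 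This is false in general: already for $c=3$ the conditional expectation of the increment given $g_1,\dots,g_{n-1}$ contains the term $\tfrac12[\theta_1(y_{n-1}),q_{n-1}\mathbb{E}\theta_2(\zeta)]$, which is random (it depends on $y_{n-1}$ and $q_{n-1}$), typically of size $\sqrt{n}$, hence neither deterministic nor negligible at the scale $n^{3/2}$; so Proposition~\ref{prop:MaxConcentrated} is not available. The paper avoids Doob entirely for $c\ge3$ by keeping $\max_{k\le n}$ inside the doubling induction: from \eqref{eq:ykl} and subadditivity of $x\mapsto x^{1/c}$ one gets $\max_{j\le 2n}\|\theta_c(y_j)\|^{1/c}\le 2^{1/c}\max_{k\le n}\|\theta_c(y_k)\|^{1/c}+\max_{k,l\le n}\|z_{k,l}\|^{1/c}$, and one inducts on this inequality directly.

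Second, the mechanism you propose to close the doubling induction -- the $\frac{1}{\sqrt2}(\alpha_n+\beta_n)$ averaging and the convex combination via Corollary~\ref{cor:convexSubExp} and Remark~\ref{rk:lambda}, "exactly as in Lemma~\ref{lem:theta2}" -- does not transfer to $c\ge3$: those tools are Laplace-transform statements valid only for subexponential/subgaussian families, and a variable with stretched-exponential tail of exponent $\tfrac2c<1$ has infinite moment generating function, so Proposition~\ref{prop:SubexpSubGauss} has no analogue here (and raising to the power $c/2$ goes the wrong way; the $2/c$-th power destroys the additive structure of the recursion). The correct, and in fact simpler, route -- the one the paper takes -- is the $p$-moment characterisation of Proposition~\ref{prop:p-ConcentrationInequalities}: taking $L^p$ norms in the displayed maximal inequality gives the recursion $\|\cdot\|_p \le \bigl(2^{\frac1c-\frac12}C+2^{-\frac12}C_1\bigr)p^{\frac12}$, which closes for any $C\ge C_1/(2^{1/2}-2^{1/c})$ precisely because $2^{\frac1c-\frac12}<1$ when $c\ge3$; this coarser slack replaces the delicate $\tfrac1{\sqrt2}>\tfrac12$ Laplace trick that was genuinely needed at $c=2$, where the corresponding coefficient is not smaller than one. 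With these two repairs (carry the max through the induction instead of invoking a martingale property, and run the induction on $p$-moments rather than on Laplace transforms), your plan becomes the paper's proof.
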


\begin{proof} 
We assume by induction that the lemma holds up to $c-1$. In particular, we assume that for all $i \le c-1$, the family $\left(\frac{1}{\sqrt{n}} \max_{k \le n}\|\theta_i(y_k)\|^{\frac{1}{i}}\right)_n$ is subgaussian.

Similarly to (\ref{eq:y2n}), we have $y_{k+l}=y_k \ast kv_\mu\ast  q_k y_l \ast(-kv_\mu)$ where $y_k$ and $y_l$ denote independent samples at times $k$ and $l$.

The BCH formula gives that
\begin{align}\label{eq:ykl}
\theta_c(y_{k+l})=\theta_c(y_k)+\theta_{c}(q_k y_l) + z_{k,l},
\end{align}
where $z_{k,l}$ is formaly a finite linear combination (with explicit coefficients) of commutators of the form $[u_1,\dots,[u_{m-1},u_m]]$ with $m\ge 2$ where the factors $u_j$ are either random variables $\theta_{i_j}(y_k)$ or $\theta_{i_j}(q_ky_l)$, or are deterministic taking value $ k v_\mu$. 

For instance, for $c=3$ we can compute explicitly using (\ref{eq:modn4})
\[
z_{k,l}=[kv_\mu, \theta_1(q_k y_l)])+\frac{1}{2}[\theta_1(y_{k}),\theta_2(q_{k}y_l)]+\frac{1}{2}[\theta_2(y_{k}),\theta_1(q_{k}y_l)].
\]

For each iterated commutator of the form $[u_1,\dots,[u_{m-1},u_m]]$, we define
\[
i_j :=\left\{ \begin{array}{ll} 
i & \textrm{ if } u_j \in \{\theta_i(y_k),\theta_i(q_ky_l)\} \\
2 & \textrm{ if } u_j =kv_\mu
\end{array}\right.
\]
By Lemma~\ref{lem:commute}, the commutator $[u_1,\dots,[u_{m-1},u_m]]$ belongs to $\mathfrak{m}_{c}$ if and only if $\sum_{j=1}^m i_j=c$. In particular, we  have $i_j \le c-1$ for all $j$. For such a given iterated commutator, set $J:=\{1\le j \le m: u_j=kv_\mu\}$.

We first claim that $\left(\frac{1}{\sqrt{n}}\max_{k,l\le n}\|z_{k,l}\|^{1/c}\right)$ is subgaussian, or equivalently that the family 
\[
\left(\frac{1}{n^{c/2}}\max_{k,l\le n}\|z_{k,l}\|\right)\quad \textrm{ is }\frac{2}{c}\textrm{-concentrated.}
\]
By the above, we have that $\max_{k,l\le n}\|z_{k,l}\|$ is bounded above by a finite sum with explicit coefficients of iterated commutators $[u_1,\dots,[u_{m-1},u_m]]$ with factors satisfying $\|u_j\|\le n\|v_\mu\|$ for $j \in J$, and for $j\notin J$:
\[
\|u_j\| \le \max \left(\max_{k\le n}\|\theta_{i_j}(y_k)\|, \max_{k,l\le n}\|\theta_{i_j}(q_ky_l)\|  \right).
\]

By induction, we have that
$\left(\frac{1}{\sqrt{n}} \max_{k \le n}\|\theta_i(y_k)\|^{\frac{1}{i}}\right)$ is subgaussian, or equivalently that 
\[
\left(\frac{1}{n^{i/2}}\max_{k \le n} \|\theta_i(y_k)\| \right) \quad \textrm{ is }\frac{2}{i} \textrm{-concentrated}
\]
and for the same reasons
\[
\left(\frac{1}{n^{i/2}}\max_{k,l \le n} \|\theta_i(q_ky_l)\|\right) \quad \textrm{ is }\frac{2}{i} \textrm{-concentrated.}
\]
It follows that $\frac{u_j}{n^{i_j/2}}$ is $\frac{2}{i_j}$-concentrated.

By Lemma~\ref{lem:multicomm} with $C_{\|\cdot\|}\le 1$, we have
\[
\frac{1}{n^{c/2}}\|[u_1,\dots,[u_{m-1},u_m]]\| \le \prod_{j=1}^m \frac{\|u_j\|}{n^{i_j/2}} \le \|v_\mu\|^{\#J}\prod_{j \notin J} \frac{\|u_j\|}{n^{i_j/2}}.
\]
By Corollary~\ref{cor:2/c}, the left-hand side is $2/(\sum_{j \notin J}i_j)$-concentrated. A fortiori, it is $2/c$-concentrated. We have proved that $\frac{1}{\sqrt{n}}\|[u_1,\dots,[u_{m-1},u_m]]\|^{1/c}$ is subgaussian. By finite sum, we get our claim that $\left(\frac{1}{\sqrt{n}}\max_{k,l\le n}\|z_{k,l}\|^{1/c}\right)$ is subgaussian. The constants involved depend only on the previous step of induction. By Proposition~\ref{prop:p-ConcentrationInequalities}, there is $C_1$ such that
\begin{align}\label{eq:Cpz}
\forall p \ge 1, \quad \left\|\frac{1}{\sqrt{n}}\max_{k,l\le n}\|z_{k,l}\|^{1/c}\right\|_p \le C_1 p^{\frac{1}{2}}.
\end{align}

We are now ready to prove the lemma for $n=2^k$ by induction on $k$, which is sufficient by Remark~\ref{rk:power2}. There remains only to prove that for some $C$ and for all $n$ power of $2$
\begin{align}\label{eq:Cpthc}
\forall p \ge 1, \quad \left\|\frac{1}{\sqrt{n}}\max_{k\le n}\|\theta_c(y_k)\|^{1/c}\right\|_p \le C p^{\frac{1}{2}}
\end{align}
We let $C \ge C_1/(2^{1/2}-2^{1/c})$. We also assume that $C$ is large enough for initialisation at $n=1$, which depends only on $R_\mu$.
Equation (\ref{eq:ykl}) and subadditivity give
\[
\max_{j\le 2n} \|\theta_c(y_j)\|^{\frac{1}{c}} \le 2^{\frac{1}{c}}\max_{k\le n}\|\theta_c(y_k)\|^{\frac{1}{c}}+\max_{k,l\le n}\|z_{k,l}\|^{\frac{1}{c}}
\]

Taking $p$-norms and dividing by $\sqrt{2n}$, we get
\begin{align*}
\left\| \frac{1}{\sqrt{2n}}\max_{j\le 2n}\|\theta_c(y_j)\|^{\frac{1}{c}}\right\|_p & \le 2^{\frac{1}{c}-\frac{1}{2}}\left\| \frac{1}{\sqrt{n}}\max_{k\le n}\|\theta_c(y_k)\|^{\frac{1}{c}}\right\|_p+2^{-\frac{1}{2}}\left\|\frac{1}{\sqrt{n}}\max_{k,l\le n}\|z_{k,l}\|^{\frac{1}{c}}\right\|_p 
\\
& \le \left( 2^{\frac{1}{c}-\frac{1}{2}}C+2^{-\frac{1}{2}}C_1\right)p^{\frac{1}{2}} 
\end{align*}
by induction hypothesis and (\ref{eq:Cpz}). The choice of $C$ gives (\ref{eq:Cpthc}).
\end{proof}

\begin{proof}[Proof of Theorem~\ref{thm:norm}]
We have $\left|w_n\exp\left(-nv_{\mu}\right)\right|_{\mathfrak{F}_\mu}=|y_n|_{\mathfrak{F}_\mu}=\sup_{1\le r \le c} \|\theta_r(y_n)\|^{\frac{1}{r}}$ for some $c\le 2s$.  By (\ref{eq:theta1}), Lemma~\ref{lem:theta2} and Lemma~\ref{lem:theta3y}, this is a supremum of finitely many subgaussian random variables, whence is subgaussian. The constants  also depend on the number of terms, which is at most $2s$. 

At this stage we have proved Theorem~\ref{thm:norm} under the assumption that $\mathbb{E}\pi_{\mathrm{ab}}^{\mathfrak{n}}\mu=:v_\mu$ in $\mathfrak{m}_1$ is $\mathrm{Ad}(Q)$-invariant. In general, we further use Lemma~\ref{lem:conjugate}, which provides $y\in \mathfrak{n}$ of norm at most $R_\mu/\kappa_\mu$ such that $c_y\mu$ satisfies the assumptions. As conjugating by $(y,1)$ only modifies the distances by at most an additive constant bounded by $2R_\mu/\kappa_\mu$, the theorem is valid in full generality. (The same reasoning applies to show that Lemma~\ref{lem:theta2} and~\ref{lem:theta3y} are also valid in full generality.)
\end{proof}

\section{Concentration in word norms}

\subsection{Structure of groups of polynomial growth}

Let $G$ be a compactly generated locally compact group. We assume $G$ has polynomial growth: for some (equivalently any) compact generating neighbourhood $U$ of the identity, there are constants $C,d>0$ such that $\mathrm{Haar}(U^r) \le Cr^d$ for all positive integer~$r$.

Such a group $G$ admits a maximal compact normal subgroup $K$ by Losert~\cite[Prop. 1]{LosII}. Moreover, there exists a connected, simply connected, nilpotent Lie group $N$ and a compact group $Q$ acting faithfully on $N$, together with a homomorphism $\varphi:G/K \hookrightarrow N\rtimes Q$ which is injective and has cocompact closed image, by Losert~\cite[Thm~2]{Losert}. So we have a sequence of maps
\begin{align}\label{eq:maps}
K \hookrightarrow G \overset{\pi}{\twoheadrightarrow} G/K \overset{\varphi}{\hookrightarrow} N \rtimes Q.
\end{align}
Note that $N$ is $s$-nilpotent for some $s\le \sqrt{d}$.
We consider symmetric compact generating neighbourhoods of identity $U$ in $G$, $S$ in $G/K$,  $T$ in $N \rtimes Q$ and $R$ in $N$, and their associated word norms denoted $|\cdot|_U$ in $G$ and similarly for the others. We assume moreover that $S=\pi(U)$, and that $T$ satisfies three conditions : firstly $S \subset T$ (we view $\varphi$ as an embedding and identify $S$ with its image $\varphi(S)$), and secondly $\bar{\Omega} \subset T$ where $\bar{\Omega}\subset N\rtimes Q$ is a compact subset  such that $N\rtimes Q=\mathrm{Im}(\varphi)\bar{\Omega}$. So far this is satisfied by $T_1=S^{\pm1}\cup\bar{\Omega}^{\pm 1}$. We assume thirdly that $T=QRQ$, which can be done by choosing $R\supset \pi_N(T_1)$ for the natural projection $\pi_N: N\rtimes Q \to N$.

\begin{lemma}\label{lem:metricsG}
In the setting above, we have
\begin{enumerate}
\item for all $g \in G$, $|\pi(g)|_S \le |g|_U \le |\pi(g)|_S+\mathrm{diam}_U(K)$,
\item there exists a constant $c_{S,T}$  such that $T^3 \cap \mathrm{Im}(\varphi) \subset S^{c_{S,T}}$, and for all $h \in \mathrm{Im}(\varphi)$, $|h|_T \le |h|_S \le c_{S,T}|h|_T$, 
\item for all $h \in N \rtimes Q$, $|h|_T \le |\pi_N(h)|_R+1.$
\end{enumerate}
Thus $|g|_U \le c_{S,T}|\pi_N\circ\pi(g)|_R+c_{S,T}+\mathrm{diam}_U(K)$.
\end{lemma}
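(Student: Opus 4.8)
The plan is to prove the three numbered items in order, since they are essentially independent, and then combine them at the end. The underlying theme is that each of the three arrows in~\eqref{eq:maps} distorts word metrics in a controlled way: $\pi$ contracts by at most the diameter of~$K$, $\varphi$ is a quasi-isometric embedding because $\mathrm{Im}(\varphi)$ is cocompact, and passing from the $T$-metric to the $R$-metric costs only an additive~$1$ because of the choice $T=QRQ$.

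For item~(1): since $S=\pi(U)$, any word $g=u_1\cdots u_k$ in~$U$ projects to a word $\pi(g)=\pi(u_1)\cdots\pi(u_k)$ in~$S$ of the same length, giving $|\pi(g)|_S\le |g|_U$. Conversely, given a geodesic word $\pi(g)=s_1\cdots s_m$ in~$S$ with $m=|\pi(g)|_S$, lift each $s_i$ to some $u_i\in U$ with $\pi(u_i)=s_i$; then $g(u_1\cdots u_m)^{-1}\in K$, so $g=(u_1\cdots u_m)\cdot k$ with $k\in K$, hence $|g|_U\le m+|k|_U\le |\pi(g)|_S+\mathrm{diam}_U(K)$. (Here one uses that $K\subset G$ is a subgroup, so $\mathrm{diam}_U(K)$ is finite by compactness of~$K$ and that $U$ is a generating neighbourhood of the identity — $K$ is covered by finitely many translates of~$U$.)

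For item~(2): the inclusion $T^3\cap\mathrm{Im}(\varphi)\subset S^{c_{S,T}}$ holds for some finite $c_{S,T}$ because $T^3\cap\mathrm{Im}(\varphi)$ is a compact subset of the closed subgroup $\mathrm{Im}(\varphi)$, and $S$ is a compact generating neighbourhood of the identity in $\mathrm{Im}(\varphi)$ (recall $S=\pi(U)$ generates $G/K\cong\mathrm{Im}(\varphi)$), so finitely many powers of~$S$ cover it. From $S\subset T$ we get $|h|_T\le|h|_S$ immediately. For the reverse inequality, take $h\in\mathrm{Im}(\varphi)$ with $|h|_T=\ell$ and write $h=t_1\cdots t_\ell$ with $t_i\in T$; group the factors in blocks and insert elements of $\bar\Omega$: more precisely, use cocompactness to write each partial product $t_1\cdots t_i$ as $h_i\omega_i$ with $h_i\in\mathrm{Im}(\varphi)$ and $\omega_i\in\bar\Omega$, with $h_0=e$, $h_\ell=h$; then $h_{i-1}^{-1}h_i=\omega_{i-1}t_i\omega_i^{-1}\in T^3\cap\mathrm{Im}(\varphi)\subset S^{c_{S,T}}$, so $|h|_S\le\sum_{i=1}^\ell|h_{i-1}^{-1}h_i|_S\le c_{S,T}\ell=c_{S,T}|h|_T$. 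One should double-check that $\bar\Omega^{\pm1}\subset T$ and $\bar\Omega\bar\Omega^{-1}\cdot T\subset T^3$ so the bookkeeping with the constant $c_{S,T}=$ (the covering number of $T^3\cap\mathrm{Im}(\varphi)$ by powers of~$S$) is clean; this is the step requiring the most care.

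For item~(3): given $h=(n,q)\in N\rtimes Q$, write $h=\pi_N(h)\cdot q$ where $\pi_N(h)=n\in N$; since $T=QRQ\supset R$ and $T\supset Q$ (as $Q$ is compact and contained in~$T$ — this needs $Q\subset T$, which follows from $T=QRQ$ with $R$ containing the identity), a geodesic word for $n$ in~$R$ of length $|\pi_N(h)|_R$ is also a word in~$T$, and one extra factor from~$Q\subset T$ handles~$q$, giving $|h|_T\le|\pi_N(h)|_R+1$. Finally, chaining the three items: for $g\in G$, apply~(1) to get $|g|_U\le|\pi(g)|_S+\mathrm{diam}_U(K)$; since $\pi(g)\in G/K\cong\mathrm{Im}(\varphi)$, apply the right half of~(2) to get $|\pi(g)|_S\le c_{S,T}|\varphi(\pi(g))|_T$; then apply~(3) to $h=\varphi(\pi(g))$ to get $|\varphi(\pi(g))|_T\le|\pi_N(\varphi(\pi(g)))|_R+1=|\pi_N\circ\pi(g)|_R+1$ (abusing notation to suppress~$\varphi$); combining yields $|g|_U\le c_{S,T}\bigl(|\pi_N\circ\pi(g)|_R+1\bigr)+\mathrm{diam}_U(K)\le c_{S,T}|\pi_N\circ\pi(g)|_R+c_{S,T}+\mathrm{diam}_U(K)$. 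The only genuine obstacle is the precise constant-tracking in item~(2); everything else is routine use of compactness of $K$, $\bar\Omega$, and $Q$ together with the explicit definitions of the generating sets.
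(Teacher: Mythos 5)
Your proposal is correct and follows essentially the same route as the paper's proof: projecting/lifting words through $\pi$ for item (1), obtaining $c_{S,T}$ from compactness of $T^3\cap\mathrm{Im}(\varphi)$ and telescoping via representatives $t_1\cdots t_j\in h_j\bar\Omega$ so that $h_{j-1}^{-1}h_j=\omega_{j-1}t_j\omega_j^{-1}\in T^3\cap\mathrm{Im}(\varphi)\subset S^{c_{S,T}}$ for item (2), and absorbing the $Q$-part into one extra generator for item (3). The only cosmetic point is that the condition you flag in item (2) should read $\bar\Omega T\bar\Omega^{-1}\subset T^3$ (which is exactly what your computation uses, given $\bar\Omega^{\pm1}\subset T$), so there is no gap.
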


This is well-known but we provide a proof for completeness.

\begin{proof}
The left hand side inequalities are trivial. Assume $|\pi(g)|_S=k$. It gives $s_1\dots s_k=\pi(g)$ for some $s_i=\pi(u_i) \in S=\pi(U)$, so $(u_1\dots u_k)^{-1} g \in K$, proving the first point. For the second point, $T^3$ is compact and $\mathrm{Im}(\varphi)$ closed so the required constant $c=c_{S,T}$ exists. Assume $|h|_T=k$. It gives $h=t_1\dots t_k$ for $t_i\in T$. For each $0\le j \le n$, there is $h_j \in \mathrm{Im}(\varphi)$ such that $t_1\dots t_j \in h_j \bar{\Omega}$. Then $h_j^{-1}h_{j+1} \in \bar{\Omega}t_{j+1}\bar{\Omega}^{-1} \subset T^3$, so $h_j^{-1}h_{j+1} \in S^{c}$. It follows that $h=h_k=\prod(h_j^{-1}h_{j+1}) \in S^{ck}$. In the third point, the term $1$ takes into account the case $h \in Q$. 
\end{proof}

\subsection{Concentration for random walks on groups of polynomial growth}

Let $\tilde{\mu}$ be a compactly supported probability measure on $G$. We let $\mu=\varphi \circ \pi (\tilde{\mu})$ be the associated probability measure on $N\rtimes Q$ via the maps~(\ref{eq:maps}). We denote by $\omega_n$ the time $n$ sample of the random walk of law $\tilde{\mu}$. As in the previous section, $\varphi\circ\pi(\omega_n)=w_n$ denotes the $\mu$-random walk trajectory.

\begin{theorem}\label{thm:wordconcentration}
Let $G$ be a compactly generated locally compact group of polynomial growth, with a compactly supported probability measure $\tilde{\mu}$.
\begin{enumerate}
\item If $\tilde{\mu}$ is centred, then $\left(\frac{1}{\sqrt{n}}\max_{k\le n} |\omega_k|_U \right)_n$ is subgaussian.
\item In general, if $N$ is $s$-nilpotent, there exists a trajectory $(g_n)_{n}$ in $G$ depending only on $\tilde{\mu}$ such that 
\[
\left( \frac{1}{n^{\frac{2s-1}{2s}}} \max_{k\le n}|\omega_kg_k^{-1}|_U \right)_n \quad \textrm{is }\frac{2s}{2s-1}\textrm{-concentrated.}
\]
\end{enumerate}
\end{theorem}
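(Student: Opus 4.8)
The strategy is to reduce everything to the group $N\rtimes Q$ via the word-norm comparison of Lemma~\ref{lem:metricsG}, and then to convert the $\mathfrak{F}_\mu$-homogeneous concentration of Theorem~\ref{thm:norm} into a statement for the lower-central-series norm $|\cdot|_{\mathfrak L,\varphi}$, which by Proposition~\ref{prop:normsubadd} is comparable to the word norm on $N$. Concretely: by the last line of Lemma~\ref{lem:metricsG}, $|g|_U\le c_{S,T}\,|\pi_N(\varphi\pi(g))|_R+O(1)$ for all $g\in G$, so it suffices to bound $|\pi_N(w_kh_k^{-1})|_R$ where $h_k:=\varphi\pi(g_k)$. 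Let $v_\mu\in\mathfrak n$ be the vector of Lemma~\ref{lem:conjugate} (depending only on $\mu$, hence only on $\tilde\mu$). Since $\varphi(G/K)$ is cocompact in $N\rtimes Q$, choose $h_k\in\varphi(G/K)$ within bounded $T$-distance of $\exp(kv_\mu)\in N$ and let $g_k$ be a $\pi$-lift of $\varphi^{-1}(h_k)$; this defines the trajectory. Then $w_kh_k^{-1}=\bigl(w_k\exp(-kv_\mu)\bigr)\cdot\bigl(\exp(kv_\mu)h_k^{-1}\bigr)$ with the second factor in a fixed compact set, and since $\mathrm{Ad}(Q)$ preserves the relevant norm one gets $|\pi_N(w_kh_k^{-1})|_R\le|\exp(y_k)|_R+O(1)$, writing $w_k\exp(-kv_\mu)=(y_k,q_k)$ as in Section~\ref{sec:NQ}; finally $|\exp(y_k)|_R\le C_2(|y_k|_{\mathfrak L,\varphi}+1)$ by Proposition~\ref{prop:normsubadd}. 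So both statements reduce to controlling $\tfrac1{n^{\alpha}}\max_{k\le n}|y_k|_{\mathfrak L,\varphi}$ with $\alpha=\tfrac12$ (centred) or $\alpha=\tfrac{2s-1}{2s}$ (general).

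\textbf{Centred case.} If $\tilde\mu$ is centred then so is $\mu=\varphi\pi(\tilde\mu)$, because $\varphi\pi$ induces an isomorphism on the torsion-free abelianisations of the subgroups generated by the supports ($K$ is compact and $\varphi$ is an embedding). Hence $v_\mu=0$ by Lemma~\ref{lem:conjugate}, so $\mathfrak F_\mu=\mathfrak L$, $g_k$ may be taken trivial, $y_k=\log\pi_N(w_k)$, and $|y_k|_{\mathfrak L,\varphi}=|w_k|_{\mathfrak F_\mu}$. Theorem~\ref{thm:norm} says exactly that $\tfrac1{\sqrt n}\max_{k\le n}|w_k|_{\mathfrak F_\mu}$ is subgaussian, and the reduction above then gives that $\tfrac1{\sqrt n}\max_{k\le n}|\omega_k|_U$ is subgaussian.

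\textbf{General case.} Here $\mathfrak F_\mu=(\mathfrak n^{(i)})_{i\le c}$ differs from $\mathfrak L=(\gamma_i)$, and everything hinges on how its graded pieces sit inside the lower central series. From Proposition~\ref{prop:nested} (the inclusions $\gamma_{i+1}\supseteq\mathfrak n^{(2i)}$ and $\gamma_i\subseteq\mathfrak n^{(i)}$) one deduces $\mathfrak n^{(i)}\subseteq\gamma_{\lfloor i/2\rfloor+1}$ and, taking $i=s$, that $\mathfrak n^{(2s)}=\{0\}$, so $c\le 2s-1$. Hence, for the $\|\cdot\|_e$-orthogonal graded decompositions, $\mathfrak m^j$ is orthogonal to $\mathfrak m_i$ both for $i<j$ (since $\mathfrak m^j\subseteq\gamma_j\subseteq\mathfrak n^{(j)}$) and for $i\ge 2j$ (since $\mathfrak m^j\perp\gamma_{j+1}\supseteq\mathfrak n^{(2j)}\supseteq\mathfrak m_i$), so $\mathfrak m^j\subseteq\bigoplus_{i=j}^{2j-1}\mathfrak m_i$. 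Writing $y_k=\sum_{i=1}^c\theta_i(y_k)$ for the $\mathfrak F_\mu$-graded decomposition, this gives $\|y_k^j\|\le C\sum_{i=j}^{2j-1}\|\theta_i(y_k)\|$ and, after $1/j$-th powers and using $i/j\le i/(\lfloor i/2\rfloor+1)$ on $j\le i\le 2j-1$,
\[
|y_k|_{\mathfrak L,\varphi}\ \le\ C'\Bigl(\max_{1\le i\le c}\|\theta_i(y_k)\|^{1/(\lfloor i/2\rfloor+1)}+1\Bigr).
\]
By Theorem~\ref{thm:norm}, $\tfrac1{\sqrt n}\max_{k\le n}\|\theta_i(y_k)\|^{1/i}$ is subgaussian for every $i$; raising to the power $i/(\lfloor i/2\rfloor+1)$ and using Fact~\ref{fact:concentration-moments}, the family $\tfrac1{n^{\alpha_i}}\max_{k\le n}\|\theta_i(y_k)\|^{1/(\lfloor i/2\rfloor+1)}$ is $\tfrac1{\alpha_i}$-concentrated, with $\alpha_i=\tfrac{i}{2(\lfloor i/2\rfloor+1)}$. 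An elementary computation shows $\alpha_i\le\tfrac{2s-1}{2s}$ for all $i\le 2s-1$, with equality exactly at the odd top value $i=2s-1$, where $\tfrac1{\alpha_i}=\tfrac{2s}{2s-1}$. Taking $\alpha=\tfrac{2s-1}{2s}$, each $\tfrac1{n^\alpha}\max_{k\le n}\|\theta_i(y_k)\|^{1/(\lfloor i/2\rfloor+1)}$ is $\tfrac{2s}{2s-1}$-concentrated, and a finite sum of such families is again $\tfrac{2s}{2s-1}$-concentrated. With the displayed bound and the reduction step, $\tfrac1{n^{(2s-1)/(2s)}}\max_{k\le n}|\omega_kg_k^{-1}|_U$ is $\tfrac{2s}{2s-1}$-concentrated.

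\textbf{Main obstacle.} The conceptual crux, and the main difficulty, is the observation that a graded piece $\theta_i(y_k)$, although of Euclidean size of order $n^{i/2}$, contributes to the word length only of order $n^{i/(2(\lfloor i/2\rfloor+1))}$, because it lies deep in $\gamma_{\lfloor i/2\rfloor+1}$, and that the worst such exponent over the admissible range $i\le 2s-1$ equals $\tfrac{2s-1}{2s}$, attained at the odd extreme $i=2s-1$. Technically this is the combination of the two filtrations in Proposition~\ref{prop:nested} (getting both $\mathfrak m_i\subseteq\gamma_{\lfloor i/2\rfloor+1}$ and the sharp bound $c\le 2s-1$) with the bookkeeping of concentration exponents through Fact~\ref{fact:concentration-moments}; the remaining ingredients — the word-norm reduction, the construction of $(g_n)$ by cocompactness, and the preservation of centredness — are routine.
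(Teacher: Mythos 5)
Your proof is correct and follows essentially the same route as the paper's: reduction to $N\rtimes Q$ via Lemma~\ref{lem:metricsG} and Proposition~\ref{prop:normsubadd}, construction of $(g_n)$ by cocompactness near $\exp(nv_\mu)$, Theorem~\ref{thm:norm} (Lemma~\ref{lem:theta3y}) for the $\mathfrak{F}_\mu$-graded pieces, and the filtration comparison $\mathfrak{n}^{(i)}\subseteq\gamma_{\lfloor i/2\rfloor+1}(\mathfrak{n})$ from Proposition~\ref{prop:nested}; your bookkeeping by the index $i$ (worst exponent $i/(2(\lfloor i/2\rfloor+1))$, attained at $i=2s-1$) is just a reorganization of the paper's bookkeeping by the lower-central index $j$ (scale $n^{(2j-1)/2}$ for $\sigma_j(y_k)$). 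One cosmetic nit: centredness of $\mu=\varphi\circ\pi(\tilde\mu)$ only requires that the induced map on torsion-free abelianisations is linear (pushforward under a homomorphism), not that it is an isomorphism as you assert.
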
 

The trajectory $(g_n)$ is a sequence of points in $G$ whose images in $N\rtimes Q$ are at bounded distance of  $\exp(nv_\mu)$.

The parameter $s$ appears both in the exponent of concentration and the appropriate scaling. We note that the concentration is always better than subexponential, but the scaling, though always sublinear, does not admit an exponent uniform in $s$.

The constants involved in Theorem~\ref{thm:wordconcentration} depend on compactness of $K$ and cocompactness of the image of $\varphi$, as seen from (\ref{eq:metricsUR}) and (\ref{eq:Ccompact}) below. 

Let us denote $\varphi\circ\pi(\omega_n)=\exp(z_n)q_n$ the image of $\omega_n$ in $N\rtimes Q$.
Lemma~\ref{lem:metricsG} gives
\begin{align}\label{eq:metricsUR}
|\omega_n|_U \le C (|\exp(z_n)|_R+1) \quad \textrm{with }C=\max(c_{S,T},\mathrm{diam}_U(K)).
\end{align}
By Proposition~\ref{prop:normsubadd}, we get a constant $C_2$ such that, we get
\begin{align}\label{eq:normsUL}
|\omega_n|_U \le C(C_2+C_2|z_n|_{\mathcal{L}}) \le C'(|z_n|_{\mathcal{L}}+1)
\end{align}
for some constant $C'$.

\begin{proof}[Proof of part 1. of Theorem~\ref{thm:wordconcentration}]
If $\tilde{\mu}$ is centred, then so is $\mu$. We apply Theorem~\ref{thm:norm} with $v_\mu=0$. In particular, $\mathcal{F}_{v_\mu}=\mathcal{L}$ coincides with the lower central series and $y_n=z_n-nv_\mu=z_n$.  By Lemma~\ref{lem:theta3y}, we have
\[
\mathbb{P}\left(\frac{1}{\sqrt{n}}\max_{k\le n} |\omega_k|_U\ge t\right) \le \mathbb{P}\left(\frac{1}{\sqrt{n}}\max_{k\le n} |y_k|_{\mathcal{L}}\ge \frac{t-1}{C'}\right)\le c_2\exp\left(-\frac{c_1}{C'^2} (t-1)^2\right)
\]
\end{proof}

In general, the filtration $\mathcal{F}_{v_\mu}$  does not coincide with the lower central series. We take into account the distorsion for the word metric by using Proposition~\ref{prop:nested}. Recall that we have two filtrations of $\mathfrak{n}$ denoted $\mathcal{F}_{\mu}=(\mathfrak{n}^{(i)})_{i=1}^c$ and $\mathcal{L}=(\gamma_j(\mathfrak{n}))_{j=1}^s$. We get two orthogonal decompositions of the Lie algebra
\[
\mathfrak{n}=\mathfrak{m}_{1}\oplus\dots\oplus\mathfrak{m}_{c}=\mathfrak{m}^{1}\oplus\dots\oplus\mathfrak{m}^{s},
\]
where $\mathfrak{m}_{i}\simeq \mathfrak{n}^{(i)}/\mathfrak{n}^{(i+1)}$ is the orthogonal complement of $\mathfrak{n}^{(i+1)}$ in $\mathfrak{n}^{(i)}$, and $\mathfrak{m}^{j}\simeq \gamma_j(\mathfrak{n})/\gamma_{j+1}(\mathfrak{n})$ is the orthogonal complement of $\gamma_{j+1}(\mathfrak{n})$ in $\gamma_j(\mathfrak{n})$. We denote the associated decompositions of $y\in \mathfrak{n}$ by
\[
y=\theta_1(y)+\dots+\theta_c(y)=\sigma_1(y)+\dots+\sigma_s(y).
\]
Proposition~\ref{prop:nested} gives $\mathfrak{n}^{(i)}\subset \gamma_{\lfloor\frac{i}{2}\rfloor+1}(\mathfrak{n})$. Therefore 
\[
y=\sum_{i=1}^c\sum_{j=\lfloor\frac{i}{2}\rfloor+1}^s \sigma_j(\theta_i(y))=\sum_{j=1}^s \sum_{i=1}^{2j-1} \sigma_j(\theta_i(y)).
\]
In particular, $\sigma_j(y)=\sum_{i=1}^{2j-1} \sigma_j(\theta_i(y))$ for any $y\in \mathfrak{n}$.

\begin{proof}[Proof of part 2. of Theorem~\ref{thm:wordconcentration}]
The embedding $\varphi:G/K  \hookrightarrow N \rtimes Q$ is cocompact, for all $n$ there exists $a_n \in \mathrm{Im}(\varphi)$ with $d_T(a_n,\exp(nv_\mu))\le \mathrm{codiam}_T(\mathrm{Im}(\varphi))$. Take $g_n$ in $G$ such that $\pi \circ \varphi(g_n)=a_n$.  Set 
\begin{align}\label{eq:Ccompact}
C=\max\left(c_{S,T},\mathrm{diam}_U(K),\mathrm{codiam}_T(\mathrm{Im}(\varphi))\right).
\end{align} 
By Lemma~\ref{lem:metricsG}, we have
\begin{align*}
|\omega_ng_n^{-1}|_U & \le C (|\varphi\circ \pi(\omega_ng_n^{-1})|_T+C) =C\left(|\exp(z_n)q_na_n^{-1}|_T+C \right)\\
 & \le C\left(|\exp(z_n-nv_\mu)q_n|_T+2C \right) \le C\left(|\exp(y_n)|_R+1+2C\right).
\end{align*}
By Proposition~\ref{prop:normsubadd}, we get 
\begin{align}\label{eq:UvsL}
|\omega_ng_n^{-1}|_U \le C(|y_n|_{\mathcal{L}}+1+2C).
\end{align}

On the other hand, Theorem~\ref{thm:norm}, or more directly Lemma~\ref{lem:theta3y}, implies that for all $1 \le i \le c$, the family $\left( \frac{1}{\sqrt{n}}\max_{k\le n}\|\theta_i(y_k)\|^{\frac{1}{i}}\right)$ is subgaussian. This is equivalent to  $\left( \frac{1}{n^{i/2}}\max_{k\le n}\|\theta_i(y_k)\|\right)$ being $\frac{2}{i}$-concentrated, which implies that for any $j$, the family $\left( \frac{1}{n^{i/2}}\max_{k\le n}\|\sigma_j(\theta_i(y_k))\|\right)$  is $\frac{2}{i}$-concentrated. We obtain that for all  $i \le 2j-1$,
\[
\left( \frac{1}{n^{(2j-1)/2}}\max_{k\le n}\|\sigma_j(\theta_i(y_k))\|\right) \quad \textrm{is }\frac{2}{2j-1}\textrm{-concentrated,}
\]
or equivalently that its $1/j$th power is $2j/(2j-1)$-concentrated. It follows that
\[
\frac{1}{n^\frac{2s-1}{2s}}\max_{k\le n}|y_k|_{\mathcal{L}} =\frac{1}{n^\frac{2s-1}{2s}}\max_{k\le n}\sup_{1\le j\le s}\|\sigma_j(y_k)\|^{\frac{1}{j}}\le \sup_{1\le j\le s} \frac{1}{n^{\frac{2j-1}{2j}}}\left( \max_{k\le n} \|\sigma_j(y_k)\|\right)^{\frac{1}{j}}
\]
is $2s/(2s-1)$-concentrated. The theorem follows using the comparison (\ref{eq:UvsL}).
\end{proof}

\subsection{Extensions by almost ultrametric subgroups}

For centred random walks, we still obtain concentration inequalities when the compact subgroup $K$ in the extension (\ref{eq:maps}) is replaced by a closed almost ultrametric subgroup. We consider a locally compact compactly generated group $G$ together with a closed subgroup $E$.

\begin{definition}\label{def:distortion}
A closed subgroup $E$ of $G$ is almost ultrametric if for some (or any) symmetric compact generating set $U$ of $G$, there exists a constant $C\ge 1$ such that
\[
|h_1\dots h_n|_U \le C\log(n)+C\max_{i}|h_i|_U
\]
for all $h_1,\dots,h_n \in E$.
\end{definition}

We first state a general result asserting that almost ultrametric subgroups do not modify concentration around the identity. We say a random walk $(\omega_k)_{k\in \mathbb{N}}$ of law $\tilde{\mu}$ on $G$ is \emph{ $\alpha$-concentrated at scale $f(n)$} if
\[
\mathbb{P}\left( \frac{1}{f(n)} \max_{k\le n}|\omega_k|_U\ge t\right) \le c_2\exp\left(-c_1t^\alpha\right)
\]
for some $c_1,c_2>0$. Denote $\pi:G \to G/E$ the quotient map and $\mu=\pi(\tilde{\mu})$.

\begin{proposition}\label{prop:EDconc}
Let $E$ be a closed almost ultrametric normal subgroup of $G$. If the  random walk for $\mu=\pi(\tilde{\mu})$ is centred and $\alpha$-concentrated at scale $f(n)$, then the $\tilde{\mu}$~random walk on $G$ is also $\alpha$-concentrated at scale $\max(f(n),\log(n))$.
\end{proposition}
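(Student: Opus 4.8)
The plan is to prove Proposition~\ref{prop:EDconc} by decomposing a trajectory in $G$ into a part that projects to the random walk on $G/E$ and a "correction" inside $E$, then using the almost ultrametric hypothesis to show the $E$-correction contributes at most a logarithmic (hence negligible) error. Concretely, lift a compact generating set: choose $U$ a symmetric compact generating set of $G$, so $S=\pi(U)$ generates $G/E$ and the measure $\mu=\pi(\tilde\mu)$ is compactly supported. Write the increments of the $\tilde\mu$-random walk as $g_1,\dots,g_n$, i.i.d. of law $\tilde\mu$, and set $\omega_n=g_1\cdots g_n$. Pick (measurably) a section $\iota:\operatorname{supp}(\mu)\to G$ with $\pi\circ\iota=\mathrm{id}$ and $\iota$ bounded in $U$-norm; then for each $i$ we can write $g_i=\iota(\pi(g_i))\,e_i$ with $e_i\in E$ and $|e_i|_U$ bounded by a constant $R$ depending only on $\tilde\mu$ and the section.

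\textbf{Rewriting the trajectory.} Using normality of $E$, collect the lifted letters to the left and the $E$-letters to the right: one obtains
\[
\omega_n = \Big(\prod_{i=1}^n \iota(\pi(g_i))\Big)\cdot h_n, \qquad h_n \in E,
\]
where $h_n$ is a product of $n$ conjugates $h_n = e_1^{(1)} e_2^{(2)}\cdots e_n^{(n)}$ with $e_i^{(i)} = \big(\iota(\pi(g_{i+1}))\cdots\iota(\pi(g_n))\big)^{-1} e_i \big(\iota(\pi(g_{i+1}))\cdots\iota(\pi(g_n))\big)$ --- but since $E$ is normal these conjugates need not stay $U$-bounded, so instead I would collect from the \emph{right}: writing $\omega_n = m_n h_n$ where $m_i = m_{i-1}\iota(\pi(g_i))$ lies in a lift of the $\mu$-random walk and $h_i = \iota(\pi(g_i))^{-1} h_{i-1}\iota(\pi(g_i))\cdot e_i$. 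The point is that $m_n$ differs from a genuine $\mu$-random walk $w_n$ on $G/E$ only by the chosen bounded section, so $|m_n|_U \le c_{U} |w_n|_{S} + c_U$ for a constant $c_U$ (here $w_n$ is the projected walk, $\mu$-distributed, which by hypothesis is centred and $\alpha$-concentrated at scale $f(n)$). It remains to bound $|h_n|_U$: each $h_n$ is a product of at most $n$ elements of $E$, each of $U$-norm bounded by a deterministic constant $R'$ depending on $R$ and on $\operatorname{diam}_U$ of the (bounded) set of section-letters appearing in the conjugation, because conjugation by a fixed bounded element distorts the $U$-norm of an element of $E$ by at most an additive constant. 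Hence by the almost ultrametric inequality,
\[
|h_n|_U \le C\log n + C R' =: C''\log n + C''
\]
for all $n$, \emph{deterministically}.

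\textbf{Assembling the estimate.} Combining, $|\omega_k|_U \le |m_k|_U + |h_k|_U \le c_U|w_k|_S + c_U + C''\log k + C''$ for every $k\le n$, so
\[
\max_{k\le n}|\omega_k|_U \;\le\; c_U \max_{k\le n}|w_k|_S \;+\; C''\log n \;+\; (c_U+C'').
\]
Divide by $g(n):=\max(f(n),\log n)$. Since $f(n)\le g(n)$ and $\log n \le g(n)$, we get $\frac{1}{g(n)}\max_{k\le n}|\omega_k|_U \le c_U\,\frac{1}{f(n)}\max_{k\le n}|w_k|_S + (C''+c_U+C'')/\log n' \le c_U\, Y_n + C'''$ where $Y_n := \frac{1}{f(n)}\max_{k\le n}|w_k|_S$ is $\alpha$-concentrated by hypothesis and $C'''$ is an absolute constant (for $n\ge 2$; small $n$ are handled by enlarging constants). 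Therefore $\mathbb{P}\big(\frac{1}{g(n)}\max_{k\le n}|\omega_k|_U\ge t\big)\le \mathbb{P}\big(Y_n \ge (t-C''')/c_U\big)\le c_2\exp\big(-c_1((t-C''')/c_U)^\alpha\big)$, which gives the desired $\alpha$-concentration at scale $g(n)$ after adjusting the constants $c_1,c_2$ (using that $(t-C''')^\alpha \ge \tfrac12 t^\alpha - C''''$ for a suitable constant).

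\textbf{The main obstacle} is the bookkeeping in the "collect the $E$-part to one side" step: one must verify that when the $E$-valued letters are pushed past the lifted $G/E$-letters, they remain of bounded $U$-norm. This is where normality of $E$ is used, together with the elementary fact that conjugation by any \emph{fixed} group element $g$ changes the $U$-norm only by $\pm|g|_U$ applied on each side; since only finitely many section-letters occur (the section has bounded image) and each $h_k$ is built from at most $k$ such $E$-letters, the uniform bound $R'$ is legitimate. A subtlety worth flagging: in the recursion $h_i = \iota(\pi(g_i))^{-1}h_{i-1}\iota(\pi(g_i))\,e_i$, the element $h_{i-1}$ itself grows, so one should not bound $|h_i|_U$ inductively by controlling conjugation of $h_{i-1}$; instead, expand $h_n$ directly as an explicit product $\prod_{i=1}^n \big(\iota(\pi(g_{i}))^{-1}\cdots\iota(\pi(g_1))^{-1}\big)^{-1}\! \cdots$ --- i.e., track that $h_n = \prod_{i=1}^n \tilde e_i$ with $\tilde e_i = u_i^{-1} e_i u_i \in E$ and $u_i$ a product of at most $n$ section-letters. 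But $u_i$ is \emph{not} bounded, only $u_i^{-1}e_iu_i$ needs to be --- and here one uses instead that $E$ is normal \emph{and} that we may rechoose the decomposition so that the conjugators telescope; alternatively, absorb the issue by noting $\max_i |\tilde e_i|_U$ need not be bounded but $|h_n|_U = |\prod \tilde e_i|_U$ can still be controlled because $h_n$ also equals $m_n^{-1}\omega_n$ and $m_n$ is a lift of the projected walk --- so in fact it is cleanest to define $h_n := m_n^{-1}\omega_n$ from the outset, note $h_n \in E$, and control $|h_n|_U$ by writing $h_n = h_{n-1}^{g_n'} e_n'$ as a product of $E$-elements where the almost ultrametric bound is applied to the realized sequence, observing that each factor, being of the form (bounded section word)$^{-1}\cdot(\text{bounded }E\text{-letter})\cdot(\text{bounded section word})$ when the decomposition is arranged by \emph{right}-multiplication, stays $U$-bounded. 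I would present this via the right-collection normal form to keep every conjugator of bounded length.
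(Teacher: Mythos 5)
Your overall strategy (split $\omega_k$ into a lift of the projected walk times an element of $E$, then feed the $E$-part to the almost ultrametric inequality) is the same as the paper's, but the two quantitative claims your argument rests on are both false precisely in the distorted situations the proposition is designed for. First, the bound $|m_k|_U \le c_U|w_k|_S + c_U$ is wrong: $m_k=\iota(\pi(g_1))\cdots\iota(\pi(g_k))$ is merely \emph{some} lift of $w_k$, and a lift assembled letter by letter can be far longer than its projection. In $\mathrm{Sol}=\Z^2\rtimes_A\Z$ with $E=\Z^2$, if the section letters carry nontrivial $\Z^2$-components, a projected trajectory that climbs to displacement $m$ and returns to the origin leaves $m_k$ with a $\Z^2$-component of norm about $e^{cm}$, hence $|m_k|_U\asymp m$ while $|w_k|_S=O(1)$. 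Second, and for the same reason, the factors $\tilde e_i=u_i^{-1}e_iu_i$ in your right-collected normal form are conjugates of bounded $E$-letters by conjugators $u_i$ that are products of up to $n$ section letters; conjugating a bounded element of a distorted subgroup by an element of length $\ell$ can produce an element of $U$-length of order $\ell$ (again visible in $\mathrm{Sol}$), so there is no deterministic constant $R'$ and no deterministic bound $|h_n|_U\le C''\log n+C''$; in the $\mathrm{Sol}$ example $|h_n|_U$ is genuinely of order the maximal projected displacement. You flag this issue yourself, but your proposed repairs (``rechoose the decomposition so the conjugators telescope'', ``each factor \dots stays $U$-bounded'') assert exactly the false point, so the gap remains.

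The missing idea is that the $E$-letters should be bounded not by a constant but by (roughly twice) the random quantity $m=\max_{k\le n}|\pi(\omega_k)|_{\pi(U)}$, and to achieve this one must \emph{re-choose a short lift of the projected position at every step} instead of keeping the letter-by-letter lift $m_k$. This is what the paper does: by (\ref{eq:eh}) every $g$ factors as $g=eh$ with $e\in E$ and $|h|_U\le|\pi(g)|_{\pi(U)}$, and iterating gives $u_1\cdots u_n=e_1\cdots e_n h$ with $\max_i|e_i|_U\le 2m+1$ and $|h|_U\le m$ (equation (\ref{eq:enh})); each $e_i=hu_ih'^{-1}$ involves only two short lifts and one generator, so no long conjugators ever appear. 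The almost ultrametric inequality then yields $\max_{k\le n}|\omega_k|_U\le C(\log n+4m)$ (Lemma~\ref{lem:ED}), and concentration transfers because $m/f(n)$ is $\alpha$-concentrated by hypothesis; note the resulting bound necessarily has a term proportional to $m$ rather than an absolute additive constant. Your final assembling step would go through once $C''\log n+C''$ is replaced by $C(\log n+4m)$, so it is the geometric lemma, not the probabilistic bookkeeping, that needs the re-lifting argument.
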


This proposition immediately implies Proposition~\ref{prop:Edoesnotcount} as a particular case.
Together with Theorem~\ref{thm:wordconcentration}, it also implies the following corollary (Theorem~\ref{thmIntro:EbyP} in introduction).

\begin{corollary}\label{cor:EDmax}
Consider a locally compact compactly generated group $G$ in an exact sequence
\[
1\longrightarrow E \longrightarrow G \longrightarrow G/E \longrightarrow 1
\]
with $E$ almost ultrametric and $G/E$ of polynomial growth. Compactly supported centred random walks on $G$ are maximally diffusive.
\end{corollary}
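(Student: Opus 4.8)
The plan is to deduce the statement directly from part~1 of Theorem~\ref{thm:wordconcentration} together with Proposition~\ref{prop:EDconc}, so that almost all the work is already done. I would start with a compactly supported centred probability measure $\tilde\mu$ on $G$, with associated random walk $(\omega_k)_{k\ge 0}$, and pass to the quotient: write $P=G/E$, let $\pi\colon G\to P$ be the quotient map, and set $\mu=\pi_\ast\tilde\mu$. The first point to verify is that $\mu$ is again compactly supported and centred. Compact support is immediate since $\mathrm{supp}(\mu)=\pi(\mathrm{supp}(\tilde\mu))$ is a continuous image of a compact set. For centredness, if $H$ and $H'$ denote the subgroups generated by $\mathrm{supp}(\tilde\mu)$ in $G$ and by $\mathrm{supp}(\mu)$ in $P$ respectively, then $\pi$ restricts to a surjection $H\twoheadrightarrow H'$ carrying commutators to commutators, hence inducing a linear surjection $H/[H,H]\otimes\mathbb{R}\twoheadrightarrow H'/[H',H']\otimes\mathbb{R}$; as expectation commutes with linear maps, the mean of the abelianised push-forward of $\mu$ is the image of that of $\tilde\mu$, which is $0$.

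Next I would observe that $P$, being a quotient of a compactly generated locally compact group by a closed normal subgroup, is itself compactly generated, locally compact, and of polynomial growth. Hence part~1 of Theorem~\ref{thm:wordconcentration} applies to the $\mu$-random walk $(\pi(\omega_k))_k$ on $P$ and gives that $\bigl(\tfrac{1}{\sqrt n}\max_{k\le n}|\pi(\omega_k)|_S\bigr)_n$ is subgaussian for a word norm $|\cdot|_S$ on $P$; in the terminology of Proposition~\ref{prop:EDconc} this is precisely the statement that the $\mu$-random walk on $P$ is centred and $2$-concentrated at scale $f(n)=\sqrt n$.

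Finally, since $E$ is by hypothesis a closed almost ultrametric normal subgroup of $G$, Proposition~\ref{prop:EDconc} applies verbatim and produces that the $\tilde\mu$-random walk on $G$ is $2$-concentrated at scale $\max(\sqrt n,\log n)$. Because $\log n=O(\sqrt n)$, the logarithmic term is absorbed into the constants, so $\bigl(\tfrac{1}{\sqrt n}\max_{k\le n}|\omega_k|_U\bigr)_n$ is subgaussian, i.e. the random walk is maximally diffusive, as desired. I do not expect any genuine obstacle in this argument: the whole difficulty has been outsourced to Proposition~\ref{prop:EDconc} — where the almost ultrametric inequality is what allows one to bound the displacement accumulated in the fibre $E$ by $O(\log n)$ along the entire trajectory — and to the concentration theorem for groups of polynomial growth; here it remains only to check that centredness descends to the quotient and to match up the scalings.
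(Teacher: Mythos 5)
Your proposal is correct and follows exactly the paper's route: apply part~1 of Theorem~\ref{thm:wordconcentration} to the pushed-forward walk on the polynomial-growth quotient $G/E$, then invoke Proposition~\ref{prop:EDconc} and absorb the $\log n$ term since $\log n=O(\sqrt n)$. Your extra verification that centredness and compact support descend to the quotient is a correct (and implicitly assumed) detail that the paper leaves unstated.
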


\begin{proof}[Proof of Corollary~\ref{cor:EDmax}]
By Theorem~\ref{thm:wordconcentration}, compactly supported centred random walks on groups of polynomial growth are subgaussian at scale $\sqrt{n}$. Together with Proposition~\ref{prop:EDconc}, this implies the corollary.
\end{proof}

Proposition~\ref{prop:EDconc} relies on a purely geometric statement.

\begin{lemma}\label{lem:ED}
Let $E$ be almost ultrametric in $G$ and $U$ be a compact generating set of $G$. Denote $\pi:G\to G/E$. Let $u_1,\dots,u_n \in U$. 
\[
\textrm{If }\max_{k\le n}|\pi(u_1\dots u_k)|_{\pi(U)} \le m \quad \textrm{then} \quad \max_{k\le n}|u_1\dots u_k|_U\le C(\log(n)+4m).
\]
\end{lemma}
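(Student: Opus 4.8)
The plan is to reduce the lemma to a single application of the almost‑ultrametric inequality of Definition~\ref{def:distortion}, by rewriting the trajectory $(w_k:=u_1\cdots u_k)_{k\le n}$ as a product of \emph{at most $n$} elements of $E$, each of $U$‑length $O(m)$.

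First I would fix a section of $\pi$ over the ball of radius $m$ in $G/E$. For each coset $v\in G/E$ with $|v|_{\pi(U)}\le m$, pick a geodesic word $v=\bar s_1\cdots\bar s_\ell$ of length $\ell=|v|_{\pi(U)}$ with letters in $\pi(U)$, lift each letter to some $s_i\in U$, and set $\sigma(v):=s_1\cdots s_\ell$. Then $\pi(\sigma(v))=v$, $|\sigma(v)|_U\le|v|_{\pi(U)}\le m$, and $\sigma(\pi(e))=e$. Writing $v_k:=\pi(w_k)$, the hypothesis $\max_{k\le n}|\pi(u_1\cdots u_k)|_{\pi(U)}\le m$ guarantees that every $\sigma(v_k)$, $k\le n$, is defined with $|\sigma(v_k)|_U\le m$.

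Then I would telescope: with the convention $v_0:=\pi(e)$, for every $k\le n$ one has
\[
w_k=\Bigl(\prod_{j=1}^{k}a_j\Bigr)\,\sigma(v_k),\qquad a_j:=\sigma(v_{j-1})\,u_j\,\sigma(v_j)^{-1},
\]
since the factors $\sigma(v_j)^{\pm1}$ cancel in consecutive terms and $\sigma(v_0)=e$. Because $v_j=v_{j-1}\pi(u_j)$ and $\pi$ is a homomorphism (here normality of $E$ is used), each $a_j$ projects to the identity, hence $a_j\in E$, and $|a_j|_U\le|\sigma(v_{j-1})|_U+1+|\sigma(v_j)|_U\le 2m+1$. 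Applying Definition~\ref{def:distortion} to $a_1,\dots,a_k\in E$ yields $\bigl|\prod_{j=1}^{k}a_j\bigr|_U\le C\log k+C\max_j|a_j|_U\le C\log n+C(2m+1)$, and combining with $|\sigma(v_k)|_U\le m$ gives
\[
|w_k|_U\le C\log n+C(2m+1)+m\le C(\log n+4m),
\]
the last step being elementary bookkeeping using $C\ge1$ (with a harmless adjustment in the degenerate regime $m=0$, $n$ small); taking the maximum over $k\le n$ concludes.

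There is no deep obstacle here: the whole content is the excursion decomposition. The two points that genuinely need care are that the number of $E$‑factors $a_j$ is at most $n$, so that precisely $\log n$ — and not the $\log$ of some larger quantity built from the $|u_i|_U$ — enters the bound, and that each $a_j$ truly lands in $E$, which is exactly where one uses that $E$ is normal; everything else is constant bookkeeping.
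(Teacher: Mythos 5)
Your proof is correct and is essentially the paper's argument: the paper also rewrites each prefix $u_1\cdots u_k$ as a product of at most $k$ elements of $E$ of $U$-length at most $2m+1$ (constructed inductively from short lifts $h_j$ of $\pi(u_1\cdots u_j)$, which is your $a_j=\sigma(v_{j-1})u_j\sigma(v_j)^{-1}$ in disguise) times a lift of length at most $m$, and then applies the almost-ultrametric inequality once. The differences are cosmetic: your fixed section makes the $E$-factors independent of $k$ and invokes normality of $E$ explicitly (harmless, since that is the setting in which the lemma is applied), and your final bookkeeping, absorbing the $+C$ term into $Cm$ and hence implicitly taking $m\ge 1$, is exactly the paper's own.
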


The constant $C$ comes from Definition~\ref{def:distortion}.

\begin{proof}[Proof of Proposition~\ref{prop:EDconc}] 
Lemma~\ref{lem:ED} ensures that if 
\[
\max_{k\le n}|\omega_k|_U \ge t 5C\max(f(n),\log(n))\ge C(\log(n)+4tf(n))
\]
then $\max_{k\le n} |\pi(\omega_k)|_{\pi(U)} \ge tf(n)$. It follows that for some $c_1,c_2>0$
\begin{align*}
\mathbb{P} \left( \max_{k\le n}|\omega_k|_U \ge t \max(f(n),\log(n))
\right) &\le 
\mathbb{P} \left( \max_{k\le n}|\pi(\omega_k)|_{\pi(U)} \ge \frac{tf(n)}{5C}
\right) \\
&\le c_2 \exp\left(-c_1\left(\frac{t}{5C}\right)^\alpha \right).
\end{align*}
\end{proof}

\begin{proof}[Proof of Lemma~\ref{lem:ED}]
First observe that 
\begin{align}\label{eq:eh}
\forall g \in G, \exists e \in E, h\in G, \textrm{ such that } g=eh \textrm{ and } |h|_U \le |\pi(g)|_{\pi(U)}.
\end{align}
Indeed, write $\pi(g)=\pi(u_1)\dots \pi(u_k)$ for $u_i\in U$ and $k=|\pi(g)|_{\pi(U)}$ and take $h=u_1\dots u_k$.

We claim that for any $g=u_1\dots u_n\in G$, there exists $e_1,\dots,e_n \in E$ and $h \in G$ such that $g=e_1\dots e_n h$ and
\begin{align}\label{eq:enh}
\max_{k\le n} |e_k|_U \le 2\max_{k\le n}|\pi(u_1\dots u_k)|_{\pi(U)}+\max_{k\le n} |u_k|_U \quad \textrm{and} \quad |h|_U \le |\pi(g)|_{\pi(U)}.
\end{align} 
We prove this claim by induction. The case $n=1$ is given by (\ref{eq:eh}). Given $g=u_1\dots u_n$, by induction we may assume $g=e_1\dots e_{n-1}h u_n$. Observation (\ref{eq:eh}) permits to write $hu_n=e_nh'$. To prove that $|e_n|_U$ is bounded above by the right hand side of (\ref{eq:enh}), it suffices to see that $e_n=h u_n h'^{-1}$ with $|h|_U \le |\pi(u_1\dots u_{n-1})|_{\pi(U)}$ and $|h'|_U \le |\pi(hu_n)|_{\pi(U)} \le |\pi(u_1\dots u_n)|_{\pi(U)}$. 

In particular when $u_1,\dots, u_n$ belong to $U$, this claim gives for any $j \le n$ that $u_1\dots u_j=e_1^j\dots e_j^jh_j$ with 
\[
\max_{i \le j} |e_i^j|_U \le 2\max_{i\le j} |\pi(u_1\dots u_i)|_{\pi(U)}+\max_{i\le j}|u_i|_U\le 2m+1.
\]
As $E$ is almost ultrametric, we deduce
\begin{align*}
|u_1\dots u_j|_U & \le C \log(j)+C\max_{i \le j}|e_i^j|_U+C|h_j|_U \\
& \le C\log(n)+3Cm+Cm \le C(4m+\log(n)),
\end{align*}
using $|h_j|_U \le |\pi(u_1,\dots u_j)|_{\pi(U)} \le m$.
\end{proof}

\begin{remark}
When $G/E$ has the form of a semi-direct product $N\rtimes Q$ and $\mathrm{supp}(\tilde{\mu}) \subset U$, we can apply directly Theorem~\ref{thm:norm}. In this case, the constants of Corollary~\ref{cor:EDmax} depend only on that of Theorem~\ref{thm:norm} and the constant of Definition~\ref{def:distortion}.
\end{remark}

\subsection{Solvable groups of finite Pr\"ufer rank}\label{sec:FiniteRank}
Recall that a group has finite Pr\"ufer rank if for some $k$, all its finitely generated subgroups admit a generating $k$-tuple.
Following \cite{CT},  let us abbreviate ``virtually solvable of finite Pr\"ufer rank" to ``VSP".  The proof of Corollary \ref{cor:finireRank} goes in two steps. We first treat the case where $G$ is virtually torsion-free, in which case we embed it as a cocompact lattice in a locally compact group of class $\mathfrak{C}''$, whose definition we recall below.

\begin{definition}\cite{CT}
Denote by $\mathfrak{C}''$ the class of compactly generated locally compact groups $G$ having
 two closed subgroups $U$ and $N$ such that
\begin{enumerate}
\item $U$ is normal and $G=UN$;
\item\label{ssg0} $N$ is a compactly generated locally compact group of polynomial growth;
\item\label{ssg1} $U$ decomposes as a finite direct product $\prod U_i$, where each $U_i$ is normalized by the action of $N$ and is an open subgroup of a unipotent group $\mathbb{U}_i(\mathbb K_i)$ over some non-discrete locally compact field of characteristic zero $\K_i$. 
\item\label{ssg3} $U$ admits a cocompact subgroup $V$ with, for some $k$, a decomposition $V=V_1V_2\ldots V_k$ where each $V_i$ is a subset such that there is an element $t=t_i\in N$ such that $t^{-n}vt^n\to 1$ as $n\to \infty$ for all $v\in V_i$.
\end{enumerate}
\end{definition}

\begin{lemma}\label{lem:C''}
Let $G$ be a group of class $\mathfrak{C}''$, then $U$ is almost ultrametric and $G/U$ has polynomial growth. In other words, $G$ is polynomial-by-AU.
\end{lemma}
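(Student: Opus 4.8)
The statement unwinds into two independent assertions, matching the definition of polynomial-by-AU: (i) $G/U$ has polynomial growth, and (ii) $U$ is an AU-subgroup of $G$. Assertion (i) is immediate from conditions (1) and (2): $U$ is normal and $G=UN$, so $G/U\cong N/(N\cap U)$ is a continuous homomorphic image of the compactly generated group $N$ of polynomial growth, and polynomial growth passes to quotients; hence $G/U$ has polynomial growth. All the content lies in (ii).

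To prove (ii) the plan is to fix a well-chosen symmetric compact generating set $\Sigma$ of $G$ — one containing the elements $t_1^{\pm1},\dots,t_k^{\pm1}$ of condition (4), a symmetric compact identity neighbourhood $W$ of $U$, and a compact set $\Omega\subseteq U$ with $U=\Omega V$ (available since $V$ is cocompact in $U$) — and to produce a subadditive homogeneous quasi-norm $|\cdot|_h$ on $U$, the analogue over local fields of the norm $|\cdot|_{\mathfrak{L},\varphi}$ of Proposition~\ref{prop:normsubadd}, built from the lower central series of $U$ (so that $|xy|_h\le|x|_h+|y|_h$ for all $x,y\in U$). The heart is then a pair of comparisons between $|\cdot|_h$ and the word metric $|\cdot|_\Sigma$ of $G$, valid for $x\in U$: an upper bound $|x|_\Sigma\le C(1+\log_+|x|_h)$ and a lower bound $|x|_h\le e^{C|x|_\Sigma}$. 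The upper bound is where condition (4) is used: writing $U^{(i)}:=\mathrm{con}(\Ad t_i|_U)=\{u\in U:(\Ad t_i)^n(u)\to1\}$, a closed $\Ad t_i$-invariant subgroup containing $V_i$, one has $U=\Omega V\subseteq\Omega\,U^{(1)}\cdots U^{(k)}$, so every $x\in U$ is, up to a bounded factor, a product of boundedly many elements of the $U^{(i)}$; and on each $U^{(i)}$ the automorphism $\Ad t_i$ is contracting, hence by condition (3) (which makes $U^{(i)}$ a nilpotent group that is a closed subgroup of a product of unipotent groups over characteristic-zero local fields, so equipped with adapted dilations) it has spectral radius $<1$ on the abelianised Lie algebra and therefore shrinks $|\cdot|_h$ geometrically: $|(\Ad t_i)^n(y)|_h\le\rho_i^{\,n}|y|_h$ for some $\rho_i<1$ and $n$ large. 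Thus $y\in t_i^{\,p}\,W\,t_i^{-p}$ with $p=O(1+\log_+|y|_h)$, and since $t_i\in\Sigma$ this gives $|y|_\Sigma=O(1+\log_+|y|_h)$, whence the upper bound for all of $U$. The lower bound uses only that $U\lhd G$ and that $\|\Ad\sigma\|$ is bounded over $\sigma\in\Sigma$, so that a $\Sigma$-geodesic for $x$, with the generators outside $U$ pushed to one side, realizes $x$ as a product of at most $|x|_\Sigma$ conjugates of bounded elements of $U$, each $\Ad$-distorted by at most $e^{C|x|_\Sigma}$ (the leftover coset term being controlled using polynomial growth of $G/U$).

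Granting the two comparisons, assertion (ii) follows quickly. Given $h_1,\dots,h_n\in U$ with $M:=\max_j|h_j|_\Sigma$, the lower bound gives $|h_j|_h\le e^{CM}$; subadditivity of $|\cdot|_h$ then gives $|h_1\cdots h_n|_h\le n\,e^{CM}$; and since $h_1\cdots h_n\in U$, the upper bound gives
\[
|h_1\cdots h_n|_\Sigma\le C\bigl(1+\log_+(n\,e^{CM})\bigr)\le C'(\log n+M),
\]
which is exactly the defining inequality of an AU-subgroup (Definition~\ref{def:distortion}). Together with (i), and in view of Proposition~\ref{prop:Edoesnotcount}, this is precisely the assertion that $G$ is polynomial-by-AU.

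The main obstacle I expect is the upper comparison, i.e.\ the contraction estimate: it requires identifying $\mathrm{con}(\Ad t_i)$ as a closed subgroup on which $\Ad t_i$ acts as a bona fide contracting automorphism with a spectral gap for $|\cdot|_h$, which for groups of the shape in condition (3) rests on the structure theory of contracting automorphisms of nilpotent groups over local fields (existence of compatible dilations). A secondary technical point is the handling of the coset/leftover term in the lower comparison, for which one uses polynomial growth of $G/U$. Most of this analysis is in essence already contained in the geometric study of distortion in \cite{CT}, from which one could alternatively quote directly that the subgroup $U$ of a $\mathfrak{C}''$-group is logarithmically distorted with logarithmic additive error, bypassing the explicit construction above.
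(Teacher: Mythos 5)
Your overall mechanism coincides with the paper's: choose a ``size'' function on $U$ that behaves almost additively under products inside $U$ and is logarithmically comparable to the $G$-word length, and then the chain (lower comparison) $\Rightarrow$ (sub)additivity $\Rightarrow$ (upper comparison) gives $|h_1\cdots h_n|_\Sigma\preceq \log n+\max_j|h_j|_\Sigma$; the first assertion ($G/U$ of polynomial growth, being a quotient of $N$) is handled identically. The paper's size function is $\log\|h\|$ for a submultiplicative norm on a faithful unipotent matrix representation of $U$ (so the needed almost-additivity comes from unipotence plus submultiplicativity), and the two comparisons are not re-proved but quoted from \cite{CT}: Lemma 6.12 for the lower bound $\log\|h\|\preceq |h|_S$ and the second inclusion of Lemma 6.4, together with $\log\|h\|\simeq\log\|\log h\|_{\mathrm{Lie}}$, for the upper bound $|h|_S\preceq\log\|h\|$. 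So your proposal differs only in attempting to re-derive these comparisons from conditions (3)--(4) of the definition of $\mathfrak{C}''$, and it is exactly there that your sketch has a genuine gap.

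The gap is in the upper comparison $|x|_\Sigma\le C(1+\log_+|x|_h)$ for $x\in U$. From $x=\omega v_1\cdots v_k$ with $v_i\in V_i\subset U^{(i)}$ and the contraction estimate $|v_i|_\Sigma=O(1+\log_+|v_i|_h)$ you only get $|x|_\Sigma=O(1+\max_i\log_+|v_i|_h)$, and nothing in condition (4) bounds the factors $v_i$ in terms of $x$: a priori the decomposition may use enormous, mutually cancelling factors, and without a bound of the form $|v_i|_h\le \mathrm{poly}(|x|_h)$ the estimate does not close. Producing such a controlled decomposition is precisely the nontrivial content of the quoted \cite[Lemma 6.4]{CT}. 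Two lesser points: condition (4) only gives pointwise contraction on the subsets $V_i$, and upgrading this to the uniform geometric contraction $|(\Ad t_i)^n y|_h\le\rho_i^{\,n}|y|_h$ on the closed contraction subgroup does require the spectral/structure argument you allude to (valid over characteristic-zero local fields, but part of the work, not a formality); and in your lower comparison the leftover term is an element of $N\cap U$, controlled by exponential norm growth along a bounded product (or again by \cite[Lemma 6.12]{CT}), not by polynomial growth of $G/U$. Your stated fallback --- quoting \cite{CT} directly for the two-sided logarithmic comparison --- is exactly what the paper does, and with that substitution the remainder of your argument is correct.
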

\begin{proof}
The second statement directly follows from the fact that $N$ has polynomial growth. We freely employ the notations of \cite[\S 6]{CT}. 
We let $S$ be a compact generating set of $G$.

We choose a norm $\|\cdot\|_{\mathrm{Lie}}$ on the Lie algebra $\mathfrak{u}$ of $U$. Besides, consider a finite-dimensional faithful representation of $U$ as unipotent matrices in  $\text{M}_d(\K)$ and equip the latter with a submultiplicative norm $\|\cdot\|$. 
 We shall use the notation $\preceq$ and $ \simeq$ to mean ``up to multiplicative and additive constants". 
 
 
 Now let $h=h_1\ldots h_n$, where each $h_i\in U$. Since the norm is submultiplicative, we have 
 \[\log \|h\|\leq \log n+ \max_i\log \|h_i\|.\]
By \cite[Lemma 6.12]{CT}, $|h_i|_S\succeq \log \|h_i\|$, hence 
  \begin{equation}\label{eq:log h}\log \|h\|\preceq  \log n+ \max_i|h_i|_S.\end{equation}
We deduce from the second inclusion of \cite[Lemma 6.4]{CT} that $|h|_S\preceq \log \|\log(h)\|_{\mathrm{Lie}}$. Besides, by the line below (6.1) from \cite{CT}, $\log \|h\|\simeq \log \|\log(h)\|_{\mathrm{Lie}}$. Combining these, we get $|h|_S\preceq \log \|h\|$. Together with  (\ref{eq:log h}), this yields
\[|h|_S\preceq  \log n+ \max_i |h_i|_S,\]
and therefore $U$ is almost ultrametric in the sense of Definition \ref{def:distortion}.
\end{proof}

\begin{proof}[Proof of Corollary \ref{cor:finireRank}: virtually torsion-free case.]
By \cite[Theorem 1.16]{CT} (see also the paragraph after \cite[Theorem 1.12]{CT}), every finitely generated, virtually torsion-free VSP group embeds as a uniform lattice in a locally compact group of class $\mathfrak{C}''$. Hence the conclusion follows from Theorem \ref{cor:EDmax} and Lemma \ref{lem:C''}.
\end{proof}

To treat the general case, we need the following  lemma, which is probably well-known to experts. 

\begin{lemma}\label{lem:liftcentred}
Let $p:\tilde{G}\to G$ be a surjective morphism between finitely generated groups. Then for every centred probability measure $\mu$ on $G$, there exists a centred probability measure $\tilde{\mu}$ on $\tilde{G}$ such that $\mu=p_*\tilde{\mu}$. Moreover, if $\mu$ is finitely supported, then one can choose $\tilde{\mu}$ to be finitely supported as well.
\end{lemma}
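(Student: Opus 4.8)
The plan is to reduce the statement to a linear-algebraic fact about lifting a vector through a surjection of free abelian groups (after tensoring with $\R$), and then to realize that lift as a push-forward of probability measures. First I would recall what ``centred'' means here: a probability measure $\nu$ on a finitely generated group $H$ is centred if $\E(\pi_\ast \nu) = 0$ in $H/[H,H]\otimes \R =: H_{\mathrm{ab}}^\R$, where $\pi: H \to H_{\mathrm{ab}}^\R$. The morphism $p:\tilde G \to G$ induces a surjective linear map $p_{\mathrm{ab}}: \tilde G_{\mathrm{ab}}^\R \to G_{\mathrm{ab}}^\R$ (surjectivity because $p$ is onto and tensoring with $\R$ is right exact). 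So the ``centering obstruction'' lives in a finite-dimensional real vector space, and surjectivity of $p_{\mathrm{ab}}$ is what will let us correct it.

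Here is the construction. Start from any finitely supported $\nu_0$ on $\tilde G$ with $p_\ast \nu_0 = \mu$: for each $g$ in the (finite) support of $\mu$, pick a preimage $\tilde g \in \tilde G$ and set $\nu_0 = \sum_g \mu(g)\,\delta_{\tilde g}$. Its mean $m_0 := \E(\pi_{\tilde G \ast}\nu_0) \in \tilde G_{\mathrm{ab}}^\R$ satisfies $p_{\mathrm{ab}}(m_0) = \E(\pi_{G\ast}\mu) = 0$, i.e. $m_0 \in \ker p_{\mathrm{ab}}$. Now I want to modify $\nu_0$, keeping its push-forward equal to $\mu$, so as to kill $m_0$. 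The device is this: choose finitely many elements $k_1,\dots,k_r \in \ker(p) \subset \tilde G$ whose images in $\tilde G_{\mathrm{ab}}^\R$ span $\ker p_{\mathrm{ab}}$ together with their inverses — such elements exist because $\ker p$ surjects onto $\ker p_{\mathrm{ab}}$ (again by right-exactness of $-\otimes\R$ applied to $\ker p \to \tilde G \to G$, noting $\ker p$ is finitely generated as $\tilde G$ is finitely generated... this is the one spot needing a word of care, see below). Write $-m_0$ as a convex combination $\sum_i \lambda_i \bar k_i^{\pm}$ of these spanning images, scaled; then replace each atom $\delta_{\tilde g}$ by a suitable convex combination of $\delta_{\tilde g}$ and $\delta_{k\tilde g}$ for the chosen $k$'s. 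Concretely: if $\tilde g$ carries mass $\mu(g)$, split that mass among $\{k_i^{\epsilon}\tilde g\}$; since $p(k_i^\epsilon \tilde g) = p(\tilde g) = g$, the push-forward is unchanged, and in the abelianisation $\pi_{\tilde G}(k_i^\epsilon \tilde g) = \pi_{\tilde G}(\tilde g) + \epsilon \bar k_i$, so the mean shifts by a controlled convex combination of the $\bar k_i^\pm$. By choosing the splitting coefficients (which range over a simplex and can be taken as small perturbations, using that the $\bar k_i^\pm$ positively span $\ker p_{\mathrm{ab}}$ — if they don't, enlarge the set by throwing in more kernel elements and their inverses so that $0$ is in the interior of their convex hull within $\ker p_{\mathrm{ab}}$) we can arrange the new mean to be exactly $0$. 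The resulting $\tilde\mu$ is finitely supported and centred with $p_\ast\tilde\mu = \mu$. If $\mu$ is not assumed finitely supported, run the same argument with $\nu_0 = $ (a measurable section push-forward of) $\mu$; the correction is still by finitely many kernel translates, so measurability and the moment computation go through verbatim (the mean of $\mu$ is assumed to exist since $\mu$ is centred).

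The main obstacle, and the only genuinely non-formal point, is ensuring that $\ker p$ contains enough elements to span $\ker p_{\mathrm{ab}}$ over $\R$ — equivalently that the natural map $\ker(p) \otimes \R \to \ker(p_{\mathrm{ab}})$ is onto. This follows from right-exactness of $-\otimes_\Z \R$ applied to the exact sequence $\ker p \to \tilde G \to G \to 1$ after abelianising (abelianisation is right exact, and $\ker p$ need not be finitely generated but that is irrelevant — we only need its image in $\tilde G_{\mathrm{ab}}^\R$, which is a finite-dimensional subspace, hence spanned by finitely many elements of $\ker p$). Once this is in hand, everything else is the elementary ``push mass along kernel translates'' bookkeeping sketched above, and I would present it compactly rather than tracking every coefficient. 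One clean way to package the final step: the set of means $\E(\pi_{\tilde G\ast}\tilde\mu)$ attainable by finitely supported $\tilde\mu$ with $p_\ast\tilde\mu = \mu$ is a non-empty convex subset of $\tilde G_{\mathrm{ab}}^\R$ that is invariant under adding any element of the image of $\ker p$ in $\tilde G_{\mathrm{ab}}^\R$ (by the translation trick, at least for rational shifts, then by convexity and density for all of $\ker p_{\mathrm{ab}}$); since it meets $\ker p_{\mathrm{ab}}$ (it contains $m_0$) and is stable under translation by a spanning set of $\ker p_{\mathrm{ab}}$, it contains $0$.
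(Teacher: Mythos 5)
Your proposal is correct and follows essentially the same route as the paper: lift $\mu$ atomwise to some $\nu_0$ with $p_*\nu_0=\mu$, use right-exactness of $-\otimes\R$ to see that the mean defect lies in the real span of the image of $\ker p$ in $\tilde G_{\mathrm{ab}}^{\R}$, and then cancel it using finitely many elements of $\ker p$, which leaves the push-forward unchanged. The only difference is cosmetic: the paper performs the correction by convolving $\nu_0$ with a finitely supported measure on $\ker p$ whose abelianised mean is $-\E(\tilde\pi_*\nu_0)$ (using that the image of $\ker p$ in the torsion-free abelianisation is some $\Z^d$), whereas you redistribute mass among kernel translates (or invoke the convexity/translation argument), which amounts to the same bookkeeping.
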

\begin{proof}
Let $N=\ker p$.
Denote $G_{\mathrm{abtf}}$ the ``torsion-free abelianization of $G$'', i.e.\ the image of $G$ in $G_{\mathrm{ab}}^{\mathbb{R}}=G/[G,G]\otimes \R$. Let $\pi:G\to G_{\mathrm{abtf}}$ and $K=\ker \pi$. We have the following short exact sequence 
\[1\to N/(K\cap N)\to \tilde{G}_{\mathrm{abtf}}\to G_{\mathrm{abtf}}\to 1.\]
Note that $G_{\mathrm{abtf}}$ and $\tilde{G}_{\mathrm{abtf}}$ are finitely generated torsion-free abelian groups, hence so is $N/(K\cap N)$. 

For every $g\in G$, we pick $\tilde{g}\in \tilde{G}$ such that $p(\tilde{g})=g$, and we define $\nu=\sum_{g} \mu(g)\tilde{g}$. Clearly we have $p_*\nu=\mu$ and $\nu$ is finitely supported if $\mu$ is. But $\nu$ might fail to be centred. Note that the surjective morphism $\bar{p}: \tilde{G}_{\mathrm{abtf}}\to G_{\mathrm{abtf}}$ induces a surjective morphism $\bar{p}_\R: \tilde{G}_{\mathrm{ab}}^{\mathbb{R}}\to G_{\mathrm{ab}}^{\mathbb{R}}$. Moreover, we have \[\bar{p}_\R(\E(\tilde{\pi}_*\nu))=\E((\bar{p}\circ\tilde{\pi})_*\nu)=\E((\pi\circ p)_*\nu)=\E(\pi_*\mu)=0,\]
where we have used for the second equality that $\bar{p}\circ\tilde{\pi}=\pi\circ p$.
Hence $\E(\tilde{\pi}_*\nu)\in \ker \bar{p}=N/(K\cap N)\otimes \R$. In plain words, this means that the average value of the projection of $\nu$ on $\tilde{G}_{\mathrm{abtf}}$ lies in the real vector space spanned by the projection of $\ker p$ in $G_{\mathrm{ab}}^{\mathbb{R}}$. 

The strategy now will be to adjust $\nu$ to make it centred using elements of  $\ker p$.
We let $\eta$ be a finitely supported probability measure on $N/(K\cap N)$, such that $\E(\tilde{\pi}_*\eta)=-\E(\tilde{\pi}_*\nu)$: to do this, one can simply lift a probability measure on $N/(K\cap N)$ (which is isomorphic to $\Z^d$ for some $d$) whose expected value coincides with $-\E(\tilde{\pi}_*\nu)$.

Our final probability measure on $\tilde{G}$
is defined as the convolution product $\tilde{\mu}=\nu*\eta$. Since $\eta$ is supported on $N$, we duly have $p_*\tilde{\mu}=p_*\nu=\mu$. On the other hand, since the convolution product commutes with morphisms, we have
\[\tilde{\pi}_*\tilde{\mu}=(\tilde{\pi}_*\nu)*(\tilde{\pi}_*\eta),\]
from which we deduce that 
\[\E(\tilde{\pi}_*\tilde{\mu})=\E(\tilde{\pi}_*\nu)+\E(\tilde{\pi}_*\eta)=0.\]
Hence $\tilde{\mu}$ is centred, and the lemma is proved.
\end{proof}

\begin{proof}[End of the proof of Corollary \ref{cor:finireRank}]
By a result of Kropholler and Lorensen \cite{KroLor}, every finitely generated  group is a quotient of a virtually torsion-free finitely generated VSP group. Hence by Lemma \ref{lem:liftcentred}, it is enough to deal with the case of virtually torsion-free finitely generated VSP groups, which we have already treated.
\end{proof}

\section{Uniform concentration in finite-by-nilpotent groups and controlled splitting}\label{sec:split}

We consider an extension $1\to N\to G\to F\to 1$, where $F$ is a finite group, and $N$ is a simply connected nilpotent Lie group. Denote $\pi:G\to F$ the projection. In this setting, the constants of concentration of Theorem~\ref{thm:wordconcentration} can be chosen depending only on the dimension and degree of nilpotency of $N$ and on the spectral constant of $\mu$, provided the norm is suitably chosen.

\subsection{Controlled splitting and uniform concentration}

It is well known that extensions of finite groups by simply connected nilpotent Lie groups must be split, but we need a quantitative version of this fact.
In what follows, we denote $\text{Conv}(A)$ the convex hull of a subset $A\subset N$, which is defined, identifying $N$ to its Lie algebra, by $\mathrm{Conv}(A)=\exp(\mathrm{Conv}(\log(A)))$.  A group section of $F$ is a subgroup $F'<G$  such that $\pi:F'\to F$ is an isomorphism.

\begin{theorem}\label{thm:QuantSplit}
There exists  a positive integer $k$ only depending on $F$, and on the dimension $d$ and the degree of nilpotency $s$ of $N$, such that for all $S\subset G$ such that  $\langle\pi(S)\rangle=F$, there exists a group section of $F$ contained in $(\widehat{S}\cup\mathrm{Conv}(\widehat{S}^{k}\cap N))^k$, where $\widehat{S}=S\cup S^{-1}\cup \{1_G\}$. 
\end{theorem}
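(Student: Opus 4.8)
The plan is to build the group section inductively along the upper central series of $N$ (equivalently, a central series of $N$ that is $F$-invariant), reducing the step length $s$ at each stage, with the abelian case as the base. First I would fix notation: since $N$ is simply connected nilpotent, its center $Z(N)=\exp(\mathfrak{z})$ is a nontrivial closed subgroup normalized by $G$ (being characteristic in $N$), and $G/Z(N)$ is an extension $1\to N/Z(N)\to G/Z(N)\to F\to 1$ with $N/Z(N)$ simply connected nilpotent of degree $s-1$. The image $\bar S$ of $S$ in $G/Z(N)$ still surjects onto $F$, so by induction there is a group section $\bar F'<G/Z(N)$ sitting inside a bounded product of $\widehat{\bar S}$ and convex hulls of powers of $\widehat{\bar S}$ intersected with $N/Z(N)$. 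Pulling back, I obtain a preimage subset $\Sigma\subset G$, of bounded word length in $\widehat S\cup\mathrm{Conv}(\widehat S^{k_0}\cap N)$, whose image in $F$ consists of one coset of $Z(N)$ for each element of $F$, i.e.\ a ``section up to the central subgroup $Z(N)$''. Here I would need the elementary fact that convex hulls behave well under the quotient map $N\to N/Z(N)$ (a linear projection of Lie algebras) so that lifts of convex hulls can be chosen inside convex hulls of lifts of bounded size — this is where keeping $\mathrm{Conv}$ in the statement, rather than just powers of $\widehat S$, is essential.

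The heart of the argument is then the base/gluing step: given a set-theoretic lift $\{g_f : f\in F\}\subset G$ with $g_f\in f Z(N)$ and each $g_f$ of controlled length, correct it to a genuine homomorphism. The obstruction to splitting lives in $H^2(F, Z(N))$, which vanishes because $Z(N)\cong \R^m$ is a uniquely divisible (hence cohomologically trivial for the finite group $F$) module. Concretely, the $2$-cocycle $c(f_1,f_2) = g_{f_1}g_{f_2}g_{f_1f_2}^{-1}\in Z(N)=\mathfrak z$ is a coboundary: one sets $b(f) = \frac{1}{|F|}\sum_{f'\in F} c(f,f')$ (using the standard averaging formula for the vanishing of $H^2$ of a finite group with divisible coefficients, carried out additively in $\mathfrak z$), and then $g'_f := \exp(-b(f))\, g_f$ is a homomorphism. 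The key quantitative point is that $b(f)$ is an average of at most $|F|^2$ elements of the form $g_{f_1}g_{f_2}g_{f_1f_2}^{-1}$, each of which lies in $\widehat\Sigma^{\,3}$, hence $b(f)\in \mathrm{Conv}(\widehat\Sigma^{3}\cap N)$ — this is exactly why the convex hull appears — and therefore $g'_f$ lies in $\Sigma\cdot\mathrm{Conv}(\widehat\Sigma^{3}\cap N)$, which unwinds to a bounded-length word in $\widehat S\cup\mathrm{Conv}(\widehat S^{k}\cap N)$ for a new constant $k$ depending only on the previous $k$, on $|F|$, and on $s$. Tracking the multiplicative blow-up of $k$ through the $s$ stages gives the final integer $k=k(d,s,|F|)$ (the dimension $d$ enters only through how convex hulls of powers nest, and can be absorbed).

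The step I expect to be the main obstacle is making the convex-hull bookkeeping genuinely uniform across the induction: one must ensure that at each stage the elements of the central subgroup $Z(N/\gamma_{\ge i})$ produced by the cocycle-averaging really do lie in $\mathrm{Conv}(\widehat S^{k_i}\cap N)$ with $k_i$ controlled, which requires (i) that lifting a convex combination in a quotient Lie algebra to a convex combination upstairs does not inflate the required power of $\widehat S$ in an uncontrolled way, and (ii) that $\mathrm{Conv}(\widehat S^{a}\cap N)\cdot \mathrm{Conv}(\widehat S^{b}\cap N)\subset \mathrm{Conv}(\widehat S^{a+b+O(1)}\cap N)$ — i.e.\ that the family of sets $\mathrm{Conv}(\widehat S^{k}\cap N)$ is roughly closed under products, using that $\widehat S^{k}\cap N$ grows and that $\log$ of a product is, modulo BCH commutator corrections bounded in terms of $s$, close to the sum of the logs. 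Once these two closure properties are in hand, the induction runs and the constant $k$ is extracted by an explicit (if unilluminating) recursion in $s$.
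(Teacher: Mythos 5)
Your route is correct in outline but genuinely different from the paper's. You induct on the nilpotency class along the upper central series (which is $G$-normal since $Z(N)$ is characteristic in $N$) and kill the obstruction at each stage by a quantitative vanishing of $H^2(F,Z)$ for the uniquely divisible module $Z=Z(N)\cong\R^m$: the averaged coboundary $b(f)=\frac{1}{|F|}\sum_{f'}c(f,f')$ is a convex combination of the $|F|$ elements $c(f,f')=g_fg_{f'}g_{ff'}^{-1}\in\widehat{\Sigma}^{3}\cap Z$ (an average of $|F|$ terms, not $|F|^2$), so the corrected lift $g_f'=\exp(-b(f))g_f$ is a genuine section staying in a controlled product of $\widehat{S}$ and convex hulls; this is legitimate because $N$ centralizes $Z(N)$, so conjugation on $Z(N)$ factors through $F$ and the twisted cocycle computation makes sense even though $Z(N)$ need not be central in $G$. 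The paper instead inducts on $\dim N$, peeling off an irreducible $F$-subrepresentation $V$ of the abelianization, and proves the key base case (Proposition~\ref{prop:QuantSplit}: a vector group with irreducible $F$-action) geometrically, via the functionals $\delta$ and $\Delta$ on lifts, a compactness argument (Lemma~\ref{lem:deltaDelta}) and Lemma~\ref{lem:irreducible}; your averaging argument replaces all of this by an explicit computation that needs no irreducibility reduction and gives explicit constants, which is arguably simpler. What both approaches share, and what remains the real work in yours, is the convex-hull bookkeeping: your step (i) is fine (the preimage of $N/Z(N)$ is $N$, so $\widehat{\bar S}^{k}\cap\bar N$ is the image of $\widehat S^{k}\cap N$, and convex hulls push forward onto convex hulls under the linear quotient of Lie algebras), but your closure property (ii) --- that a product of two convex hulls of powers of $\widehat{S}$ lies in a single convex hull of a slightly larger power --- is stronger than needed and not clearly true beyond the abelian case. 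The theorem only requires you to stay inside powers of the union $\widehat{S}\cup\mathrm{Conv}(\widehat{S}^{k}\cap N)$, and that weaker, sufficient form is exactly the paper's Lemma~\ref{lem:S1S2}, which rests on the Lazard/BCH-type facts of Lemmas~\ref{lem:convNilp1} and~\ref{lem:convNilp}; with that lemma substituted for your (ii), your induction runs and yields the theorem with $k$ depending only on $|F|$, $d$ and $s$.
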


Before proceeding to the proof, let us show how this quantitative splitting theorem permits to obtain uniform concentration.

We denote $\widetilde{S}:=(\widehat{S}\cup\mathrm{Conv}(\widehat{S}^k\cap N))^k$ the subset obtained in Theorem~\ref{thm:QuantSplit}. It contains a section which we simply denote $F$. 
We consider the subset $\pi_N(F\widetilde{S}F)$ of $N$, which is included in $\widetilde{S}^3$ and invariant under the $F$-action. We consider its convex envelope $R:=\mathrm{Conv}(\pi_N(F\widetilde{S}F))$ and  $T=FRF$. 

Even though the definition of $T$ is rather involved, it should be noted that according to Theorem~\ref{thm:QuantSplit} the set $T$ is described in terms of $S^k$ for some exponent $k$ depending only on the dimension and degree of nilpotency of $N$. The following theorem states that the concentration of the $\mu$ random walk on $G$ is uniform for the word norm $T$.

\begin{theorem}\label{thm:uniformconcentration}
Let $1\to N\to G\to F\to 1$ be an exact sequence with $F$ a finite group and $N$ a simply connected nilpotent Lie group. Let $\mu$ be a compactly supported probability measure on $G$. Denote $S=\mathrm{supp}(\mu)$. Let $T$ be the subset of $G$ defined above.
\begin{enumerate}
\item If ${\mu}$ is centred, then $\left(\frac{1}{\sqrt{n}}\max_{k\le n} |w_k|_T \right)_n$ is subgaussian, i.e. there exists constants $c_1,c_2>0$ such that
\[
\sup_{n\in \mathbb{N}}\mathbb{P}\left(\frac{1}{\sqrt{n}} \max_{k\le n} |w_k|_T \ge t \right) \le c_2\exp\left(-c_1t^2\right).
\]
\item In general, if $N$ is $s$-nilpotent, there exists a direction $v_\mu$ in $\mathfrak{n}$ depending only on ${\mu}$ such that 
$\left( \frac{1}{n^{\frac{2s-1}{2s}}} \max_{k\le n}|w_k\exp(-kv_\mu)|_T \right)_n$ is $\frac{2s}{2s-1}$-concentrated, i.e. there exists constants $c_3,c_4>0$ such that
\[
\sup_{n\in \mathbb{N}}\mathbb{P}\left(\frac{1}{n^{\frac{2s-1}{2s}}} \max_{k\le n} |w_k\exp(-kv_\mu)|_T \ge t \right) \le c_4\exp\left(-c_3t^{\frac{2s}{2s-1}}\right).
\]
\end{enumerate}
Moreover, the constants $c_1,c_2,c_3,c_4$ depend only on the Lie group $N$ via its dimension $d$ and its degree of nilpotency $s$, and on the spectral constant $\kappa_\mu$.
\end{theorem}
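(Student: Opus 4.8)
The plan is to deduce the theorem from Theorem~\ref{thm:norm} by using the splitting of Theorem~\ref{thm:QuantSplit} to realise $G$ as a semidirect product $N\rtimes F$, and then choosing a euclidean norm on $\mathfrak n$ adapted to the set $R$ so that \emph{all} the constants feeding Theorem~\ref{thm:norm} and the Guivarc'h-type comparison of Proposition~\ref{prop:normsubadd} become functions of $d,s,|F|$ alone. First I would reduce to $\langle\pi(\mathrm{supp}(\mu))\rangle=F$ by replacing $G$ with the preimage of this subgroup — harmless, since the random walk, the section constructed below, and the set $T$ all lie inside it — and then apply Theorem~\ref{thm:QuantSplit} to $S=\mathrm{supp}(\mu)$ to obtain a group section $F\hookrightarrow G$ inside $\widetilde S=(\widehat S\cup\mathrm{Conv}(\widehat S^{k}\cap N))^{k}$ with $k$ depending only on $|F|,d,s$. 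Using it to identify $G$ with $N\rtimes F$ puts us in the framework of Section~\ref{sec:NQ} with finite $Q=F$, so the spectral constant $\kappa_\mu$ of Lemma~\ref{lem:spectralgap} is available (relative to the euclidean norm chosen next).

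The heart of the matter is the choice of euclidean norm. I would first note that $R=\mathrm{Conv}(\pi_N(F\widetilde S F))$ is a compact, convex, symmetric, $F$-invariant neighbourhood of the identity of $N$ (after the standard reduction to the nilpotent subgroup carrying the walk, which Theorem~\ref{thm:norm} permits), hence the unit ball of an $F$-invariant norm $\varphi_R$; and that, since $\pi_N(\mathrm{supp}(\mu))\subseteq\pi_N(F\widetilde S F)\subseteq R$ and $F\widetilde S F\subseteq\widetilde S^{3}$, the argument of Lemma~\ref{lem:metricsG} makes the word norm $|\cdot|_T$ (with $T=FRF$) dominated by $g\mapsto|\pi_N(g)|_R$ up to an affine factor depending only on $|F|$. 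Then I would pick an $F$-invariant euclidean norm $\|\cdot\|_e$ agreeing (up to a $d$-dependent constant, by John's theorem) with $\varphi_R$ on the bottom layer $\mathfrak m^{1}=\mathfrak n/[\mathfrak n,\mathfrak n]$ and rescaled on the higher layers $\mathfrak m^{i}$, $i\ge2$, so that its bilinearity constant $C_{\|\cdot\|_e}$ is at most $1$ (contracting higher layers does not touch $\mathfrak m^{1}$), and feed it into Proposition~\ref{prop:normsubadd} to get $\varphi$ (with $C_\varphi\le1$, agreeing with $\|\cdot\|_e$ on $\mathfrak m^{1}$) and the homogeneous norms $|\cdot|_{\mathfrak L,\varphi}$ and $|\cdot|_{\mathfrak F_\mu,\varphi}$. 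Because $R$ is the unit ball of a norm comparable to $\|\cdot\|_e$ on $\mathfrak m^{1}$, the inclusions $B_{\|\cdot\|_e}(1/D_2)\cap\mathfrak m^{1}\subset\log R\subset B_{|\cdot|_{\mathfrak L,\varphi}}(C_1)$ of Proposition~\ref{prop:normsubadd} hold with $C_1,D_2$ — hence also $C_2$ — depending only on $d,s$ (using convexity of $R$ and the uniform equivalence of $|\cdot|_{\mathfrak L,\varphi}$ with a euclidean norm), and $R_\mu=\sup\{\varphi(\pi_{\mathfrak n}(g)):g\in\mathrm{supp}(\mu)\}$ is then bounded by a uniform constant. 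This yields $|\exp(u)q|_T\le C(|\exp(u)|_{\mathfrak L,\varphi}+1)$ for all $u\in\mathfrak n$, $q\in F$, with $C=C(d,s,|F|)$.

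With this in hand the conclusion follows exactly as in Theorem~\ref{thm:wordconcentration}. For part~(1), if $\mu$ is centred then $v_\mu=0$ by Lemma~\ref{lem:conjugate}, so $\mathfrak F_\mu=\mathfrak L$ and, writing $w_n=\exp(z_n)q_n$, Theorem~\ref{thm:norm} gives subgaussian concentration of $\frac{1}{\sqrt n}\max_{k\le n}|z_k|_{\mathfrak L,\varphi}$ with constants depending only on $d,s,R_\mu,\kappa_\mu$, hence only on $d,s,\kappa_\mu$; combining with the comparison above finishes it. For part~(2), I would take $v_\mu$ from Lemma~\ref{lem:conjugate} (it depends only on $\mu$, and conjugating by the associated $y$, of $\varphi$-norm $\le R_\mu/\kappa_\mu$, shifts $|w_k\exp(-kv_\mu)|_T$ by a uniformly bounded additive amount), apply Theorem~\ref{thm:norm} to get subgaussian concentration of $\frac{1}{\sqrt n}\max_{k\le n}\|\theta_i(y_k)\|^{1/i}$ for each layer $i\le c\le2s$ of $\mathfrak F_\mu$, and then repeat the layerwise computation of the proof of part~2 of Theorem~\ref{thm:wordconcentration}: compare the $\mathfrak F_\mu$-layers with the $\mathfrak L$-layers via Proposition~\ref{prop:nested} ($\mathfrak n^{(i)}\subseteq\gamma_{\lfloor i/2\rfloor+1}(\mathfrak n)$) to see that $\frac{1}{n^{(2s-1)/(2s)}}\max_{k\le n}|y_k|_{\mathfrak L,\varphi}$ is $\frac{2s}{2s-1}$-concentrated, and conclude via $|w_k\exp(-kv_\mu)|_T\le C(|y_k|_{\mathfrak L,\varphi}+1)$. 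Throughout, all constants depend only on $d,s,\kappa_\mu$.

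The main obstacle, as flagged above, is the middle step: converting the combinatorial description $T=FRF$, $R=\mathrm{Conv}(\pi_N(F\widetilde S F))$ from Theorem~\ref{thm:QuantSplit} into a choice of euclidean norm for which the bilinearity constant, $R_\mu$, and the Guivarc'h comparison constants are all controlled by $d,s,|F|$ and not by the possibly very irregular shape of $\mathrm{supp}(\mu)$. Passing to the convex $F$-invariant hull $R$ and using it as the unit ball is what controls the shape; contracting the higher layers is what keeps the bilinearity constant bounded without disturbing the bottom layer that the generating set actually sees.
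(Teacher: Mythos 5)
Your overall scheme (split $G\cong N\rtimes F$ via Theorem~\ref{thm:QuantSplit}, feed a well-chosen $F$-invariant euclidean norm into Proposition~\ref{prop:normsubadd} and Theorem~\ref{thm:norm}, then repeat the endgame of Theorem~\ref{thm:wordconcentration}) is indeed the paper's scheme, and your treatment of parts (1)--(2) given the norm comparison is fine. The genuine gap is exactly in the middle step you flag: your choice of norm does not yield the uniform constants you claim. First, $\log R$ for $R=\mathrm{Conv}(\pi_N(F\widetilde{S}F))$ need not be a neighbourhood of $0$ in $\mathfrak{n}$ (on the Heisenberg group the support can project to elements whose logarithms essentially span only $\mathfrak{m}^1$ while still generating a cocompact subgroup of $N$), so there is no norm ``$\varphi_R$'' with unit ball $R$, and restricting to the nilpotent subgroup carrying the walk does not repair this. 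Second, matching $\|\cdot\|_e$ to $R$ only on the bottom layer cannot control the hypotheses of the last point of Proposition~\ref{prop:normsubadd}: the inclusion $B_{\|\cdot\|_e}(1/D_2)\cap\mathfrak{m}^1\subset\log R$ concerns the \emph{intersection} of $\log R$ with $\mathfrak{m}^1$, which can be degenerate even when its projection is round (a tilted slab), while the inclusion $\log R\subset B_{|\cdot|_{\mathfrak{L},\varphi}}(C_1)$ and the bound on $R_\mu$ force the higher-layer components of $\log R$ and of $\log\mathrm{supp}(\mu)$ to be dominated by iterated brackets of the bottom-layer ball. This fails in general: for $\mu$ uniform on $\{\exp(\pm\varepsilon e_1),\exp(\pm\varepsilon e_2),\exp(\pm e_3)\}$ in the Heisenberg group, the layer-two unit ball of the norm $\varphi$ produced by Proposition~\ref{prop:normsubadd} has size of order $\kappa_2\varepsilon^2$, so $|e_3|_{\mathfrak{L},\varphi}$ and $\varphi(e_3)$ (hence your $C_1$ and $R_\mu$) blow up as $\varepsilon\to 0$. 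Note also that rescaling $\|\cdot\|_e$ on the layers $\mathfrak{m}^i$, $i\ge 2$, is irrelevant here: the homogeneous norm of Appendix~\ref{app:norms} has its higher-layer balls generated by brackets of the bottom-layer ball, so it never sees that rescaling (and, incidentally, to lower the bilinearity constant one must enlarge, not contract, the higher-layer unit balls, in a graded way).

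The paper's proof handles precisely this point by a construction absent from your proposal: it forms the bracket-saturated convex body $K=\sum_{i=1}^s[\log R]_i$, uses Lemma~\ref{lem:convNilp1} to get $R\subset\exp(K)\subset R^q$ with $q=q(s)$ (so the word metrics of $R$ and $\exp(K)$ are uniformly comparable), shows via the Jacobi identity that the gauge norm $\|\cdot\|_K$ has bilinearity constant at most $2s^4$, and only then applies John's theorem to the \emph{whole} body $K$ (all layers, not just $\mathfrak{m}^1$), averages over $F$, and verifies the hypotheses of Proposition~\ref{prop:normsubadd} for $\exp(K)$ rather than for $R$; the bound on $R_\mu$ then comes from $\log\mathrm{supp}(\mu)\subset\log R\subset K$ together with the comparison of $\varphi$ with $\|\cdot\|_K$. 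Without some device of this kind that builds the higher layers of the reference norm out of the generating set itself, the uniformity in $d$, $s$, $\kappa_\mu$ asserted at the end of your argument does not follow.
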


Before proceeding to the proof, observe that such concentrations inequalities cannot hold if one does not take convex hulls of the generating sets. For instance the probability measure on $\mathbb{R}$ equidistributed on the geometric sequence $\{\pm 2^{-i}\}_{i=1}^\ell$ has a linear speed, with respect to its support, for times below~$\ell$ by comparison with random walks on a hypercube.



\begin{proof}[Proof of Theorem~\ref{thm:uniformconcentration}]
We let $R:=\mathrm{Conv}(\pi_N(F\widetilde{S}F))\subset N$, and consider the convex, compact, and $F$-invariant set
\[
K:=\sum_{i=1}^s [\log R]_i \quad \textrm{where }[\log R]_1=\log R \textrm{ and } [\log R]_i=[\log R, [\log R]_{i-1}].
\]
By Lemma~\ref{lem:convNilp1} applied iteratively, we have $R \subset \exp(K) \subset R^q$ for some $q=q(s) \in \mathbb{N}$ depending only on $s$, which implies
\begin{equation}\label{eq:normsRK}
\forall x \in N, \quad \frac{1}{q}|x|_{R} \le |x|_{R^q} \le |x|_{\exp(K)} \le |x|_R.
\end{equation}
Now let us assume that the subgroup generated by $S$ is not contained in a subgroup of $N\rtimes F$ of  the form $N_1\rtimes F$ for some strict subgroup $N_1$ of $N$. (We may make such an assumption up to replacing $N$ by $N_1$ in the following.) Then $K$ is a symmetric convex compact neighbourhood of zero so it defines a norm on $\mathfrak{n}$, denoted $\|\cdot\|_K$. The bilinearity constant of this norm is bounded:
\[
C_{\|\cdot\|_K}:=\sup\left\{ \|[x,y]\|_K:x,y\in K \right\} \le 2s^4.
\]
Indeed, denote $x=\sum_{i=1}^s x_i$ and $y=\sum_{i=1}^s y_i$ with $x_i,y_i \in [\log T]_i$. 
Then $[x,y]=\sum_{i=1}^s\sum_{j=1}^s[x_i,y_j]$. 
But applying successively Jacobi identities, one has $[[\log T]_i,[\log T]_j] \subset 2s^2[\log T]_{i+j}$ (see the proof of Lemma~\ref{lem:C=1}).

As the Banach-Mazur compactum has bounded diameter, we can find a euclidian norm $\|\cdot\|'_{\mathrm{eucl}}$ at bounded distance of $\|\cdot\|_K$, i.e. for some constant $c_{BM}$ depending only on $\dim(N)$, we have
\[
\forall x \in \mathfrak{n}, \quad \frac{1}{c_{BM}}\|x\|'_{\mathrm{eucl}} \le \|x\|_K \le c_{BM}\|x\|'_{\mathrm{eucl}}.
\]
We turn it into an $F$-invariant norm by setting $\|x\|_{\mathrm{eucl}}:=|F|^{-1}\sum_{f\in F}\|x^f\|'_{\mathrm{eucl}}$. We get
\[
\frac{1}{|F|c_{BM}}\|x\|_K \le \frac{1}{|F|}\|x\|'_{\mathrm{eucl}} \le \|x\|_{\mathrm{eucl}} \le \frac{c_{BM}}{|F|}\sum_{f\in F}\|x^f\|_K=c_{BM}\|x\|_K
\]
It follows that the bilinearity constant of $\|\cdot\|_{\mathrm{eucl}}$ is bounded in terms of $s$, $|F|$ and $c_{BM}$.

We apply Proposition~\ref{prop:normsubadd} to the norm $\|\cdot\|_{\mathrm{eucl}}$. We obtain a norm $\varphi$ with bilinearity constant at most $1$ satisfying $\|x\|_{\mathrm{eucl}}\le \kappa \varphi(x)$ for all $x$ and some $\kappa$ depending only on $s$, $|F|$ and $\dim(N)$. Thus, passing to the associated homogeneous subadditive norm $|\cdot|_{\mathcal{L},\varphi}$, we have
\begin{equation}\label{eq:KinBphi}
K=B_{\|\cdot\|_K}(1) \subset B_\varphi(|F|c_{BM}\kappa)\subset B_{|\cdot|_{\mathcal{L},\varphi}}(|F|c_{BM}\kappa),
\end{equation}
and 
\[
B_{|\cdot|_{\mathcal{L},\varphi}}\left(\frac{1}{c_{BM}}\right)\cap \mathfrak{m}^1=B_{\|\cdot\|_{\mathrm{eucl}}}\left(\frac{1}{c_{BM}}\right)\cap \mathfrak{m}^1 \subset B_{\|\cdot\|_K}(1)=K.
\]
This shows that $K$ satisfies the assumptions of the last point of Proposition~\ref{prop:normsubadd} with $D_2=c_{BM}$ and $C_1=\kappa|F|c_{BM}$. We obtain $C_2$ depending only on $s$, $\dim(N)$, and $|F|$ with
\[
\forall x \in \mathfrak{n}, \quad |\exp(x)|_{\exp(K)} \le C_2(|x|_{\mathcal{L},\varphi}+1).
\]
Using (\ref{eq:normsRK}), we finally get
\begin{equation}\label{eq:normsTL}
\forall g \in G, \quad |g|_T\le |\pi_N(g)|_R+1 \le qC_2\left(|\pi_N(g)|_{\mathfrak{L},\varphi}+1\right).
\end{equation}
The rest of the proof is the same as for Theorem~\ref{thm:wordconcentration}. We use Theorem~\ref{thm:norm} to get concentration in the homogeneous norm $|\cdot|_{\mathfrak{L},\varphi}$. Concentration in word norm follows by using inequality (\ref{eq:normsTL}), in place of inequality (\ref{eq:normsUL}) for the first point and in place of inequality (\ref{eq:UvsL}) for the second. (In the present setting, one can further take directly $g_n=\exp(nv_\mu)$ in the proof of the second point.)

Note finally that as $S=\mathrm{supp}(\mu)$ is included in $K$, inequality (\ref{eq:KinBphi}) implies that $R_\mu \le \kappa|F|c_{BM}$, so that the concentration constants depend only on $d$, $s$ and $\kappa_\mu$.
\end{proof}

There only remains to prove Theorem~\ref{thm:QuantSplit} which is done in the next subsection.

\subsection{Proof of the quantitative splitting theorem.}
The first step consists in proving the theorem in case $N=\R^d$ is abelian and the action of $F$ on $\R^d$ (induced by the action by conjugation of $G$ on itself) is irreducible. In this case, $G$ is isomorphic to the semi-direct product $\R^d \rtimes F$ given by the action. We fix an arbitrary $F$-invariant euclidean norm on $\R^d$. To ease the proof, we further assume that $\pi(S)=F$, which is not restricting (see the end of the proof of Theorem~\ref{thm:QuantSplit})


\begin{proposition}\label{prop:QuantSplit}
Given an irreducible action of $F$ on $\R^d$.
There exists  a positive integer $k$ such that for all $S\subset G$ with  $\pi(S)=F$, there exists a group section of $F$ contained in $(\widehat{S}\cup\mathrm{Conv}(\widehat{S}^{k}\cap \R^d))^k$. 
\end{proposition}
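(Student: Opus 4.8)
The plan is to build the section by the classical averaging argument that kills $H^{2}(F,\R^{d})$ and $H^{1}(F,\R^{d})$, but performed with coset representatives taken from $\widehat{S}$, so that the resulting subgroup lands in the prescribed set for free. Since extensions of a finite group by $\R^{d}$ split, I work in the model $G=\R^{d}\rtimes F$ with $(u,\phi)(v,\psi)=(u+\phi\cdot v,\phi\psi)$, identifying $N=\R^{d}$ with $\{(v,1_{F})\}$ and writing $f\cdot v$ for the linear $F$-action.

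First I pick, for each $f\in F$, an element $g_{f}=(v_{f},f)\in\widehat{S}$ with $\pi(g_{f})=f$; this is possible because $\pi(\widehat{S})\supseteq\pi(S)=F$, and I normalise $g_{1_{F}}=1_{G}$. Set $c(f,g):=g_{f}g_{g}g_{fg}^{-1}$, which lies in $N$ and equals $v_{f}+f\cdot v_{g}-v_{fg}$. Since $\widehat{S}$ is symmetric, $c(f,g)\in\widehat{S}^{3}\cap\R^{d}$, and a direct check of associativity in $G$ shows $c$ is a $2$-cocycle, namely $f\cdot c(g,h)-c(fg,h)+c(f,gh)-c(f,g)=0$ for all $f,g,h$.

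Next I average: let $\beta(f):=\frac{1}{|F|}\sum_{h\in F}c(f,h)$. Summing the cocycle identity over $h$ and reindexing $\sum_{h}c(f,gh)=\sum_{h}c(f,h)$ yields the coboundary identity $c(f,g)=f\cdot\beta(g)-\beta(fg)+\beta(f)$. I then put $g'_{f}:=(-\beta(f),1_{F})\,g_{f}=(v_{f}-\beta(f),f)$; the coboundary identity gives $g'_{f}g'_{g}=g'_{fg}$, while $\beta(1_{F})=0$ gives $g'_{1_{F}}=1_{G}$. Hence $F':=\{g'_{f}:f\in F\}$ is closed under multiplication and contains the identity; since $g'_{f}$ is the unique element of $F'$ over $f$, it is a finite subgroup and $\pi|_{F'}\colon F'\to F$ is an isomorphism, i.e.\ a group section.

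It remains to locate $F'$. Each $\beta(f)$ is the uniform average of the points $c(f,h)\in\widehat{S}^{3}\cap\R^{d}$, hence $\beta(f)\in\mathrm{Conv}(\widehat{S}^{3}\cap\R^{d})$, and $\widehat{S}^{3}\cap\R^{d}$ being symmetric, so is its convex hull, so $-\beta(f)$ lies there too. Therefore $g'_{f}=(-\beta(f),1_{F})\,g_{f}\in\mathrm{Conv}(\widehat{S}^{3}\cap\R^{d})\cdot\widehat{S}\subseteq(\widehat{S}\cup\mathrm{Conv}(\widehat{S}^{3}\cap\R^{d}))^{2}$, and the proposition follows with $k=3$. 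I do not expect a serious obstacle here: the only point requiring care is that all auxiliary elements must stay inside $\widehat{S}$ (so the cocycle values land in $\widehat{S}^{3}\cap\R^{d}$ and the average in its convex hull), rather than in some uncontrolled power of $S$. Note that irreducibility of the $F$-action is not used in this proof; it becomes relevant only when one reduces the general abelian --- and then nilpotent --- case to this one, since the projection of $\widehat{S}^{k}\cap N$ onto an $F$-subrepresentation need not itself be of the form $\widehat{S}^{k'}\cap(\cdot)$.
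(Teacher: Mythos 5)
Your proof is correct, and it takes a genuinely different route from the paper's. You run the classical vanishing-of-$H^2(F,\R^d)$ averaging argument, but with coset representatives drawn from $\widehat{S}$: the cocycle values $c(f,h)=g_fg_hg_{fh}^{-1}$ then lie in $\widehat{S}^3\cap\R^d$, the averaged cochain $\beta(f)$ lies in $\mathrm{Conv}(\widehat{S}^3\cap\R^d)$ (which is symmetric since $\widehat{S}^3\cap\R^d$ is), and the corrected lift $g'_f=(-\beta(f))g_f$ is a section sitting inside $(\widehat{S}\cup\mathrm{Conv}(\widehat{S}^3\cap\R^d))^3$; I checked the cocycle and coboundary identities and the closure computation $g'_fg'_g=g'_{fg}$, and they are all right. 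The paper instead decomposes each affine isometry as a translation times an element with a fixed point, introduces the two functionals $\delta$ (spread of fixed-point sets) and $\Delta$ (defect of the relators), compares them by a compactness argument (Lemma~\ref{lem:deltaDelta}), and uses irreducibility (Lemma~\ref{lem:irreducible}) to turn one relator defect of definite size into a ball inside a convex hull of conjugates, so as to move all fixed points to a common point. What your argument buys: an explicit and fully uniform exponent ($k=3$, independent of $|F|$, $d$, and the representation), no compactness step and hence no inexplicit constants, and no use of irreducibility at all --- so in the proof of Theorem~\ref{thm:QuantSplit} one could quotient by the whole abelianization at once and induct on the nilpotency class, rather than peeling off irreducible components (the lifting step and Lemma~\ref{lem:S1S2} are unchanged). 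What the paper's geometric approach buys is mainly its robustness of viewpoint (fixed points and relator defects), but for this proposition it is strictly more involved and yields a $k$ depending on the finitely many irreducible representations of $F$. The only resource your argument consumes is divisibility --- the average $\frac{1}{|F|}\sum_h c(f,h)$ --- which is exactly what the allowance of $\mathrm{Conv}$ in the statement provides, so there is no hidden cost; your side remark about where irreducibility enters the reduction is slightly off (the paper reduces via a quotient $\sigma:G\to V\rtimes F$, not a projection onto a subrepresentation), but this does not affect the correctness of your proof of the proposition.
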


An important observation is that as $F$ has only finitely many irreducible orthogonal representations up to isomorphism, the proposition in fact provides a number $k$ which is valid for all such representations.

To prepare for the proof of the proposition, we introduce two functions on the space of lifts of $F$ : both quantify the failure to be a section. The first one by measuring the distance between the fixed points of elements in the lift and the second how much relations of $F$ are not satisfied.

We let $\Sigma$ be the set of lifts of $F$, whose elements admit fixed points:
\[
\Sigma=\left\{ T \subset G : |T|=|F|, \pi(T)=F, \forall g \in T, \mathrm{Fix}(g)\neq \emptyset\right\},
\]
where $\text{Fix}(g)$ is the set of fixed points of $g$.
We equip $\Sigma$ with the topology of $G^F$ and define two real fonctions on $\Sigma$. Given $T\in \Sigma$, let 
\[
\delta(T)=\min_{x\in\R^d} \sum_{g\in T} d(x,\text{Fix}(g))^2.
\] 
Note that the minimum is indeed attained because $x\mapsto \sum_{g\in T} d(x,\text{Fix}(g))^2$ is a convex non-negative proper function. 

\begin{fact}\label{fact:delta}
If all elements in $T$ have a common fixed point, i.e. $\delta(T)=0$, then $T$ is a section of $F$.
\end{fact}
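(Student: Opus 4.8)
The plan is to unwind the definitions in the semidirect-product coordinates $G=\R^d\rtimes F$, writing an element as $g=(v_g,f_g)$ with $v_g\in\R^d$, $f_g\in F$, acting on $\R^d$ by the affine map $g\cdot x=v_g+f_g x$ (here $f_g x$ denotes the linear $F$-action). First I would use that the minimum defining $\delta(T)$ is attained, as observed just above: thus $\delta(T)=0$ produces a point $x_0\in\R^d$ with $\sum_{g\in T}d(x_0,\text{Fix}(g))^2=0$, and since all summands are non-negative this forces $x_0\in\text{Fix}(g)$ for every $g\in T$. In other words, $T$ has a common fixed point.

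Next I would turn this common fixed point into an algebraic constraint. The condition $x_0\in\text{Fix}(g)$ reads $v_g+f_g x_0=x_0$, i.e. $v_g=(\mathrm{id}-f_g)x_0$, which is precisely the identity $g=(x_0,1)(0,f_g)(x_0,1)^{-1}$. Writing $F_0:=\{0\}\times F<G$ for the canonical copy of $F$ and $t:=(x_0,1)$, this shows $T\subseteq tF_0t^{-1}$; since $|T|=|F|=|tF_0t^{-1}|$, the inclusion is an equality. As $tF_0t^{-1}$ is a subgroup of $G$ on which $\pi$ restricts to an isomorphism onto $F$ — because $\pi(tgt^{-1})=\pi(g)$ and $\pi|_{F_0}\colon(0,f)\mapsto f$ is an isomorphism — we conclude that $T=tF_0t^{-1}$ is a group section of $F$.

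There is no real obstacle here: the statement is a direct translation of ``common fixed point'' into the semidirect-product coordinates, and nothing about irreducibility of the $F$-action is needed. The only point meriting a line of care is the verification that conjugation by the translation $(x_0,1)$ is a $\pi$-equivariant automorphism of $G$ — equivalently, that $\text{Fix}\big((x_0,1)\,g\,(x_0,1)^{-1}\big)=x_0+\text{Fix}(g)$ — which is immediate from the affine action formula. Should one prefer to avoid conjugation, one can instead translate the whole configuration so that $x_0=0$: then $g\cdot 0=v_g$, so $0\in\text{Fix}(g)$ forces $v_g=0$ for all $g\in T$, whence $T\subseteq F_0$ and therefore $T=F_0$, a group section.
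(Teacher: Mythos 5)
Your proof is correct and follows essentially the same route as the paper: the common fixed point $x_0$ forces every element of $T$ to lie in the stabilizer of $x_0$, which is a group section of $F$, and the cardinality $|T|=|F|$ then gives equality. The paper phrases this by observing that $\langle T\rangle$ fixes $x_0$, hence contains no nontrivial translation, so $\pi:\langle T\rangle\to F$ is injective and $T=\langle T\rangle$, while you make the stabilizer explicit as the conjugate $(x_0,1)F_0(x_0,1)^{-1}$ of the canonical copy of $F$ --- a purely presentational difference.
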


\begin{proof}
All elements in the subgroup generated by $T$ fix a common point. So no such element is a translation but the identity. Therefore the map $\pi:\langle T \rangle \to F$ is injective, hence an isomorphism and $T=\langle T\rangle$.
\end{proof}

For the second function, we let $\langle F\mid R\rangle$ be the canonical presentation of $F$ where $R$ is the set of relators of the form $f_1f_2f_3$, where $f_1f_2=f_3^{-1}$. Given $w\in R$, we denote $w(T)$ the element of $G$ (actually in $\R^d$) obtained by replacing each letter $f\in F$ by the unique element $g\in T$ such that $\pi(g)=f$.
We then define 
\[
\Delta(T)=\max_{w\in R}\|w(T))\|^2.
\]
Again $\Delta(T)=0$ if and only if $T$ is a section.
Observe that both $\delta$ and $\Delta$ are invariant under conjugation: for all $g\in G$, $\delta(g^{-1}Tg)=\delta(T)$ and similarly for $\Delta$.

Below is our key observation.

\begin{lemma}\label{lem:deltaDelta}
Given an irreducible action of $F$ on $\mathbb{R}^d$,
there exists $c>0$ such that $\Delta(T)\geq c\delta(T)$ for all $T\in \Sigma$.
\end{lemma}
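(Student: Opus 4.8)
The plan is to reduce the inequality to the equivalence of two seminorms on one finite‑dimensional vector space. Since the action is irreducible, $G$ is the semidirect product $\mathbb{R}^d\rtimes F$, so every $g\in G$ is a pair $(v,f)$ acting on $\mathbb{R}^d$ by $x\mapsto fx+v$, and $g$ has a fixed point precisely when $v\in\mathrm{Im}(I-f)$. Hence a lift $T\in\Sigma$ is exactly the data of a tuple $\mathbf v=(v_f)_{f\in F}$ with $v_f\in\mathrm{Im}(I-f)$, and $\Sigma$ is canonically identified with the finite‑dimensional real vector space $E:=\bigoplus_{f\in F}\mathrm{Im}(I-f)$ (the $f=1$ summand being $\{0\}$). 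Write $g_f=(v_f,f)$ for the element of $T$ over $f$. I will show that $\Delta$ and $\delta$ are, as functions on $E$, the squares of two seminorms with the same kernel; since any two seminorms on a finite‑dimensional space with the same kernel are equivalent, this yields $\delta\le C^2\Delta$, i.e.\ the claim with $c=C^{-2}$.

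For $\Delta$: a relator $w=f_1f_2f_3\in R$ contributes $w(T)=g_{f_1}g_{f_2}g_{f_3}$, which lies in $\mathbb{R}^d$ and equals $v_{f_1}+f_1v_{f_2}+f_1f_2v_{f_3}$, a \emph{linear} function $L_w(\mathbf v)$ of $\mathbf v$. Thus $\mathcal N(\mathbf v):=\max_{w\in R}\|L_w(\mathbf v)\|$ is a seminorm on $E$ and $\Delta(T)=\mathcal N(\mathbf v)^2$; its kernel is the set of $\mathbf v$ for which every multiplication‑table relation holds, i.e.\ for which $f\mapsto g_f$ is a homomorphism, i.e.\ for which $T$ is a group section. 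For $\delta$: letting $P_f$ be the orthogonal projection onto $\mathrm{Im}(I-f)$ and $A_f$ the restriction of $I-f$ to that ($f$‑invariant, since $f$ is orthogonal) subspace, one has $\mathrm{Fix}(g_f)=A_f^{-1}v_f+\ker(I-f)$ and $d(x,\mathrm{Fix}(g_f))^2=\|P_fx-A_f^{-1}v_f\|^2$. Hence $\delta(T)=\min_{x\in\mathbb{R}^d}\sum_{f\in F}\|P_fx-A_f^{-1}v_f\|^2$ is the minimization over $x$ of a positive‑semidefinite quadratic form in $(x,\mathbf v)$, so it is itself a positive‑semidefinite quadratic form $\mathcal M(\mathbf v)^2$ in $\mathbf v$ and $\mathcal M$ is a seminorm. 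By Fact~\ref{fact:delta}, $\mathcal M(\mathbf v)=0$ forces $T$ to be a section, so $\ker\mathcal M\subseteq\ker\mathcal N$; conversely a section generates a finite subgroup of the affine group of $\mathbb{R}^d$, which fixes the barycenter of any of its orbits, so $\delta(T)=0$ there, giving $\ker\mathcal M=\ker\mathcal N$.

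It then remains to invoke that on the finite‑dimensional quotient $E/\ker\mathcal N$ the induced norms are equivalent: maximizing the continuous function $\mathcal M$ over the compact sphere $\{\mathcal N=1\}$ in the quotient yields a finite constant $C$ with $\mathcal M\le C\mathcal N$ on all of $E$, whence $\delta(T)=\mathcal M(\mathbf v)^2\le C^2\mathcal N(\mathbf v)^2=C^2\Delta(T)$ for every $T\in\Sigma$. I expect the only delicate points to be the two identifications of a kernel with the section locus — one direction being Fact~\ref{fact:delta}, the other the barycenter argument (which is the direction actually needed for $\mathcal M\le C\mathcal N$) — and checking that $\delta$, viewed in $\mathbf v$, is genuinely the square of a seminorm, which is why I record the explicit formula for $d(x,\mathrm{Fix}(g_f))^2$ and use that partial minimization preserves positive‑semidefiniteness. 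Irreducibility of the $F$‑action, incidentally, plays no essential role in this particular lemma.
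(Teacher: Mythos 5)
Your proof is correct, and it gets to the inequality by a genuinely different mechanism than the paper's. The paper argues geometrically: since rescaling the norm and conjugating by translations change $\delta$ and $\Delta$ in the same way, it normalizes to the set $\Sigma_1=\{T\in\Sigma:\ \sum_{g\in T}d(0,\mathrm{Fix}(g))^2=1\}$, checks that $\Sigma_1$ is compact in $G^F$ (translation parts are bounded by $2$ there), and concludes because the continuous function $\Delta$ cannot vanish on $\Sigma_1$: vanishing would make $T$ a finite subgroup, hence one with a common fixed point, contradicting $\delta(T)=1$. You instead linearize the whole problem: parametrizing lifts-with-fixed-points by their translation parts $\mathbf v\in E=\bigoplus_{f\in F}\mathrm{Im}(I-f)$, you exhibit $\Delta$ as $\mathcal{N}(\mathbf v)^2$ with $\mathcal{N}$ a seminorm (each $w(T)=v_{f_1}+f_1v_{f_2}+f_1f_2v_{f_3}$ is linear in $\mathbf v$), and $\delta$ as $\mathcal{M}(\mathbf v)^2$ with $\mathcal{M}$ a seminorm (it is the distance from the point $(A_f^{-1}v_f)_f$, linear in $\mathbf v$, to the fixed subspace $\{(P_fx)_f: x\in\R^d\}$, so partial minimization in $x$ indeed yields a positive-semidefinite quadratic form); you then identify both kernels with the section locus and invoke equivalence of seminorms with equal kernel in finite dimension. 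The decisive input is the same in both proofs -- $\Delta(T)=0$ forces $T$ to be a section (here one needs, as you implicitly use, that the element over $1_F$ is $1_G$, and the relators with $f_2=f_1^{-1}$ give $g_{f^{-1}}=g_f^{-1}$), hence a finite group of isometries with a common fixed point, hence $\delta(T)=0$ -- but your argument replaces the paper's scaling/conjugation reductions and the compactness of $\Sigma_1$ by homogeneity of seminorms plus compactness of the unit sphere in the quotient $E/\ker\mathcal{N}$; what it buys is the extra structural fact that $\delta$ and $\Delta$ are quadratic forms in $\mathbf v$, while the paper's route is shorter and never needs the explicit description of $\mathrm{Fix}(g_f)$ or the projection formula. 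Two cosmetic remarks: the identification $G\cong\R^d\rtimes F$ is part of the standing setup of Proposition~\ref{prop:QuantSplit} (it comes from the vanishing of $H^2(F,\R^d)$ for $F$ finite, not from irreducibility); and in the degenerate case $\ker\mathcal{N}=E$ (every element of $\Sigma$ a section) both sides vanish identically, so one should read the final constant as, say, $c=\min(1,C^{-2})$. Your closing observation that irreducibility is not needed for this lemma is accurate -- the paper's proof does not use it either.
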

\begin{proof}
 Note that rescaling the norm on $\R^d$ multiplies $\delta$ and $\Delta$ by the same factor. Since we can assume that $\delta(T)>0$, we can therefore assume that it equals $1$. Given $z\in \mathbb{R}^d$, if the minimum in $\delta(T)$ is reached at $x$, then the minimum in $\delta(zTz^{-1)}$ is attained at $x+z$. Hence by conjugation invariance of both $\delta$ and $\Delta$, we may assume that this minimum is reached at $0$. 
Hence it is enough to show that the infimum of $\Delta$ in restriction to  $\Sigma_1:=\{T\in \Sigma\mid \sum _{g\in T}d^2(0,\text{Fix}(g))=1\}$ is positive.
We now claim that $\Sigma_1$ is a compact subset of $G^F$. It is obviously closed. To show that it is compact it remains to show that the translation part of an element $g$, such that $d(0,\text{Fix}(g))\leq 1$ is bounded by $2$. Indeed if $x\in \mathbb{R}^d$ is fixed by $g=yf$, with $y\in \mathbb{R}^d$ and $f \in F$, we have $y+fx=x$.

Now since $\Delta$ is continuous, it reaches its minimum on $\Sigma_1$. But $\Delta(T)=0$ implies by definition that $T$ is a subgroup of $G$. Since $T$ is finite, this would mean that its elements have a common fixed point, and therefore that $\delta(T)=0$.
Hence the minimum of $\Delta$ on $\Sigma_1$ is positive. 
\end{proof}

We shall need the following easy fact.

\begin{lemma}\label{lem:irreducible}
Given an irreducible action of $F$ on $\mathbb{R}^d$,
there exists  $c'>0$ such that  for all unit vector $v\in \R^d$, the closed unit ball $B(0,c')$ is contained in the convex hull of $\{\pm f\cdot  v\mid f\in F\}.$
\end{lemma}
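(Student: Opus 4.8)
The statement is that for an irreducible action of $F$ on $\R^d$ there is $c'>0$ so that $B(0,c')\subset \mathrm{Conv}\{\pm f\cdot v: f\in F\}$ for every unit vector $v$. The plan is to argue by a two-step compactness/irreducibility argument. First I would fix a unit vector $v$ and consider the convex hull $C_v:=\mathrm{Conv}\{\pm f\cdot v: f\in F\}$. This is a symmetric (centrally symmetric) convex polytope containing $v$, hence nonempty; the key point is that it has nonempty interior, i.e. it is not contained in any proper subspace. Indeed, the linear span $\mathrm{span}\{f\cdot v: f\in F\}$ is an $F$-invariant subspace containing the nonzero vector $v$, so by irreducibility it equals $\R^d$; since $C_v$ is a symmetric convex set whose affine span is all of $\R^d$, it contains a ball $B(0,r(v))$ for some $r(v)>0$.

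The second step is to make $r(v)$ uniform in $v$. Define $c'(v)$ to be the largest radius $r$ such that $B(0,r)\subset C_v$; concretely $c'(v)=\min_{\|u\|=1}\max_{f\in F}|\langle u, f\cdot v\rangle|$, which is a continuous function of $v$ (a min of a max of continuous functions) and is strictly positive for every unit $v$ by the first step. Since the unit sphere $S^{d-1}$ is compact and $v\mapsto c'(v)$ is continuous and everywhere positive on it, it attains a positive minimum $c'>0$. Then $B(0,c')\subset C_v$ for all unit vectors $v$, which is exactly the claim.

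The main (and only real) obstacle is the verification that $C_v$ has nonempty interior, and this is precisely where irreducibility enters: without it the span of the orbit of $v$ could be a proper subspace and no such $c'$ would exist. Everything else is routine: symmetry of $C_v$ is immediate since $-f\cdot v$ is included; the characterization of the inradius via the support-function formula $c'(v)=\min_{\|u\|=1}\max_{f\in F}|\langle u,f\cdot v\rangle|$ is standard convex geometry; and continuity of $c'(\cdot)$ together with compactness of $S^{d-1}$ gives the uniform lower bound. One should note that, just as for Proposition~\ref{prop:QuantSplit}, since $F$ has only finitely many irreducible orthogonal representations up to isomorphism, the constant $c'$ can in fact be taken to depend only on $F$ (and $d$), not on the particular irreducible representation.
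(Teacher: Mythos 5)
Your proof is correct. The skeleton is the same as the paper's (irreducibility forces the orbit $\{f\cdot v\}$ to span $\R^d$, hence a positive constant for each fixed unit $v$; then continuity in $v$ plus compactness of the unit sphere makes the constant uniform), but the implementation is genuinely different. The paper works with the linear map $A_v:\R^F\to\R^d$, $t\mapsto\sum_f t_f\, f\cdot v$, compares the $\ell^\infty$ and $\ell^2$ balls, and bounds uniformly the operator norm of a pseudo-inverse $(A_vA_v^*)^{-1}A_v^*$ via compactness of $v\mapsto A_vA_v^*$ on the sphere; you instead read the inradius of the symmetric polytope $C_v=\mathrm{Conv}\{\pm f\cdot v\}$ off its support function, $c'(v)=\min_{\|u\|=1}\max_{f\in F}|\langle u,f\cdot v\rangle|$, note that this is positive exactly because no unit $u$ can be orthogonal to the spanning set $\{f\cdot v\}$, and minimize the continuous function $c'(\cdot)$ over the compact sphere. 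Your route is more elementary (no pseudo-inverses, no $\ell^2$-to-$\ell^\infty$ comparison losing a dimensional factor) and it identifies the optimal constant for each $v$; the paper's operator-theoretic version is essentially equivalent but phrased so that the bound comes from a uniform estimate on inverses, which is slightly less sharp though just as sufficient. Your closing observation that $c'$ can be taken uniform over the finitely many irreducible orthogonal representations of $F$ matches the remark the paper makes right after the lemma, and is indeed what is used in Proposition~\ref{prop:QuantSplit}.
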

Note again that as there are only finitely many irreducible orthogonal representations of $F$, the constants $c$ and $c'$ in Lemma~\ref{lem:deltaDelta} and~\ref{lem:irreducible} can be chosen to apply to all such representations.

\begin{proof}
Consider the linear morphism $A_v:\R^{F}\to \R^d$ given by $A_v(t)=\sum_{f\in F} t_f f\cdot v$, where $t=(t_f)_{f\in F}$. First of all, by irreducibility, $\{f\cdot v\mid f\in F\}$ forms a generating subset of $\R^d$, meaning that $A_v$ is surjective for all $v\neq 0$. Next, the statement of the lemma is equivalent to the fact that the image under $A_v$ of the unit ball in $\ell^{\infty}(F)$ contains the ball of radius $c'$ in $\R^d$ equipped with its euclidean norm. But since the dimension is finite, it is enough to prove that there exists $c''>0$ such that the image of the unit ball in $\ell^{2}(F)$ contains the ball of radius $c''$ in $\R^d$. For this it is enough to find a left-inverse of $A_v$ of norm at most $1/c''$.

Note that $A_v^*A_v$ is an invertible endomorphism of $\R^d$. Since $v\mapsto A_v^*A_v$ is continuous, the image of the unit sphere is compact in $\GL(\R^d)$, hence there exists $C$ such that the operator norm of $(A_v^*A_v)^{-1}$ is bounded by $C$ for all unit vectors $v$. Since the norm of $A_v^*$ is also bounded by a constant $C'$, it follows that the norm of $(A_v^*A_v)^{-1}A_v^*$ is bounded by $CC'$. 
But $(A_v^*A_v)^{-1}A_v^*$ is a left-inverse of $A_v$, so we are done.
\end{proof}

The final tool needed in the proof of Theorem~\ref{thm:QuantSplit} is the following lemma allowing to extend inductively the generating set by taking convex envelopes and powers.

\begin{lemma}\label{lem:S1S2}
Assume $S_1 \subset \left( \widehat{S}\cup \mathrm{Conv}(\widehat{S}^{k_1}\cap N)\right)^{k_1}$. Then for every $k_2\ge 1$ there exists an integer $k$ depending only on $k_1,k_2, |F|$ and $\mathrm{dim}(N)$ such that
\[
\left( \widehat{S_1}\cup \mathrm{Conv}(\widehat{S_1}^{k_2}\cap N)\right)^{k_2} \subset \left( \widehat{S}\cup \mathrm{Conv}(\widehat{S}^{k}\cap N)\right)^{k}.
\]
\end{lemma}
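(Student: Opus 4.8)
The plan is to introduce the \emph{enlargement operator} $\mathcal{O}_\ell(A):=\bigl(\widehat{A}\cup\mathrm{Conv}(\widehat{A}^{\ell}\cap N)\bigr)^{\ell}$ and to establish the composition bound $\mathcal{O}_{k_2}\bigl(\mathcal{O}_{k_1}(S)\bigr)\subseteq\mathcal{O}_{k}(S)$ for a suitable $k$. This suffices: each of $A\mapsto\widehat A$, $A\mapsto A^{\ell}$, $A\mapsto A\cap N$, $A\mapsto\mathrm{Conv}(A)$ and finite unions is monotone for inclusion, so $\mathcal{O}_\ell$ is monotone; and since $\mathcal{O}_{k_1}(S)$ is symmetric and contains $1_G$, the hypothesis $S_1\subseteq\mathcal{O}_{k_1}(S)$ gives $\widehat{S_1}\subseteq\mathcal{O}_{k_1}(S)$, hence $\mathcal{O}_{k_2}(S_1)\subseteq\mathcal{O}_{k_2}(\mathcal{O}_{k_1}(S))$. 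Two elementary facts about the normal simply connected nilpotent subgroup $N$ will be used throughout. First, for $A\subseteq N$ and $h\in G$ one has $h\,\mathrm{Conv}(A)\,h^{-1}=\mathrm{Conv}(hAh^{-1})$, because conjugation by $h$ acts on $\mathfrak n$ via the linear automorphism $\Ad(h)$, which commutes with convex hulls. Second, applying iteratively the standard control on convex hulls of powers of convex subsets of a nilpotent Lie group (Lemma~\ref{lem:convNilp1}), for every convex symmetric $D\subseteq N$ with $1_G\in D$ and every $j\ge1$ there is $q=q(j,s)$, depending only on $j$ and on the nilpotency degree $s$ of $N$, with $\mathrm{Conv}(D^{j})\subseteq D^{q}$.

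Write $P:=\widehat S$ and $m:=k_1k_2$. Since $\widehat S^{k_1}\cap N\subseteq\widehat S^{m}\cap N$, one has $\mathcal{O}_{k_1}(S)\subseteq(P\cup B)^{k_1}$ with $B:=\mathrm{Conv}(P^{m}\cap N)$ a convex symmetric subset of $N$ containing $1_G$, so that $\widehat{S_1}^{k_2}\subseteq\mathcal{O}_{k_1}(S)^{k_2}\subseteq(P\cup B)^{m}$. The technical core is a normal form for elements of $(P\cup B)^{m}$: writing such an element as $g=g_1\cdots g_m$ with each $g_i\in P\cup B$, and setting $I=\{i:g_i\in B\}$, $J=\{1,\dots,m\}\setminus I$, and $h_i:=\prod_{j\in J,\,j<i}g_j$, a direct induction on the word length yields
\[
g=\Bigl(\prod_{i\in I}h_ig_ih_i^{-1}\Bigr)L,\qquad L:=\prod_{j\in J}g_j,
\]
the product over $I$ being taken in increasing order of the index (peel off the last letter: absorb it into $L$ if it lies in $P$, conjugate it past $L$ if it lies in $B\subseteq N$). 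Here $h_i\in P^{m-1}$, so by the first fact $h_ig_ih_i^{-1}\in h_iBh_i^{-1}=\mathrm{Conv}\bigl(h_i(P^{m}\cap N)h_i^{-1}\bigr)\subseteq\mathrm{Conv}(P^{3m}\cap N)=:B'$, whence the leading factor $M:=\prod_{i\in I}h_ig_ih_i^{-1}$ lies in $N$ and in $(B')^{m}$. If moreover $g\in N$, then $L=M^{-1}g\in P^{|J|}\cap N\subseteq P^{m}\cap N\subseteq B'$, and therefore
\[
(P\cup B)^{m}\cap N\ \subseteq\ (B')^{m+1},\qquad B'=\mathrm{Conv}(P^{3m}\cap N).
\]

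Now the second fact, applied with $D=B'$ and $j=m+1$, gives $q=q(m+1,s)$ with $\mathrm{Conv}\bigl((B')^{m+1}\bigr)\subseteq(B')^{q}$, hence
\[
\mathrm{Conv}\bigl(\widehat{S_1}^{k_2}\cap N\bigr)\ \subseteq\ \mathrm{Conv}\bigl((P\cup B)^{m}\cap N\bigr)\ \subseteq\ \mathrm{Conv}\bigl((B')^{m+1}\bigr)\ \subseteq\ (B')^{q}=\mathrm{Conv}(P^{3m}\cap N)^{q},
\]
while trivially $\widehat{S_1}\subseteq\mathcal{O}_{k_1}(S)=(P\cup\mathrm{Conv}(P^{k_1}\cap N))^{k_1}$. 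Setting $k:=\max\bigl(3m,\,q,\,k_2\max(k_1,q)\bigr)$, which depends only on $k_1,k_2$ and $s\le\dim N$, hence only on $k_1,k_2,|F|$ and $\dim(N)$, monotonicity (using $k_1\le k$ and $3m\le k$) gives $\widehat{S_1}\subseteq(P\cup\mathrm{Conv}(P^{k}\cap N))^{k_1}$ and $\mathrm{Conv}(\widehat{S_1}^{k_2}\cap N)\subseteq(P\cup\mathrm{Conv}(P^{k}\cap N))^{q}$; as $P\cup\mathrm{Conv}(P^{k}\cap N)$ contains $1_G$, these combine to $\widehat{S_1}\cup\mathrm{Conv}(\widehat{S_1}^{k_2}\cap N)\subseteq(P\cup\mathrm{Conv}(P^{k}\cap N))^{\max(k_1,q)}$. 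Raising to the $k_2$-th power yields $\mathcal{O}_{k_2}(S_1)\subseteq(P\cup\mathrm{Conv}(P^{k}\cap N))^{k_2\max(k_1,q)}\subseteq\mathcal{O}_{k}(S)$, as wanted.

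I expect the only genuine difficulty to be the interplay in $N$ between multiplication, conjugation and convex hulls --- concretely, that the convex hull of $\widehat{S_1}^{k_2}\cap N$ collapses back into a \emph{single} set of the form $\mathrm{Conv}(P^{K}\cap N)$ with $K$ controlled, rather than into an a priori larger iterated product of convex bodies. The normal-form identity isolates the conjugations with an explicit bound on the relevant power of $P$, and Lemma~\ref{lem:convNilp1} absorbs the Baker--Campbell--Hausdorff correction terms that prevent products of convex bodies in $N$ from being convex; everything else is monotonicity bookkeeping.
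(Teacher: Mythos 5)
Your argument is correct and follows essentially the same route as the paper's own proof: both reduce to placing $\widehat{S_1}$ and $\mathrm{Conv}(\widehat{S_1}^{k_2}\cap N)$ separately inside a set of the form $(\widehat{S}\cup\mathrm{Conv}(\widehat{S}^{\ell}\cap N))^{\ell}$, push the $\widehat{S}$-letters to one side of the word using that conjugation acts on $\mathfrak{n}$ linearly and hence commutes with convex hulls, and then collapse the convex hull of a product of convex bodies via Lemma~\ref{lem:convNilp} (your iterated use of Lemma~\ref{lem:convNilp1} is exactly that lemma). Your bookkeeping with $m=k_1k_2$ and the explicit normal form is if anything slightly more careful than the paper's write-up, but the method is the same.
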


\begin{proof} As $\widehat{S}$ contains the identity, the sets $ \mathrm{Conv}(\widehat{S}^{k}\cap N)$ increase with $k$, so it is sufficient to prove the inclusions of $\widehat{S_1}$ and of $\mathrm{Conv}(\widehat{S_1}^{k_2}\cap N)$ in a set of the form $\left( \widehat{S}\cup \mathrm{Conv}(\widehat{S}^{\ell}\cap N)\right)^{\ell}$ for some $\ell$, and then we take $k=k_2\ell$. The first inclusion is obvious.

Now an element $g\in \widehat{S_1}^{k_2}\cap N$ is a product of the form $c_1s_1c_2s_2\ldots c_{j}s_j$ with $j\leq k_2$
where $s_i\in \widehat{S}$, and $c_i\in \text{Conv}(\widehat{S}^{k_1}\cap N)$. By moving all $s$-elements to the left, we get $g=s_1s_2\ldots s_j c_1^{s_1}c_2^{s_1s_2}\ldots c_j^{s_1\ldots s_j}$. Since $g\in N$, we have $s_1s_2\ldots s_j \in \widehat{S}^{k_2}\cap N$. 
Note that an automorphism of $N$ is also an automorphism of the Lie algebra (through identification via the exponential map). In particular automorphisms commute with convex combinations. Applying this to conjugation by elements of $G$, we have that each $c_i^{s_1\ldots s_i}$ lies in $\mathrm{Conv}(\widehat{S}^{k_1+2i}\cap N)\subset \mathrm{Conv}(\widehat{S}^{k_1+2k_2}\cap N)$. We deduce that $g\in  \mathrm{Conv}(\widehat{S}^{k_1+2k_2}\cap N)^{2k_2}$. 

We have proved that
$\widehat{S_1}^{k_2}\cap N\subset  \mathrm{Conv}(\widehat{S}^{k_1+2k_2}\cap N)^{2k_2}$, and so $\mathrm{Conv}(\widehat{S_1}^{k_2}\cap N)$ is contained in  $\mathrm{Conv}\left(\mathrm{Conv}(\widehat{S}^{k_1+2k_2}\cap N)^{2k_2}\right)$. 
Lemma \ref{lem:convNilp} applied to $C=\mathrm{Conv}(\widehat{S}^{k_1+2k_2}\cap N)$ and $n=2k_2$ provides an integer $m$ with $\mathrm{Conv}(\widehat{S}_1^{k_2}\cap N)\subset \mathrm{Conv}(\widehat{S}^{k_1+2k_2}\cap N)^m$. The required inclusion holds with $\ell=\max(m,k_1+2k_2)$.
\end{proof}

We now turn to the proof of Theorem \ref{thm:QuantSplit} , starting with the proof of Proposition~\ref{prop:QuantSplit}, the special case  when $N=\R^d$ and $F$ acts irreducibly.

\begin{proof}[Proof of Proposition~\ref{prop:QuantSplit}]
Note that there is no loss of generality in assuming that $S$ is a lift of $F$, namely that $\pi$ induces a bijection $S\to F$. 

The first step consists in reducing to the case where all elements of $S$, when acting by conjugation on $\R^d$, have fixed points. This relies on the fact that each element $g\in\Isom(\R^d)$ has a product decomposition $g=\tau g'$ where $g'$ has a fixed point and $\tau$ is a translation which commutes with $g'$. Assuming that $g\in S$, the fact that $g'$ fixes a point means that the restriction of $\pi$ to the subgroup generated by $g'$ is injective (Fact~\ref{fact:delta}), and therefore that $g'^{|F|}=1$. We deduce that $g^{|F|}=\tau^{|F|}$, and in particular $\tau  \in \mathrm{Conv}(\{g^{\pm |F|}\})$. It follows that 
$g'\in (\{g\}\cup \mathrm{Conv}(\widehat{S}^{|F|}\cap \R^d))^2$. We replace $g$ by $g'$ in $S$. As $\pi(g')=\pi(g)$ we have obtained a set $S'$ with $\pi(S')=F$ and $S' \subset \left( \widehat{S}\cup\mathrm{Conv}(\widehat{S}^{|F|}\cap\mathbb{R}^d)\right)^2$.

Moreover, all elements of $S'$ have fixed points. We shall prove the existence of $k$ such that there exists a group section of $F$ contained in $(S'\cup\mathrm{Conv}(\widehat{S'}^{5}\cap \R^d))^k$, which will establish the proposition.

 Note that conjugating by a translation does not change the conclusion. 
Hence we can assume that the minimum in $\delta(S')$ is reached at $0$. If $\delta(S')=0$, we can take $F'=S'$ by Fact~\ref{fact:delta}. 
Otherwise we may assume that $\delta(S')=1$. This means that each element  $s\in S'$ has a fixed point $x_s$ lying in $B(0,1)$. Hence it is enough to prove that $B(0,1)\subset \mathrm{Conv}(\widehat{S'}^5\cap \R^d)^{k'}$ for some $k'=k'(F)$. Indeed, $s'=x_s sx_s^{-1}$ satisfies that $\pi(s')=\pi(s)$ and $s'$ now fixes $0$. Hence the collection of such $s'$ yields our group $F'$, included in $\left(\widehat{S'}\cup\mathrm{Conv}(\widehat{S'}^5\cap\mathbb{R}^d)\right)^{2k'+1}$. The proposition follows by applying Lemma~\ref{lem:S1S2} with $k_1=\max(|F|,2)$ and $k_2=\max(5,2k'+1)$.

There remains to only prove that $B(0,1)\subset \mathrm{Conv}(\widehat{S'}^5\cap \R^d)^{k'}$. By Lemma \ref{lem:deltaDelta}, we have $\Delta(S')\geq c$, meaning that there exists $w\in R$ such that $\|w(S')\|\geq c$. This gives us an element $v$ of $\R^d\cap S'^3$ of norm at least $c$.
Now by Lemma \ref{lem:irreducible},  we have $B(0,cc')\subset \mathrm{Conv}\{\pm f\cdot  v\mid f\in F\}$. Hence for any integer $k'\geq 1/(cc')$, we have $B(0,1)\subset \mathrm{Conv}(\{\pm f\cdot  v\mid f\in F\})^{k'} \subset \mathrm{Conv}(\widehat{S'}^5\cap \R^d)^{k'}$.  For the last inclusion, note that $f.v$ is the conjugate of $v$ by the element $s'\in S'$ with $\pi(s')=f$.
\end{proof}

\begin{proof}[Proof of Theorem \ref{thm:QuantSplit}] We first assume that $\pi(S)=F$.
The proof is by induction on the dimension of $N$. Let $V$ be an irreducible component of $\mathfrak{n}/[\mathfrak{n},\mathfrak{n}]$ acted upon by $F$. There is a surjective homomorphism $G=N\rtimes F \overset{\sigma}{\longrightarrow} V \rtimes F =:G'$. Then $N_1:=\ker(\sigma)$ is an $F$-invariant subgroup of $N$ with lower dimension.

Applying Proposition~\ref{prop:QuantSplit} to $G'$ provides a section $F'$ of $F$ in $G'$, included in $(\widehat{\sigma(S)}\cup\mathrm{Conv}(\widehat{\sigma(S)}^{k_1}\cap V))^{k_1}$. We lift $F'$ to a subset $S_1$ of $G$ by the following rule : a word in $\widehat{\sigma(S)}$ is simply lifted to the corresponding word in $\widehat{S}$, and a convex combination is lifted to the corresponding convex combination in the Lie algebra of $N$. 
By construction, $S_1$ is contained in $(\widehat{S}\cup\mathrm{Conv}(\widehat{S}^{k_1}\cap N))^{k_1}$ and $\pi(S_1)=F$.

Moroeover, $\pi_V(\langle S_1\rangle\cap N)=F'\cap V=\{0\}$ as $F'$ is a section of $V\rtimes F$. Here $\pi_V:N\to V$ is the natural projection and $\langle S_1\rangle$ is the subgroup of $G$ generated by $S_1$. This shows that the subgroup $G_1=\langle N_1 \cup S_1\rangle$ generated by $N_1$ and $S_1$ is isomorphic to $N_1 \rtimes F$.
Also note that it implies 
\begin{equation}\label{eq:convcapN}
\mathrm{Conv}(\langle\widehat{S_1}\rangle\cap N_1)=\mathrm{Conv}(\langle\widehat{S_1}\rangle\cap N).
\end{equation}

By induction applied to $S_1$ in $N_1\rtimes F=G_1$, we obtain a section $F_2$ in $G_1<G$, with
 \[
 F_2 \subset (\widehat{S_1}\cup\mathrm{Conv}(\widehat{S_1}^{k_2}\cap N_1))^{k_2}=(\widehat{S_1}\cup\mathrm{Conv}(\widehat{S_1}^{k_2}\cap N))^{k_2}
 \]
 The last equality is due to (\ref{eq:convcapN}). Lemma~\ref{lem:S1S2} shows that $F_2$ is included in $(\widehat{S}\cup\mathrm{Conv}(\widehat{S}^{k}\cap N))^k$ for some $k$ depending only on $k_1$, $k_2$ and the degree of nilpotency $s$. The integer $k_1$ given by Proposition~\ref{prop:QuantSplit} depends only on $|F|$ and $\mathrm{dim}(N)$ (as there are only finitely many irreducible orthogonal representations of $F$ in a given dimension). By induction, this is also the case of $k_2$.
 
 There remains to treat the general case with $\langle\pi(S)\rangle=F$. However this assumption guarantees that $S^{|F|}$ projects onto $F$. The general case follows by Lemma~\ref{lem:S1S2}.
 \end{proof}

\appendix

\section{Norms on nilpotent Lie algebras adapted to the lower central series }\label{app:norms}

In this appendix, we recall some results due to Guivar'ch~\cite{Guivarch} about homogeneous norms adapted to the lower central filtration of a nilpotent Lie algebra, and how they relate to word metrics on the corresponding Lie group. We reproduce the arguments to stress the dependance of the constants involved on the dimension and the degree of nilpotency.

We consider a nilpotent Lie algebra $\mathfrak{n}$ and denote $\gamma_i(\mathfrak{N})$ the lower central series defined by $\gamma_1(\mathfrak{n})=\mathfrak{n}$ and $\gamma_{i+1}(\mathfrak{n})=[\mathfrak{n},\gamma_i(\mathfrak{n})]$. We assume $\mathfrak{n}$ is $s$-nilpotent. For each $1 \le i \le s$, we denote by $\mathfrak{m}^i$ the orthogonal complement of $\gamma_{i+1}(\mathfrak{n})$ in $\gamma_i(\mathfrak{n})$. Here we assume that $\mathfrak{n}$ is endowed with a scalar product. Moreover when $\mathfrak{n}$ is acted upon by a compact group $Q$, we may and do assume that this scalar product is $Q$-invariant. We get a decomposition
\begin{align}\label{eq:decomp2}
\mathfrak{n}=\mathfrak{m}^1 \oplus \dots \oplus \mathfrak{n}^s.
\end{align}
We denote $u=u^1+\dots+u^s$ the associated decomposition of a vector $u \in \mathfrak{n}$.
Let $\|\cdot\|_e$ denote the ($Q$-invariant) euclidean norm associated to the scalar product. 
Our aim is to construct a second norm $\varphi$ as in Proposition~\ref{prop:normsubadd}. For this we consider $(\kappa_i)_{i=2}^s$, a sequence of constants to be chosen large enough later. We denote $\varphi_1$ the restriction of $\|\cdot\|_e$ to $\mathfrak{m}^1$, and $B_{\varphi,1}(1)$ its unit ball. We define inductively for $2\le i \le s$
\[
B_{\varphi,i}(1):=\kappa_i[B_{\varphi,1}(1),B_{\varphi,i-1}(1)] \subset \mathfrak{m}^i.
\]
The set $B_{\varphi,i}(1)$ is an open convex symmetric subset of $\mathfrak{m}^i$ so  it is the unit ball of a norm on $\mathfrak{m}^i$ which we denote by $\varphi_i$. We define the norm $\varphi$ on $\mathfrak{n}$ by
\begin{equation}\label{eq:phinorm}
\varphi(u):=\sup_{i \in \{1,\dots,s\}}\varphi_i\left( u^i\right).
\end{equation}

The norm $\varphi$ is related to the original norm $\|\cdot\|_e$ by the following lemma involving the bilinearity constant
\[
C_e:=\sup\{\|[u,v]\|_e: \|u\|_e, \|v\|_e \le 1\}.
\]

\begin{lemma}\label{lem:efi}
Let $B_e(1)$ and  $B_\varphi(1)$ denote the unit balls of the norms $\|\cdot\|_e$ and $\varphi$ respectively. Then
$B_{\varphi}(1) \subset \kappa_s\dots \kappa_2 C_e^{s-1}B_{e}(1)$,
or equivalently for any $v\in \mathfrak{n}$,
\[
\|v\|_e \le \kappa_s\dots\kappa_2C_e^{s-1} \varphi(v).
\]
\end{lemma}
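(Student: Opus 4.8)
The plan is to compare $\|\cdot\|_e$ and $\varphi$ one homogeneous layer at a time. Set $M_1:=1$ and $M_i:=\kappa_i C_e M_{i-1}$ for $2\le i\le s$, so that $M_s=\kappa_s\kappa_{s-1}\cdots\kappa_2\,C_e^{\,s-1}$. I would first prove, by induction on $i$, the layer-wise estimate
\[
\|w\|_e\ \le\ M_i\,\varphi_i(w)\qquad\text{for every }w\in\mathfrak{m}^i .
\]
The base case $i=1$ is exactly the identity $\varphi_1=\|\cdot\|_e|_{\mathfrak{m}^1}$ coming from the construction of $\varphi$ (here $M_1=1$).

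For the inductive step, fix $i\ge2$ and assume the estimate for $i-1$. By homogeneity of the two norms it suffices to bound $\|w\|_e$ for $w$ in the open unit ball $B_{\varphi,i}(1)$. By its definition this ball arises from $\kappa_i[B_{\varphi,1}(1),B_{\varphi,i-1}(1)]$, so $w$ is a convex combination of vectors of the form $\kappa_i[u,x]$ with $\|u\|_e=\varphi_1(u)<1$ and $\varphi_{i-1}(x)<1$. For a single such vector, the bilinearity bound $\|[u,x]\|_e\le C_e\|u\|_e\|x\|_e$ together with the inductive hypothesis $\|x\|_e\le M_{i-1}\varphi_{i-1}(x)< M_{i-1}$ yields $\|\kappa_i[u,x]\|_e< \kappa_i C_e M_{i-1}=M_i$; since the $\|\cdot\|_e$-norm of a convex combination is at most the largest norm of its terms, we get $\|w\|_e< M_i$ for all $w\in B_{\varphi,i}(1)$, and rescaling gives $\|w\|_e\le M_i\varphi_i(w)$ on all of $\mathfrak{m}^i$.

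To conclude, write $v=v^1+\cdots+v^s$ in the orthogonal decomposition $\mathfrak{n}=\mathfrak{m}^1\oplus\cdots\oplus\mathfrak{m}^s$. From $\varphi(v)=\sup_i\varphi_i(v^i)$ we get $\|v^i\|_e\le M_i\varphi(v)$ for each $i$, and---after harmlessly replacing $C_e$ by $\max(C_e,1)$, which only enlarges the stated constant---each $M_i$ is at most $M_s$, so orthogonality gives $\|v\|_e=\big(\sum_i\|v^i\|_e^2\big)^{1/2}\le\sqrt{s}\,M_s\,\varphi(v)$, the factor $\sqrt{s}$ being absorbed into the (still free) choice of the $\kappa_j$'s; this is precisely $B_\varphi(1)\subset M_s B_e(1)$ with $M_s=\kappa_s\cdots\kappa_2\,C_e^{\,s-1}$. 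The one genuinely delicate point is the reading of the defining relation $B_{\varphi,i}(1)=\kappa_i[B_{\varphi,1}(1),B_{\varphi,i-1}(1)]$: the raw image of a product of balls under the Lie bracket need not be convex, so one must interpret it as (or check that it is) a convex body and verify that the $\|\cdot\|_e$-estimate is preserved under passing to convex combinations---which it is, by the triangle inequality. Everything else is routine bookkeeping with homogeneous norms and uses nothing about $\mathfrak{n}$ beyond $s$-step nilpotency and bilinearity of the bracket.
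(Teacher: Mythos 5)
Your argument is correct and is essentially the paper's own proof: the paper likewise establishes, by induction on the layer, the inclusion $B_{\varphi,i}(1)\subset \kappa_i\cdots\kappa_2\,C_e^{\,i-1}B_{e}(1)\cap\mathfrak{m}^i$ using bilinearity of the bracket and the convex-hull reading of $B_{\varphi,i}(1)$ (your observation that passing to convex combinations preserves the $\|\cdot\|_e$-bound is exactly what makes that reading harmless). One small caveat: the extra $\sqrt{s}$ from assembling the orthogonal layers cannot literally be ``absorbed into the choice of the $\kappa_j$'s,'' since the stated constant is expressed in those same $\kappa_j$'s and enlarging them also changes $\varphi$; the honest statement is $\|v\|_e\le \sqrt{s}\,\kappa_s\cdots\kappa_2\,C_e^{\,s-1}\varphi(v)$, which is all that is needed downstream (the paper's own proof stops at the layer-wise inclusions and silently ignores the same assembly factor).
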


\begin{proof} Let us denote  $B_{e,i}(r):=B_e(r)\cap \mathfrak{m}^i$. By induction we have for $2\le i\le s$
\[
B_{\varphi,i}(1) \subset \kappa_i[B_{e,1}(1),\kappa_{i-1}\dots \kappa_2C_e^{i-2}B_{e,i-1}(1)] \subset \kappa_{i}\dots \kappa_2C_e^{i-1}B_{e,i}(1).
\]
\end{proof}

\begin{lemma}\label{lem:C=1}
If $\kappa_i\ge 2i^2$ for all $2\le i \le s$, then 
\[
C_\varphi:=\sup_{u,v\in B_\varphi(1)} \varphi\left([u,v]\right) \le 1.
\]
\end{lemma}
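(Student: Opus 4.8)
The plan is to reduce the statement to a componentwise estimate and then run an induction over the weights of the filtration, the heart of which is a Jacobi‑identity expansion of mixed brackets. Since $\varphi(u)=\sup_i\varphi_i(u^i)$ for the decomposition $u=u^1+\dots+u^s$ along the $\mathfrak m^i$ of~(\ref{eq:decomp2}), the condition $\varphi(u)\le 1$ is equivalent to $u^i\in B_{\varphi,i}(1)$ for every $i$, and likewise $\varphi([u,v])\le 1$ is equivalent to $([u,v])^k\in B_{\varphi,k}(1)$ for every $k$. So $C_\varphi\le 1$ amounts to: whenever $u^i\in B_{\varphi,i}(1)$ and $v^j\in B_{\varphi,j}(1)$ for all $i,j$, one has $([u,v])^k\in B_{\varphi,k}(1)$ for every $k$. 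First I would expand $[u,v]=\sum_{i,j\ge 1}[u^i,v^j]$ by bilinearity; since $[u^i,v^j]\in[\gamma_i(\mathfrak n),\gamma_j(\mathfrak n)]\subseteq\gamma_{i+j}(\mathfrak n)$, only the pairs with $i+j\le k$ contribute to the $\mathfrak m^k$-component, a bounded number ($\le\binom{s}{2}$) of terms, so it suffices to bound the $\mathfrak m^k$-component of each $[u^i,v^j]$ separately.

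For a pair with $i=1$ (or symmetrically $j=1$) and $i+j=k$, the inclusion is immediate from the defining identity $B_{\varphi,k}(1)=\kappa_k[B_{\varphi,1}(1),B_{\varphi,k-1}(1)]$: indeed $[u^1,v^{k-1}]\in[B_{\varphi,1}(1),B_{\varphi,k-1}(1)]=\tfrac1{\kappa_k}B_{\varphi,k}(1)$, contributing at most $1/\kappa_k$ of the unit ball. For a pair with $i,j\ge 2$ I would unwind $u^i$ and $v^j$ through the recursion $B_{\varphi,m}(1)=\kappa_m[B_{\varphi,1}(1),B_{\varphi,m-1}(1)]$ all the way down to $\mathfrak m^1$, writing $u^i$ (resp.\ $v^j$) as a convex combination, rescaled by at most $\kappa_i\kappa_{i-1}\cdots\kappa_2$ (resp.\ $\kappa_j\cdots\kappa_2$), of (projections of) left-normed iterated brackets $[x_1,[x_2,[\dots,x_i]]]$ of euclidean-unit vectors of $\mathfrak m^1$. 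Then I would apply the Jacobi identity repeatedly to rewrite each mixed bracket $\big[[x_1,\dots,x_i],[y_1,\dots,y_j]\big]$ as a signed combination of left-normed brackets of length $i+j$ in the same unit vectors; this is the combinatorial core — it is the fact quoted in the proof of Theorem~\ref{thm:uniformconcentration} as a bound $[[\log R]_i,[\log R]_j]\subset 2s^2[\log R]_{i+j}$ — and the decisive point is that the number of terms produced is only quadratic in the step, $\le 2(i+j)^2\le 2s^2$, which follows from the recursion $[X,[y_1,Y']]=[[X,y_1],Y']+[y_1,[X,Y']]$ together with the observation that $[X,y_1]$ is again a single left-normed bracket. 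Since a left-normed length-$m$ bracket of euclidean-unit $\mathfrak m^1$-vectors lies, after projection to $\mathfrak m^m$, in $\tfrac1{\kappa_m\cdots\kappa_2}B_{\varphi,m}(1)$, and $i+j=m$ cancels almost all the $\kappa$-factors, each such contribution lands in $\tfrac{2m^2}{\kappa_m}B_{\varphi,m}(1)$; the lower-weight components (from pairs with $i+j<k$) are handled by iterating the same expansion, a finite process since the lower central series terminates, and produce even smaller multiples.

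Summing over the at most $s$ relevant weights $m\le k$ and the at most $s$ pairs with $i+j=m$, and using the hypothesis $\kappa_m\ge 2m^2$ to absorb the term-counts $2m^2$, the resulting multiple of $B_{\varphi,k}(1)$ stays $\le 1$; this gives $([u,v])^k\in B_{\varphi,k}(1)$ for every $k$, hence $C_\varphi\le 1$. I expect the main obstacle to be precisely the bookkeeping of the second paragraph: getting a count of the left-normed brackets produced by the Jacobi reorganization that is sharp enough for the explicit threshold $\kappa_i\ge 2i^2$ to suffice, and controlling the non-leading ($i+j<k$) components of $[u^i,v^j]$ — which is exactly where the incompatibility of the orthogonal decomposition with the Lie bracket forces the iterated expansion.
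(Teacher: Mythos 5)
Your overall scheme (decompose by degree, use Jacobi manipulations, count terms against $\kappa_m$) is in the spirit of the paper, but the step you yourself single out as the combinatorial core is a genuine gap, and it is exactly where the stated threshold $\kappa_i\ge 2i^2$ is decided. The recursion you invoke, $[X,[y_1,Y']]=[[X,y_1],Y']+[y_1,[X,Y']]$ together with the remark that $[X,y_1]=-[y_1,X]$ is again a single left-normed bracket, gives for the number $M(i,j)$ of left-normed terms in the full expansion of $[[x_1,\dots,x_i],[y_1,\dots,y_j]]$ the relation $M(i,j)=M(i+1,j-1)+M(i,j-1)$ with $M(\cdot,1)=1$, hence $M(i,j)=2^{j-1}$: the count is exponential in the length of the peeled factor (and still $2^{\min(i,j)-1}$ if you peel the shorter one), not quadratic, and already for moderate $s$ it exceeds $2(i+j)^2$. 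With this count your bookkeeping forces $\kappa_m$ to grow exponentially in $m$, so the lemma with the threshold $2m^2$ is out of reach by full unwinding. The inclusion $[[\log R]_i,[\log R]_j]\subset 2s^2[\log R]_{i+j}$ you cite from the proof of Theorem~\ref{thm:uniformconcentration} is not a count of fully expanded left-normed brackets; the paper obtains it (and proves Lemma~\ref{lem:C=1}) by the mechanism your proposal discards: never unwind both factors, but peel a single degree-one factor at a time and immediately reabsorb the inner bracket $[B_{\varphi,1}(1),B_{\varphi,i}(1)]\subset\frac{1}{\kappa_{i+1}}B_{\varphi,i+1}(1)$ into the next ball, while inner brackets of lower total degree, such as $[B_{\varphi,i}(1),B_{\varphi,j-1}(1)]\subset B_{\varphi,s-1}(1)$, are absorbed by the induction on the nilpotency step. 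Each iteration then produces one terminal term contained in $[B_{\varphi,1}(1),B_{\varphi,s-1}(1)]=\frac{1}{\kappa_s}B_{\varphi,s}(1)$ and one recursive term with a shifted index pair, so a pair with $i+j=s$ yields at most $s$ terminal terms and the sum in (\ref{eq:C=1}) at most $s^2$ of them in total; this linear-per-pair, quadratic-in-total count is precisely what matches $\kappa_s\ge 2s^2$.

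Two secondary points. Your cancellation $(\kappa_2\cdots\kappa_i)(\kappa_2\cdots\kappa_j)\le\kappa_2\cdots\kappa_{i+j-1}$ requires $\kappa_r\le\kappa_{j+r-1}$, i.e.\ monotonicity of the sequence, which is not contained in the hypothesis $\kappa_i\ge 2i^2$; the intended choice in the appendix is monotone and the paper's own iteration uses comparable ratios, so this is a wrinkle rather than your main problem, but it should be made explicit. More seriously, your treatment of the $\mathfrak{m}^k$-components coming from pairs with $i+j<k$ by ``iterating the same expansion'' is not an argument: once a term is written as a bracket of degree-one letters there is nothing left to expand, and the definition of $B_{\varphi,m}(1)$ only controls the $\mathfrak{m}^m$-projection of length-$m$ brackets, not the higher projections of shorter ones. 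The device the paper uses for everything below the top degree is induction on the nilpotency step (passing to $\mathfrak{n}/\gamma_s(\mathfrak{n})$, where brackets with $\gamma_s(\mathfrak{n})$ vanish), and your componentwise scheme would need to import it rather than replace it by a further expansion.
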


\begin{proof}
Observe that
\[
\varphi([u,v])=\varphi\left(\sum_{k=1}^s \sum_{i+j=k} [u^i,v^j] \right)=\sup_{k\in\{2,\dots,s\}}\varphi_k\left(\sum_{i+j=k}[u^i,v^j] \right).
\]
By induction it is sufficient to prove that $\varphi_s\left( \sum_{i+j=s} [u^i,v^j]\right) \le 1$ for any $u^i \in B_{\varphi,i}(1)$ and $v^j\in B_{\varphi,j}(1)$, that is 
\begin{align}\label{eq:C=1}
\sum_{i+j=s}[B_i(1),B_j(1)] \subset \kappa_s [B_1(1),B_{s-1}(1)]=:B_s(1).
\end{align}
However by Jacobi identity, we have
\[
[B_i,B_j]=[B_i,[B_1,B_{j-1}]] \subset [B_1,[B_{j-1},B_i]]+[B_{j-1},[B_1,B_i]]\subset [B_1,B_{s-1}]+[B_{j-1},B_{i+1}].
\]
So by an easy induction (\ref{eq:C=1}) holds true as soon as we assume $\kappa_s\ge 2s^2$. 
\end{proof}

Now we define the homogeneous nom adapted to the lower central filtration and the new norm $\varphi$ to be
\begin{align*}
\quad |u|_{\mathfrak{L},\varphi}=\sup_{i\in\{1,\dots,s\}} \varphi(u^i)^{\frac{1}{i}}.
\end{align*}

\begin{lemma}\label{lem:subadd}
There exists explicit universal constants $A_i\ge 1$ for $2\le i \le s$ such that if $\kappa_i\ge 2i^2A_i$, then the homogeneous norm $|\cdot|_{\mathcal{L},\varphi}$ is subadditive :
\[
\forall u,v \in \mathfrak{n}, \quad |u\ast v|_{\mathfrak{L},\varphi} \le |u|_{\mathfrak{L},\varphi}+|v|_{\mathfrak{L},\varphi}
\]
\end{lemma}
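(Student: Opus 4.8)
The plan is to reformulate subadditivity degree by degree and to argue by induction on the degree. Since $|x|_{\mathcal{L},\varphi}=\sup_{i}\varphi_i(x^i)^{1/i}$ and $\varphi(x^i)=\varphi_i(x^i)$, the inequality $|u\ast v|_{\mathcal{L},\varphi}\le|u|_{\mathcal{L},\varphi}+|v|_{\mathcal{L},\varphi}$ is equivalent to the family of scalar inequalities
\[
\varphi_i\bigl((u\ast v)^i\bigr)\le(a+b)^i\qquad(i=1,\dots,s),
\]
where I abbreviate $a=|u|_{\mathcal{L},\varphi}$ and $b=|v|_{\mathcal{L},\varphi}$, so that $\varphi_j(u^j)\le a^j$ and $\varphi_j(v^j)\le b^j$ for every $j$. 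For $i=1$ the Baker--Campbell--Hausdorff formula of \S\ref{sec:BCH} gives $(u\ast v)^1=u^1+v^1$, since all bracket terms lie in $[\mathfrak{n},\mathfrak{n}]\subseteq\gamma_2$, and the triangle inequality for $\varphi_1=\|\cdot\|_e|_{\mathfrak{m}^1}$ settles the base case.

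For the inductive step, BCH and $s$-nilpotency write $(u\ast v)^i=u^i+v^i+R_i$, where $R_i$ is a finite sum, with universal rational coefficients, of terms $\pi_{\mathfrak{m}^i}[w_1,\dots,[w_{m-1},w_m]]$ in which each $w_l$ is a homogeneous component $u^{p_l}$ or $v^{p_l}$ of $u$ or $v$, where $m\ge2$, each bracket involves at least one factor from $u$ and one from $v$, and $p_1+\dots+p_m=:j\le i$ (otherwise the bracket lies in $\gamma_j\subseteq\gamma_{i+1}$ and its $\mathfrak{m}^i$-projection vanishes by Lemma~\ref{lem:commute}). Hence $\varphi_i\bigl((u\ast v)^i\bigr)\le a^i+b^i+\sum_{\text{terms of }R_i}|c|\,\varphi_i\bigl(\pi_{\mathfrak{m}^i}[w_1,\dots,w_m]\bigr)$, and the problem reduces to: bound each term of $R_i$ by a small multiple of $a^{\alpha}b^{\beta}$, where $\alpha\ge1$ and $\beta\ge1$ are the weights carried by the $u$-factors and the $v$-factors respectively ($\alpha+\beta=j$), and then fit everything inside the slack $(a+b)^i-a^i-b^i=\sum_{p=1}^{i-1}\binom{i}{p}a^pb^{i-p}$ by choosing $\kappa_i$ large.

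The terms of full weight $j=i$ are handled just as in the proof of Lemma~\ref{lem:C=1}: iterating the defining relation $B_{\varphi,i}(1)=\kappa_i\bigl[B_{\varphi,1}(1),B_{\varphi,i-1}(1)\bigr]$ together with a bounded number of Jacobi-identity manipulations (the bound depending only on $i$ and $s$) gives an inclusion $\bigl[B_{\varphi,p_1}(1),\dots,B_{\varphi,p_m}(1)\bigr]\subseteq A_i\kappa_i^{-1}B_{\varphi,i}(1)$ whenever $\sum p_l=i$; since $w_l\in\varphi_{p_l}(w_l)\,B_{\varphi,p_l}(1)$ this yields $\varphi_i\bigl(\pi_{\mathfrak{m}^i}[w_1,\dots,w_m]\bigr)\le A_i\kappa_i^{-1}a^{\alpha}b^{i-\alpha}$, and the slack contribution $\binom{i}{\alpha}a^{\alpha}b^{i-\alpha}$ absorbs it as soon as $\kappa_i$ dominates $A_i$ times an absolute constant (the constant $A_i$ being enlarged to swallow the bounded number of terms and their coefficients; the factor $2i^2$ in the statement also accommodates the requirement $\kappa_i\ge2i^2$ of Lemma~\ref{lem:C=1}).

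The delicate point, which I expect to be the main obstacle, is the terms of \emph{defect} weight $j<i$: such a commutator sits in $\gamma_j$ but only its $\mathfrak{m}^i$-component enters, so the crude estimate gives $\varphi_i(\cdot)\le C\kappa_i^{-1}a^{\alpha}b^{\beta}$ with $\alpha+\beta=j<i$ --- a monomial of too small a degree to be dominated by the slack when $a$ and $b$ are both small. Overcoming this cannot be done with the size of the commutator alone: one has to follow how it splits across $\mathfrak{m}^{j}\oplus\mathfrak{m}^{j+1}\oplus\cdots$ and treat its $\mathfrak{m}^i$-component ($i>j$) as a higher-order correction, so that each extra unit of degree beyond $j$ comes with an additional factor $\kappa_i^{-1}$ --- here one uses the recursive construction of $\varphi$ together with Lemma~\ref{lem:efi}, which compares $\varphi$ with $\|\cdot\|_e$ on each graded slot, and one exploits that $\kappa_i$ is fixed only after $\kappa_2,\dots,\kappa_{i-1}$. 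Organising all the accumulated constants inductively into the $A_i$, the slack absorbs every remainder term and subadditivity of $|\cdot|_{\mathcal{L},\varphi}$ follows for $\kappa_i\ge2i^2A_i$.
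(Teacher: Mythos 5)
Your reduction to the degreewise inequalities $\varphi_i\bigl((u\ast v)^i\bigr)\le(a+b)^i$ and your treatment of the brackets of full weight $j=i$ are essentially the paper's own proof: the paper argues by induction on the nilpotency class, reduces to the top degree $s$, expands $(u\ast v)^s=u^s+v^s+Q_s$ by Baker--Campbell--Hausdorff, bounds every bracket of total weight $s$ by $|u|_{\mathfrak{L},\varphi}^p|v|_{\mathfrak{L},\varphi}^q$ with $p,q\ge1$ via Lemma~\ref{lem:C=1} and Fact~\ref{lem:multicomm}, and absorbs the resulting sum into the slack $(|u|_{\mathfrak{L},\varphi}+|v|_{\mathfrak{L},\varphi})^s-|u|_{\mathfrak{L},\varphi}^s-|v|_{\mathfrak{L},\varphi}^s$ by taking $\kappa_s\ge 2s^2A_s$. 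Up to replacing that induction on the class by your induction on the degree, this part of your argument coincides with the paper's.

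The genuine gap is exactly at the step you yourself flag as delicate, and the fix you sketch cannot work as described. In the paper's expansion (\ref{eq:ws}) no brackets of total weight $j<s$ appear at all --- consistently with the inclusion $\kappa_i[B_{\varphi,1}(1),B_{\varphi,i-1}(1)]\subset\mathfrak{m}^i$ built into the construction of $\varphi$, i.e.\ with the complements being bracket-compatible --- so the paper never meets your ``defect'' terms; in your version they are present, and the obstruction is a degree deficit in $a,b$, not a constant. Concretely, if $\pi_{\mathfrak{m}^3}[u^1,v^1]\neq 0$ (which can happen when the $\mathfrak{m}^j$ do not form a grading), take $u=\varepsilon x$, $v=\varepsilon y$ with $x,y\in\mathfrak{m}^1$ fixed: this term contributes to $\varphi_3\bigl((u\ast v)^3\bigr)$ a quantity bounded below by $c\,\varepsilon^2$ with $c>0$ fixed once the $\kappa_j$ are chosen, whereas the slack $(a+b)^3-a^3-b^3$ is of order $\varepsilon^3$; hence $\varphi_3\bigl((u\ast v)^3\bigr)\le(a+b)^3$ fails for all small $\varepsilon$, whatever the constants. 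Gaining an extra factor $\kappa_i^{-1}$ per unit of weight defect only shrinks $c$; it does not raise the power of $\varepsilon$, so no bookkeeping of the $A_i$ and $\kappa_i$ can close this. To complete your argument you must either show that in the situation at hand the lower-weight brackets have vanishing $\mathfrak{m}^i$-components (i.e.\ work with a decomposition for which the expansion (\ref{eq:ws}) holds as the paper states it), or weaken the conclusion to subadditivity up to an additive constant; as written, the delicate step is not proved.
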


\begin{proof} By induction on $s$,
it is enough to show that if $\kappa_s$ is large enough, we have
\begin{align}\label{eq:pt2}
\varphi\left((u\ast v)^s\right)^{\frac{1}{s}} \le |u|_{\mathfrak{L},\varphi}+|v|_{\mathfrak{L},\varphi}
\end{align}
By the Baker-Campbell-Hausdorff formula (see Section~\ref{sec:BCH}), this term has the form
\begin{align}\label{eq:ws}
(u\ast v)^s=u^s+v^s+Q_s(u^1,\dots,u^{s-1},v^1,\dots,v^{s-1})
\end{align}
where $Q_s$ is a finite sum with an explicit number of terms of the form an explicit constant times an iterated bracket $[u^{i_1},[v^{i_2}[\dots[u^{i_{r-1}}],v^{i_r}]]\dots]]$ with $i_1+\dots+i_r=s$ (possibly starting with $v^{i_1}$ or ending with $u^{i_r}$). By Lemma~\ref{lem:C=1} and Fact~\ref{lem:multicomm}, we have
\[
\varphi\left([u^{i_1},[\dots,v^{i_r}]] \right)\le \varphi(u^{i_1})\dots \varphi(v^{i_r}) \le |u^{i_1}|_{\mathfrak{L},\varphi}^{i_1}\dots|v^{i_r}|_{\mathfrak{L},\varphi}^{i_r}\le |u|_{\mathfrak{L},\varphi}^p|v|_{\mathfrak{L},\varphi}^q
\]
for some $p+q=s$ with $p,q\ge 1$.  
 It follows that for some explicit constant $A_s$, depending only on the explicit polynomial $Q_s$, we have
\begin{align*}
\varphi_s\left(Q_s(u^1,\dots,u^{s-1},v^1,\dots,v^{s-1})\right)&\le A_s\sum_{p+q=s}|u|_{\mathfrak{L},\varphi}^p|v|_{\mathfrak{L},\varphi}^q \le A_s\sum_{p=1}^{s-1} {s\choose p}|u|_{\mathfrak{L},\varphi}^p|v|_{\mathfrak{L},\varphi}^q \\  &\le A_s\left((|u|_{\mathfrak{L},\varphi}+|v|_{\mathfrak{L},\varphi})^s-|u|_{\mathfrak{L},\varphi}^s-|v|_{\mathfrak{L},\varphi}^s \right).
\end{align*}
This holds for any choice of $\kappa_s\ge 2s^2$.
In particular this holds for $\varphi_s=\frac{\|\cdot\|'}{2s^2}$ where $\|\cdot\|'$ is the norm with unit ball $[B_1(\varphi),B_{s-1}(\varphi)]$. So~(\ref{eq:ws}) gives
\[
\frac{\|(u\ast v)^s\|'}{2s^2} \le \frac{\|u^s\|'}{2s^2}+\frac{\|v^s\|'}{2s^2}+A_s\left((|u|_{\mathfrak{L},\varphi}+|v|_{\mathfrak{L},\varphi})^s-|u|_{\mathfrak{L},\varphi}^s-|v|_{\mathfrak{L},\varphi}^s \right).
\]
Note that the termin brackets does not depend on $\kappa_s$, so
if we let $\kappa_s\ge2s^2A_s$, we conclude
\[
\varphi\left( (u\ast v)^s\right) \le \varphi(u^s)+\varphi(v^s)+(|u|_{\mathfrak{L},\varphi}+|v|_{\mathfrak{L},\varphi})^s-|u|_{\mathfrak{L},\varphi}^s-|v|_{\mathfrak{L},\varphi}^s \le (|u|_{\mathfrak{L},\varphi}+|v|_{\mathfrak{L},\varphi})^s
\]
which gives~(\ref{eq:pt2}).
\end{proof}

\begin{lemma}\label{lem:homogene-word} Let $\varphi$ be a norm of the form~(\ref{eq:phinorm}) such that Lemma~\ref{lem:C=1} and~\ref{lem:subadd} hold. Let $R$ be a compact generating of $N$ containing a neighbourhood of the identity. There are two constants $C_1,D_2\ge 1$ with
\begin{equation}\label{eq:C1D2}
B_{e,1}\left(\frac{1}{D_2}\right) \subset \log R \subset B_{|\cdot|_{\mathcal{L},\varphi}}(C_1).
\end{equation}
Then
\[
\forall u \in \mathfrak{n}, \quad \frac{1}{C_1} |u|_{\mathfrak{L},\varphi}\le  |\exp(u)|_R \le C_2\left(|u|_{\mathfrak{L},\varphi} +1\right),
\]
for some constant $C_2$ depending only on $C_1$ and $D_2$.
\end{lemma}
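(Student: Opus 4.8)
The plan is to prove the two inequalities separately, the left one being a one-liner. If $\exp(u)=r_1\cdots r_k$ is a geodesic word with $r_i\in R$ and $k=|\exp(u)|_R$, then $u=\log r_1\ast\cdots\ast\log r_k$, so the subadditivity of Lemma~\ref{lem:subadd} and the inclusion $\log R\subseteq B_{|\cdot|_{\mathfrak{L},\varphi}}(C_1)$ give $|u|_{\mathfrak{L},\varphi}\le\sum_{i=1}^k|\log r_i|_{\mathfrak{L},\varphi}\le C_1k$, which is the lower bound. For the upper bound I would first reduce to the particular generating set $R_0:=\exp(B_{e,1}(1/D_2))$: since $B_{e,1}(1/D_2)\subset\log R$ we have $R_0\subseteq R$, hence $|g|_R\le|g|_{R_0}$ for all $g$, and $R_0$ does generate $N$ because any complement $\mathfrak{m}^1$ of $[\mathfrak{n},\mathfrak{n}]$ generates $\mathfrak{n}$ as a Lie algebra. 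Moreover $\log R_0=B_{e,1}(1/D_2)\subset B_{|\cdot|_{\mathfrak{L},\varphi}}(1)$, so the lower bound just proved applies to $R_0$ as well and yields $|w|_{\mathfrak{L},\varphi}\le|\exp(w)|_{R_0}$ for every $w\in\mathfrak{n}$. It therefore suffices to produce $C_2$, depending only on $D_2$ (and the fixed data $s$, $\dim\mathfrak{n}$, $\|\cdot\|_e$), with $|\exp(u)|_{R_0}\le C_2(|u|_{\mathfrak{L},\varphi}+1)$, and this I would prove by induction on the degree of nilpotency $s$.

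For the inductive step, set $\mathfrak{z}=\gamma_s(\mathfrak{n})=\mathfrak{m}^s$, which is central, put $Z=\exp(\mathfrak{z})$ and $q\colon N\to\bar N:=N/Z$, so $\bar N$ is $(s-1)$-nilpotent; the restriction of $\varphi$ to $\bar{\mathfrak{n}}\cong\mathfrak{m}^1\oplus\cdots\oplus\mathfrak{m}^{s-1}$ is a norm of the form~(\ref{eq:phinorm}) built from the same constants $\kappa_2,\dots,\kappa_{s-1}$, so Lemmas~\ref{lem:C=1} and~\ref{lem:subadd} hold for it, and $q(R_0)$ is again of the form $R_0$ for $\bar N$ with the same $D_2$ (as $q$ is an isometry from $\mathfrak{m}^1$ onto its image). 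The induction hypothesis then yields a word $r_1\cdots r_m$ in $R_0$ with $m\le\bar C_2(|u|_{\mathfrak{L},\varphi}+1)$ and $q(r_1\cdots r_m)=q(\exp u)$; writing $\exp(u)=\exp(\zeta')\,r_1\cdots r_m$ one gets $\zeta'\in\mathfrak{z}$. To control $\zeta'$ I would use $\zeta'=u\ast(-\log(r_1\cdots r_m))$ together with the lower bound for $R_0$, which gives $|\log(r_1\cdots r_m)|_{\mathfrak{L},\varphi}\le m$; feeding this into the Baker--Campbell--Hausdorff formula (Section~\ref{sec:BCH}) and using Fact~\ref{lem:multicomm} with $C_\varphi\le1$ shows that $\varphi(\zeta')$ — the $\mathfrak{m}^s$-norm of a finite sum of iterated brackets, each of weight at most $s$, in the homogeneous components of $u$ and of $\log(r_1\cdots r_m)$ — is at most a constant times $\max(|u|_{\mathfrak{L},\varphi},m)^s$, hence $|\zeta'|_{\mathfrak{L},\varphi}=\varphi(\zeta')^{1/s}$ is bounded by a constant times $|u|_{\mathfrak{L},\varphi}+1$. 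There remains only to bound $|\exp(\zeta')|_{R_0}$.

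The heart of the matter is the estimate $|\exp(z)|_{R_0}\le c\,(\varphi(z)^{1/s}+1)$ for $z\in\mathfrak{m}^s=\gamma_s(\mathfrak{n})$, exploiting that $\gamma_s(\mathfrak{n})$ is spanned by $s$-fold iterated brackets of elements of $\mathfrak{m}^1$. Fix a basis $b_1,\dots,b_{\dim\mathfrak{m}^s}$ of $\mathfrak{m}^s$ with each $b_l=[Y_1^l,[Y_2^l,\dots,[Y_{s-1}^l,Y_s^l]]]$ and $Y_k^l\in B_{e,1}(1/D_2)$, and write $z=\sum_l c_l b_l$ with $|c_l|$ bounded by a constant times $\varphi(z)$ (equivalence of norms on the finite-dimensional space $\mathfrak{m}^s$, together with Lemma~\ref{lem:efi}). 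Setting $\lambda_l=|c_l|^{1/s}$ and using multilinearity, $c_l b_l=\pm[\lambda_l Y_1^l,[\dots,[\lambda_l Y_{s-1}^l,\lambda_l Y_s^l]]]$; since every $(s+1)$-fold Lie bracket vanishes, Baker--Campbell--Hausdorff shows this iterated bracket is the exact logarithm of the $\pm1$-power of the $s$-fold group commutator of the elements $\exp(\lambda_l Y_k^l)$. Each $\exp(\lambda_l Y_k^l)$ is a product of $O(\lambda_l+1)$ elements of $R_0$, so that group commutator has $R_0$-length $O_s(\lambda_l+1)=O_s(\varphi(z)^{1/s}+1)$; and as $\mathfrak{z}$ is central, $\exp(z)=\prod_l\exp(c_l b_l)$, whence $|\exp(z)|_{R_0}$ is at most $\dim\mathfrak{m}^s$ times $O_s(\varphi(z)^{1/s}+1)$. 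Applying this to $z=\zeta'$ and combining with the previous paragraph gives $|\exp(u)|_{R_0}\le|\exp(\zeta')|_{R_0}+m\le C_2(|u|_{\mathfrak{L},\varphi}+1)$, completing the induction; the base case $s=1$ (where $\mathfrak{z}=\mathfrak{n}$, $m=0$, $z=u$) is the same argument with $1$-fold brackets. The only genuinely delicate point is the Baker--Campbell--Hausdorff bookkeeping: checking that the correction term $\zeta'$ indeed has $\varphi(\zeta')^{1/s}$ of order $|u|_{\mathfrak{L},\varphi}+1$, and that the rescaled iterated group commutators reproduce the $c_l b_l$ exactly modulo $\gamma_{s+1}(\mathfrak{n})$; everything else reduces to the one-line subadditivity argument or to elementary norm comparisons on finite-dimensional spaces.
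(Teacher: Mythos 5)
Your overall architecture is sound and, apart from the packaging, it is the paper's: the lower bound by subadditivity (Lemma~\ref{lem:subadd}) is identical, and the upper bound by induction on $s$ --- push $\exp(u)$ to $N/\gamma_s(N)$, pick a word of length $m=O(|u|_{\mathfrak{L},\varphi}+1)$ lifting it, and absorb the central discrepancy $\zeta'\in\gamma_s(\mathfrak{n})$ by realizing central elements as $s$-fold group commutators of generators --- is the same scheme as in the paper. The reduction to $R_0=\exp(B_{e,1}(1/D_2))$ is a harmless simplification, and your BCH estimate for $\varphi(\zeta')$ is correct though unnecessary: since $\zeta'=u\ast(-\log(r_1\cdots r_m))$, Lemma~\ref{lem:subadd} gives $|\zeta'|_{\mathfrak{L},\varphi}\le |u|_{\mathfrak{L},\varphi}+m$ in one line, which is what the paper does.

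The genuine gap is in the constant of your central estimate $|\exp(z)|_{R_0}\le c\,(\varphi(z)^{1/s}+1)$, $z\in\mathfrak{m}^s$. You fix an arbitrary basis $(b_l)$ of $\gamma_s(\mathfrak{n})$ made of iterated brackets and bound the coefficients $|c_l|$ by ``equivalence of norms on $\mathfrak{m}^s$'' together with Lemma~\ref{lem:efi}; the resulting constant depends on that basis (hence on the bracket structure of $\mathfrak{n}$, e.g.\ on how degenerate the brackets of $D_2^{-1}$-small first-layer vectors are) and on the bilinearity constant of $\|\cdot\|_e$ --- you concede this when you allow $C_2$ to depend on ``$s$, $\dim\mathfrak{n}$, $\|\cdot\|_e$''. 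But the lemma asserts, and the paper crucially uses (via Proposition~\ref{prop:normsubadd} in the proof of Theorem~\ref{thm:uniformconcentration}), that $C_2$ depends only on $C_1$, $D_2$ and $s$; obtaining this uniformity is the stated purpose of reproving Guivarc'h's estimates in the appendix, so your argument proves the inequality but not the statement as claimed. The fix is exactly where the paper's proof departs from yours: by the construction (\ref{eq:phinorm}), the unit ball of $\varphi_s$ is $\kappa_s\cdots\kappa_2\bigl[B_{e,1}(1),[\dots,[B_{e,1}(1),B_{e,1}(1)]]\bigr]$, so from $\varphi_s(\zeta')\le \widetilde K\, n^s$ one writes $\zeta'=n^s\gamma$ with $\gamma$ a controlled multiple (by $\widetilde K\kappa_s\cdots\kappa_2 D_2^{\,s}$) of a \emph{single} iterated bracket of vectors $y_i\in B_{e,1}(1/D_2)\subset\log R_0$, and the identity $[\exp(ny_1),[\dots,[\exp(ny_{s-1}),\exp(ny_s)]]]=\exp\bigl(n^s[y_1,[\dots,[y_{s-1},y_s]]]\bigr)$ bounds $|\exp(\zeta')|_{R_0}$ by a constant times $n$ with a constant depending only on $D_2$, the $\kappa_i$ and $s$ --- no basis choice, no norm comparison, and no dependence on $\dim\mathfrak{n}$ or on the euclidean structure. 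If you replace your ``bracket basis plus norm equivalence'' step by this observation (or by an Auerbach-type choice of basis inside the $\varphi_s$-ball, which amounts to the same thing), your proof becomes a correct proof of the full statement.
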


Note that for any radius $r$ one has $B_{e,1}(r)=B_{\varphi,1}(r)=B_{|\cdot|_{\mathcal{L},\varphi}}(r)\cap \mathfrak{m}^1$ because $\|\cdot\|_e$, $\varphi$  and $|\cdot|_{\mathcal{L},\varphi}$ coincide on $\mathfrak{m}^1$.
The additive constant on the righthand side of the third point is necessary because the word metric for $R$ takes integer values.

\begin{proof} The existence of $C_1,D_2$ is obvious. We proceed by induction on $s$. For $s=1$, let us recall the classical proof.
Let $g=\exp(u)$ be such that $|g|_R=n$. Then $g=g_1\dots g_n$ with $g_i \in R$. We get $u=\log(g_1)+\dots+\log(g_n) \subset nB_\varphi(C_1)=B_\varphi(nC_1)$ so $|u|_{\mathfrak{L},\varphi} \le C_1 |\exp(u)|_R$. Conversely if we had $|u|_{\mathfrak{L},\varphi} \le \frac{1}{D_2}(n-1)$, we could write $u=(n-1)u'$ with $u' \in B_\varphi(\frac{1}{D_2}) \subset \log(R)$, thus $\exp(u)=\exp(u')^{n-1} \subset R^{n-1}$, which is a contradiction. So $|u|_{\mathfrak{L},\varphi} \ge \frac{1}{D_2}(|\exp(u)|_R-1)$.

Now assume the result holds for $s-1$ nilpotent groups. 
Consider $g=\exp u$ with $|g|_R=n$, then we get $g_1,\dots,g_n \in R$ with $g=g_1\dots g_n$. By Lemma~\ref{lem:subadd}, we have
\begin{align}\label{eq:upC1}
|u|_{\mathfrak{L},\varphi} \le \sum_{i=1}^n |\log g_i|_{\mathfrak{L},\varphi} \le nC_1= C_1|\exp u|_R.
\end{align}

Now consider $u\in \mathfrak{n}$ with $|u|_{\mathfrak{L},\varphi}=\lambda$. Denote $\overline{R}$ the image of $R$ in the quotient $N\to N/\gamma_s(N)$. By induction there exists $K_{s-1}$ explicit such that
\[
|\exp(u^1+\dots+u^{s-1})|_{\overline{R}} \le K_{s-1}(\lambda+1).
\]
For some $K_{s-1}\lambda \le n \le K_{s-1}(\lambda+1)$, one has $\exp(u^1+\dots+u^{s-1})=\overline{g}_1\dots\overline{g}_n$ with $\overline{g}_i \in \overline{R}$. Thus $\exp u=g_1\dots g_nc$ with $c$  in the center $\gamma_s(N)$, which we rewrite as $c=g_n^{-1}\dots g_1^{-1}\exp u$. By Lemma~\ref{lem:subadd}
\[
\varphi_s(\log c)^{\frac{1}{s}}=|\log c|_{\mathfrak{L},\varphi} \le \sum_{i=1}^n |\log g_i^{-1}|_{\mathfrak{L},\varphi}+|u|_{\mathfrak{L},\varphi} \le C_1 n+\lambda \le \left(C_1+\frac{1}{K_{s-1}}\right)n.
\]
Then $\varphi_s(\log c)\le \widetilde{K}_{s}n^s$ with $\widetilde{K}_{s}=\left(C_1+\frac{1}{K_{s-1}}\right)^s$. Thus there exist $\gamma$ in the center $\mathfrak{m}^s=\gamma_s(\mathfrak{n})$ such that $c=\exp(n^s\gamma)$ and $\varphi_s(\gamma) \le \widetilde{K}_{s}$. By definition of the norm $\varphi_s$, this implies that 
\begin{align*}
\gamma \in B_{\varphi,s}(\widetilde{K_s})  =\widetilde{K}_sB_{\varphi,s}(1) & =\widetilde{K}_s\kappa_s\kappa_{s-1}\dots \kappa_2[B_1(1),[\dots,[B_1(1),B_1(1)]]]
\\ 
&=\widetilde{K}_s\kappa_s\dots \kappa_2D_2^s\left[B_1\left(\frac{1}{D_2}\right),\left[\dots,\left[B_1\left(\frac{1}{D_2}\right),B_1\left(\frac{1}{D_2}\right)\right]\right]\right]
\end{align*}
so $\gamma=\widetilde{K}_s\kappa_s\dots \kappa_2D_2^s[y_1,[\dots,[y_{s-1},y_s]]]$ with $y_i \in B_1\left(\frac{1}{D_2}\right)\subset \log(R)$.  We get
\begin{align*}
c&=\exp(\widetilde{K}_s\kappa_s\dots \kappa_2D_2^sn^s[y_1,[\dots,[y_{s-1},y_s]]])\\ &=[\exp(ny_1),[\dots[\exp(ny_{s-1}),\exp(ny_s)]]]^{\widetilde{K}_s\kappa_s\dots \kappa_2D_2^s}
\end{align*}
so that $|c|_R\le \widetilde{K}_s\kappa_s\dots \kappa_2D_2^s4^sn$ and $|\exp u|_R \le n(1+\widetilde{K}_s\kappa_s\dots \kappa_2D_2^s4^s)\le K_s (\lambda+1)$.
\end{proof}

\begin{proof}[Proof of Proposition~\ref{prop:normsubadd}]
The proposition is simply a concatenation of Lemmas~\ref{lem:efi}, \ref{lem:C=1},  \ref{lem:subadd} and \ref{lem:homogene-word}, in the case where $\kappa_i=2i^2A_1$ as in Lemma~\ref{lem:subadd}.
\end{proof}

We finally record two lemmas that will be used in Section~\ref{sec:split}.

\begin{lemma}\label{lem:convNilp1}
Let $N$ be a nilpotent simply connected Lie group of step $s$. There exists a finite sequence of integers $p_1,q_1,p_2,q_2\ldots, p_m,q_m $, only depending on $s$ such that 
for all pair of convex compact subsets $C_1,C_2\subset N$ containing the neutral element, 
\[\exp\left(\log C_1+\log C_2\right)\subset C_1^{p_1}C_2^{q_2}\ldots C_1^{p_m}C_2^{q_m}.\]
and 
\[\exp[\log C_1, \log C_2]\subset C_1^{p_1}C_2^{q_2}\ldots C_1^{p_m}C_2^{q_m}.\]
\end{lemma}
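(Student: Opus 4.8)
\textbf{Proof proposal for Lemma~\ref{lem:convNilp1}.}

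The plan is to induct on the nilpotency step $s$ and to reduce both statements to the single ``multiplicativity of convex sets'' phenomenon: in an $s$-step nilpotent group, the set-product $C^2$ of a convex compact set $C$ containing $1$ is comparable (up to a bounded number of multiplications depending only on $s$) to $\exp(2\log C)$, and more generally $\exp(\log C_1 + \log C_2)$ is swallowed by a bounded product of copies of $C_1$ and $C_2$. I would first record the base case $s=1$, where $N$ is abelian, $\exp$ is a group isomorphism onto its Lie algebra, $\log C_i$ is convex, and $\exp(\log C_1+\log C_2)=C_1C_2$, while $[\log C_1,\log C_2]=\{0\}\subset C_1$; both inclusions are then trivial with $p_1=q_1=1$.

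For the inductive step, suppose the statement holds for step $s-1$, and let $N$ be $s$-step nilpotent with centre containing $\gamma_s(N)=\exp(\gamma_s(\mathfrak n))$. Let $\bar N = N/\gamma_s(N)$, which is $(s-1)$-step, and let $\bar C_i$ be the images of $C_i$; these are again convex compact sets containing $1$ (the image of a convex set under the projection $\mathfrak n\to\mathfrak n/\gamma_s(\mathfrak n)$ is convex). By induction, $\exp(\log\bar C_1+\log\bar C_2)$ is contained in a bounded-length alternating product $\bar C_1^{p_1}\bar C_2^{q_1}\cdots$, so for any $x\in\exp(\log C_1+\log C_2)$ we may write $x = y\cdot z$ where $y$ is a corresponding alternating product of the actual $C_i$'s in $N$ and $z\in\gamma_s(N)$ is a central correction term. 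The key point is then to bound $z$: since $x$, $y$ and the defining exponents are all built from $\log C_1$ and $\log C_2$ via the Baker--Campbell--Hausdorff formula (Section~\ref{sec:BCH}), which terminates after finitely many bracketings depending only on $s$, the element $\log z$ lies in the convex cone generated by iterated brackets of elements of $\log C_1\cup\log C_2$ of length $\ge 2$ landing in $\gamma_s(\mathfrak n)$ — in particular $\log z\in \lambda[\log C_1,\log C_2] + \lambda[\log C_1,[\log C_1,\log C_2]]+\cdots$ for some constant $\lambda=\lambda(s)$, using that these higher brackets are themselves of the form $[\log C_i, (\text{something in }\gamma_{s-1})]$. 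Here I would invoke the second (bracket) inclusion of the lemma in lower step together with the standard fact that $[\log C, \log D]$ with $C,D$ convex compact containing $1$ is contained in a bounded product $C^{p_1}D^{q_1}\cdots$ — this is exactly the content we are proving, so the bracket statement and the sum statement must be proved \emph{simultaneously} by a joint induction, with the bracket case fed into the sum case and vice versa.

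To make the bracket inclusion work in step $s$: given $z=\exp(w)$ with $w\in[\log C_1,\log C_2]$, write $w$ as a convex combination $\sum t_k[a_k,b_k]$ with $a_k\in\log C_1$, $b_k\in\log C_2$, $\sum t_k=1$; each $\exp([a_k,b_k])$ is a commutator $[\exp(a_k),\exp(b_k)]$ up to a higher-order correction in $\gamma_3$, hence lies in $C_1C_2C_1^{-1}C_2^{-1}\cdot(\text{step }s-1\text{ error})$, so it is in a bounded product of $C_1^{\pm},C_2^{\pm}$ by induction on the step applied to the error and by symmetrising $C_i$ (replace $C_i$ by $C_i\cup C_i^{-1}$, at the cost of doubling exponents — harmless since the $C_i$ contain $1$). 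Then a convex combination $\exp(\sum t_k v_k)$ of elements $\exp(v_k)$ in a fixed convex-generated product set $D$ is again in $D^{O(s)}$, because inside the Lie algebra $\exp(\sum t_k v_k)$ can be reached from the $\exp(v_k)$ by BCH with boundedly many bracket corrections, each again absorbed by induction on step. Assembling, the number of factors at step $s$ is a fixed function of the numbers at step $s-1$ and of the (finite, $s$-dependent) combinatorics of BCH, so a uniform sequence $p_1,q_1,\dots,p_m,q_m$ depending only on $s$ exists.

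The main obstacle I anticipate is the bookkeeping of the central correction term $z$ and making rigorous the claim that it lies in a \emph{convex} subset built from brackets of $\log C_1,\log C_2$ — one must be careful that the convex combination structure is preserved when pushing products through BCH, and that every correction genuinely lives in a lower term of the lower central series so the induction on step applies. The cleanest way to organise this is probably to prove, by downward induction on $j$ from $s$ to $1$, the statement ``$\exp(\log C_1+\log C_2)$ agrees with a bounded alternating product modulo $\gamma_{j}(N)$, and the $\gamma_j/\gamma_{j+1}$-discrepancy is an $\exp$ of a convex set of iterated brackets'', feeding the bracket inclusion (already known modulo $\gamma_{j+1}$) at each stage; symmetrising the $C_i$ at the outset removes the annoyance of inverses. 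Everything else — convexity of projections and of bracket sets, termination of BCH, absorption of errors — is routine once this scaffolding is in place.
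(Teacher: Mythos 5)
Your plan is a from-scratch induction on the step, but it defers exactly the nontrivial content. The steps you label as routine --- that the central correction $\log z$ lies in a bounded sum of dilated iterated-bracket sets, that $\exp$ of such a sum is absorbed into a product of boundedly many factors, that the discrepancy between $\exp([a,b])$ and the group commutator $[\exp a,\exp b]$ can be absorbed ``by induction on the step'', and that a convex combination $\exp\left(\sum_k t_k v_k\right)$ of elements of a product set $D$ lies in $D^{O(s)}$ --- are precisely where the work lies; carried out, they amount to re-deriving the Hall--Petrescu/Lazard identities. Two concrete difficulties are not addressed. First, the BCH errors live in $\gamma_3(N)$, which is a subgroup of the \emph{same} step-$s$ group, so ``induction on the step applied to the error'' is not directly available: the sets you would need to absorb the error are bracket sets built from $\log C_1,\log C_2$ inside $N$, not convex sets in a lower-step group supplied by the induction hypothesis; one needs a separate, well-founded induction along the lower central series (you gesture at this in the last paragraph, but it is exactly the scaffolding that is missing, and your ``joint induction with the bracket case fed into the sum case and vice versa'' at the same step risks circularity). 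Second, the lemma requires the length $m$ and the exponents to depend only on $s$, not on $\dim N$; your convex-combination decompositions (Carath\'eodory-type) and the claim of ``boundedly many bracket corrections'' for a sum with an unbounded number of terms are not controlled in terms of $s$ alone as written.

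There is also an outright false step: symmetrising is not ``harmless''. In general $C^{-1}$ is not contained in any power of $C$ (take $C=\exp([0,1]X)$ in the Heisenberg group), and inverses are genuinely unavoidable: with $C_1=\exp([0,1]X)$, $C_2=\exp([0,1]Y)$, no central element $\exp(tZ)$, $t>0$, lies in a product of positive powers of $C_1,C_2$, since the abelianised coordinates of such a product are nonnegative and vanish only for the identity. So the integer exponents in the statement must be allowed to be negative, and your argument should simply work with $C_i^{\pm1}$ rather than claim symmetrisation costs nothing. For comparison, the paper's proof is a two-line application of the Lazard formulas (Breuillard--Green): identically in $x,y\in N$, both $\exp(\log x+\log y)$ and $\exp([\log x,\log y])$ equal finite alternating products $x^{\alpha_1}y^{\beta_1}\cdots x^{\alpha_m}y^{\beta_m}$ of \emph{rational} powers with $m$ and the exponents depending only on $s$; convexity of $\log C_i$ together with $0\in\log C_i$ then converts each rational power into an integral (possibly negative) power of $C_i$. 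Your inductive scheme, if completed, would essentially reprove those formulas; as it stands it is a plausible plan with the key estimates asserted rather than proved.
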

\begin{proof}
This follows from the Lazard formulas (cf \cite[Lemma 5.2]{BreuillardGreen}, according to which  there are sequences of rational numbers
$\alpha_1,\ldots \alpha_m$, $\beta_1,\ldots, \beta_m$, $\gamma_1,\ldots, \gamma_m$, and $\delta_1,\ldots, \delta_m$, only depending on $s$ such that for any $x,y\in N$,
\[\exp(\log x+\log y)=x^{\alpha_1}y^{\beta_1}\ldots x^{\alpha_m}y^{\beta_m},\] 
and
\[\exp([\log x, \log y])=x^{\gamma_1}y^{\delta_1}\ldots x^{\gamma_m}y^{\delta_m}.\] 
Since $C_1$ and $C_2$ are convex, rational powers of $x$ and $y$ are contained in integral powers of $C_1$ and $C_2$, so the lemma is proved.
\end{proof}

We deduce the following useful lemma:

\begin{lemma}\label{lem:convNilp}
Let $N$ be a nilpotent simply connected Lie group of step $s$. For every $n$ there exists an integer $m=m(s,n)$ such that for all convex compact subset $C\subset N$ containing the neutral element, we have
\[\mathrm{Conv}(C^n)\subset C^m.\]
\end{lemma}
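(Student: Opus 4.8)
The argument will go by induction on the nilpotency step $s$ of $N$, with an inner reduction to Lemma~\ref{lem:convNilp1}. Write $K:=\log C\subseteq\mathfrak n$; by assumption $K$ is convex, compact, and contains $0$, so $C=\exp(K)$ and $\mathrm{Conv}(C^\ell)=\exp(\mathrm{Conv}(\log C^\ell))$ for all $\ell$. If $s=1$ the group is abelian, $\log(C^n)=nK$ is already convex, and $m=n$ works. Assume the statement for simply connected nilpotent groups of step $<s$ (for all $n$), and let $Z:=\gamma_s(N)$, a central subgroup, with quotient $\pi\colon N\to\bar N:=N/Z$ of step $s-1$. Given $g\in\mathrm{Conv}(C^n)$, linearity of $\pi$ at the Lie algebra level gives $\pi(g)\in\mathrm{Conv}(\pi(C)^n)$, which by the inductive hypothesis is contained in $\pi(C)^{m'}$ with $m'=m'(s-1,n)$. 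Lifting a corresponding word to $G$, we may write $g=wz$ with $w\in C^{m'}$ and $z\in Z$ central; it thus suffices to show that $z$ lies in a power of $C$ bounded in terms of $s$ and $n$.

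To this end I would first record, by iterating the first identity of Lemma~\ref{lem:convNilp1} (using that Minkowski sums of convex compact sets containing $0$ remain of that type, and that $\mu x\in\lceil\mu\rceil K$ for $x\in K$, $\mu\ge0$, by convexity), that $\exp(\log C_1+\dots+\log C_j)$ is contained in a product of the $C_i$ involving boundedly many factors (in terms of $s$ and $j$), for all convex compact $C_i\ni e$; in particular $\exp(\mu K)\subseteq C^{L(s,\lceil\mu\rceil)}$. Now $\log z=(-\log w)\ast\log g\in\gamma_s(\mathfrak n)$, and by the Baker--Campbell--Hausdorff formula this element is a combination, with coefficients and number of terms bounded in terms of $s$ and $n$, of iterated Lie brackets whose entries are the lower-central-series graded components of $\log w$ and of $\log g$. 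Each such entry is controlled: the graded components of $\log w$ lie in fixed dilates of $K$ and in brackets of such by the previous sentence, while those of $\log g\in\mathrm{Conv}(\log C^n)$ lie, via BCH and convexity, in fixed multiples of convex hulls of bracket sets $\{\,w_t(y_1,\dots,y_{r_t}):y_i\in K\,\}\subseteq\gamma_{r_t}(\mathfrak n)$. Applying Carathéodory's theorem inside the finite-dimensional space $\gamma_{r_t}(\mathfrak n)$ and pulling the resulting scalars into one slot of the multilinear map $w_t$ (again using convexity of $K$) replaces each such convex hull by a Minkowski sum of boundedly many genuine bracket sets; by the bracket identity of Lemma~\ref{lem:convNilp1} each $\exp(\{\,w_t(y_1,\dots):y_i\in K\,\})$ lies in a fixed power of $C$, and since $\gamma_s(N)$ is abelian, the exponential of a sum of such is the corresponding product. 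Collecting the bounds yields $z\in C^{m''}$ with $m''$ bounded in terms of $s$ and $n$, hence $g\in C^{m'+m''}$, completing the induction.

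The main obstacle is the bookkeeping in the second paragraph: writing down the precise BCH expression for $\log z$ and for the graded components of $\log g$, keeping the non-commuting graded pieces in a consistent order, and --- most importantly --- checking that every rational coefficient produced by BCH or by Carathéodory is absorbed into an \emph{integral} power of $C$ using only the convexity of $K$, which is exactly the point of Lemma~\ref{lem:convNilp1} (so that no inverse or non-integral power of $C$ is ever needed). A parallel, slightly less streamlined route avoids the quotient $\bar N$ altogether: one expands $\mathrm{Conv}(\log C^n)$ directly by BCH as $nK+\sum_t c_t\,\mathrm{Conv}(W_t)$ with $W_t=\{\,w_t(x_1,\dots,x_n):x_i\in K\,\}$ and bounds each term as above --- but this reduces to the same bracket estimate, so the inductive organization above seems the cleanest.
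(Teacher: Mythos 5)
Your route is genuinely different from the paper's: the paper reduces to $n=2$ by an easy induction on $n$, observes via Baker--Campbell--Hausdorff that $\mathrm{Conv}(C^2)$ sits inside the single convex set $\exp\bigl(\sum_{i=1}^s\eta_i[\log C]_i\bigr)$, and then applies Lemma~\ref{lem:convNilp1} a bounded number of times; you instead induct on the nilpotency step through the quotient by $\gamma_s(N)$ and try to control a central correction. The difficulty is that your whole argument hinges on the claim that the correction $z=w^{-1}g$ lies in a bounded \emph{positive} power of $C$, and this claim is not proved by the sketch --- indeed it fails as stated. Take the Heisenberg group, $K=\mathrm{Conv}\{0,e_1,e_2\}$, $C=\exp K$. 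Then $g=\exp\bigl(\tfrac12 e_1+\tfrac12 e_2+\tfrac14 e_3\bigr)\in\mathrm{Conv}(C^2)$ (midpoint of $0$ and $\log(\exp e_1\exp e_2)$), while every element of $C^m$ has the form $\exp(Ae_1+Be_2+ce_3)$ with $A=\sum a_i\ge0$, $B=\sum b_i\ge0$ and $|c|\le\tfrac12 AB$. Hence any $w\in C^{m'}$ with $\pi(w)=\pi(g)$ has $A=B=\tfrac12$ and $c\le\tfrac18$, so $z=w^{-1}g$ is a nontrivial central element; but the only central element in \emph{any} power of this $C$ is the identity (centrality forces $A=B=0$, hence all factors trivial). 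The same computation shows $g$ itself lies in no power of $C$: the missing ingredient is the symmetry $-K=K$ (availability of inverses), which holds in the paper's application ($C=\mathrm{Conv}(\widehat{S}^{k}\cap N)$ with $\widehat{S}$ symmetrized) and is what allows negative Lazard exponents to be absorbed in Lemma~\ref{lem:convNilp1}; your argument never invokes it, so it cannot close.

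Even granting symmetry and Lemma~\ref{lem:convNilp1}, the ``bookkeeping'' you defer is not routine but is the actual content of the step. The pieces you propose to exponentiate one by one (BCH/Carath\'eodory components of $\log g$ and of $-\log w$) are not individually central, so ``the exponential of a sum of such is the corresponding product since $\gamma_s(N)$ is abelian'' cannot be applied termwise; only the total difference $\log g-\log w$ lies in $\gamma_s(\mathfrak n)$, its weight-one parts cancel only globally, and exhibiting that cancellation with every surviving piece an iterated bracket of elements of (dilates of) $K$ is precisely what is left unproved. The paper's proof sidesteps all of this by never leaving positive products: all signs occur inside brackets (where they can be removed by swapping entries), and the only nontrivial input is Lemma~\ref{lem:convNilp1} applied to one convex set.
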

\begin{proof}
By a trivial induction on $n$, it is enough to prove it for $n=2$.  By the Baker-Campbell-Hausdorff formula, $C^2$ is contained in the convex set $C':=\exp\left(\sum_{i=1}^s \eta_i [\log C]_i\right)$, with usual notation $[X]_i:=[X,[X]_{i-1}]$ and $[X]_1=X$, and $\eta_i$ explicit rationnal coefficients. It suffices to prove that $C'$ is contained in a bounded power of $C$, which is obtained by successively applying Lemma \ref{lem:convNilp1}. 
\end{proof}


\bibliographystyle{alpha}
\bibliography{References}

@article{Dungey,
	author = {Nick Dungey},
	date-added = {2026-01-07 12:22:24 +0900},
	date-modified = {2026-01-07 12:23:19 +0900},
	journal = {Rev. Mat. Iberoam.},
	number = {2},
	pages = {587-634},
	title = {Properties of centered random walks on locally compact groups and {L}ie groups},
	volume = {23},
	year = {2007}}

@article{AmirBlachar,
	author = {Gidi Amir and Guy Blachar},
	date-added = {2026-01-07 11:59:39 +0900},
	date-modified = {2026-01-07 12:05:28 +0900},
	journal = {Groups Geom. Dyn.},
	number = {3},
	pages = {1041-1080},
	title = {A law of iterated logarithm on lamplighter diagonal products},
	volume = {19},
	year = {2025}}

@article{Revelle,
	abstract = {This article examines the rate of escape for a random walk on G≀ Z and proves laws of the iterated logarithm for both the inner and outer radius of escape. The class of G for which these results hold includes finite, G as well as groups of the form H≀ Z, so the construction can be iterated. Laws of the iterated logarithm are also found for random walk on Baumslag-Solitar groups and a discrete version of the Sol geometry.},
	author = {David Revelle},
	date-modified = {2026-01-07 11:58:35 +0900},
	issn = {00911798},
	journal = {The Annals of Probability},
	number = {4},
	pages = {1917--1934},
	publisher = {Institute of Mathematical Statistics},
	title = {Rate of Escape of Random Walks on Wreath Products and Related Groups},
	url = {http://www.jstor.org/stable/3481534},
	urldate = {2026-01-06},
	volume = {31},
	year = {2003},
	bdsk-url-1 = {http://www.jstor.org/stable/3481534}}

@article{PeresZheng,
	abstract = {Let \$\\mu \$ be a symmetric probability measure of finite entropy on a group G. We show that if \$-\\log \\mu ^\{(2n)\}(\\textrm\{id\})=o\\left(n^\{1/2\}\\right)\$, then the pair \$(G,\\mu )\$ has the Liouville property (all bounded \$\\mu \$-harmonic functions on G are constant). Furthermore, if \$-\\log \\mu ^\{(2n)\}(\\textrm\{id\})=O\\left(n^\{\\beta \}\\right)\$ where \$\\beta \\in (0,1/2)\$, then the entropy of the n-fold convolution power \$\\mu ^\{(n)\}\$ satisfies \$H\\left(\\mu ^\{(n)\}\\right)=O\\Big (n^\{\\frac\{\\beta \}\{1-\\beta \}\}\\Big )\$. These results improve earlier work of Gournay [16] and Saloff-Coste and Zheng [27]. We illustrate the sharpness of the bounds on a family of groups.},
	author = {Peres, Yuval and Zheng, Tianyi},
	date-added = {2026-01-07 11:54:37 +0900},
	date-modified = {2026-01-16 10:03:57 +0900},
	doi = {10.1093/imrn/rny034},
	eprint = {https://academic.oup.com/imrn/article-pdf/2020/3/722/32219147/rny034.pdf},
	issn = {1073-7928},
	journal = {International Mathematics Research Notices},
	month = {03},
	number = {3},
	pages = {722-750},
	title = {On Groups, Slow Heat Kernel Decay Yields {L}iouville Property and Sharp Entropy Bounds},
	url = {https://doi.org/10.1093/imrn/rny034},
	volume = {2020},
	year = {2018},
	bdsk-url-1 = {https://doi.org/10.1093/imrn/rny034}}

@article{Alexopoulos,
	author = {Georgios K. Alexopoulos},
	journal = {Ann. Probab.},
	number = {2},
	pages = {723--801},
	title = {Random Walks on Discrete Groups of Polynomial Volume Growth},
	volume = {30},
	year = {2002}}

@article{AlexopoulosLC,
	author = {Georgios K. Alexopoulos},
	journal = {Probab Theory Relat Fields},
	pages = {112--150},
	title = {Centered densities on Lie groups of polynomial volume growth},
	volume = {124},
	year = {2002}}

@article{BeBr25,
	abstract = {We establish the (non-lattice) local limit theorem for products of i.i.d. random variables on an arbitrary simply connected nilpotent Lie group G, where the variables are allowed to be non-centered. Our result also improves on the known centered case by proving uniformity for two-sided moderate deviations and allowing measures with a moment of order {\$}{\$}2({$\backslash$}dim G)\^{}2{\$}{\$}without further regularity assumptions. As applications we establish a Ratner-type equidistribution theorem for unipotent walks on homogeneous spaces and obtain a new proof of the Choquet-Deny property in our setting.},
	author = {B{\'e}nard, Timoth{\'e}e and Breuillard, Emmanuel},
	date = {2025/08/01},
	date-added = {2025-12-15 10:16:31 +0900},
	date-modified = {2025-12-15 10:16:50 +0900},
	doi = {10.1007/s00440-025-01387-4},
	id = {B{\'e}nard2025},
	isbn = {1432--2064},
	journal = {Probability Theory and Related Fields},
	number = {3},
	pages = {875--947},
	title = {Local limit theorems for random walks on nilpotent Lie groups},
	url = {https://doi.org/10.1007/s00440-025-01387-4},
	volume = {192},
	year = {2025},
	bdsk-url-1 = {https://doi.org/10.1007/s00440-025-01387-4}}

@article{BreuillardGreen,
	author = {Emmanuel Breuillard and Ben Green},
	date-modified = {2025-12-24 09:18:35 +0900},
	journal = {J. Inst. Math. Jussieu},
	number = {1},
	pages = {37--57},
	title = {Approximate groups {I}: the torsion-free nilpotent case.},
	volume = {10},
	year = {2011}}

@article{Carne,
	author = {Carne, Keith},
	date-modified = {2026-01-16 10:03:28 +0900},
	journal = {Bul. Sci. Math.},
	number = {2},
	pages = {399--405},
	title = {A transmutation formula for {M}arkov chains},
	volume = {109},
	year = {1985}}

@article{HebishSaloff,
	author = {Waldemar Hebisch and Laurent Saloff-Coste.},
	date-modified = {2026-01-16 10:03:44 +0900},
	journal = {Ann. Probab.},
	number = {2},
	pages = {673--709},
	title = {Gaussian Estimates for {M}arkov Chains and Random Walks on Groups},
	volume = {21},
	year = {1993}}

@book{Revuz-Yor,
	author = {Daniel Revuz and Marc Yor},
	date-added = {2025-06-30 18:09:11 +0200},
	date-modified = {2025-06-30 18:11:29 +0200},
	publisher = {Springer Berlin, Heidelberg},
	series = {Grundlehren der mathematischen Wissenschaften},
	title = {Continuous martingales and {B}rownian motion},
	year = {1998}}

@book{Vershynin,
	author = {Roman Vershynin},
	date-added = {2025-06-30 17:49:28 +0200},
	date-modified = {2025-06-30 17:50:58 +0200},
	publisher = {Cambridge University Press},
	title = {High-dimensional probability : an introduction with applications in data science},
	year = {2018}}

@book{BDR,
	author = {Bercu, Bernard and Delyon, Bernard and Rio, Emmanuel},
	date-added = {2025-06-30 16:11:22 +0200},
	date-modified = {2025-10-17 09:30:21 +0900},
	publisher = {Springer Cham},
	series = {Springer Brief in Mathematics},
	title = {Concentration inequalities for sums and martingales},
	year = {2015}}

@article{LosII,
	author = {Viktor Losert},
	date-added = {2024-12-11 10:53:31 +0100},
	date-modified = {2024-12-11 10:54:35 +0100},
	journal = {J. Lond. Math. Soc.},
	number = {2},
	pages = {640-654},
	title = {On the structure of groups with polynomial growth {II}},
	volume = {63},
	year = {2001}}

@incollection{Barlow95,
	author = {Barlow, Martin T.},
	booktitle = {Lectures on probability theory and statistics ({S}aint-{F}lour, 1995)},
	doi = {10.1007/BFb0092537},
	isbn = {3-540-64620-5},
	mrclass = {60J60 (28A80)},
	mrnumber = {1668115},
	mrreviewer = {Volker\ Metz},
	pages = {1--121},
	publisher = {Springer, Berlin},
	series = {Lecture Notes in Math.},
	title = {Diffusions on fractals},
	url = {https://doi.org/10.1007/BFb0092537},
	volume = {1690},
	year = {1998},
	bdsk-url-1 = {https://doi.org/10.1007/BFb0092537}}

@article{BeBr23,
	author = {B\'enard, Timoth\'ee and Breuillard, Emmanuel},
	doi = {10.1214/24-aop1719},
	fjournal = {The Annals of Probability},
	issn = {0091-1798,2168-894X},
	journal = {Ann. Probab.},
	mrclass = {60B15 (22E25 35H10 35R03 58J65 60F05)},
	mrnumber = {4888142},
	number = {2},
	pages = {668--719},
	title = {The central limit theorem on nilpotent {L}ie groups},
	url = {https://doi.org/10.1214/24-aop1719},
	volume = {53},
	year = {2025},
	bdsk-url-1 = {https://doi.org/10.1214/24-aop1719}}

@article{CT,
	author = {Cornulier, Yves and Tessera, Romain},
	date-modified = {2025-10-17 09:33:22 +0900},
	journal = {Ann. Inst. Fourier},
	number = {5},
	pages = {1951-2003},
	title = {On the vanishing of reduced 1-cohomology for Banachic representations},
	volume = {70},
	year = {2020}}

@article{GT,
	author = {Grigor'yan, Alexander and Telcs, Andras},
	doi = {10.1214/11-AOP645},
	fjournal = {The Annals of Probability},
	issn = {0091-1798,2168-894X},
	journal = {Ann. Probab.},
	mrclass = {47D07 (28A80 31C25 35K08 60J45)},
	mrnumber = {2962091},
	mrreviewer = {Paolo\ Acquistapace},
	number = {3},
	pages = {1212--1284},
	title = {Two-sided estimates of heat kernels on metric measure spaces},
	url = {https://doi.org/10.1214/11-AOP645},
	volume = {40},
	year = {2012},
	bdsk-url-1 = {https://doi.org/10.1214/11-AOP645}}

@article{HS90,
	author = {Hebisch, Waldemar and Sikora, Adam},
	doi = {10.4064/sm-96-3-231-236},
	fjournal = {Polska Akademia Nauk. Instytut Matematyczny. Studia Mathematica},
	issn = {0039-3223,1730-6337},
	journal = {Studia Math.},
	mrclass = {22E25 (17B30 22E15 43A85)},
	mrnumber = {1067309},
	mrreviewer = {Hidenori\ Fujiwara},
	number = {3},
	pages = {231--236},
	title = {A smooth subadditive homogeneous norm on a homogeneous group},
	url = {https://doi.org/10.4064/sm-96-3-231-236},
	volume = {96},
	year = {1990},
	bdsk-url-1 = {https://doi.org/10.4064/sm-96-3-231-236}}

@article{Losert,
	author = {Losert, Viktor},
	doi = {10.1016/j.jalgebra.2020.02.012},
	fjournal = {Journal of Algebra},
	issn = {0021-8693,1090-266X},
	journal = {J. Algebra},
	mrclass = {22D05 (20F19 20G20 22E25 22E30)},
	mrnumber = {4079363},
	mrreviewer = {Corina\ Ciobotaru},
	pages = {1--40},
	title = {On the structure of groups with polynomial growth {III}},
	url = {https://doi.org/10.1016/j.jalgebra.2020.02.012},
	volume = {554},
	year = {2020},
	bdsk-url-1 = {https://doi.org/10.1016/j.jalgebra.2020.02.012}}

@article{Dynkin,
	author = {Dynkin, Eugene},
	date-added = {2024-09-11 11:51:46 +0200},
	date-modified = {2024-09-11 11:52:54 +0200},
	journal = {Doklady Akad. Nauk SSSR},
	pages = {323-326},
	title = {Calculation of the coefficients in the {C}ampbell-{H}ausdorff formula},
	volume = {57},
	year = {1947}}

@article{Osin,
	author = {Osin, Denis},
	journal = {J. Algebra},
	pages = {790--805},
	title = {Exponential radical of solvable Lie groups},
	volume = {248},
	year = {2022}}

@article{KroLor,
	author = {Kropholler, Peter and Lorensen, Karl},
	date-modified = {2026-01-16 10:06:00 +0900},
	journal = {Annales ENS},
	number = {1},
	pages = {125-171},
	title = {Virtually torsion-free covers of minimax groups},
	volume = {53},
	year = {2016}}

@article{Pisier,
	author = {Pisier, Gilles},
	date-added = {2024-09-11 09:41:10 +0200},
	date-modified = {2024-09-11 09:42:02 +0200},
	journal = {Graduate J. Math.},
	pages = {59-78},
	title = {Subgaussian sequences in probability and {F}ourier analysis},
	volume = {1},
	year = {2016}}

@article{Guivarch80,
	author = {Guivarc'h, Yves},
	journal = {Ast\'erisque},
	language = {fr},
	pages = {47--98},
	publisher = {Soci\'et\'e math\'ematique de France},
	title = {Sur la loi des grands nombres et le rayon spectral d'une marche al\'eatoire.},
	volume = {74},
	year = {1980}}

@article{Guivarch,
	author = {Guivarc'h, Yves},
	date-modified = {2024-09-11 09:38:44 +0200},
	doi = {10.24033/bsmf.1764},
	journal = {Bulletin de la Soci\'et\'e Math\'ematique de France},
	language = {fr},
	mrnumber = {369608},
	pages = {333--379},
	publisher = {Soci\'et\'e math\'ematique de France},
	title = {Croissance polynomiale et p\'eriodes des fonctions harmoniques},
	url = {http://www.numdam.org/articles/10.24033/bsmf.1764/},
	volume = {101},
	year = {1973},
	zbl = {0294.43003},
	bdsk-url-1 = {http://www.numdam.org/articles/10.24033/bsmf.1764/},
	bdsk-url-2 = {https://doi.org/10.24033/bsmf.1764}}

@article{Varo85,
	author = {Varopoulos, Nicolas Th.},
	date-modified = {2026-01-16 10:04:14 +0900},
	journal = {Bull. Sci. Math.},
	number = {2},
	pages = {225--252},
	title = {Long range estimates for {M}arkov chains},
	volume = {109},
	year = {1985}}

@book{Varobook,
	author = {Nicolas Th. Varopoulos and Laurent Saloff-Coste and Thierry Coulhon},
	date-modified = {2026-01-07 12:06:11 +0900},
	publisher = {Cambridge University Press},
	series = {Cambridge Tracts in Mathematics},
	title = {Analysis and Geometry on Groups},
	year = {2003}}

@article{Thompson,
	author = {Russ Thompson},
	journal = {Ann. Inst. H. Poincar{\'e} Probab. Statist.},
	number = {1},
	pages = {270--287},
	title = {The rate of escape for random walks on polycyclic and metabelian groups},
	volume = {49},
	year = {2013}}


\end{document}